\documentclass[11pt]{amsart}

\usepackage[colorlinks=true,linkcolor=blue,urlcolor=blue,citecolor=blue]{hyperref}
\usepackage[nameinlink]{cleveref}

\usepackage{amsmath}
\usepackage{amssymb}
\usepackage{amsthm}
\usepackage{latexsym}
\usepackage{graphicx}
\usepackage{enumerate}
\usepackage[all]{xy}
\usepackage{chngcntr}
\usepackage{relsize}

\setlength{\unitlength}{1cm}
\setlength{\topmargin}{0cm}
\setlength{\textheight}{22cm}
\setlength{\oddsidemargin}{1cm}
\setlength{\textwidth}{14cm}
\setlength{\voffset}{-1cm}

\newtheorem{thm}{Theorem}[section]
\newtheorem{cor}[thm]{Corollary}
\newtheorem{lem}[thm]{Lemma}
\newtheorem{prop}[thm]{Proposition}

\newtheorem{thmintro}{Theorem}

\newtheorem{ass}[thm]{Hypothesis}

\newcommand{\N}{\mathbb N}
\newcommand{\Z}{\mathbb Z}
\newcommand{\Q}{{\mathbb Q}}
\newcommand{\R}{\mathbb R}
\newcommand{\C}{\mathbb C}
\newcommand{\F}{\mathbb F}
\newcommand{\mf}{\mathfrak}
\newcommand{\mc}{\mathcal}
\newcommand{\mb}{\mathbf}
\newcommand{\mh}{\mathbb}
\newcommand{\mr}{\mathrm}
\newcommand{\ind}{\mathrm{ind}}

\newcommand{\enuma}[1]{\begin{enumerate}[\textup{(}a\textup{)}] {#1} \end{enumerate}}
\newcommand{\Fr}{\mathrm{Frob}}
\newcommand{\Sc}{\mathrm{Sc}}

\newcommand{\cusp}{\mathrm{cusp}}
\newcommand{\nr}{\mathrm{nr}}
\newcommand{\unr}{\mathrm{unr}}

\newcommand{\cpt}{\mathrm{cpt}}
\newcommand{\der}{\mathrm{der}}
\newcommand{\Irr}{{\mathrm{Irr}}}
\newcommand{\Rep}{\mathrm{Rep}}
\newcommand{\Res}{\mathrm{Res}}

\newcommand{\enh}{\mathrm{e}}
\newcommand{\mini}{\mathrm{min}}
\newcommand{\Jord}{\mathrm{Jord}}

\newcommand{\red}{\mathrm{red}}
\newcommand{\matje}[4]{\left(\begin{smallmatrix} #1 & #2 \\ 
#3 & #4 \end{smallmatrix}\right)}
\newcommand{\Mod}{\mathrm{Mod}}

\newcommand{\Hom}{\mathrm{Hom}}
\newcommand{\End}{\mathrm{End}}

\newcommand{\op}{\mathrm{op}}

\newcommand{\GL}{\mathrm{GL}}
\newcommand{\rO}{\mathrm{O}}
\newcommand{\rS}{\mathrm{S}}
\newcommand{\SL}{\mathrm{SL}}
\newcommand{\SO}{\mathrm{SO}}
\newcommand{\GSO}{\mathrm{GSO}}
\newcommand{\GO}{\mathrm{GO}}
\newcommand{\GSpin}{\mathrm{GSpin}}
\newcommand{\GPin}{\mathrm{GPin}}
\newcommand{\rN}{\mathrm{N}}
\newcommand{\Sp}{\mathrm{Sp}}
\newcommand{\GSp}{\mathrm{GSp}}
\newcommand{\rU}{\mathrm{U}}
\newcommand{\rZ}{\mathrm{Z}}

\newcommand{\isom}{\xrightarrow{\;\sim\;}}

\begin{document}

\title{Affine Hecke algebras for classical $p$-adic groups}
\author[A.-M. Aubert]{Anne-Marie Aubert}
\address{Sorbonne Universit\'e and Universit\'e Paris Cit\'e, CNRS,
IMJ-PRG, F-75005 Paris, France}
\email{anne-marie.aubert@imj-prg.fr}
\author[A. Moussaoui]{Ahmed Moussaoui}
\address{Laboratoire de Math\'ematiques et Applications, Universit\'e de Poitiers, 
11 Boulevard Marie et Pierre Curie,
B\^{a}timent H3 - TSA 61125, 86073 Poitiers Cedex 9, France}
\email{ahmed.moussaoui@math.univ-poitiers.fr}
\author[M. Solleveld]{Maarten Solleveld}
\address{IMAPP, Radboud Universiteit Nijmegen, Heyendaalseweg 135,
6525AJ Nijmegen, the Netherlands}
\email{m.solleveld@science.ru.nl}
\subjclass[2010]{20G25,20C08,22E50,11S37}
\date{\today}


\maketitle


\begin{abstract}
We consider four classes of classical groups over a non-archimedean local field $F$:
symplectic, (special) orthogonal, general (s)pin and unitary. These groups need not be
quasi-split over $F$. The main goal of the paper is to obtain a local Langlands correspondence 
for any group $G$ of this kind, via Hecke algebras.

To each Bernstein block Rep$(G)^{\mf s}$ in the category Rep$(G)$ of smooth com\-plex $G$-representations,
an (extended) affine Hecke algebra $\mc H (\mf s)$ can be associated with the method of Heiermann.
On the other hand, to each Bernstein component $\Phi_\enh (G)^{\mf s^\vee}$ of the
space $\Phi_\enh (G)$ of enhanced $L$-parameters for $G$, one can also associate an (extended) affine 
Hecke algebra, say $\mc H (\mf s^\vee)$. For the supercuspidal representations underlying 
Rep$(G)^{\mf s}$, a local Langlands correspondence is available via endoscopy, due to M\oe glin 
and Arthur. Using that we assign to each Rep$(G)^{\mf s}$ a unique $\Phi_\enh (G)^{\mf s^\vee}$. 

Our main new result is an algebra isomorphism $\mc H (\mf s)^{\op} \cong \mc H (\mf s^\vee)$, 
canonical up to inner automorphisms. In combination with earlier work, that provides an injective
local Langlands correspondence Irr$(G) \to \Phi_\enh (G)$ which satisfies Borel's desiderata. When
$F$ has characteristic zero, this parametrization map is in fact bijective. When $F$ has 
positive characteristic it is probably bijective as well, but we could not show that in all cases.

Our framework is suitable to (re)prove many results about smooth $G$-represen\-tations (not 
necessarily irreducible), and to relate them to the geometry of a space of $L$-parameters.
In particular our Langlands parametrization yields an independent way to classify discrete series
$G$-representations in terms of Jordan blocks and supercuspidal representations of Levi subgroups.
We show that it coincides with the classification of the discrete series obtained twenty years
ago by M\oe glin and Tadi\'c.
\end{abstract}

\tableofcontents

\section*{Introduction}

In the theory of linear algebraic groups, the classical groups play a special role.
As the stabilizer groups of bilinear/hermitian forms, they can arise from many 
directions and have various applications. Within the representation theory of reductive $p$-adic
groups, the main advantage of classical groups is their explicit structure. It enables
precise, combinatorial methods to study representations, on a level which is hard to reach
for other reductive groups. Such methods have been pursued by many mathematicians, see for 
instance \cite{Art,GGP,Hei4,KiMa,MoTa}.

In this paper we translate the (smooth, complex) representation theory of classical $p$-adic 
groups to affine Hecke algebras arising from Langlands parameters. This is part of a long-term
program \cite{AMS1,AMS2,AMS3} that applies to all reductive $p$-adic groups and aims to
establish instances of Langlands correspondences via Hecke algebras. The method has already
proven successful for principal series representations of split groups \cite{ABPS3} and for
unipotent representations \cite{SolUnip}. For classical groups, our Hecke algebra methods
provide alternative proofs of many earlier results (e.g. the classification of discrete
series representations) and install a framework in which one can easily establish many
new results that involve categories of smooth representations. \\

Let $F$ be any non-archimedean local field ($p$-adic or a local function field). We will
consider classical $F$-groups in a broad sense, namely
\begin{itemize}
\item symplectic groups;
\item (special) orthogonal groups associated to symmetric bilinear forms on a finite dimensional
$F$-vector space $V$; 
\item general (s)pin groups associated to such bilinear forms;
\item unitary groups associated to hermitian forms on vector spaces over a separable 
quadratic extension of $F$.
\end{itemize}
We stress that these groups do not have to be quasi-split, we allow pure inner forms.
For $G = \SO (V)$ and $G = \GSpin (V)$ we write, respectively, $G^+ = \rO(V)$ and 
$G^+ = \GPin (V)$, otherwise $G^+ = G$. An advantage of including general spin groups is 
that they provide information about all representations of spin groups, including those 
that do not factor through special orthogonal groups.

General linear groups could also figure in the list, they are very classical (but note that they
do not come from a nondegenerate bilinear form). We excluded them because for $\GL_n (F)$ everything
we will discuss has been known for a long time already, see \cite{HaTa,Hen, Sch,LRS} for the local Langlands correspondence and 
\cite{BuKu,AMS1} for the Hecke algebras.

The common feature of all the above groups $G$ is that their Levi subgroups are isomorphic to
$G' \times \GL_{n_1}(F') \times \cdots \times \GL_{n_k}(F')$, where $G'$ is a group in the same 
family as $G$ but of smaller rank, and $F' = F$ unless $G$ is a unitary group, then $[F' : F] = 2$.
It is this structure which enables the aforementioned ``combinatorial" approach to representations
of classical groups. In a sense that approach is recursive, relating $G$-representations to
similar groups of smaller rank and to representations of $\GL_n (F)$, which are understood well already.
However, such a reduction strategy does not say much about supercuspidal $G$-representations.
The crucial technique to analyse those is endoscopy, as in \cite{Art,Mok,KMSW,MoRe}. From the work
of Arthur and M\oe glin, the following version of a local 
Langlands correspondence (for the discrete objects) can be distilled.

\begin{thmintro} \label{thm:A}
\textup{(Arthur, M{\oe}glin, see Theorem\autoref {thm:1.1})} \ \\
Let $F$ be a $p$-adic field and let $G$ be one of the connected classical groups listed above.
\enuma{
\item Let $\pi$ be a discrete series representation of $G^+$. Then the $L$-parameter of $\pi$ can
be obtained from the set of Jordan blocks of $\pi$, by taking the $L$-parameters of all
$\GL_n (F)$-representations in $\mr{Jord}(\pi)$ and combining those via block-diagonal matrices.
\item Pick a Whittaker datum for the quasi-split inner form of $G$. This determines an extension
of part (a) to an injection from the discrete series of $G^+$ to the set of enhanced bounded 
discrete $L$-parameters for $G$ (where the component groups of $L$-parameters are computed 
in the possibly disconnected group $G^{\vee+}$).
\item When $G^+ \neq G$, it can be described explicitly in terms of $\mr{Jord}(\pi)$ whether or
not $\mr{Res}^{G^+}_G (\pi)$ is irreducible.
}
\end{thmintro}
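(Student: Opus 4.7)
The plan is to derive this from the endoscopic classification of tempered $L$-packets for classical groups: Arthur for quasi-split symplectic and orthogonal groups, Mok for quasi-split unitary groups, and KMSW together with M\oe glin-Renard for the extension to pure inner forms. Granted this machinery, the discrete series of $G^+$ are organized into finite packets $\Pi_\phi$ indexed by bounded discrete $L$-parameters $\phi \colon W_F \times \SL_2(\C) \to G^{\vee+}$, with internal structure recorded by characters of $\mc S_\phi = \pi_0 \bigl( Z_{G^{\vee+}}(\phi) \bigr)$. A bounded discrete $\phi$ decomposes canonically as a multiplicity-free direct sum of irreducible self-dual (or conjugate-self-dual) representations of $W_F \times \SL_2(\C)$ of the appropriate parity type, each of the form $\phi_\rho \boxtimes \nu_a$ with $\rho$ a supercuspidal of some $\GL_n(F')$ and $\nu_a$ the $a$-dimensional irrep of $\SL_2(\C)$.

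For part (a), I would identify the set of pairs $(\rho,a)$ appearing in this decomposition of $\phi_\pi$ with $\mr{Jord}(\pi)$, the latter defined through reducibility points of parabolic inductions of the form $\rho |\det|^s \rtimes \pi_{\cusp}$. The core identity $\mr{Jord}(\pi) = \mr{Jord}(\phi_\pi)$ is M\oe glin's compatibility theorem: the reducibility of these inductions is governed by Plancherel measures, hence by local $L$- and $\epsilon$-factors attached to $\phi$ via Langlands-Shahidi, and exactly detects the irreducible summands of $\phi$. Assembling the $\phi_\rho \boxtimes \nu_a$ block-diagonally inside $G^{\vee+}$ then reconstructs $\phi_\pi$.

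For part (b), M\oe glin provides an explicit combinatorial recipe producing an enhancement $\epsilon_\pi \in \Irr(\mc S_{\phi_\pi})$ from $\mr{Jord}(\pi)$ and the partial cuspidal support $\pi_{\cusp}$: one chooses admissible orderings of $\mr{Jord}(\pi)$, attaches signs using the Jacquet-module structure of $\pi$ along $\pi_{\cusp}$, and interprets the resulting sign pattern as a character of $\mc S_{\phi_\pi}$, which for these groups is naturally a product of copies of $\Z/2\Z$ indexed by $\mr{Jord}(\pi)$ (subject to the one relation coming from the center in the even orthogonal case). Injectivity of $\pi \mapsto (\phi_\pi, \epsilon_\pi)$ then follows from the injectivity of Arthur's parametrization of $\Pi_\phi$ by $\Irr(\mc S_\phi)$, together with the compatibility of M\oe glin's recipe with Arthur's parametrization via the endoscopic character identities.

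For part (c), the disconnection $G^+ \neq G$ arises only for $\SO$ (then $G^+ = \rO$) and $\GSpin$ (then $G^+ = \GPin$), where the non-identity component acts on $\Irr(G)$ via an outer automorphism that twists by a quadratic character $\chi$. By Mackey theory $\mr{Res}^{G^+}_G \pi$ is reducible (a sum of two inequivalent pieces) iff $\pi \cong \pi \otimes \chi$; on the dual side this corresponds to whether the natural map $\mc S_\phi^+ \twoheadrightarrow \mc S_\phi$ has trivial or order-$2$ kernel, which in turn can be read off from the parities and pairings visible in $\mr{Jord}(\pi)$ and $\pi_{\cusp}$ through the component-group calculation in $G^{\vee+}$. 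The main obstacle throughout is the compatibility invoked in part (a): matching the purely representation-theoretic definition of $\mr{Jord}(\pi)$ with the summand decomposition of $\phi_\pi$ is the deep content, resting on M\oe glin's analysis and Arthur's endoscopic transfer, and this is what makes the theorem substantive rather than a bookkeeping exercise.
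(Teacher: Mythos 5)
Your proposal takes essentially the same route as the paper: both defer the theorem to the Arthur/Mok/KMSW endoscopic classification plus M\oe glin's Jordan block analysis, and the paper's own ``proof'' of Theorem~\ref{thm:1.1} is likewise a pointer to those sources (M\oe glin \cite{Moe2}, M\oe glin \cite{Moe3}, and M\oe glin--Renard \cite{MoRe} for the non-quasi-split case), with the one substantive issue --- that $\mr{Jord}(\pi)$ is not too small --- resolved, as you note, by Arthur's endoscopy. One small inaccuracy in your part (c): the non-identity component of $G^+$ acts on $\Irr(G)$ by genuine conjugation, not by tensoring with a character, and the component-group comparison is an inclusion $\mc S_\phi \subset \mc S_\phi^+$ rather than a surjection $\mc S_\phi^+ \twoheadrightarrow \mc S_\phi$; the correct statements are that $\mr{Res}^{G^+}_G(\pi)$ is irreducible iff $\pi \not\cong \pi \otimes \mathrm{sgn}$ for $\pi \in \Irr(G^+)$, equivalently $\mc S_\phi \subsetneq \mc S_\phi^+$, equivalently some $(\tau,a)\in\mr{Jord}(\phi)$ has $a\dim\tau$ odd (Theorem~\ref{thm:1.1}.d and the proof of Theorem~\ref{thm:1.2}.b), but this rephrasing does not affect the substance of your argument.
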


M\oe glin characterized also supercuspidality in the context of Theorem~\ref{thm:A}, both for
$G$-representations and for enhanced $L$-parameters. We refer to Section~\ref{sec:Moe} for the
notations and more background. For now, we 
make a couple of remarks to aid the correct interpretation of Theorem~\ref{thm:A}. Firstly, note that 
in part (b) no bijectivity is claimed, although that is known for many of these groups. Secondly, we 
have to warn that not all details of the proof of Theorem~\ref{thm:A} have been worked out (we ourselves 
did not try, we only provide the relevant references). Further, Theorem~\ref{thm:A} relies heavily on 
endoscopy, that is the reason why $F$ needs to have characteristic zero.

Nevertheless, Theorem~\ref{thm:A} should also hold for classical groups over local function fields,
see \cite{GaVa,GaLo} for some instances. In Paragraph~\ref{par:close} we attempt to derive that with the 
method of close local fields. We managed to prove that in Proposition\autoref {prop:1.9}, under Hypothesis 
\ref{as:1.8} on depths of representations in Jordan blocks (the hypothesis most probably holds always).
Unfortunately our arguments do not suffice to prove surjectivity in Theorem~\ref{thm:A}.b in complete
generality for classical groups over local function fields, even if we would know such surjectivity 
for the analogous groups over $p$-adic fields.

For the purposes of this paper, we only need to know Theorem~\ref{thm:A} for supercuspidal 
$G^+$-representations. Indeed, the remainder of Theorem~\ref{thm:A} follows from those cases with
either \cite{Moe0,MoTa} or with our results discussed below and the detailed know\-ledge of the 
discrete series of the Hecke algebras from \cite{AMS2,AMS3}. Consequently all results in paper 
hold for $G$ and $G^+$ as soon as we know Theorem~\ref{thm:A} for supercuspidal representations 
of $G^+$ and of the groups of smaller rank in the same family.\\

Next we discuss our new results. An important aspect is their canonicity, by which
we mean that they do not depend on arbitrary choices. For results that are not entirely
canonical, we indicate the freedom in the choices.

Recall that the category of smooth complex representations of any classical 
$F$-group $G$ admits the Bernstein decomposition
\begin{equation}\label{eq:4}
\Rep (G) = \prod\nolimits_{\mf s} \Rep (G)^{\mf s} ,
\end{equation}
indexed by inertial equivalence classes $\mf s$. Each such $\mf s$ is a $G$-conjugacy class of
pairs $(L,X_\nr(L)\cdot\sigma)$, where $\sigma$ is an irreducible supercuspidal representation 
of a Levi subgroup $L$ of $G$ and $X_\nr(L)$ is the group of unramified characters of $L$.
Every Bernstein block $\Rep (G)^{\mf s}$ is equivalent with the category of right modules of some
finitely generated algebra $\mc H (\mf s)$, often an affine Hecke algebra. Usually these Hecke algebras
arise via types (in the sense of Bushnell--Kutzko). For classical groups such types are indeed
available \cite{MiSt}, but it has turned out to be difficult to analyse the Hecke algebras via those 
types. Instead we follow the approach of Heiermann \cite{Hei2,Hei3,Hei4}, who constructed $\mc H (\mf s)$
as the $G$-endomorphism algebra of a canonical progenerator $\Pi_{\mf s}$ of $\Rep (G)^{\mf s}$. 
(For $\GSpin (V)$ we use the more general results from \cite{SolEnd,SolParam}.) We recall that 
by design $\Hom_G (\Pi_{\mf s},?)$ provides an equi\-va\-lence of categories
\begin{equation}\label{eq:1}
\Rep (G)^{\mf s} \cong \Mod(\mc H (\mf s)^{\op}) = \Mod(\End_G (\Pi_{\mf s})^{\op}).
\end{equation}
Here $A^{\op}$ denotes the opposite algebra of $A$, so that $\Mod (A^{\op})$ is the category of
right $A$-modules.
These algebras $\mc H (\mf s)$ have been described explicitly in terms of the Jordan blocks of the 
underlying supercuspidal representations (of a Levi subgroup of $G$). They are extended affine
Hecke algebras \cite{Hei3,SolEnd}. That links them to Theorem
\ref{thm:A} and hence to Langlands parameters. These links were investigated in \cite{Hei4}, where it
was shown that each $\Rep (G)^{\mf s}$ equivalent to a Bernstein block of unipotent representations
in another group. Unfortunately these equi\-va\-lences are far from canonical, as most of the comparison
steps in \cite{Hei4} involve arbitrary choices. Also, the back-and-forth between various Langlands
parameters and Hecke algebras entails that in \cite{Hei4} there is no construction of representations in
$\Rep (G)^{\mf s}$ with an unambiguous relation to Langlands parameters for $G$.

Fortunately, objects of the above kinds are also available directly for $L$-parameters. Indeed, 
in \cite[\S 8]{AMS1} the space of enhanced $L$-parameters (of any connected reductive $p$-adic group $G$) 
is partitioned into Bernstein components:
\begin{equation}\label{eq:5}
\Phi_\enh (G) = \bigsqcup\nolimits_{\mf s^\vee} \Phi_\enh (G)^{\mf s^\vee} ,
\end{equation}
indexed by inertial equivalence classes $\mf s^\vee$ for $\Phi_e (G)$. By definition each
$\mf s^\vee$ is a $G^\vee$-conjugacy class of triples $({}^L M,X_\nr({}^L M)\cdot \phi, 
\epsilon)$, where ${}^L M$ is a $L$-Levi subgroup of ${}^L G$, $X_\nr({}^L M)$ denotes the 
analogue of unramified characters for ${}^L M$ and $(\phi,\epsilon) \in \Phi_\cusp (M)$ 
is a cuspidal enhanced $L$-parameter, see \cite[7.1]{AMS1}.

To each such Bernstein component, one can associate a twisted affine Hecke algebra $\mc H (\mf s^\vee,
\mb z)$ \cite{AMS3}. Here $\mb z$ is an invertible indeterminate, analogous to $\sqrt{\mb q}$ for
Iwahori--Hecke algebras. Two important features of $\mc H (\mf s^\vee, \mb z)$ were established in
\cite{AMS2,AMS3}: a construction of (irreducible) representations in terms of the geometry of a space
of Langlands parameters and for each $z \in \R_{>0}$ a canonical bijection 
\begin{equation}\label{eq:2}
\Phi_\enh (G)^{\mf s^\vee} \longleftrightarrow \Irr (\mc H (\mf s^\vee,z)) ,
\end{equation}
where $\mc H (\mf s^\vee,z)$ denotes the specialization of $\mc H (\mf s^\vee,\mb z)$ at $\mb z = z$.
Moreover, for $z > 1$ the bijection \eqref{eq:2} sends bounded 
parameters and discrete parameters to the expected kind of representations (respectively tempered 
and essentially discrete series). Later we will specialize $\mb z$ to $q_F^{1/2}$, where $q_F$ denotes 
the cardinality of the residue field of $F$. The algebras $\mc H (\mf s^\vee, q_F^{1/2})$ are crucial,
without them it is hardly possible to make the relations between $\Phi_\enh (G)$ and $\Rep (G)$ canonical.

In Paragraph~\ref{par:HAL} we make the affine Hecke algebras $\mc H (\mf s^\vee, \mb z)$ completely 
explicit, for any Bernstein component of $\Phi_\enh(G)$ with $G$ a classical $F$-group. This involves
a description of the underlying root datum and of the labels (equivalently: the $q$-parameters) in 
terms of the relevant Jordan blocks. We refer to Table~\ref{tab:2} for an overview.

Theorem~\ref{thm:A} enables us to associate to each Bernstein block $\Rep (G)^{\mf s}$ a unique
Bernstein component of $\Phi_\enh (G)$ which we call $\Phi_\enh (G)^{\mf s^\vee}$, see Theorem\autoref {thm:1.5}.
When $G^+ \neq G$ (so for special orthogonal groups and general spin groups), $\mf s^\vee$ is only
canonical up to the action of the two-element group $\mr{Out}(\mc G)$. Our most important result is 
a comparison of Hecke algebras on the two sides of the local Langlands correspondence:

\begin{thmintro}\label{thm:B}
\textup{(see Theorem\autoref{thm:1.7}, Proposition\autoref{prop:3.2} and Proposition\autoref {prop:4.4})} \\
Let $G$ be a connected classical group over $F$ and fix a Whittaker datum for the quasi-split
inner form of $G$. Suppose that the inertial equivalence classes $\mf s$ and $\mf s^\vee$ are 
matched as in Theorem\autoref{thm:1.5}. There exists an algebra isomorphism 
\[
\mc H (\mf s ) \cong \mc H (\mf s^\vee, q_F^{1/2})
\]
with the following properties.
\begin{itemize}
\item On the standard maximal commutative subalgebras $\mc O (T_{\mf s}) \subset \mc H (\mf s)$ and\\
$\mc O (T_{\mf s^\vee}) \subset \mc H (\mf s^\vee, q_F^{1/2})$, the isomorphism is prescribed by the
$L$-parameters of supercuspidal representations from Theorem~\ref{thm:A} and from the local Langlands correspondence for general
linear groups.
\item There is a canonical bijection between the root system associated to $\mf s$ and the
root system associated to $\mf s^\vee$.
\item The isomorphism is canonical up to conjugation by elements of $\mc O (T_{\mf s})^\times$ and
(in the cases with $G^+ \neq G$) up to the action of $\mr{Out}(\mc G)$.
\end{itemize}
There exists an analogous isomorphism of Hecke algebras for $G^+$, which is canonical up to 
conjugation by elements of $\mc O (T_{\mf s})^\times$.

Further, the algebra $\mc H (\mf s^\vee, q_F^{1/2})$ is canonically isomorphic to its own opposite.
\end{thmintro}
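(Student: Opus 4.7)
The plan is to reduce the statement to the comparison of three standard invariants that determine an extended affine Hecke algebra up to isomorphism: the based root datum, the $q$-labels after specialisation $\mb z \mapsto q_F^{1/2}$, and the $2$-cocycle twisting the finite ``$R$-group'' factor. Both algebras have been described explicitly earlier in the paper: $\mc H(\mf s)$ via Heiermann's presentation of $\End_G(\Pi_{\mf s})$, whose root datum is determined by the Jordan blocks of the underlying supercuspidal $\sigma$ and whose labels encode the Silberger--Heiermann reducibility points; and $\mc H(\mf s^\vee, \mb z)$ via the complex-geometric recipe of \cite{AMS3} recalled in Paragraph~\ref{par:HAL}, whose data are encoded directly in the Jordan blocks of a cuspidal L-parameter. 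Theorem~\ref{thm:A} for supercuspidals, combined with the LLC for $\GL_n(F')$, produces a canonical identification of cuspidal supports, and in particular the prescribed isomorphism $\mc O(T_{\mf s}) \cong \mc O(T_{\mf s^\vee})$ on the maximal commutative subalgebras.

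The first concrete step is to verify that this identification carries the root datum of $\mc H(\mf s)$ to that of $\mc H(\mf s^\vee, \mb z)$. On the representation side, the roots correspond to self-dual $\GL$-blocks of the Levi that reduce against a Jordan block of $\sigma$, together with dual pairs of non-self-dual $\GL$-blocks, and their coroots are read off from $X_\nr(L)$; on the parameter side exactly the same combinatorics reappears because the cuspidal support of $\mf s^\vee$ carries the matching collection of Jordan blocks by Theorem~\ref{thm:A}. The same book-keeping matches the labels root-by-root, since on one side $s_\alpha$ is a Silberger--Heiermann reducibility point while on the other it is the half-integer exponent of the corresponding Jordan block, and the two agree by construction. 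The appearance of the opposite algebra is a convention issue (left versus right $G$-endomorphisms of $\Pi_{\mf s}$, absorbed by inversion on $T_{\mf s}$) to be fixed once and for all.

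The main obstacle is then to match the $2$-cocycles. On each side there is a finite group twisting the affine Hecke algebra into its extended version: the stabiliser of $\sigma$ in $\rN_G(L)/L$ on one side, the corresponding quotient of the component group of the cuspidal L-parameter on the other, and Moeglin's parametrisation identifies the two. A priori, however, the crossed products could still be twisted by inequivalent classes in $H^2$. The strategy is to show that both cocycles become trivialisable after conjugation by an element of $\mc O(T_{\mf s})^\times$, which is exactly the indeterminacy allowed by the statement: on the representation side via Heiermann's normalisation of intertwining operators, and on the parameter side via the explicit cocycle computation in \cite{AMS3}. The separate Propositions~\ref{prop:3.2} and \ref{prop:4.4} then treat the unitary and $\GSpin$ cases, in which the quadratic extension $F'/F$ (respectively the use of \cite{SolEnd,SolParam} in place of \cite{Hei4}) force an independent cocycle analysis.

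Finally, the $G^+$ version is obtained by taking the semidirect product with $\Out(\mc G) = \Z/2$ on each side: on the representation side via restriction along $G \hookrightarrow G^+$, and on the parameter side via enlarging the component group in $G^{\vee +}$. The inclusion $G \hookrightarrow G^+$ absorbs precisely the $\Out(\mc G)$-ambiguity in the choice of $\mf s^\vee$, so the resulting isomorphism for $G^+$ becomes canonical up to the inner indeterminacy $\mc O(T_{\mf s})^\times$ alone; compatibility of the constructed map with the $\Out(\mc G)$-action reduces to the fact that Moeglin's L-parameters are defined up to the same action.
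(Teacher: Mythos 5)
Your proposal captures the first half of the argument correctly: the identification of $\mc O(T_{\mf s})$ with $\mc O(T_{\mf s^\vee})$ via Theorem~\ref{thm:A} and the LLC for general linear groups, the root-by-root matching of the two root data (Proposition~\ref{prop:1.6}), and the comparison of $q$-labels via Heiermann's computations of the Harish-Chandra $\mu$-function against the geometric parameter functions of \cite{AMS3} (Table~\ref{tab:2}). These steps really are what yields the isomorphism of Theorem~\ref{thm:1.7}.

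However, you have misread the architecture of the remaining part. Propositions~\ref{prop:3.2} and \ref{prop:4.4} do \emph{not} ``treat the unitary and $\GSpin$ cases''; both are for general spin groups, and the unitary case is dealt with entirely separately in Section~\ref{sec:unitary}. Proposition~\ref{prop:3.2} is the $G^+$-version of the Hecke algebra comparison, and Proposition~\ref{prop:4.4} is the key step that pins down the isomorphism to the claimed canonicity. This last point is the real gap in your proposal: Theorem~\ref{thm:1.7} by itself only determines the isomorphism up to the larger set of operations (1)--(4), which include the ad hoc freedom of twisting $\Gamma_{\mf s}$ by a character and replacing $s_\beta$ by $h_\beta^\vee s_\beta$ when $q_\beta^* = 1$. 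Theorem~\ref{thm:B} asserts canonicity up to just $\mc O(T_{\mf s})^\times$ and $\Out(\mc G)$, and the passage from (1)--(4) to this smaller indeterminacy is precisely the content of Proposition~\ref{prop:4.4}, which uses M\oe glin/Arthur-normalized intertwining operators $J(s_\beta,\rho\times\sigma_-)$ on the $p$-adic side and matches them to the canonical geometric intertwining operators on the Hecke algebra side. Your ``main obstacle'' is misidentified: the $2$-cocycle issue is dispatched easily in the paper (the relevant groups $\langle r_\tau\rangle\cong\Z/2\Z$ have trivial $H^2$), whereas the actual obstacle is normalizing the generators $T_{s_\beta}$ in a way compatible with the local Langlands data, and your proposal offers no mechanism for that. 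Without some analogue of the intertwining-operator normalization you cannot get from ``an isomorphism exists'' to ``the isomorphism is canonical up to inner automorphisms'', which is the essential novelty of Theorem~\ref{thm:B}.
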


We remind the reader that in the case of classical groups over local function fields we need the
mild Hypothesis~\ref{as:1.8} for Theorem~\ref{thm:B} (and hence for most subsequent results).
The Whittaker datum is needed for the canonicity of the above Hecke algebra isomorphism. That is 
a subtle affair, it relies on a normalization of certain intertwining operators in 
Paragraph\autoref {par:intertwining}, which in the end boils down to \cite{Art}.

As a direct consequence of Theorem~\ref{thm:B} and \eqref{eq:1} we find an equivalence of categories
\begin{equation}\label{eq:3}
\Rep (G)^{\mf s} \cong 
\Mod \big( \mc H (\mf s^\vee, q_F^{1/2}) \big),
\end{equation}
which is canonical up to inner automorphisms of $\mc H (\mf s^\vee, q_F^{1/2})$ (and up to the 
action of $\mr{Out}(\mc G)$ when $G^+ \neq G$). The analogous equivalence of categories for $G^+$ 
is canonical up to inner automorphisms of the involved Hecke algebra.

With the Bernstein decompositions \eqref{eq:4} and \eqref{eq:5}, \eqref{eq:3} makes $\Rep (G)$
equivalent to the module category of a direct sum of Hecke algebras.  
In combination with \eqref{eq:2} we obtain:

\begin{thmintro}\label{thm:C}
\textup{(see Theorem\autoref {thm:3.4} and Corollary\autoref {cor:3.5})} \\
Theorem~\ref{thm:A} (for supercuspidal representations) and Theorem~\ref{thm:B} 
induce an injective local Langlands correspondence
\[
\Irr (G) \hookrightarrow \Phi_\enh (G) .
\]
It is canonical (up to the action of $\mr{Out}(\mc G)$ when $G^+ \neq G$) and it sends supercuspidal/
essentially square-integrable/tempered representations to cuspidal/discrete/bounded enhanced
$L$-parameters.

There exists an analogous parametrization of $\Irr (G^+)$, which uses component groups of
$L$-parameters computed in $G^{\vee +}$ and is entirely canonical.
\end{thmintro}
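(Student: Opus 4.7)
The plan is to assemble the injection block by block across the Bernstein decomposition and then glue the pieces. For each inertial class $\mf s$, Theorem~\ref{thm:B} produces an algebra isomorphism $\mc H (\mf s)^{\op} \cong \mc H (\mf s^\vee, q_F^{1/2})$ which, combined with \eqref{eq:1}, gives the categorical equivalence \eqref{eq:3}. Passing to irreducibles and composing with the canonical bijection \eqref{eq:2} specialised at $\mb z = q_F^{1/2}$ yields
\[
\Irr (\Rep (G)^{\mf s}) \;\longleftrightarrow\; \Irr (\mc H (\mf s^\vee, q_F^{1/2})) \;\longleftrightarrow\; \Phi_e (G)^{\mf s^\vee}.
\]
Taking the disjoint union over $\mf s$, and using that Theorem~\ref{thm:1.5} provides a well-defined map $\mf s \mapsto \mf s^\vee$ while distinct Bernstein components of $\Phi_e (G)$ are pairwise disjoint, I obtain the injection $\Irr (G) \hookrightarrow \Phi_e (G)$.

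For canonicity, \eqref{eq:2} is itself canonical and the only ambiguity in the equivalence \eqref{eq:3} consists of conjugation by $\mc O (T_{\mf s})^\times$ together with the $\Out(\mc G)$-action when $G^+ \neq G$. Since inner conjugation acts trivially on isomorphism classes of modules, only the outer ambiguity propagates to $\Irr$, matching the stated indeterminacy. For $G^+$, Theorem~\ref{thm:B} supplies an entirely canonical Hecke algebra isomorphism, and combining it with the $G^{\vee +}$-variant of \eqref{eq:2} (where component groups are computed inside the disconnected dual $G^{\vee +}$) produces the fully canonical parametrization of $\Irr (G^+)$.

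For the three preservation properties I would argue case by case. For a supercuspidal $\pi \in \Irr(G)$, the inertial class has the form $\mf s = [G,\pi]_G$; the associated Hecke algebra is essentially commutative, its irreducibles are $T_{\mf s}$-characters, and via Theorem~\ref{thm:A} combined with the first bullet of Theorem~\ref{thm:B} the matched component $\Phi_e(G)^{\mf s^\vee}$ consists precisely of cuspidal enhanced L-parameters. For temperedness and essential square-integrability I would invoke the $z > 1$ case of \cite{AMS2,AMS3}, which asserts that \eqref{eq:2} sends bounded (resp.\ discrete) enhanced parameters to tempered (resp.\ essentially discrete series) $\mc H (\mf s^\vee, q_F^{1/2})$-modules. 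It then remains to verify that the equivalence \eqref{eq:3} identifies tempered $G$-representations with tempered Hecke modules, and essentially square-integrable $G$-representations with essentially discrete series Hecke modules. The hard part will be exactly this last step: temperedness on the $G$-side is an analytic condition (Casselman's criterion on Jacquet modules), while on the Hecke side it is read off from central characters relative to the explicit root datum of Paragraph~\ref{par:HAL}. To bridge the two, I would exploit the first bullet of Theorem~\ref{thm:B}, which pins down the isomorphism on the maximal commutative subalgebra $\mc O (T_{\mf s})$ through the LLC for supercuspidals and for general linear groups; this identification matches the two notions of positivity, and hence Heiermann's progenerator equivalence preserves the required subclasses, completing the proof of the properties both for $G$ and for $G^+$.
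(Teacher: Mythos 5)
Your proposal assembles the injection exactly as the paper does: Bernstein-block-by-block via \eqref{eq:1}, the Hecke algebra isomorphism from Theorem~\ref{thm:B}, and the canonical bijection \eqref{eq:2}, then verifies the preservation of supercuspidality/temperedness/discreteness by transferring those conditions across the algebra isomorphism, which is precisely the content of Theorem~\ref{thm:3.4} and its proof (with the root-datum compatibility from Proposition~\ref{prop:1.6} and the cited results of \cite{SolComp} and \cite[Theorem 3.18]{AMS3} doing the work you correctly identify as the crux). One small understatement: the ambiguity in \eqref{eq:3} is not only conjugation by $\mc O(T_{\mf s})^\times$ and the $\Out(\mc G)$-action; Theorem~\ref{thm:1.7} also allows twisting by a character of $\Gamma_{\mf s}$ and the $s_\beta \mapsto h_\beta^\vee s_\beta$ replacement, and these genuinely permute irreducibles within an L-packet, so full canonicity requires the further normalization via intertwining operators carried out in Proposition~\ref{prop:4.4} and Theorem~\ref{thm:4.10}.
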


We note that \eqref{eq:3} is much stronger than any results about the parametrization of $\Irr (G)$,
in the sense that it deals with an entire category of representations. Indeed, earlier results about 
Hecke algebras entail that \eqref{eq:3} has various consequences that 
involve reducible representations, see Paragraph~\ref{par:LLC}. Furthermore the equivalence of 
categories \eqref{eq:3} makes it possible to relate $\Rep (G)$ to the complex geometry of the 
space/stack of $L$-parameters, as in \cite{SolKL}.

In Theorem~\ref{thm:C} we do not claim surjectivity of the parametrization map, because for that
we would need surjectivity in Theorem~\ref{thm:A}.b, which we do not know when $F$ is a local
function field. That is in fact the only obstruction: the image of the map in Theorem~\ref{thm:C} is 
the union of all Bernstein components of $\Phi_\enh (G)$ whose underlying cuspidal $L$-parameters can be 
reached via Theorem~\ref{thm:A}. So in all the cases where the surjectivity of Theorem~\ref{thm:A}.b 
has been proven, we also get surjectivity in Theorem~\ref{thm:C}.

Theorem~\ref{thm:C} yields in particular a classification of the discrete series of $G^+$, in terms
of the bounded discrete enhanced $L$-parameters in the image of the parametrization map.
On the other hand, Theorem~\ref{thm:A} also classifies discrete series representations of $G^+$.
For supercuspidal representations these two methods agree, that is a starting point of our setup.
We obtain two independent ways to classify the discrete series in terms of supercuspidal
representations of Levi subgroups: with Hecke algebras via Theorem~\ref{thm:C} and with Jordan
blocks as in \cite{Moe0,MoTa,KiMa}. 

Moreover both methods can be pushed further, to classify all irreducible smooth $G^+$-representations.
Indeed, in Theorem~\ref{thm:C} that comes at the same time as the discrete series (in the
underlying proofs from \cite{AMS2} the discreteness of representations is actually analysed last).
Irreducible tempered $G^+$-representations are classified with endoscopy and Jordan blocks in
\cite{MoTa,MoRe}. The step from tempered representations to all irreducible smooth representations
via the Langlands classification is well-known for $G$ and we show how this technique can be made
to work for $G^+$. With that extension the papers \cite{MoTa,MoRe} also classify $\Irr (G^+)$.

\begin{thmintro}\label{thm:D}
\textup{(see Theorem\autoref {thm:4.12})} \\
Let $G$ be a connected classical group over $F$ and fix a Whittaker datum for the quasi-split
inner form of $G$.
The following two ways to parametrize $\Irr (G^+)$ with enhanced $L$-parameters coincide:
\begin{itemize}
\item with Hecke algebras as in Theorem~\ref{thm:C},
\item with endoscopy and Jordan blocks (as in Theorem~\ref{thm:A}) 
and the Langlands classification.
\end{itemize}
\end{thmintro}

Because the two strategies are very different, it is rather cumbersome to check that they agree.
We do this step by step in Section~\ref{sec:Langlands}, in the following order: completely positive
discrete series, all discrete series, irreducible tempered, all irreducible representations.
The most difficult part concerns the enhancements of $L$-parameters for discrete series
representations. To match those for the two methods in Theorem~\ref{thm:D}, we have to
investigate normalizations of intertwining operators. That also informs us how to make 
Theorem~\ref{thm:B} canonical.\\

A few words about the setup of the paper are in order. As we mentioned at the start of the
introduction, we consider four classes of classical groups. For all classes the proofs of
our results are extremely similar, yet not entirely the same. For symplectic and (special)
orthogonal groups, almost everything that we show about Hecke algebras was known already,
from \cite{Hei2,Hei3} for $p$-adic groups and from \cite{Mou,AMS3} for Langlands parameters.

Instead we focus on general (s)pin groups in Sections\ref{sec:GSpin} and \ref{sec:Hecke}. That
is a little bit more involved because the center $\rZ(G)$ of such a group $G$ is not compact.
The proofs for symplectic and (special) orthogonal groups can be recovered by restricting
from $G^\vee$ to its derived group, where $G^\vee$ denotes the complex reductive group with root 
datum dual to that of $G$. Sections \ref{sec:Moe} and \ref{sec:Langlands} are written
so that they apply equally well to symplectic, (special) orthogonal and general (s)pin groups.

For unitary groups, the necessary changes affect the notations so much that we discuss
them in the separate Section~\ref{sec:unitary}. We check carefully which modifications are
needed to make Sections\ref{sec:GSpin}--\ref{sec:Langlands} work for unitary groups. It turns 
out that for unramified unitary groups some calculations in Paragraph~\ref{par:HAL} have different outcomes, which we record.

\renewcommand{\theequation}{\arabic{section}.\arabic{equation}}
\counterwithin*{equation}{section}

\section{General spin groups}
\label{sec:GSpin}

Let $F$ be a non-archimedean local field with absolute Weil group $\mb W_F$. 
Consider a finite dimensional $F$-vector space $V$ endowed with a symmetric bilinear form. 
The associated general pin group, denoted GPin$(V)$, contains the general spin group 
$\GSpin (V)$ with index two. Both are subgroups of the multiplicative group of the Clifford 
algebra of $V$. For the root datum of $\GSpin (V)$ we refer to \cite[\S 2]{AsSh}. 
Simultaneously we consider the groups $\GSpin (V')$, where $\dim (V') = \dim (V)$ and
$\mr{disc}(V) = \mr{disc}(V')$. The equivalence classes of such groups are naturally in 
bijection with:
\begin{itemize}
\item equivalence classes of symmetric bilinear forms of the same dimension and the 
same discriminant as $V$,
\item pure inner twists of $\SO (V)$,
\item rigid inner twists of $\GSpin (V)$ \cite{Kal}, with respect to the central subgroup
\[
\{\pm 1\} = \ker \big( \mr{Spin}(V) \to \SO (V) \big) = 
\ker \big( \GSpin (V) \to \SO(V) \times F^\times \big) .
\]
\end{itemize}
To align with the other classical groups, we refer to the above groups as the pure inner 
twists of $\GSpin (V)$. Let us list all the possibilities:
\begin{itemize}
\item for $\dim = 2n+1$, the split group $\GSpin_{2n+1}(F)$ of $F$-rank $n$ and one
pure inner twist $\GSpin'_{2n+1}(F)$ of $F$-rank $n \!-\! 1$,
\item for $\dim = 2n$, the split group $\GSpin_{2n}(F)$ of $F$-rank $n$ and one
pure inner twist $\GSpin'_{2n}(F)$ of $F$-rank $n \!-\! 1$,
\item for $\dim = 2n$, the quasi-split group $\GSpin^*_{2n}(F)$ of $F$-rank $n-1$ and one
pure inner twist $\GSpin^{*'}_{2n+1}(F)$, which is also quasi-split.
\end{itemize}
For any of these groups $G$, we write
\begin{equation}\label{eq:G+}
G^+ = \left\{
\begin{array}{ll}
\mr{GPin}(V) & \text{if } \dim V \text{ is even} \\
\GSpin (V) & \text{if } \dim V \text{ is odd}
\end{array}\right. .
\end{equation}
All inner twists share the same Langlands dual group, so for that we have precisely
three possibilities:
\begin{itemize}
\item $\GSpin_{2n+1}^\vee = \GSp_{2n}(\C)$, and since one of the $p$-adic groups is split
we may take ${}^L \GSpin_{2n+1} = \GSp_{2n}(\C)$,
\item $\GSpin_{2n}^\vee = \GSO_{2n}(\C)$, and again one of the $p$-adic groups is split
so we take ${}^L \GSpin_{2n} = \GSO_{2n}(\C)$,
\item $\GSpin_{2n}^{*\vee} = \GSO_{2n}(\C)$, and $\mb W_F$ acts on it via passing to a quotient
$\mb W_F / \mb W_E$ of order two and then conjugation by an element of $\mr{O}_{2n}(\C)
\setminus \SO_{2n}(\C)$. We may take ${}^L \GSpin_{2n}^* = \GO_{2n}(\C)$,
where we remember that every Langlands parameter for $\GSpin_{2n}^* (F)$ sends $\mb W_E$ to
$\GSO_{2n}(\C)$ and $\mb W_F \setminus \mb W_E$ to $\GO_{2n}(\C) \setminus \GSO_{2n} (\C)$.
\end{itemize}
We write ${}^L G_n$ or ${}^L G$ for  ${}^L \GSpin_{2n+1}, {}^L \GSpin_{2n}$ or 
${}^L \GSpin_{2n}^*$. We also write
\[
G^{\vee +} = \left\{
\begin{array}{l}
\GO_{2n}(\C) \\
\GSp_{2n}(\C) 
\end{array}\right. ,\quad
{G^\vee}^+_\der = \left\{
\begin{array}{llll}
\rO_{2n}(\C) & \text{if } \dim V = 2n \\
\Sp_{2n}(\C) & \text{if } \dim V = 2n+1
\end{array}\right. .
\]
Langlands parameters for $G$ are group homomorphisms $\phi : \mb W_F \times \SL_2 (\C) \to {}^L G$,
with the usual requirements as for instance in \cite[Definition 6.4]{AMS1}. Considered up to
$G^\vee$-conjugation, these form the set $\Phi (G)$. Langlands parameters for $G^+$ take values in 
${}^L G$ and are considered up to conjugation by $G^{\vee +}$.

\subsection{Properties of Langlands parameters} \
\label{par:properties}

Let us investigate when a Langlands parameter $\phi$ for $G$ is discrete.
The image of $\phi$ is contained in ${}^L G$, so in $\GSp_{2n}(\C)$ or in $\GO_{2n}(\C)$.
In the former case $\phi$ is an $L$-parameter for $\GSpin_{2n+1}(F)$ or $\GSpin'_{2n+1}(F)$,
in the latter case for a general spin group of even size. We can distinguish two subcases:
\begin{itemize}
\item when im$(\phi) \subset \GSO_{2n}(\C)$, where $\phi$ is an $L$-parameter for
$\GSpin_{2n}(F)$ or $\GSpin'_{2n}(F)$,
\item otherwise there is an index two subgroup $\mb W_E \subset \mb W_F$ such that\\
$\phi (\mb W_E \times \SL_2 (\C)) \subset \GSO_{2n}(\C)$. Then $\phi$ is an $L$-parameter
for a group $\GSpin^*_{2n}(F)$ or $\GSpin^{*'}_{2n}(F)$ which splits over $E$.
\end{itemize}
We suppose that the bilinear form $B_J$ on $\C^{2n}$ from which $G^\vee$ is defined is given by 
a (skew-)symmetric matrix $J \in \GL_{2n}(\C)$. Let $\mu_G^\vee \colon{}^L G \to \C^\times$
be the similitude character, that is
\begin{equation}\label{eq:1.1}
B_J (g v_1, g v_2) = \mu_G^\vee (g) B_J (v_1, v_2) \qquad v_1, v_2 \in \C^{2n}, g \in {}^L G.
\end{equation}
Recall that \eqref{eq:1.1} holds for $g = \phi (w)$ with $w \in \mb W_F \times \SL_2 (\C)$. 
Hence the map
\[
\begin{array}{cccc}
\tilde{B_J} : & \C^{2n} & \to & (\C^{2n})^\vee \\
 & v & \mapsto & [v' \mapsto B_J (v',v) ]
\end{array}
\]
provides an isomorphism of $\mb W_F \times \SL_2 (\C)$-representations 
\begin{equation}\label{eq:1.51}
\phi \isom \phi^\vee \otimes \mu_G^\vee \circ \phi \qquad \text{or equivalently} \qquad
\phi \otimes (\mu_G^\vee \circ \phi)^{-1} \isom \phi^\vee .
\end{equation}
Here $\phi^\vee$ denotes the contragredient of $\phi$. The adjoint map
\[
\tilde{B_J}^\vee : \phi \isom \big( \phi \otimes (\mu_G^\vee \circ \phi)^{-1} \big)^\vee
= \phi^\vee \otimes \mu_G^\vee \circ \phi 
\] 
is also an isomorphism of $\mb W_F \times \SL_2 (\C)$-representations. Suppose that $V_1$
is an irreducible subrepresentation of $(\phi,\C^{2n})$, on which $B_J$ is nondegenerate.
By Schur's lemma there exists $c_1 \in \C^\times$ such that $\tilde{B_J}^\vee |_{V_1} = 
c_1 \tilde{B_J} |_{V_1}$. Then
\[
\tilde{B_J} |_{V_1} = \tilde{B_J}^{\vee \vee} |_{V_1} = c_1 \tilde{B_J}^\vee |_{V_1} =
c_1^2 \tilde{B_J} |_{V_1} ,
\]
so $c_1 \in \{1,-1\}$. This says that $(V_1,B_J)$ has a well-defined sign $c_1$.

Since ${}^L G = \C^\times {}^L G_\der$ and ${}^L G_\der = \Sp_{2n}(\C)$ or
${}^L G_\der \subset\rO_{2n}(\C)$, the decomposition of $(\phi,\C^{2n})$ in irreducible
subrepresentations can be carried out just like for orthogonal or symplectic representations.
For those kinds of representations we use the instructive paper \cite{GGP}. Thus we decompose
\begin{equation}\label{eq:1.2}
(\phi,\C^{2n}) = 
\bigoplus\nolimits_{\psi \in \Irr (\mb W_F \times \SL_2 (\C))} N_\psi \otimes V_\psi ,
\end{equation}
where $V_\psi$ is the space of the representation $\psi$ and $N_\psi$ is the multiplicity space 
(with a trivial action). By \cite[Theorem 8.1]{GGP}
the right hand side of \eqref{eq:1.2} determines $\phi$ up to $G^\vee$-conjugacy, apart from
some exceptional cases in which it is up to $\GO_{2n}(\C)$-conjugacy. Further, by \cite[\S 4]{GGP}
$B_J$ induces bilinear a form on each $N_\psi$ and
\begin{align}\label{eq:1.3}
& \rZ_{{G^\vee}_\der}(\phi) := \rZ_{{G^\vee}_\der}(\phi(\mb W_F\times\SL_2(\C))) = \\
\nonumber \rS \big( \prod\nolimits_{\psi \in I^+} \rO(N_\psi) \otimes & \mr{Id}_{V_\psi} \big) 
\times \prod\nolimits_{\psi \in I^-} \Sp (N_\psi) \otimes \mr{Id}_{V_\psi} \times
\prod\nolimits_{\psi \in I^0} \GL (N_\psi) \otimes \mr{Id}_{V_\psi \oplus V_\psi^\vee} ,
\end{align}
where $\rS(H)$ denotes the subgroup of elements in $H$ with determinant equal to $1$.
Here we abbreviated
\[
\begin{array}{lll}
I^\pm & = & \{ \psi \in \Irr (\mb W_F \times \SL_2 (\C)) \colon \psi \cong \psi^\vee \otimes 
\mu_G^\vee \circ \phi, \mr{sgn}(\psi) = \pm \mr{sgn}({G^\vee}_\der) \} , \\
I^0 & = & \{ \psi \in \Irr (\mb W_F \times \SL_2 (\C)) : \psi \not\cong \psi^\vee \otimes 
\mu_G^\vee \circ \phi \} / (\psi \sim \psi^\vee \otimes 
\mu_G^\vee \circ \phi ) .
\end{array}
\]
If $G^\vee = \GSO_{2n}(\C)$, then we may also consider \eqref{eq:1.3} with 
$G^{\vee+} = \GO_{2n}(\C)$ instead of $G^\vee$. That results in omitting the $S$ from 
the second line of \eqref{eq:1.3}.\\
Recall that $\phi$ is discrete if and only if $\rZ_{G^\vee}(\phi) / \rZ(G^\vee)^{\mb W_F}$ is finite,
which is equivalent to: $\rZ_{{G^\vee}_\der}(\phi)$ is finite. From \eqref{eq:1.3} we see that
that is the case if and only if 
\begin{equation}\label{eq:1.4}
N_\tau = 0 \text{ for } \tau \in I^- \cup I^0 \qquad \text{and} \qquad 
\dim (N_\tau) \leq 1 \text{ for } \tau \in I^+.
\end{equation}
From now on we assume that $\phi$ is discrete. Thus each $\tau \otimes P_a$ has multiplicity
at most one in $\phi$.

Recall that $\SL_2 (\C)$ has a unique irreducible representation $(P_a,\C^a)$ of dimension
$a \in \Z_{>0}$, and that it is self-dual with sign $(-1)^{a-1}$. Let Jord$(\phi)$ be the set of
pairs $(\tau,a) \in \Irr (\mb W_F) \times \Z_{>0}$ for which $\tau \otimes P_a$
occurs in $(\phi,\C^{2n})$. The set Jord$(\phi)$ describes the Jordan decomposition of the unipotent 
element $u_\phi = \phi (1,\matje{1}{1}{0}{1})$: for each $(\tau,a) \in \mr{Jord}(\phi)$, $u_\phi$
has $\dim \tau$ Jordan blocks of size $a$. We abbreviate
\[
\mr{Jord}_\tau (\phi) = \{ a \in \Z_{>0} : (\tau,a) \in \mr{Jord}(\phi) \} .
\]
We define
\[
\begin{array}{lll}
\Irr (\mb W_F)^\pm_\phi & = & \{ \tau \in \Irr (\mb W_F) : \tau \cong \tau^\vee \otimes 
\mu_G^\vee \circ \phi, \mr{sgn}(\tau) = \pm \mr{sgn}({G^\vee}_\der) \} , \\
\Irr (\mb W_F)^0_\phi & = & \{ \tau \in \Irr (\mb W_F) : \tau \not\cong \tau^\vee \otimes 
\mu_G^\vee \circ \phi \} / (\tau \sim \tau^\vee \otimes \mu_G^\vee \circ \phi ) .
\end{array}
\]
Then we can express \eqref{eq:1.2} more precisely as
\begin{equation}\label{eq:1.5}
(\phi,\C^{2n}) = \bigoplus_{\tau \in \Irr (\mb W_F)^+_\phi \hspace{-5mm}} \tau \otimes 
\left( \bigoplus_{a \; \mr{odd}: (\tau,a) \in \mr{Jord}(\phi)} P_a \right) \oplus 
\bigoplus_{\tau \in \Irr (\mb W_F)^-_\phi \hspace{-5mm}} \tau \otimes \left( 
\bigoplus_{a \; \mr{even}: (\tau,a) \in \mr{Jord}(\phi)} P_a \right) .
\end{equation}
The setup with rigid inner forms from \cite{Kal} entails that we should consider 
centralizers of $L$-parameters in
\[
(G / \{ \pm 1 \})^\vee = (\SO (V) \times \GL_1 (F))^\vee = \SO(V)^\vee \times \GL_1 (\C) .
\] 
The factor $\GL_1(\C)$ is central, connected and fixed by $\mb W_F$, so does not influence the
component groups and may be omitted. Therefore we may and will compute component groups of 
$L$-parameters for our pure inner twists of $\GSpin (V)$ in ${G^\vee}_\der = \SO (V)^\vee$, 
which equals $\SO_{2n}(\C)$ or $\Sp_{2n}(\C)$. We put
\begin{equation} \label{eqn:Sphi}
\mc S_\phi = \pi_0 (\rZ_{{G^\vee}_\der} (\phi)) \quad \text{and} \quad
\mc S_\phi^+ = \pi_0 (\rZ_{{G^\vee}^+_\der} (\phi)) .
\end{equation}
and we use the irreducible representations of $\mc S_\phi$ as enhancements of $\phi$. 
From \eqref{eq:1.3} we see that every $(\tau,a) \in \mr{Jord}(\phi)$ contributes a 
generator $z_{\tau,a}$ of order two to $\mc S_\phi^+$. Then $\mc S_\phi^+$ is the 
$\F_2$-vector space with these $z_{\tau,a}$ as basis. Here $z_{\tau,a}$ acts as $-1$ on
$\tau \otimes P_a$ and as 1 on the other summands of \eqref{eq:1.5}. The group $\mc S_\phi$
has index at most two in $\mc S_\phi^+$ and consists of all products of the $z_{\tau,a}$ such 
that the determinant is 1. Thus every element of $\mc S_\phi$ involves an even number of 
$z_{\tau,a}$ with $a \dim \tau$ odd.

A character $\epsilon$ of $\mc S_\phi$ is a $G$-relevant enhancement of $\phi$
if and only if $\epsilon$ restricted to $\rZ({G^\vee}_\der)^{\mb W_F}$ encodes $G$ via the 
Kottwitz isomorphism, i.e. it is quadratic if $G$ is a ``prime'' form (with notation as above
\eqref{eq:G+}) and trivial otherwise. Here the image of $\rZ({G^\vee}_\der)^{\mb W_F}$ in 
$\mc S_\phi$ is generated by
\begin{equation}\label{eq:1.40}
\prod\nolimits_{(\tau,a) \in \mr{Jord}(\phi) :\, a \dim \tau \text{ odd}} z_{\tau,a} ,
\end{equation}
which is an element of order at most two. We refine $\Phi (G)$ and $\Phi (G^+)$ to sets of 
enhanced L-parameters by
\begin{equation}\label{eq:1.80}
\begin{array}{lll}
\Phi_e (G) & = & \{ (\phi,\rho) : \phi \in \Phi (G), \rho \in \Irr (\mc S_\phi)
\text{ is $G$-relevant} \}, \\
\Phi_e (G^+) & = & \{ (\phi,\rho) : \phi \in \Phi (G), \rho \in \Irr (\mc S_\phi^+)
\text{ is $G$-relevant} \}.
\end{array}
\end{equation}
We want to make explicit which enhancements of $\phi$ are cuspidal. Like in \eqref{eq:1.5}
\begin{multline}\label{eq:1.6}
\rZ_{{G^\vee}_\der}(\phi (\mb W_F)) = \rS \Big(
\prod_{\tau \in \Irr (\mb W_F)^+_\phi} \mr{Id}_{V_\tau} \otimes 
\rO \big( \bigoplus_{a \; \mr{odd}: (\tau,a) \in \mr{Jord}(\phi)} \C^a \big) \Big) \times \\
\prod_{\tau \in \Irr (\mb W_F)^-_\phi} \mr{Id}_{V_\tau} \otimes 
\Sp \big( \bigoplus_{a \; \mr{even}: (\tau,a) \in \mr{Jord}(\phi)} \C^a \big) .
\end{multline}
This brings us to the setting of \cite{Moe0,MoTa} and \cite[\S 5.3]{AMS3}. In the latter it is 
checked that $(\phi,\epsilon)$ is cuspidal if and only if the following conditions are met:
\begin{itemize}
\item Jord$(\phi)$ does not have holes, that is, if $(\tau,a) \in \mr{Jord}(\phi)$ and $a>2$,
then also $(\tau,a-2) \in \mr{Jord}(\phi)$,
\item $\epsilon$ is alternated, in the sense that for all $(\tau,a), (\tau,a+2), (\tau',2) 
\in \mr{Jord}(\phi)$: 
\begin{equation}\label{eq:1.7}
\epsilon_\pi (z_{\tau,a} z_{\tau,a+2}) = -1 \quad \text{and} \quad
\epsilon_\pi (z_{\tau',2}) = -1 .
\end{equation}
\end{itemize}

\subsection{Hecke algebras for Langlands parameters} \
\label{par:HAL}

We will work out the Hecke algebras associated in \cite[\S 3]{AMS3} to Bernstein components of
enhanced $L$-parameters for $G$. Although in \cite{AMS3} we used an alternative group $\mc S_\phi$
coming from the simply connected cover of ${G^\vee}_\der$, the constructions work equally well
with $\mc S_\phi$ as above. 

For the convenience of the reader, we start by recalling the definition of the precise kind of 
(extended) affine Hecke algebra that will be involved \cite[Proposition 2.2]{AMS3}. 
We use the following data: 
\begin{itemize}
\item a root datum $\mc R = (R,X,R^\vee,Y)$, endowed with a notion of positive and simple roots in $R$,
\item the Weyl group $W(R)$,
\item $W(R)$-invariant parameter functions $\lambda: R \to \Z_{\geq 0}$ and\\
$\lambda^* : \{ \alpha \in R_\red : \alpha^\vee \in 2 Y\} \to \Z_{\geq 0}$,
\item an invertible indeterminate $\mb z$,
\item (optional) a finite group $\Gamma$ acting on $X$ and $R$, stabilizing $\lambda, \lambda^*$ and 
the set of positive roots.
\end{itemize} 
Then $\mc H (\mc R,\lambda,\lambda^*,\mb z) \rtimes \Gamma$ is the vector space
$\C[X] \otimes \C[W(R)] \otimes \C[\Gamma] \otimes \C[\mb z,\mb z^{-1}]$ with the multiplication rules:
\begin{itemize}
\item $\C[X], \C[\Gamma]$ and $\C[\mb z,\mb z^{-1}]$ are embedded as subalgebras.
\item $\C[\mb z,\mb z^{-1}]$ is central.
\item the $\C[\mb z,\mb z^{-1}]$-span of $W (R)$ is the Iwahori--Hecke algebra $\mc H (W(R),
\mb z^{2 \lambda})$ of $W (R)$ with parameters $\mb z^{2 \lambda (\alpha)}$.
That is, it has a $\C[\mb z,\mb z^{-1}]$-basis $\{ N_w : w \in W(R) \}$ such that
\[
\begin{array}{cccl}
N_w N_v & = & N_{wv} & \text{if } \ell (w) + \ell (v) = \ell (wv), \\
(N_{s_\alpha} + \mb{z}_j^{-\lambda (\alpha)}) (N_{s_\alpha} - \mb{z}_j^{\lambda (\alpha)}) &
= & 0 & \text{if } \alpha \in R_\red \text{ is a simple root.}
\end{array}
\]
\item For $w \in W(R), x \in X$ and $\gamma \in \Gamma$, corresponding to 
$N_\gamma \in \C[\Gamma]$:
\[
N_\gamma N_w \theta_x N_\gamma^{-1} = N_{\gamma w \gamma^{-1}} \theta_{\gamma (x)} ,
\]
\item For a simple root $\alpha \in R_\red$ and $x \in X$ corresponding to $\theta_x \in \C[X]$:
\end{itemize}
\begin{multline*}
\theta_x N_{s_\alpha} - N_{s_\alpha} \theta_{s_\alpha (x)} = \\
\left\{ \begin{array}{ll}
\big( \mb{z}^{\lambda (\alpha)} - \mb{z}^{-\lambda (\alpha)} \big) (\theta_x -
\theta_{s_\alpha (x)}) / (\theta_0 - \theta_{-\alpha}) & \alpha^\vee \notin 2 Y \\
\big( \mb{z}^{\lambda (\alpha)}  - \mb{z}^{-\lambda (\alpha)}  + \theta_{-\alpha}
(\mb{z}^{\lambda^* (\alpha)}  - \mb{z}^{-\lambda^* (\alpha)}) \big) (\theta_x -
\theta_{s_\alpha (x)}) / (\theta_0 - \theta_{\! -2\alpha}) & \alpha^\vee \in 2 Y
\end{array}\right. . 
\end{multline*}
We can specialize $\mb z$ to any $z \in \C^\times$, that gives another extended
affine Hecke algebra
\[
\mc H (\mc R,\lambda,\lambda^*,z) \rtimes \Gamma = 
\big( \mc H (\mc R,\lambda,\lambda^*,\mb z) \rtimes \Gamma \big) / (\mb z - z) .
\]
When $\Gamma$ is trivial or absent, we have the (non-extended) affine Hecke algebras 
$\mc H (\mc R,\lambda,\lambda^*,\mb z)$ and $\mc H (\mc R,\lambda,\lambda^*,z)$.\\

We fix a minimal $F$-Levi subgroup $L_{\mini}$ of $G = \GSpin (V)$ as in \cite[\S 2]{AsSh}.
We call a Levi subgroup $L$ standard if it contains $L_{\mini}$. 
It is shown in \cite[\S 2]{AsSh} that every such $L$ has the form
\begin{equation}\label{eq:1.60}
L = G_{n_-} \times \GL_{n_1}(F) \times \cdots \times \GL_{n_k}(F) ,
\end{equation}
where $n_- \in \N$, $G_{n_-} = \mc G_{n_-}(F) = \GSpin (V_-)$ with disc$(V_-) = \mr{disc}(V)$ and
\[
\dim (V) - \dim (V_-) = 2 (n_1 + \cdots + n_k) .
\]
The embedding $L \to G$ will be described in \eqref{eq:3.59}.

The group ${}^L G_{n_-}$ has the same type as ${}^L G$ (but smaller rank) and
\begin{equation}\label{eq:1.12}
{}^L  L = {}^L G_{n_-} \times \GL_{n_1}(\C) \times \cdots \times \GL_{n_k}(\C) .
\end{equation}
Assume that the (skew-)symmetric matrix $J \in \GL_{2n}(\C)$ defining the bilinear form has the 
following simple shape: the isotropic part is built from matrices $\matje{0}{1}{1}{0}$ or
$\matje{0}{1}{-1}{0}$ placed in rows and columns $j, 2n + 1 - j$.
Then the embedding ${}^L L \to {}^L G$ is given by 
\begin{equation}\label{eq:1.13}
(h_-,h_1,\ldots,h_k) \mapsto \big( h_1,\ldots,h_k, h_-, \mu_G^\vee (h_-) 
J h_k^{-T} J^{-1}, \ldots, \mu_G^\vee (h_-) J h_1^{-T} J^{-1} \big) ,\hspace{-4mm}
\end{equation}
where for an invertible matrix $m$ we denote the inverse-transpose by $m^{-T}={(m^{-1})}^{T}$. 

Consider a Langlands parameter $\phi \colon \mb W_F \times \SL_2 (\C) \to {}^L  L$.
With \eqref{eq:1.12} and \eqref{eq:1.13} we can write
\begin{equation}\label{eq:1.14}
\phi = \bigoplus\nolimits_j \phi_j \oplus \phi_- \oplus 
\bigoplus\nolimits_j \phi_j^\vee \otimes \mu_G^\vee \circ \phi \cong \phi_- \oplus 
\bigoplus\nolimits_j \phi_j \oplus (\phi_j^\vee \otimes \mu_G^\vee \circ \phi ) ,
\end{equation}
where $\phi_- \colon \mb W_F \times \SL_2 (\C) \to {}^L G_{n-}$ and 
$\phi_j \colon \mb W_F \times \SL_2 (\C) \to \GL_{n_j} (\C)$. Clearly $\phi$ is discrete if and only
if $\phi_-$ and all the $\phi_j$ are discrete. Notice that $\mc S_\phi = \mc S_{\phi_-}$
because $\rZ_{\GL_{n_j}(\C)}(\phi_j)$ is connected. An enhancement
\[
\epsilon \in \Irr (\mc S_\phi) = \Irr (\mc S_{\phi_-})
\]
is cuspidal if and only $(\phi_-,\epsilon)$ and all the $(\phi_j,\mr{triv})$ are cuspidal. 
For $(\phi_-,\epsilon)$ cuspidality was analysed after \eqref{eq:1.6}, while for $(\phi_j,\mr{triv})$
it means that $\phi_j$ is trivial on $\SL_2 (\C)$ and $\phi_j$ is irreducible as representation
of $\mb W_F$ \cite[Example 6.11]{AMS1}. 

Let $\Phi_\cusp (L)$ denote the set of $L^\vee$-conjugacy classes of cuspidal enhanced 
$L$-parameters for $L$. The group $Z(L^\vee)^{\mb I_F}$ acts naturally on $\Phi (L), \Phi_e (L)$
and $\Phi_\cusp (L)$ \cite[(100)]{AMS1}. From now on we assume that 
$(\phi,\epsilon) \in \Phi_\cusp (L)$. Following \cite[\S8]{AMS1} this gives a subset 
\[
\mf s_L^\vee = (Z(L^\vee)^{\mb I_F,\circ} \cdot \phi, \epsilon) \subset \Phi_\cusp (L)
\]
and a Bernstein component $\Phi_\enh (G)^{\mf s^\vee} \subset \Phi_\enh (G)$. 
For $\tau \in \Irr (\mb W_F)$, let $\ell_\tau$ be the multiplicity of $\tau$ in $\phi_-$ 
(regarded as $\mb W_F$-representation via the standard embedding ${}^L G_{n_-} \to \GL_{2 n_-}$)
and let $e_\tau$ be the sum of the multiplicities of $\tau$ in the $\GL_{n_j}(\C)$.
Then \eqref{eq:1.14} and \eqref{eq:1.5} become
\begin{align*}
& \phi = \phi_- \oplus \bigoplus_{\tau \in \Irr (\mb W_F)^\pm_\phi} 2 e_\tau \tau \oplus
\bigoplus_{\tau \in \Irr (\mb W_F)^0_\phi} 
e_\tau (\tau \; \oplus \; \tau^\vee \otimes \mu_G^\vee \circ \phi ) , \\
& \phi |_{\mb W_F} = \bigoplus_{\tau \in \Irr (\mb W_F)^\pm_\phi} (2 e_\tau + \ell_\tau) \tau \oplus
\bigoplus_{\tau \in \Irr (\mb W_F)^0_\phi} 
e_\tau (\tau \; \oplus \; \tau^\vee \otimes \mu_G^\vee \circ \phi ) 
\end{align*}
From \eqref{eq:1.3} we deduce
\begin{multline}\label{eq:1.15}
\rZ_{{G^\vee}_\der}(\phi(\mb W_F)) = \rS \Big( \prod_{\tau \in \Irr (\mb W_F)_\phi^+} 
\rO_{2 e_\tau + \ell_\tau}(\C) \otimes \mr{Id}_{V_\tau} \Big) \times \\
\prod_{\tau \in \Irr (\mb W_F)_\phi^-} \Sp_{2 e_\tau + \ell_\tau} (\C) \otimes \mr{Id}_{V_\pi} 
\times \prod_{\tau \in \Irr (\mb W_F)^0_\phi} 
\GL_{e_\tau} (\C) \otimes \mr{Id}_{V_\tau \oplus V_\tau^\vee} .  
\end{multline}
Relevant for the determination of Hecke algebras are furthermore
\begin{align*}
& \rZ_{{G^\vee}_\der}(\phi(\mb W_F)) \cap {}^L  L = \rS \Big( \prod_{\tau \in \Irr (\mb W_F)_\phi^+} 
\rO_{\ell_\tau}(\C) \times (\C^\times)^{e_\tau} \otimes \mr{Id}_{V_\tau} \big) \times \\
& \hspace{25mm} \prod_{\tau \in \Irr (\mb W_F)_\phi^-} \Sp_{\ell_\tau} (\C) \times (\C^\times)^{e_\tau} 
\otimes \mr{Id}_{V_\pi} \times \prod_{\tau \in \Irr (\mb W_F)^0_\phi} (\C^\times)^{e_\tau} \otimes 
\mr{Id}_{V_\tau \oplus V_\tau^\vee}, \\
& G^\vee_\phi = \rZ_{G^\vee} (\mb \phi (\mb W_F)) = \C^\times \rZ_{{G^\vee}_\der}(\phi (\mb W_F)) , \\
& M = G^\vee_\phi \cap {}^L  L = \C^\times \big( \rZ_{{G^\vee}_\der}(\phi(\mb W_F)) \cap {}^L  L \big) ,\\
& T = \rZ(M)^\circ \cong \C^\times \big( \prod_{\tau \in \Irr (\mb W_F)_\phi^\pm} (\C^\times)^{e_\tau}
\times \prod_{\tau \in \Irr (\mb W_F)^0_\phi} (\C^\times)^{e_\tau} \big) = \rZ ({}^L  L).
\end{align*}
If $G^\vee = \GSO_{2n}(\C)$, we may extend it to $G^{\vee +} := \GO_{2n}(\C)$. That means omitting 
the $\rS$ from \eqref{eq:1.15}, which makes the group (at most) a factor 2 bigger, so that it 
decomposes naturally as a product over the involved $\tau$'s:
\begin{equation}\label{eq:1.16}
\rZ_{{G^\vee}_\der^+}(\phi (\mb W_F)) = \prod_{\tau \in \Irr (\mb W_F)^\pm_\phi \cup
\Irr (\mb W_F)^0_\phi} G^\vee_{\phi,\tau} .
\end{equation}
Then the root system $R(G^\vee_\phi, T)$ decomposes canonically as a disjoint union of the root
systems
\[
R_\tau := R (G^\vee_{\phi,\tau} T,T) = R(G^\vee_{\phi,\tau}, T \cap G^\vee_{\phi,\tau}) .
\]
In \cite[\S 1]{AMS3} a graded Hecke algebra is attached to the data $(G^\vee_\phi, M, u_\phi, 
\epsilon)$. The maximal commutative subalgebra is $\mc O (\mr{Lie}(T))$, the root system is 
$R_\tau$ and the para\-meters of the roots come from \cite{Lus-Cusp1}. The root system and the
parameter functions $c : R_\tau \to \Z_{\geq 0}$ (which are used to construct graded Hecke algebras)
were worked out in \cite[\S 5.3]{AMS3}. To write down the parameters uniformly, we define
\begin{equation}\label{eq:1.50}
a_\tau = \left\{ \begin{array}{ll}
\max \mr{Jord}_\tau (\phi_-) & \mr{Jord}_\tau (\phi_-) \neq \emptyset \\
0 & \mr{Jord}_\tau (\phi_-) = \emptyset, \tau \in \Irr (\mb W_F )_\phi^-  \\
-1 & \mr{Jord}_\tau (\phi_-) = \emptyset, \tau \in \Irr (\mb W_F )_\phi^+ \\
\end{array} \right. .
\end{equation}
Notice that now $a_\tau$ is odd for $\tau \in \Irr (\mb W_F)_\phi^+$ and even for 
$\tau \in \Irr (\mb W_F)_\phi^-$.
For $e_\tau = 0$ the torus $T \cap G^\vee_{\phi,\tau}$ reduces to 1, and there are no roots. 
Otherwise we denote a root of length $\sqrt 2$ by $\alpha$ and a root of length 1 by $\beta$. Now 
the root systems and the parameter can be expressed as in Table~\ref{tab:1}.
When $e_\tau = 1$, we must regard $D_{e_\tau}$ and $A_{e_\tau - 1}$ as the empty root system.
Although $\beta$ is not a root in $C_n$ or $D_n$, \cite[\S 3.2]{AMS3} still allows us
to attach a useful parameter $c(\beta)$.

\begin{table}[h]
\caption{Root systems and graded Hecke algebra parameters for $\tau$} \label{tab:1}
$\begin{array}{ccccc}
\tau \in & \ell_\tau & R_\tau & c(\alpha) & c(\beta) \\
\hline
\Irr (\mb W_F)^+_\phi & = 0 & D_{e_\tau} & 2 & 0 = 1 + a_\tau \\
\Irr (\mb W_F)^+_\phi & > 0 & B_{e_\tau} & 2 & 1 + a_\tau \\
\Irr (\mb W_F)^-_\phi & = 0 & C_{e_\tau} & 2 & c(2\beta) = 2, c(\beta) = 1 = 1 + a_\tau \\
\Irr (\mb W_F)^-_\phi & > 0 & BC_{e_\tau} & 2 & 1 + a_\tau \\
\Irr (\mb W_F)^0_\phi & = 0 & A_{e_\tau - 1} & 2 & --
\end{array}$
\end{table}

Recall that the cuspidal supports in the Bernstein component $\Phi_\enh (G)^{\mf s^\vee}$ are precisely the
twists of $(\phi,\epsilon)$ by elements of 
\[
X_\nr ({}^L L) := \rZ( L^\vee \rtimes \mb I_F )^\circ_{\mb W_F} \cong X_\nr(L) .
\]
Here $\mb W_F$ acts trivially on the type $\GL$ factors of ${}^L L$ and on 
$Z (G_{n_-}) \cong \C^\times$, so
\begin{equation}\label{eq:1.26}
X_\nr ({}^L  L) \cong \rZ(G_{n_-}) \times \prod\nolimits_j \C^\times \mr{Id} = \rZ({}^L  L) = T .
\end{equation}
Without changing $\Phi_\enh (G)^{\mf s^\vee}$, we can bring $(\phi, \epsilon)$ in a somewhat better
position:
\begin{itemize}
\item if $\phi_j : \mb W_F \to \GL_{n_j}(\C)$ differs from $\phi_j^\vee \otimes \mu_G^\vee \circ 
\phi$ by $z \in \rZ(\GL_{n_j}(\C)) \cong X_\nr (\GL_{n_j}(F))$, then we replace $\phi_j$ by 
$z^{1/2} \phi_j$, so that
\[
z^{1/2} \phi_j \cong z^{-1/2} \phi_j^\vee \otimes \mu_G^\vee \circ \phi \cong 
(z^{1/2} \phi_j )^\vee \otimes \mu_G^\vee \circ \phi .
\]
\item if $n_i = n_j$ and $\phi_i,\phi_j$ differ by an element of $X_\nr (\GL_{n_i}(F))$, then we
adjust one of them so that actually $\phi_i = \phi_j$,
\item if $\phi_i,\phi_j \in \Irr (\mb W_F)^0_\phi$ and $\phi_i, \phi_j^\vee \otimes \mu_G^\vee 
\circ \phi$ differ by an element of $\GL_{n_i}(\C)$, then we replace $\phi_j$ by $\phi_i$.
\end{itemize}
Let $\tau' \in \Irr (\mb W_F)_\phi^\pm$ be a twist of $\tau \in \Irr (\mb W_F)_\phi^\pm$ by an 
unramified character, such that $\tau'$ is equivalent with $\tau^{'\vee} \otimes \mu_G^\vee \circ \phi$ 
but not with $\tau$. By the above assumptions on $\phi$, $e_{\tau'} = 0$ if $e_\tau > 0$. Still, 
$\ell_\tau$ and $\ell_{\tau'}$ can be nonzero simultaneously. If $e_\tau > 0$ and 
$\ell_\tau < \ell_{\tau'}$ (resp. $\ell_\tau = \ell_{\tau'} = 0$ and $a_\tau = 0 < a_{\tau'}$)
then we change $\phi_j = \tau$ to $\phi_j = \tau'$, so that the roles of $\ell_\tau$ and 
$\ell_{\tau'}$ (resp. of $a_\tau$ and $a_{\tau'}$) are exchanged.

Let $\rZ(L^\vee)^\circ_\phi \subset \rZ(L^\vee)^\circ = T$ be the subgroup of elements $z$ such that
$z \phi$ is equivalent with $\phi$ in $\Phi_\enh (L)^{\mf s^\vee}$. It is finite and the map
\[
\rZ(L^\vee)^\circ / \rZ(L^\vee)^\circ_\phi \to \Phi_\enh (L)^{\mf s^\vee} :
z \mapsto (z\phi,\epsilon)
\]
is a bijection. The affine Hecke algebra we are constructing has the underlying complex torus
\begin{equation}\label{eq:1.48}
T_{\mf s^\vee} := \rZ(L^\vee)^\circ / \rZ(L^\vee)^\circ_\phi = 
T \big/ \prod\nolimits_j Z (\GL_{n_j}(\C))_{\phi_j} .
\end{equation}
Let $t_\tau$ be the torsion number of $\tau \in \Irr (\mb W_F)$, that is, the order of the
group $\rZ(\GL_{d_\tau}(\C))_\tau$. Then $t_\tau$ is also the number of irreducible constituents
$\theta$ of $\mr{Res}^{\mb W_F}_{\mb I_F} \tau$. We need to distinguish two cases:
\begin{itemize}
\item[(i)] $\theta \cong \theta^\vee \otimes \mu_G^\vee \circ \phi$. Then the same goes for all 
constituents of $\mr{Res}^{\mb W_F}_{\mb I_F} \tau$, because all those are in one orbit for $\mb W_F$.
The proof of \cite[Proposition 4.10.a]{SolParam} (which concerns self-dual representations of
$\mb W_F$) applies and shows that $\tau$ and $\tau'$ have the same sign.
\item[(ii)] $\theta \not\cong \theta^\vee \otimes \mu_G^\vee \circ \phi$ for all eligible $\theta$. 
Again the proof of \cite[Proposition 4.10.a]{SolParam} applies, now it shows that $\tau$ and 
$\tau'$ have different signs.
\end{itemize}
According to these two cases, we divise a new partition of $\Irr (\mb W_F)_\phi^\pm$:
\begin{itemize}
\item $\Irr (\mb W_F)^{++}$ is the set of all $\tau \in \Irr (\mb W_F)_\phi^+$ in case (i) above,
modulo the relation $\tau \sim \tau'$;
\item $\Irr (\mb W_F)^{--}$ is defined in the same way, only starting from $\Irr (\mb W_F )_\phi^-$;
\item $\Irr (\mb W_F)^{+-}$ is the set of all $\tau \in \Irr (\mb W_F)_\phi^{\pm}$ in case (ii) above,
modulo the relation $\tau \sim \tau'$;
\item $\Irr' (\mb W_F)_\phi^\pm = \Irr (\mb W_F)^{++} \cup \Irr (\mb W_F)^{--} \cup \Irr (\mb W_F)^{+-} =
\Irr (\mb W_F)_\phi^\pm / (\tau \sim \tau')$.
\end{itemize}
A computation like for \eqref{eq:1.15} yields
\begin{align}
\nonumber \rZ_{{G^\vee}_\der}(\phi (\mb I_F)) = & \; \rS \Big( \prod_{\tau \in \Irr (\mb W_F)_\phi^{++}} 
\rO_{2 e_\tau + \ell_\tau + \ell_{\tau'}}(\C)^{t_\tau} \Big) \times \prod_{\tau \in \Irr 
(\mb W_F)_\phi^{--}} \Sp_{2 e_\tau + \ell_\tau + \ell_{\tau'}} (\C)^{t_\tau} \\
\label{eq:1.18} & \; \times \prod_{\tau \in \Irr (\mb W_F)_\phi^{+-}} 
\GL_{2 e_\tau + \ell_\tau + \ell_{\tau'}}(\C)^{t_\tau /2} 
\times \prod_{\tau \in \Irr (\mb W_F)^0_\phi} \GL_{e_\tau} (\C)^{t_\tau}  .  
\end{align}
Analogous to \eqref{eq:1.16} we decompose
\begin{equation}\label{eq:1.19}
\begin{split}
& \rZ_{{G^\vee}_\der^+}(\phi (\mb I_F)) = \prod\nolimits_\tau G^\vee_{\phi (\mb I_F), \tau} ,\\
& T_{\mf s^\vee} = \C^\times \underset{\{1,-1\}}{\times} \prod\nolimits_\tau T_{\mf s^\vee,\tau} =
\C^\times \underset{\{1,-1\}}{\times}  \prod\nolimits_\tau \big( \C^\times / 
\rZ( \GL_{d_\tau}(\C) )_\tau \big)^{e_\tau} , \\
& X^* (T_{\mf s^\vee}) \subset \Z \oplus \bigoplus\nolimits_\tau X^* (T_{\mf s^\vee,\tau}) \cong
\Z \oplus \bigoplus\nolimits_\tau \Z^{e_\tau} .
\end{split}
\end{equation}
In each of the above cases, the product or sum runs over $\Irr' (\mb W_F)^\pm_\phi \cup 
\Irr (\mb W_F)^0_\phi$. For comparison with \cite{AMS3} we record the group
\begin{equation} \label{eqn:J}
J = \rZ_{G^\vee}(\phi (\mb I_F)) = \C^\times \rZ_{{G^\vee}_\der}(\phi (\mb I_F)) .
\end{equation}
The root system of $J^\circ$ with respect to the (possibly non-maximal) torus $T$ splits naturally
as a disjoint union of root systems $R(G^\vee_{\phi (\mb I_F),\tau}T ,T)$, indexed as in 
\eqref{eq:1.19}. We note that by the above assumptions on $\tau$ and $\tau'$ we have $\ell_\tau 
\geq \ell_{\tau'} = 0$ and if $\ell_\tau = 0$, then $\ell_{\tau'} = 0$ and $a_\tau \geq a_{\tau'}$. 
With Table~\ref{tab:1} at hand, one checks readily that 
\begin{equation}\label{eq:1.49}
R(G^\vee_{\phi (\mb I_F),\tau}T ,T) = R(G^\vee_{\phi,\tau}T ,T)
\end{equation}
in all cases. (Only the dimensions of the root subspaces for $G^\vee_{\phi (\mb I_F),\tau}$ are 
higher than for $G_{\phi,\tau}^\vee$, namely $t_\tau$ times higher in all but ones cases.) 
In view of \cite[Proposition 3.9]{AMS3}, this means that $(\phi,\epsilon)$ is a good 
basepoint of $\Phi_\enh (L)^{\mf s^\vee}$.

Now the roots for the Hecke algebra that we are after can be found with \cite[Definition 3.11]{AMS3}.
The reduced roots $\alpha \in R(G^\vee_{\phi (\mb I_F),\tau}T ,T)$ need to be scaled by a certain
factor $m_\alpha \in \N$, which we compute next. Let $B_J \supset T_J$ be a $\phi (\Fr_F)$-stable 
Borel subgroup and maximal torus of $J^\circ$, such that $T_J^{\phi (\Fr_F)} \supset T$. A natural
choice for $T_J$ comes from the standard maximal tori $T_{J,\tau}$ in $G^\vee_{\phi (\mb I_F),\tau}$:
\begin{equation}\label{eq:1.47}
T_J = \C^\times \Big( \prod_{\tau \in \Irr (\mb W_F)_\phi^{++} \cup \Irr (\mb W_F)_\phi^{--}}
T_{J,\tau}^{t_\tau} \times \prod_{\tau \in \Irr (\mb W_F)_\phi^{+-}} T_{J,\tau}^{t_\tau / 2}
\times \prod_{\tau \in \Irr (\mb W_F)_\phi^{0}} T_{J,\tau}^{t_\tau} \Big).
\end{equation}
We see that $R(G^\vee_{\phi (\mb I_F),\tau}, T_J)$ has $t_\tau$ irreducible components, 
unless $\tau \in \Irr (\mb W_F)_\phi^{+-}$, then there are $t_\tau /2$. Following 
\cite[Definition 3.11]{AMS3}, $m_\alpha$ equals $t_\tau$ (or $t_\tau / 2$ for 
$\tau \in \Irr (\mb W_F)_\phi^{+-}$) times a number $m'_\alpha$ which is $m_\alpha$ for 
$\mr{Res}^{\mb W_F}_{\mb W_E} \phi$ where $E/F$ is the unramified extension of degree $t_\tau$ (or 
$t_\tau / 2$ for $\tau \in \Irr (\mb W_F)_\phi^{+-}$). By definition $m'_\alpha$ is the smallest
number such that $\ker (m'_\alpha \alpha)$ contains all $t \in T$ for which 
$t \mr{Res}^{\mb W_F}_{\mb W_E} \phi$ is equivalent with $\mr{Res}^{\mb W_F}_{\mb W_E} \phi$. 

The group of $t \in T$ with $t \phi \cong \phi$ factors as a product indexed by all possible $\tau$, 
and the contribution from one $\tau$ consists of $t_\tau$ unramified characters of $\mb W_F$. But 
this group of unramified characters becomes trivial if we pass from $\mb W_F$ to the Weil group of
the degree $t_\tau$ unramified extension of $F$. Hence $m'_\alpha = 1$, unless maybe when
$\tau \in \Irr (\mb W_F)_\phi^{+-}$. In the latter situation we usually have $m'_\alpha = 2$, 
because $\ker (m'_\alpha \alpha)$ has to contain an element $t \in (1 - \phi (\Fr_F )^{t_\tau /2}) 
T_{J,\tau}$ with $\alpha (t) = -1$. The only exception occurs when $\ell_\tau = 0$ and 
$\alpha \in C_{e_\tau}$ is long, then $m'_\alpha = 1$.
We conclude that $m_\alpha = t_\tau$ in all cases, except when $\tau \in \Irr (\mb W_F)_\phi^{+-},
\ell_\tau = 0$ and $\alpha \in C_{e_\tau}$ is long, then $m_\alpha = t_\tau / 2$.

Finally, we are ready to define the root datum for our affine Hecke algebra:
\begin{equation}\label{eq:1.30}
\begin{aligned}
& \mc R_{\mf s^\vee} = \big( R_{\mf s^\vee}, X^* (T_{\mf s^\vee}), 
R_{\mf s^\vee}^\vee, X_* (T_{\mf s^\vee}) \big), \\
& \text{where }R_{\mf s^\vee} = \{ m_\alpha \alpha : \alpha \in R(G^\vee_\phi ,T)_\red \} .
\end{aligned}
\end{equation}
Here $R_{\mf s^\vee}$ is the disjoint union of root subsystems 
\[
R_{\mf s^\vee,\tau} = \{ m_\alpha \alpha : \alpha \in R(G^\vee_{\phi,\tau} T,T)_\red \} .
\]
Notice that $X^* (T_{\mf s^\vee,\tau})$ arises from the part of $X^* (T)$ 
associated to $\tau$ by multiplication with $t_\tau$, where $t_\tau = m_\alpha$ for most $\alpha \in 
R (G^\vee_{\phi,\tau} T,T)$. The multiplication rules in our affine Hecke algebra are determined by 
parameter functions $\lambda, \lambda^* \colon R_{\mf s^\vee} \to \Z_{\geq 0}$, which come from 
\cite[Proposition 3.14]{AMS3}. The outcome of those constructions is summarized in \cite[\S 5.3]{AMS3}:
\begin{itemize}
\item For $\alpha \in R_{\tau,\red}$ a short root in a type $B$ root system, $t_\tau = m_\alpha$,
$c(\alpha) = a_\tau + 1, c^* (\alpha) = a_{\tau'} + 1$ and
\[
\lambda (\alpha) = t_\tau (a_\tau + a_{\tau'} + 2)/2, \qquad
\lambda^* (\alpha) = t_\tau (a_\tau - a_{\tau'}) / 2 .
\]
We note that $\lambda^* (\alpha) \geq 0$ because $\ell_\tau \geq \ell_{\tau'}$.
\item For $\alpha \in R_{\tau,\red}, \tau \in \Irr (\mb W_F)_\phi^{+-}$, $\ell_\tau = 0$, 
$\alpha$ a long root of a type $C$ root system: $c(\alpha) = 2$ and 
\[
\lambda (\alpha) = \lambda^* (\alpha) = m_\alpha = t_\tau / 2 .
\]
\item For all other $\alpha \in R_{\tau,\red}$: $c(\alpha) = 2$ and 
\[
\lambda (\alpha) = \lambda^* (\alpha) = m_\alpha = t_\tau .
\]
\end{itemize}
We note that the operation $\alpha \mapsto m_\alpha \alpha$ preserves the type of the root systems
$R_{\tau,\red}$ from Table~\ref{tab:1}, except that in the case $\tau \in \Irr (\mb W_F)_\phi^{+-}, 
\ell_\tau + \ell_{\tau'} = 0$ type $C_{e_\tau}$ is turned into $B_{e_\tau}$.
 
We also need to determine $W_{\mf s^\vee}$, the stabilizer of $\mf s_L^\vee$ in 
\begin{equation}\label{eq:1.20}
N_{G^\vee}(L^\vee \rtimes \mb W_F) / L^\vee = N_{G^\vee}(L^\vee) / L^\vee.
\end{equation}
Recall the embedding ${}^L L \to {}^L G$ from \eqref{eq:1.13}. For each $j$ the group
$\rN_{G^{\vee +}}(L^\vee)$ possesses an element that exchanges $h_j$ and $\mu_G^\vee (h_-) J h_j^{-T}
J^{-1}$. In terms of representations of $\mb W_F$ (via $\phi$), this 
\begin{equation}\label{eq:1.25}
\text{exchanges } \tau \text{ and } \tau^\vee \otimes \mu_G^\vee \circ \phi .
\end{equation}
Further $\rN_{G^{\vee +}}(L^\vee)$ contains elements that permute the factors $\GL_{n_j}(\C)$ of the
same size. It follows that $\rN_{G^{\vee +}}(L^\vee) / L^\vee$ is isomorphic with a direct product of
Weyl groups of type $B_{e_N}$, where $e_N$ counts the number of $j$'s with $n_j = N$.
The group \eqref{eq:1.20} has index at most two in $\rN_{G^{\vee +}}(L^\vee) / L^\vee$, which comes from
the difference between $\GO_{2n}(\C)$ and $\GSO_{2n}(\C)$.

The group $W_{\mf s^\vee}$ can be represented with elements that normalize $M$ and $T$ and 
centralize $\phi (\mb I_F \times \SL_2 (\C))$, so in particular elements of $J$. Further 
$W_{\mf s^\vee}$ contains $W(R_{\mf s^\vee}) = W(J^\circ,T)$ as a normal subgroup. Fix a standard
Borel subgroup $B^\vee$ of $G^\vee$. That determines a Borel subgroup $B^J$ of $J^\circ$, and hence
a system of positive roots in $R(J^\circ,T)$ and in $R_{\mf s^\vee}$. Let $\Gamma_{\mf s^\vee}$ be
the subgroup of $W_{\mf s^\vee}$ that stabilizes this positive system of roots. By standard results
about finite root system and Weyl groups
\begin{equation}\label{eq:1.29}
W_{\mf s^\vee} = W(R_{\mf s^\vee}) \rtimes \Gamma_{\mf s^\vee} . 
\end{equation}
Let us determine $\Gamma_{\mf s^\vee}$ in terms of the action of $ N_{G^\vee}(L^\vee) / L^\vee$
on the type GL factors of $L^\vee$ and on the tensor factors of $\phi$ (as described above). The
$\tau \in \Irr (\mb W_F)$ with $e_\tau = 0$ do not contribute. If $e_\tau > 0$, then 
$e_{\tau \otimes \chi} = 0$ for every unramified twist $\tau \otimes \chi$ which is not isomorphic to
$\tau$ by our normalization of $\phi$. Hence every element of $\rN_{G^{\vee +}}(L^\vee)$ that stabilizes 
$\mf s_L^\vee$ must already stabilize $\phi$. In other words, $W_{\mf s^\vee}$ equals the
stabilizer of $\phi$ in $\rN_{G^\vee}(L^\vee) / L^\vee$. Thus we can represent $W_{\mf s^\vee}$ with
elements of $\rZ_{{G^\vee}_\der}(\phi)$ that normalize $T$. Let $W_{\mf s^\vee}^+$ and 
$\Gamma_{\mf s^\vee}^+$ be the versions of $W_{\mf s^\vee}$ and $\Gamma_{\mf s^\vee}$ for 
$G^{\vee +}$. From \eqref{eq:1.15} we see that 
\begin{equation}\label{eq:1.33}
W_{\mf s^\vee}^+ = \prod_\tau W^+_{\mf s^\vee,\tau} \cong \prod_{\tau \in \Irr' (\mb W_F)_\phi^\pm} 
W(B_{e_\tau}) \times \prod_{\tau \in \Irr (\mb W_F)^\circ_\phi} W (A_{e_\tau - 1}) .
\end{equation}
Comparing with Table~\ref{tab:1}, we find that 
\begin{equation}\label{eq:1.21}
\Gamma_{\mf s^\vee}^+\cong \prod_{\tau \in \Irr' (\mb W_F)^+_\phi : \ell_\tau = 0} \hspace{-7mm}
W(B_{e_\tau}) / W(D_{e_\tau}) \cong \prod_{\tau \in \Irr' (\mb W_F)^+_\phi : \ell_\tau = 0}
\hspace{-7mm} \pi_0 \big(\rO_{2 e_\tau}(\C) \otimes \mr{Id}_{V_\tau} \big) .
\end{equation}
In \eqref{eq:1.21} every $\tau$ contributes a factor 
\[
\Gamma_{\mf s^\vee,\tau}^+ = \langle r_\tau \rangle \cong \Z / 2 \Z
\]
to $\Gamma_{\mf s^\vee}^+$. For $\tau$ not appearing in \eqref{eq:1.21}, we may put
$\Gamma_{\mf s^\vee,\tau}^+ = 1$.

When $\dim \tau$ is even, $\det (r_\tau) = 1$ and when 
$\dim \tau$ is odd, $\det (r_\tau) = -1$. Hence the $S$ in \eqref{eq:1.15} does not put any condition
on the $r_\tau$ with $\dim \tau$ even. If there exists a $\tau \in \Irr (\mb W_F)^+_\phi$ with
$\ell_\tau > 0$, then we can use $\rO_{\ell_\tau}(\C)$ to make the determinant of a product of
$r_\tau$'s equal to 1. From \cite[\S 4.1]{Mou} we know that this leaves just two possibilities
for $\Gamma_{\mf s^\vee}$:
\begin{itemize}
\item $\Gamma_{\mf s^\vee} = \prod_{\tau \in \Irr' (\mb W_F)^+_\phi : \ell_\tau = 0}
\langle r_\tau \rangle$,
\item if $\mc G$ is a form of $\GSpin_{2n}$ and $\mc L \cong \prod_j \GL_{n_j}$ with
$n_j \in \Z_{>0}$, then 
\[
\Gamma_{\mf s^\vee} = \prod_{\tau \in \Irr' (\mb W_F)^+_\phi : \ell_\tau = 0, \dim \tau \: \mr{even}}
\langle r_\tau \rangle \times \rS \Big( \prod_{\tau \in \Irr' (\mb W_F)^+_\phi : 
\ell_\tau = 0, \dim \tau \: \mr{odd}} \langle r_\tau \rangle \Big) ,
\]
where $S$ denotes the subgroup of elements with determinant 1.
\end{itemize}
Conceivably our affine Hecke algebra could contain the span of $\Gamma_{\mf s^\vee}$ as a twisted
group algebra. But here the 2-cocycle of $W_{\mf s^\vee}$ involved in the Hecke algebra can be computed 
for each $\tau$ separately, and $\langle r_\tau \rangle \cong \Z / 2 \Z$ only has trivial 2-cocycles.

Let us summarise our findings. From \eqref{eq:1.19} we know that $X^* (T_{\mf s^\vee})$ has index 
two in $\Z \oplus \bigoplus_\tau \Z^{e_\tau}$, where $\Z^{e_\tau} \cong X^* (T_{\mf s^\vee,\tau})$. If we
replace $X^* (T_{\mf s^\vee})$ and $X_* (T_{\mf s^\vee})$ in $\mc R_{\mf s^\vee}$ by 
$\oplus_\tau \Z^{e_\tau}$, we get a new root datum $\mc R_{\mf s^\vee,\der}$ that decomposes 
naturally. More precisely, the root datum $\mc R_{\mf s^\vee,\der}$, extended with the finite group 
$\Gamma_{\mf s^\vee}^+$ acting on it, is a direct sum of such extended root data, where the product 
is indexed by $\tau \in \Irr' (\mb W_F )_\phi^\pm \cup \Irr (\mb W_F )_\phi^0$. For each such $
\tau$ the data are (with $\alpha$ a root of length $\sqrt 2$ and $\beta$ a root of another length)
are collected in Table~\ref{tab:2}. Recall from \eqref{eq:1.50} that $a_\tau$ is odd for 
$\tau \in \Irr (\mb W_F)_\phi^+$ and even for $\tau \in \Irr (\mb W_F)_\phi^-$.
 
\begin{table}[h]
\caption{Data from $\mc R_{\mf s^\vee}$ for each $\tau$}\label{tab:2}
$\begin{array}{cccccccc}
a_\tau & a_{\tau'} & X^* (T_{\mf s^\vee,\tau}) \!\! & R_{\mf s^\vee,\tau} & \lambda(\alpha) & 
\lambda(\beta) & \lambda^* (\beta) & \Gamma_{\mf s^\vee}^+ \\
\hline
-1 & -1 & \Z^{e_\tau} & D_{e_\tau} & t_\tau & -- & -- & \hspace{-5mm} \mr{Out} (D_{e_\tau}) \\
0 & -1 & \Z^{e_\tau} & B_{e_\tau} & t_\tau & t_\tau /2 & t_\tau / 2 & 1 \\
0 & 0 & \Z^{e_\tau} & C_{e_\tau} & t_\tau & t_\tau & t_\tau & 1 \\
\geq 1 & -1 & \Z^{e_\tau} & B_{e_\tau} & t_\tau & t_\tau (a_\tau + 1)/2 & t_\tau ( a_\tau + 1) / 2 & 1 \\
\geq 1 & \geq 0 & \Z^{e_\tau} & B_{e_\tau} & t_\tau & t_\tau (a_\tau + a_{\tau'} + 2)/2 & 
t_\tau ( a_\tau - a_{\tau'}) / 2 & 1 \\
\multicolumn{2}{c}{\Irr (\mb W_F)^0_\phi} & \Z^{e_\tau} & A_{e_\tau - 1} & t_\tau & -- & -- & 1
\end{array}$
\end{table}
Here the second, third and fourth lines can be regarded as special cases of the fifth line. 
We write them down nevertheless, because they arise from different lines in Table~\ref{tab:1}.
With Table~\ref{tab:2} and \eqref{eq:1.19} we can finally make the affine Hecke algebra 
associated in \cite{AMS3} to $\mf s^\vee$ (and a parameter $z \in \C^\times$) explicit:
\begin{equation}\label{eq:1.22}
\mc H (\mf s^\vee, z) = \mc H (\mc R_{\mf s^\vee}, \lambda, \lambda^*, z) \rtimes \Gamma_{\mf s^\vee},
\end{equation} 
where $\Gamma_{\mf s^\vee}$ acts on $\mc H (\mc R_{\mf s^\vee}, \lambda, \lambda^*, z)$ via
automorphisms of $\mc R_{\mf s^\vee}$.

\section{M{\oe}glin's classification of discrete series representations} 
\label{sec:Moe}

Arthur famously proved the local Langlands correspondence for symplectic and quasi-split (special)
orthogonal groups over $p$-adic fields \cite{Art}. An analogue for quasi-split unitary groups was 
announced in \cite{Mok} and proven (for all unitary groups) in \cite{KMSW}. As explained in 
\cite{Moe3,MoRe}, Arthur's endoscopic methods can also be applied to (special) orthogonal groups
and general spin groups that are not necessarily quasi-split. In principle that should yield local 
Langlands correspondences for all classical groups over $p$-adic fields. However, not all arguments
have been worked out in detail. For classical groups over local function fields far less is known,
the notable exception being \cite{GaVa}. We address that in Paragraph~\ref{par:close}. Here we make 
M{\oe}glin 's parametrization of discrete series representations \cite{Moe0,MoTa,Moe2,Moe3} more explicit. 
 
Let $G = G_n = \mc G (F)$ be a symplectic group, a special orthogonal group or a general
spin group. When $G$ is an even special orthogonal group or an even 
general spin group, we denote by $G^+$ the associated orthogonal or general pin group, as 
in \eqref{eq:G+}. In the other cases $G^+$ means just $G$. Let $\rZ(G)_s$ be the maximal $F$-split central
torus in $G$. It is isomorphic to $F^\times$ for general spin groups and trivial in the other cases.

We say that an irreducible smooth $G$-representation belongs to the discrete series if it is 
square-integrable modulo centre. More explicitly, that means that $\pi$ has a unitary central 
character and its restriction to the derived group of $G$ is square-integrable. 

The group $\GL_m (F) \times G_n$ is a Levi subgroup of a group $G_{n+m}$ of the same kind as $G_n$ but 
of rank $m$ higher. There is a parabolic induction functor
\[
\times \colon\Rep (\GL_m (F)) \times \Rep (G_n) \to \Rep (G_{n+m}) , 
\]
which up to semisimplification does not depend on the choice of a parabolic subgroup of $G_{n+m}$
with Levi factor $\GL_m (F) \times G_n$. Similarly there  is a parabolic 
induction functor
\[
\times \colon\Rep (\GL_m (F)) \times \Rep (G_n^+) \to \Rep (G^+_{n+m}) . 
\]
Let $\rho \in \Irr (\GL_{d_\rho}(F))$ be unitary and supercuspidal, for some $d_\rho$. For an integer 
$a \geq 1$ we can form the generalized Steinberg representation $\delta (\rho,a) \in \Irr (\GL_m (F))$ 
with $m = d_\rho a$. Take $\pi$ in the discrete series of $G_n^+$ and let $\nu_\pi$ be the character by 
which $\rZ(G)_s$ acts on $\pi$. One says that $(\rho,a)$ lies in the 
Jordan block of $\pi$ if $\delta (\rho,a) \times \pi$ is irreducible but there exists $a' \in a + 2 \Z$ 
such that $\delta (\rho,a') \times \pi$ is reducible. We denote the set of all such pairs $(\rho,a)$ 
by Jord$(\pi)$. That reducibility is only possible if the nontrivial element 
\[
s_\alpha \in N_{G^+_{n + d_\rho a}} \big( \GL_{d_\rho a}(F) \times G_n^+ \big) \big/ 
\big( \GL_{d_\rho a}(F) \times G_n^+ \big),
\] 
see \eqref{eq:1.23}, stabilizes $\delta (\rho,a') \boxtimes \pi$ up to an unramified character.
That in turn implies that the version of $s_\alpha$ with $a=1$ stabilizes $\rho$, or more explicitly 
\begin{equation}\label{eq:1.44}
\rho \cong \rho^\vee \otimes \nu_\pi .
\end{equation}
To Jord$(\pi)$ one associates a finite group $S_\pi$, the $\mh F_2$-vector space with basis
$\{ z_{\rho,a} : (\rho,a) \in \mr{Jord}(\pi) \}$. Via endoscopy, $\pi$ determines a character
$\epsilon_\pi : S_\pi \to \{1,-1\}$, see \cite[Theorem 2.2.1]{Art} and \cite[\S 2.1]{MoRe}.
This requires a Whittaker datum for the quasi-split inner form of $G$, which we will use as input.
Alternatively, $\epsilon_\pi$ can be defined almost entirely using parabolic induction, 
see \cite[p. 147--148]{Moe0} or Paragraphs~\ref{par:Sc}--\ref{par:intertwining}. 

\begin{thm} \textup{[M\oe glin]} \label{thm:1.1} \\  
Let $F$ be a $p$-adic field and consider $\pi$ in the discrete series of $G^+$. 
\enuma{
\item $\Jord(\pi)$ has image in ${}^L G$, by which we mean that the Langlands parameter of 
\[
\mathlarger{\mathlarger{\boxtimes}}_{(\rho,a) \in \mr{Jord}(\pi)} \delta (\rho,a) \in \Irr
\big( \prod\nolimits_{(\rho,a) \in \mr{Jord}(\pi)} \GL_{d_\rho a}(F) \big)
\] 
factors through ${}^L G$.
\item $\Jord(\pi)$ determines precisely the $L$-packet containing $\pi$.
\item When $G \neq G^+$, the restriction of $\pi$ to $G$ is reducible if and only if $d_\rho a$ 
is even for all $(\rho,a) \in \Jord(\pi)$. 
\item Fix a Whittaker datum for the quasi-split inner form of $G$. That and the above determine 
an injection from the discrete series of $G^+$ to the set of pairs
$(\Jord,\epsilon)$ (up to $G^{\vee+}$-conjugacy) for which $\Jord$ has image in ${}^L G$ and 
$\epsilon$ is $G$-relevant, as explained around \eqref{eq:1.40}.
\item $\pi$ is supercuspidal if and only if $\Jord(\pi)$ does not have holes and $\epsilon_\pi$
is alternated in the sense of \eqref{eq:1.7}.
}
\end{thm}

In fact there should be a bijection in part (d), but for our purposes an injection suffices.
Theorem\autoref {thm:1.1}.a entails in particular
\begin{equation}\label{eq:sizeJord}
\sum\nolimits_{(\rho,a) \in \mr{Jord}(\pi)} d_\rho a = \text{size of } G^\vee ,
\end{equation}
where the size of an $N \times N$-matrix is $N$.
Parts (a) and (b) of Theorem\autoref {thm:1.1} are in proven in \cite[\S 2.2--2.5]{Moe2}. When in 
addition $G$ is quasi-split, parts (c) and (d) are shown in \cite[\S 7.1]{Moe3}. Theorem 
\autoref{thm:1.1}.c--d is stated for all our $G^+$ in \cite[\S 2.5]{Moe2}, attributed to Arthur 
\cite{Art}. Later this was worked out for non-quasi-split groups in \cite{MoRe}. We note that 
these sources do include surjectivity in part (d). Part (e) was shown in \cite[Theorem 2.5.1]{Moe2}. 

For the main results in this paper it suffices to know that Theorem\autoref {thm:1.1} holds for 
supercuspidal representations. We point out that unfortunately the proof of those cases of 
Theorem\autoref {thm:1.1} is not yet entirely complete (except when $G$ has very small rank). 
Arthur's book \cite{Art} relies on certain papers that were announced but have not yet 
appeared. Meanwhile most of this has been settled in \cite{MoWa, AGIKMS}, and it is expected 
that the remaining gaps in \cite{Art} will be fixed soon.
The paper \cite{MoRe} uses \cite{Art}, and also leaves some other details to be worked out.

On the other hand, the part of Theorem\autoref {thm:1.1} that classifies discrete series representations
in terms of supercuspidal representations is documented much better. Besides the above references,
it is also treated in \cite{Moe0,MoTa,KiMa,Xu}. We describe this explicitly in Paragraph~\ref{par:Sc}.

\subsection{The method of close local fields} \
\label{par:close} 

To goal of this paragraph is to deduce instances of Theorem\autoref {thm:1.1} for groups over local
function fields (for which very little is in the literature) from Theorem\autoref {thm:1.1} for
groups over $p$-adic fields. To this end we employ the method of close 
fields, a general method to transfer statements from a group over one
local field to the same group over an another local field, provided these fields look sufficiently
similar. Let $F$ be a local field of positive characteristic and let $F'$ be a local field of 
characteristic zero. From the classification of classical groups we see that we can define any 
algebraic group $\mc G = \mc G_n$ as in Section~\ref{sec:Moe} simultaneously over $F$ and over $F'$.

Consider $\pi$ in the discrete series of $\mc G (F)^+$. Let $d$ be the maximum of the depths of 
$\pi$ and of all the $\rho$ that appear in Jord$(\pi)$. We denote the subcategory of 
$\Rep (\mc G (F))$ generated by the representations of depth $\leq d$ by $\Rep(\mc  G (F))_{\leq d}$. 
Let $F'$ be a $p$-adic field which is sufficiently close to $F$, with respect to the depth 
$\mc D := \mc D (p,\mc G,d)$ and the groups $\mc G$, $\GL_m$, $\mc G_m$ with $m \leq \mr{rk}(\mc G)$. 
Here $F$ and $F'$ are at least $\mc D$-close, but usually a lot closer is needed.

By \cite{Gan} the method of close fields yields canonical equivalences of categories
\begin{equation}\label{eq:1.41}
\begin{array}{llll}
\zeta^{\mc G,F,F'} : & \Rep (\mc G_n (F))_{\leq \mc D} & \isom & 
\Rep (\mc G_n (F'))_{\leq \mc D} , \\
\zeta^{\mc G,F,F'} : & \Rep (\GL_m (F))_{\leq \mc D} & \isom & \Rep (\GL_m (F'))_{\leq \mc D} , \\
\zeta^{\mc G,F,F'} : & \Rep (\mc G_{n+m} (F))_{\leq \mc D} & \isom & 
\Rep (\mc G_{n+m} (F'))_{\leq \mc D} ,
\end{array}
\end{equation}
for all $m \leq \mr{rk}(\mc G) = n$. By \cite[Theorem 3.5]{SolParam} these equivalences of categories 
are compatible with normalized parabolic induction. Hence the equivalences \eqref{eq:1.41} 
transfer the condition that $(\rho,a)$ belongs to Jord$(\pi)$ into the condition that 
$\zeta^{\GL_{d_\rho},F,F'}(\pi)$ belongs to Jord$(\zeta^{\mc G,F,F'}(\pi))$. In other words,
\eqref{eq:1.41} induces an injection
\begin{equation}\label{eq:1.42}
\mr{Jord}(\pi) \to \mr{Jord} (\zeta^{\mc G,F,F'}(\pi)) .
\end{equation}
Now the problem arises that Jord$(\pi)$ could be too small, so that \eqref{eq:1.42} would not be
surjective. Then \eqref{eq:sizeJord} fails and Jord$(\pi)$ would not yield a Langlands parameter for 
$\mc G (F)$. For groups over $p$-adic fields this used to be a difficult problem \cite[p. 727]{MoTa}, 
which has only been solved with the endoscopic methods from \cite{Art}.
To carry out the method of close fields completely, we need the following additional input.

\begin{ass}\label{as:1.8}
Fix $\mc G$, a prime $p$ and a depth $d \in \N$. 
There exists a bound $\mc D (p,\mc G,d) \in \Z_{\geq d}$ such that
\begin{itemize}
\item for all $p$-adic fields $F'$,
\item for all unitary supercuspidal representations $\sigma \in \Irr (\mc G (F'))$ of depth $\leq d$,
\item for all $\rho \in \Irr (\GL_{d_\rho}(F'))$ occurring in $\Jord(\sigma)$,
\end{itemize}
the depth of $\rho$ is $\leq \mc D (p,\mc G,d)$.
\end{ass}
For symplectic groups and split special orthogonal groups this assumption is known (for $p > 2$) 
from \cite[Lemma 8.2.3]{GaVa}, in the stronger form $\mc D (p,\mc G,d) = d + 1$. In fact the main
results of \cite{GaVa} imply Theorem\autoref {thm:1.1} for these split groups, including bijectivity in 
part (c). For possibly non-split classical groups (with $p > 2$ but not general spin groups), 
it seems likely that Hypothesis~\ref{as:1.8} follows from \cite{KSS}.

\begin{prop}\label{prop:1.9}
Fix a prime $p$ and a group $\mc G$ as before. Suppose that Hypothesis\autoref {as:1.8} holds for all 
$d \in \N$. Then Theorem\autoref {thm:1.1} holds for $\mc G (F)$, for any local function field $F$.
\end{prop}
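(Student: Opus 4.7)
The plan is to propagate Theorem \ref{thm:1.1} from a $p$-adic field $F'$ to the local function field $F$ via the close-fields equivalences \eqref{eq:1.41}, using Hypothesis \ref{as:1.8} to keep all relevant depths bounded. Let $\pi$ be in the discrete series of $\mc G(F)^+$ of depth $d$, and set $\mc D := \mc D(p,\mc G,d)$. First I would choose a $p$-adic field $F'$ so close to $F$ that the equivalences $\zeta^{\mc G_n,F,F'}, \zeta^{\GL_m,F,F'}, \zeta^{\mc G_{n+m},F,F'}$ are all available up to depth $\mc D$ for every $m \leq \mr{rk}(\mc G)$, and moreover such that the LLC for $\GL_m$ is compatible with $\zeta^{\GL_m,F,F'}$ in the same range. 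The latter is a classical finite requirement on closeness (Deligne--Kazhdan). Put $\pi' := \zeta^{\mc G,F,F'}(\pi)$; by compatibility with normalised parabolic induction \cite[Theorem 3.5]{SolParam}, $\pi'$ is again in the discrete series of $\mc G(F')^+$.

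The central step is then to upgrade the injection \eqref{eq:1.42} into a bijection $\mr{Jord}(\pi) \cong \mr{Jord}(\pi')$. Injectivity is built into the definition, since $(\rho,a) \in \mr{Jord}(\pi)$ is characterised by reducibility of induced representations of shape $\delta(\rho,a') \times \pi$, which $\zeta$ preserves. For surjectivity, take $(\rho',a) \in \mr{Jord}(\pi')$. Hypothesis \ref{as:1.8}, applied to the supercuspidal support of $\pi'$ (to which all $\rho'$ occurring in $\mr{Jord}(\pi')$ can be traced by Moeglin's construction), bounds $\mr{depth}(\rho') \leq \mc D$, so $\rho' = \zeta^{\GL_{d_{\rho'}},F,F'}(\rho)$ for a unique $\rho \in \Irr(\GL_{d_{\rho'}}(F))$, and then $(\rho,a) \in \mr{Jord}(\pi)$ by the same compatibility. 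Part (a) of Theorem \ref{thm:1.1} for $\pi$ follows: $\mr{Jord}(\pi)$ now satisfies \eqref{eq:sizeJord}, and the resulting Langlands parameter has image in ${}^L G$ because the corresponding parameter for $\pi'$ does and the LLC for $\GL$ matches under $\zeta$.

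Parts (b), (c) and (d) are inherited the same way. If $\pi_1, \pi_2$ share the same Jordan blocks, so do $\pi_1',\pi_2'$, which by part (b) over $F'$ lie in the same L-packet; the character $\epsilon_\pi$, defined in \cite[p.\,147--148]{Moe0} through reducibility properties of parabolically induced representations together with the supercuspidal input from \cite{Art,MoRe}, is preserved by $\zeta$ under the canonical identification $S_\pi \cong S_{\pi'}$, giving part (c); and the reducibility criterion for $\mr{Res}^{G^+}_G \pi$ in part (d) transfers because the close-fields equivalence commutes with the restriction $G^+ \rightsquigarrow G$ (both are realised at the level of the compatible pro-$p$ Hecke algebras).

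The main obstacle is the interlocking of compatibilities: a \emph{single} $F'$ has to be close enough to $F$ for $\zeta$ to exist on $\mc G_n$, on every $\GL_{d_\rho}$ appearing in $\mr{Jord}(\pi)$, on every ambient $\mc G_{n+d_\rho a}$, and for the LLC for each of these $\GL$-factors --- all up to depth $\mc D$. This is precisely the role of Hypothesis \ref{as:1.8}: without an a priori bound on $\mr{depth}(\rho)$ for $\rho \in \mr{Jord}$, no finite closeness of $F$ and $F'$ could secure all these conditions at once, which is exactly the obstruction that \cite[p.\,727]{MoTa} flag in the $p$-adic setting and that endoscopy removes there.
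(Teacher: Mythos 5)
Your proposal is correct and follows essentially the same route as the paper: transfer $\pi$ to a $p$-adic field $F'$ via the close-fields equivalence, use Hypothesis~\ref{as:1.8} (applied to the supercuspidal classical-group factor of the cuspidal support of $\zeta^{\mc G,F,F'}(\pi)$, whose Jordan block together with unramified twists of the $\GL$-factors produces $\mr{Jord}(\zeta^{\mc G,F,F'}(\pi))$) to bound depths and upgrade \eqref{eq:1.42} to a bijection, then pull back parts (a)--(d) of Theorem~\ref{thm:1.1} using the Deligne--Kazhdan compatibilities. The only place you are a little loose is in phrasing the application of Hypothesis~\ref{as:1.8}: it applies to the supercuspidal $\sigma \in \Irr(\mc G_{n_-}(F'))$, while the remaining members of $\mr{Jord}(\zeta^{\mc G,F,F'}(\pi))$ are unramified twists of the $\rho_i$, which have depth $\leq d$ for the separate (and easier) reason that parabolic induction preserves depth.
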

\begin{proof}
Write $\zeta^{\mc G,F,F'}(\pi)$ as a subquotient of the parabolic induction of a supercuspidal 
representation $\sigma \boxtimes \rho_1 \boxtimes \cdots \boxtimes \rho_r$ of a Levi subgroup
of $\mc G (F')^+$. Since (normalized) parabolic induction preserves depth \cite[Theorem 5.2]{MoPr}, 
$\sigma$ and all the $\rho_i$ have depth $\leq d$. The Jordan block of $\zeta^{\mc G,F,F'}(\pi)$
consists of the Jordan block of $\sigma$ and some pairs $(\rho,a)$ where $\rho$ is an
unramified twist of $\rho_i$. By Hypothesis\autoref {as:1.8} all $\rho$ appearing in 
Jord$(\zeta^{\mc G,F,F'}(\pi))$ have depth $\leq \mc D (p,\mc G,d)$. We note that $\rho \in 
\Irr (\GL_m (F'))$ where $m \leq \mr{rk}(\mc G)$ by Theorem\autoref {thm:1.1}.a.
Hence every such $\rho$ is in the image of $\zeta^{\GL_m,F,F'}$ for the correct $m$. Then
$(\zeta^{\GL_m,F,F'})^{-1} \rho$ lies in Jord$(\pi)$, and we can conclude that \eqref{eq:1.42}
is in fact a bijection. 

The $L$-parameter $\phi_\rho$ of any $\rho$ from Jord$(\pi)$ has depth $\leq \mc D$, because the local Langlands correspondence
for general linear groups preserves depths \cite[Proposition 4.2]{ABPS1}.
Let $\mb W_F^r$ be the $r$-th filtration subgroup of the absolute Galois group of $F$. Recall from 
\cite[(3.5.1)]{Del} that the $D$-closeness of $F$ and $F'$ is reflected in a group isomorphism
\begin{equation}\label{eq:1.43}
\mb W_F / \mb W_F^{\mc D +} \cong \mb W_{F'} / \mb W_{F'}^{\mc D +} .
\end{equation}
Composition with \eqref{eq:1.43} transfers $\phi_\rho$ to a $L$-parameter for $\GL_{d_\rho}(F')$, say
$\zeta (\phi_\rho)$. When $F$ and $F'$ are very close (for instance $2^{d_\rho} \mc D$-close),
$\zeta (\phi_\rho)$ is indeed the $L$-parameter of $\zeta^{\GL_{d_\rho},F,F'}(\rho)$
\cite[Theorem 6.1]{ABPS1}. We note that we really can chose $F'$ that close to $F$: by
\cite{Del} such a field exists and the above works for any choice of $F'$ that is $\mc D$-close
to $F$. For such an $F'$ composition of the $L$-parameter of 
\[
\mathlarger{\mathlarger{\boxtimes}}_{(\rho',a) \in \mr{Jord}(\zeta^{\mc G,F,F'} (\pi))} \delta (\rho',a)
\]
with \eqref{eq:1.43} yields the $L$-parameter of
\[
\mathlarger{\mathlarger{\boxtimes}}_{(\rho,a) \in \mr{Jord}(\pi)} \delta (\rho,a) .
\]
By Theorem\autoref {thm:1.1}.a the former parameter has image in ${}^L \mc G$, hence so does the 
latter parameter. We define the latter to be the $L$-parameter of $\pi$, like in \eqref{eq:1.8}. 
Then parts (a) and (b) of Theorem\autoref {thm:1.1} hold for $\mc G (F)$.

The (partially defined) character $\epsilon_{\zeta^{\mc G,F,F'} (\pi)}$ is transferred, via 
\eqref{eq:1.42}, to a (partially defined) character $\epsilon_\pi$ of $S_\pi$. Moreover $\epsilon_\pi$ 
is $\mc G (F)$-relevant because $\epsilon_{\zeta^{\mc G,F,F'} (\pi)}$ is $\mc G (F')$-relevant. 

Suppose that two discrete series representations $\pi, \tilde \pi$ of $\mc G (F)^+$ have the same
Jordan block and the same $\epsilon$. Then their transfers to representations of $\mc G (F')$ also
share the same Jordan block and the same $\epsilon$. With Theorem\autoref {thm:1.1}.d we find
$\zeta^{\mc G,F,F'}(\pi) \cong \zeta^{\mc G,F,F'}(\tilde \pi)$. Then \eqref{eq:1.41} says that
$\pi \cong \tilde \pi$.\\
Similarly \eqref{eq:1.41} readily shows that Theorem\autoref {thm:1.1}.d carries over from
$\mc G (F')$ to $\mc G (F)$.
\end{proof}

\subsection{Parametrization of essentially square-integrable representations} \
\label{par:ess}
 
From now on $F$ can be any non-archimedean local field, but we need Hypothesis
\autoref{as:1.8} if $F$ has positive characteristic. 
 
We note that $\mr{Out}(\mc G)$ is trivial except for special orthogonal groups and general spin groups
associated to vector spaces of even dimension $2n$. Then (for $n \neq 2$)
\begin{equation}\label{eq:Out}
\mr{Out}(\mc G) \cong G^{\vee +} / G^\vee \cong \rO_{2n}(\C) / \SO_{2n}(\C) .
\end{equation}
When $\mc G$ is a form of $\SO_4$, we ignore its exceptional automorphisms and instead we use
\eqref{eq:Out} as a definition of $\mr{Out}(\mc G)$. In particular the two-element group 
\eqref{eq:Out} acts naturally on $\Irr (G)$ and on $\Phi_\enh (G)$.

\begin{thm}\label{thm:1.2}
Fix a Whittaker datum for the quasi-split inner form of $G$.
\enuma{
\item Suppose that $\mr{Out}(\mc G)$ is trivial. There exists a canonical injection
\begin{itemize}
\item from the set of discrete series representations of $G$,
\item to the set of discrete bounded parameters in $\Phi_\enh (G)$.
\end{itemize}
\item Suppose that $\mr{Out}(\mc G)$ is nontrivial. There exists an injection 
\begin{itemize}
\item from the set of discrete series representations of $G$,
\item to the set of discrete bounded parameters in $\Phi_\enh (G)$,
\end{itemize}
which intertwines the actions of $\mr{Out}(\mc G)$. The induced injection between $\mr{Out}(\mc G)$-orbits
in these two sets is canonical.
\item The injection in parts (a) and (b) send supercuspidal unitary $G$-representations to
bounded cuspidal $L$-parameters, and non-supercuspidal representations to non-cuspidal
enhanced $L$-parameters.
}
\end{thm}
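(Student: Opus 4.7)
The plan is to deduce Theorem~\ref{thm:1.2} from Theorem~\ref{thm:1.1} applied to $G^+$ via a Clifford-theoretic descent from $G^+$ to $G$. The key observation is that the inclusion $G^\vee_\der \subset G^{\vee+}_\der$ of index at most two induces an inclusion of component groups $\mc S_\phi := \pi_0(\rZ_{G^\vee_\der}(\phi)) \subset \mc S_\phi^+ := \pi_0(\rZ_{G^{\vee+}_\der}(\phi))$ of index at most two, whose quotient records the $\Out(\mc G)$-action on parameters. Inspection of \eqref{eq:1.3} shows that $\mc S_\phi = \mc S_\phi^+$ if and only if every $(\tau,a) \in \mr{Jord}(\phi)$ has $a \dim \tau$ even, which is exactly the parity dichotomy of Theorem~\ref{thm:1.1}.d.

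For part (a), when $\Out(\mc G)$ is trivial one has $G^+ = G$, and the desired canonical injection is furnished directly by Theorem~\ref{thm:1.1}.c. For part (b), starting from a discrete series $\pi \in \Irr(G)$, I would use Clifford theory to choose $\sigma \in \Irr(G^+)$ with $\pi \hookrightarrow \sigma|_G$, let $(\phi, \epsilon^+)$ be the parameter of $\sigma$ from Theorem~\ref{thm:1.1}.c, and set $\epsilon := \epsilon^+|_{\mc S_\phi}$. Well-definedness and $\Out(\mc G)$-equivariance follow from the dichotomy above. When some $d_\rho a$ is odd, so that $\sigma|_G = \pi$ is irreducible, the two $G^+$-extensions $\sigma$ and $\sigma \otimes \chi$ of $\pi$ correspond to the two extensions of $\epsilon$ over $\mc S_\phi^+$, which collapse to the same $\epsilon$, while $\phi$ itself is $\Out(\mc G)$-fixed, so the assignment is canonical on elements. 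When all $d_\rho a$ are even, so that $\sigma|_G = \pi \oplus \pi^{\Out}$, one has $\mc S_\phi = \mc S_\phi^+$, and the size-two $\Out(\mc G)$-orbit of $(\phi,\epsilon)$ in $\Phi_e(G)$ is matched by an arbitrary bijection with $\{\pi, \pi^{\Out}\}$, which accounts for the canonicity only on orbits. Injectivity on $G$ then follows from injectivity on $G^+$ (Theorem~\ref{thm:1.1}.c) together with the Clifford reconstruction of $\sigma$ from $\pi$.

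For part (c), I would match the cuspidality criteria on both sides. By \cite[Thm.~2.5.1]{Moe2}, $\sigma$ is supercuspidal if and only if $\mr{Jord}(\sigma)$ has no holes and $\epsilon^+_\sigma$ is alternated in the sense of \eqref{eq:1.7}; by \cite[\S 5.3]{AMS3}, the same two conditions characterize cuspidality of $(\phi,\epsilon)$. Supercuspidality passes between $\sigma$ and $\pi$ under restriction, since Jacquet functors commute with unnormalized restriction from $G^+$ to $G$. Moreover, the alternation conditions involve only products $z_{\tau,a} z_{\tau,a+2}$ and $z_{\tau',2}$, each of which has trivial total determinant by \eqref{eq:1.40} and so lies in $\mc S_\phi$; hence the alternation condition on $\epsilon$ is equivalent to that on $\epsilon^+$.

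The main obstacle is the simultaneous matching, in part (b), of two Clifford-type $2\!:\!1$ phenomena on the two sides: on the representation side, either two $G^+$-extensions $\sigma, \sigma \otimes \chi$ of a single $\pi$, or one $\sigma$ restricting to $\pi \oplus \pi^{\Out}$; on the parameter side, either two extensions of $\epsilon$ over $\mc S_\phi^+$, or a size-two $\Out(\mc G)$-orbit in $\Phi_e(G)$. Verifying that the parity dichotomy of Theorem~\ref{thm:1.1}.d aligns these two fibrations coherently, and is compatible with the Kottwitz relevance condition near \eqref{eq:1.40} on both sides, is the crux of the argument.
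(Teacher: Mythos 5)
Your proposal follows essentially the same route as the paper: part (b) is the same Clifford-theoretic descent from $G^+$ to $G$ governed by the parity dichotomy of Theorem~\ref{thm:1.1}.d, with the same normalization $\epsilon := \epsilon^+|_{\mc S_\phi}$ in the odd case and the same orbit-level canonicity (choice as in \eqref{eq:1.9}) in the even case, and part (c) matches the hole-free/alternated criteria on both sides using the observation that the alternation products land in $\mc S_\phi$. The only gloss is in part (a): Theorem~\ref{thm:1.1}.c furnishes $(\mr{Jord},\epsilon)$-pairs rather than elements of $\Phi_e(G)$, so one still needs the translation steps the paper carries out — LLC for $\GL$ applied to each $\rho$, uniqueness of $\phi$ via \cite[Theorem 8.1]{GGP}, discreteness from absence of multiplicities, boundedness from unitarity, and $\mc S_\pi \cong \mc S_\phi$.
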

\begin{proof}
(a) If we apply the local Langlands correspondence for $\GL_{d_\rho}(F)$ to a $\rho$ occurring in Jord$(\pi)$, we obtain
$\phi_\rho \in \Irr (\mb W_F)$. The property \eqref{eq:1.44} translates to 
\begin{equation}\label{eq:1.45}
\phi_\rho \cong \phi_\rho^\vee \otimes \phi_{\nu_\pi} .
\end{equation} 
From \eqref{eq:1.45}, \eqref{eq:1.51} and Theorem\autoref {thm:1.1} we see that
\[
\{ (\phi_\rho,a) : (\rho,a) \in \mr{Jord}(\pi) \}
\]
is the set of Jordan blocks of some $\phi \in \Phi (G)$ with
\begin{equation}\label{eq:1.46}
\phi^\vee \otimes \mu_G^\vee \circ \phi \cong \phi \cong \phi^\vee \otimes \phi_{\nu_\pi} .
\end{equation}
Further $\phi$ is unique by \cite[Theorem 8.1]{GGP}, and discrete because Jord$(\pi)$ does not 
have multiplicities. As $\rho$ (from above) was unitary supercuspidal and in particular tempered, 
$\phi_\rho$ is bounded and therefore $\phi$ is also bounded. 

Under the correspondence Jord$(\pi) \mapsto \mr{Jord}(\phi)$, the group $\mc S_\pi$ 
becomes $\mc S_\phi$. The set of $G$-relevant characters of $\mc S_\phi$ is naturally
in bijection with the set of partially defined characters $\epsilon_\pi$ of $\mc S_\pi$ 
which figures in Theorem\autoref {thm:1.1}.d. Thus we can define the required injection by sending
$\pi$ to $(\phi,\epsilon)$ such that the local Langlands correspondence for $\GL_m$ sends
\begin{equation}\label{eq:1.8}
(\mr{Jord}(\pi), \epsilon_\pi) \quad \text{to} \quad (\mr{Jord}(\phi), \epsilon) .
\end{equation}
(b) The proof of part (a) applies perfectly well to the disconnected reductive group
$G^+$. It provides a canonical injection from the discrete series representations $\pi^+$
of $G^+$ to the pairs $(\phi,\epsilon)$ with $\phi \in \Phi (G) / \mr{Out} (\mc G)$ 
bounded and discrete and $\epsilon \in \mc S^+_\phi$, where $\mc S^+_\phi$ is like $\mc S_\phi$
but computed in $\rO_{2n} (\C)$. We can distinguish two cases:

\noindent
\textbf{Case (i): There exists $(\tau,a) \in \mr{Jord}(\phi)$ with $a \dim \tau$ odd.}\\ 
From \eqref{eq:1.3} we see that the group $\mc S_\phi^+$ contains an element of 
$\rO_{2n}(\C) \setminus \SO_{2n}(\C)$. Hence the preimage of $\phi$ in $\Phi (G)$
is just one equivalence class.

By Theorem\autoref {thm:1.1}.c, $\pi^+ \in \Irr (G^+)$ restricts to an irreducible 
representation $\pi$ of $G$. In particular $\pi$ is stable 
under $\mr{Out}(\mc G)$. Clifford theory tells us that there are precisely two inequivalent
irreducible representations of $G^+$ that restrict to $\pi$.  

As $\mc S_\phi^+ \cong \mh F_2^{|\mr{Jord}(\phi)|}$, we find 
\[
\mc S_\phi^+ \cong \mc S_\phi \times \mh F_2 .
\]
Hence there exist precisely two characters of $\mc S_\phi^+$ that extend $\epsilon |_{\mc S_\phi}$.
We decree that the bijection for the discrete series of $G$ sends $\pi$ to
$(\phi,\epsilon |_{\mc S_\phi})$, that is the only natural possibility and does not disturb the
injectivity we had for $G^+$.

\noindent
\textbf{Case (ii): $a \dim \tau$ is even for all $(\tau,a) \in \mr{Jord}(\phi)$.}\\
Now \eqref{eq:1.3} shows that $\mc S_\phi^+$ does not contain any elements from $\rO_{2n}(\C)
\setminus \SO_{2n}(\C)$, so $\mc S_\phi^+ = \mc S_\phi$. By \cite[Theorem 8.1]{GGP} the preimage of 
$\phi$ in $\Phi (\GSpin (V))$ consists of two equivalence classes, say $\phi'$ and $\phi''$.
Then $\phi''$ is equivalent with $\mr{Ad}(h^\vee) \phi'$ for some $h^\vee \in\rO_{2n}(\C)
\setminus \SO_{2n}(\C)$ and $\mc S_{\phi'}$ is canonically isomorphic with $\mc S_{\phi''}$.

By Theorem\autoref {thm:1.1}.c the restriction of $\pi^+$ to $G$ is reducible. By Clifford 
theory it is the direct sum of two inequivalent irreducible $G$-representations say 
$\pi' \oplus \pi''$, and any element of $G^+ \setminus G$ exchanges $\pi'$ and $\pi''$.
\begin{equation}\label{eq:1.9} 
\text{We choose a bijection between } \{(\phi',\epsilon), (\phi'',\epsilon)\} \text{ and }
\{ \pi', \pi'' \} ,
\end{equation}
and we decree that it gives two instances of the injection for the discrete series of $G$.
Notice that this guarantees $\mr{Out}(\mc G)$-equivariance on these objects.

Combining all instances, we obtain the desired injection for the discrete series of
$G$. Its only noncanonical parts are the choices \eqref{eq:1.9}, which become invisible
when we pass to $\mr{Out}(\mc G)$-orbits. \\
(c) This is clear from the criteria for cuspidality on page \pageref{eq:1.7} and in
Theorem\autoref {thm:1.1}.e.
\end{proof}

For the moment $G$ is a general spin group. Since the centre of $G$ is not compact 
(unlike for the other groups in Section~\ref{sec:Moe}),  we have to distinguish
between discrete series representations and essentially square-integrable representations.
A $G$-representation $\pi$ is called essentially square-integrable if its restriction to
$G_\der$ is square-integrable. If $\pi$ is in addition irreducible, then there exists an unramified
character $\chi \in X_\nr (G)$ such that $\chi \otimes \pi$ has unitary central character, that 
is, $\chi \otimes \pi$ belongs to the discrete series. We can even achieve this with $\chi$
a real power of the norm character of $F^\times \cong G / G_\der$.

Recall from \cite{Hai} that the group $X_\nr (G)$ of unramified characters of $G$ is naturally
isomorphic with $(\rZ(G^\vee)^{\circ,\mb I_F})_{\mb W_F}$, which for our $G$ is just
$\rZ(G^\vee)^\circ \cong \C^\times$. Similarly the group $X_\unr (G)$ of unitary unramified
characters is naturally isomorphic with the maximal compact subgroup $\rZ(G^\vee)^\circ_\cpt$
of $\rZ(G^\vee)^\circ$. The group $X_\nr (G)$ acts on $\Irr (G)$ by tensoring and the group
$\rZ(G^\vee)^{\mb I_F} = \rZ(G^\vee)$ acts on $\Phi_\enh (G)$ by 
\[
z (\phi,\rho) = (z \rho,\phi), \qquad (z\phi) |_{\mb I_F \times \SL_2 (\C)} =
\phi |_{\mb I_F \times \SL_2 (\C)}, \qquad (z\phi)(\Fr_F) = z \, \phi (\Fr_F) .
\]

\begin{thm}\label{thm:1.3}
Let $G$ be a general spin group.
\enuma{
\item The injection in Theorem\autoref {thm:1.2}.a is equivariant for the actions of 
$X_\unr (G) \cong \rZ(G^\vee)^\circ_\cpt$, and by suitable choices the bijection in Theorem 
\autoref{thm:1.2}.b can be made equivariant for these actions.
\item The injection from part (a) extends canonically to an injection 
\begin{itemize}
\item from the set of irreducible essentially square-integrable $G$-representations,
\item to the set of discrete parameters in $\Phi_\enh (G)$.
\end{itemize}
\item The injection in part (b) is equivariant for the actions of $X_\nr (G) \cong \rZ(G^\vee)^\circ$,
and it respects cuspidality.
}
\end{thm}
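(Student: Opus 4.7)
All three parts follow from tracking how unramified characters act on both sides of the injection in Theorem \ref{thm:1.2}, through the Jordan-block description of Theorem \ref{thm:1.1} and the LLC for general linear groups.

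For part (a), the plan is to analyze how $\mr{Jord}(\pi)$ changes when one tensors $\pi$ by $\chi \in X_\unr(G)$. Since parabolic induction commutes with tensoring by unramified characters of the ambient group (after restriction to the Levi), reducibility of $\delta(\rho,a) \times (\chi \otimes \pi)$ in $G_{n+d_\rho a}$ is equivalent to reducibility of $\delta(\chi' \rho, a) \times \pi$, where $\chi'$ denotes the restriction of $\chi$ to the $\GL_{d_\rho}(F)$ factor of the Levi. Hence
\[
\mr{Jord}(\chi \otimes \pi) = \{(\chi' \rho, a) : (\rho, a) \in \mr{Jord}(\pi)\} .
\]
On the parameter side, the action of $z \in \rZ(G^\vee)^\circ_\cpt$ is defined by $(z\phi)(\Fr_F) = z \cdot \phi(\Fr_F)$ and $z\phi|_{\mb I_F \times \SL_2(\C)} = \phi|_{\mb I_F \times \SL_2(\C)}$; under the embedding \eqref{eq:1.13} this amounts to an unramified twist of each irreducible summand $\phi_\rho$ of $\phi$. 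The LLC for $\GL_{d_\rho}(F)$ being equivariant for unramified twists, together with the defining relation \eqref{eq:1.8}, then gives equivariance for the injection in Theorem \ref{thm:1.2}.a. In the situation of Theorem \ref{thm:1.2}.b, the noncanonical choices \eqref{eq:1.9} can be made on a single representative of each $X_\unr(G)$-orbit and extended by equivariance.

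For part (b), every irreducible essentially square-integrable representation of $G$ has the form $\pi = \chi \otimes \pi_0$ for some $\pi_0$ in the discrete series and some $\chi \in X_\nr(G)$, which can be taken to be a real power of the similitude norm character. I define
\[
\phi_\pi := \phi_\chi \cdot \phi_{\pi_0} , \qquad \phi_\chi \in \rZ(G^\vee)^\circ \cong X_\nr(G),
\]
and inherit the enhancement from $\phi_{\pi_0}$. Well-definedness follows from part (a): if $\chi \otimes \pi_0 \cong \chi' \otimes \pi_0'$ then $\chi/\chi' \in X_\unr(G)$ and $\pi_0' \cong (\chi/\chi') \otimes \pi_0$, whence $\phi_{\pi_0'} = (\phi_\chi \phi_{\chi'}^{-1}) \cdot \phi_{\pi_0}$ and thus $\phi_{\chi'} \cdot \phi_{\pi_0'} = \phi_\chi \cdot \phi_{\pi_0}$. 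Discreteness of $\phi_\pi$ follows from that of $\phi_{\pi_0}$, since $\rZ_{{G^\vee}_\der}(z\phi) = \rZ_{{G^\vee}_\der}(\phi)$ for every $z \in \rZ(G^\vee)^\circ$. Injectivity of the extended map reduces to the discrete-series case, because $\chi$ is recoverable from the central character of $\pi$. For part (c), $X_\nr(G)$-equivariance is immediate from the construction in (b), and cuspidality is preserved because the criterion recalled after \eqref{eq:1.7} depends only on $\mr{Jord}(\phi)$ having no holes and $\epsilon$ being alternated, both of which are invariant under twisting $\phi$ by any element of $\rZ(G^\vee)^\circ$; combined with Theorem \ref{thm:1.2}.c this settles the cuspidality claim.

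\textbf{Main obstacle.} The technical heart is the Jordan-block identity asserted in part (a): one must check carefully how $X_\unr(G_{n+m})$ restricts to the Levi $\GL_m(F) \times G_n$ through the similitude character of a general spin group, and how this is mirrored under the dual embedding \eqref{eq:1.13}. Once this restriction is identified on both sides, the remainder of the argument is formal.
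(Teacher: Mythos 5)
Your approach matches the paper's proof closely: for (a), both compute how the Jordan block transforms under unramified twisting on each side and invoke equivariance of the LLC for $\GL_m$; for (b), you choose a canonical real-power-of-the-norm twist of a discrete series representation, while the paper phrases the same content as a fiber product of the discrete series with $X_\nr(G)$ over $X_\unr(G)$, matched against the analogous decomposition of discrete parameters; the arguments for (c) are the same.

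However, part (a) has a genuine gap. The injection of Theorem \ref{thm:1.2}.a sends $\pi$ to a pair $(\phi,\epsilon)$ via \eqref{eq:1.8}, not merely to $\phi$: you verify that the Jordan blocks transform compatibly on both sides, but you never examine how M\oe glin's character $\epsilon_\pi : S_\pi \to \{\pm 1\}$ behaves under $\pi \mapsto \chi\otimes\pi$. This check is necessary before \eqref{eq:1.8} can be invoked to conclude equivariance. The paper supplies exactly this step: from the properties of $\epsilon_\pi$ in \cite[\S 2.5]{Moe2} one gets $\epsilon_{\chi\otimes\pi}(z_{\chi\otimes\rho,a}) = \epsilon_\pi(z_{\rho,a})$; on the parameter side $\rZ_{{G^\vee}_\der}(\hat\chi\phi) = \rZ_{{G^\vee}_\der}(\phi)$, so the abstract character $\epsilon$ is unchanged but its generators $z_{\tau,a}$ are relabelled as $z_{\hat\chi\tau,a}$, and the relabelled pair is what the paper denotes $\hat\chi\epsilon$. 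Without this verification the equivariance claim of (a), and with it the well-definedness argument in your (b) that relies on it, is incomplete. As a side remark: in your intermediate reducibility claim, the twist should be by $\chi'^{-1}$, since $\delta(\rho,a)\times(\chi\otimes\pi)\cong\chi\otimes\big(\delta(\chi'^{-1}\rho,a)\times\pi\big)$, although your final formula for $\mr{Jord}(\chi\otimes\pi)$ is nonetheless correct.
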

\begin{proof}
(a) Let $\pi \in \Irr (G)$ and $(\phi,\epsilon) \in \Phi_\enh (G)$ be as in the proof of Theorem 
\autoref{thm:1.2}. For $\chi \in X_\unr (G)$, $\chi \otimes \pi$ is of the same kind. 
From the natural isomorphisms
\[
(\chi \otimes \mr{St}(\rho,a)) \times (\chi \otimes \pi) \cong 
\chi \otimes (\mr{St}(\rho,a) \times \pi)
\]
we see that
\[
\mr{Jord}(\chi \otimes \pi) \quad \text{equals}  \quad
\{ (\chi \otimes \rho, a) : (\rho,a) \in \mr{Jord}(\pi) \} =: \chi \otimes \mr{Jord}(\pi) .
\]
The properties of $\epsilon_\pi$ in \cite[\S 2.5]{Moe2} readily imply that
\[
\epsilon_{\chi \otimes \pi} (z_{\chi \otimes \rho,a}) = \epsilon_\pi (z_{\rho,a}) .
\]
Let $\hat \chi \in \rZ(G^\vee)^\circ$ correspond to $\chi$ via $X_\unr (G) \cong \rZ(G^\vee)^\circ_\cpt$.
Then $\hat \chi \phi$ is still discrete and bounded, while
\[
\mr{Jord}(\hat \chi \phi) \quad \text{equals} \quad
\{ (\hat \chi \tau,a) : (\tau,a) \in \mr{Jord}(\phi) \} =: \hat \chi \mr{Jord}(\phi) .
\]
The action of $\hat \chi$ does not change $\epsilon$ as character of 
\[
\mc S_\phi = \rZ_{{G^\vee}_\der}(\phi) = \rZ_{{G^\vee}_\der}(\hat \chi \phi) = \mc S_{\hat \chi \phi} .
\]
However, the element $z_{\tau,a} \in \mc S_\phi$ is renamed as $z_{\hat \chi \tau,a}$ and to account
for that we rename $\epsilon$ to $\hat \chi \epsilon$.

Suppose now that $\pi$ and $(\phi,\epsilon)$ are matched by Theorem\autoref {thm:1.2}, so \eqref{eq:1.8}
holds. By the known equivariance properties of the local Langlands correspondence for $\GL_m$, 
\[
(\chi \otimes \mr{Jord}(\pi), \epsilon_{\chi \otimes \pi}) \quad \text{is sent to} \quad
(\hat \chi \mr{Jord}(\phi), \hat \chi \epsilon) .
\]
In the setting of Theorem\autoref {thm:1.2}.a, this shows that $\chi \otimes \pi$ is matched with
$(\hat \chi \phi, \hat \chi \epsilon)$, which is the desired equivariance.

In the setting of Theorem\autoref {thm:1.2}.b, only the choices in \eqref{eq:1.9} could disturb this
equivariance for $X_\unr (G) \cong \rZ(G^\vee)^\circ_\cpt$. To prevent that, it suffices to make the 
entirety of the choices \eqref{eq:1.9} in an equivariant way. This can be done as follows. Pick a 
set of representatives for the $(\phi,\epsilon)$ with all $a \dim \tau$ even, modulo the action of 
$\rZ(G^\vee)^\circ_\cpt$. For each of these $\phi$'s we fix a choice \eqref{eq:1.9}, say 
$(\phi',\epsilon) \mapsto \pi'$. Then decree that, for each $\chi \in X_\unr (G)$, 
$(\hat \chi \phi',\epsilon)$ is matched via $\chi \otimes \pi'$. \\
(b) By design, the set of essentially square-integrable irreducible $G$-representations can be
expressed as
\begin{equation}\label{eq:1.10}
\text{discrete series of } G \times_{X_\unr (G)} X_\nr (G) .
\end{equation}
Similarly, it follows from \cite[Lemma 5.1]{Hei1} that the set of discrete parameters in $\Phi_\enh (G)$
can be constructed as
\begin{equation}\label{eq:1.11}
\text{bounded discrete part of } \Phi_\enh (G) \times_{\rZ(G^\vee)^\circ_\cpt} \rZ(G^\vee)^\circ .
\end{equation}
From \eqref{eq:1.10}, \eqref{eq:1.11} and part (a) we deduce an injection from the set of 
essentially square-integrable irreducible $G$-representations to the discrete part of $\Phi_\enh (G)$,
which is equivariant for $X_\nr (G) \cong \rZ(G^\vee)^\circ$.\\
(c) The actions of $X_\nr (G)$ on $\Irr (G)$ and of $\rZ(G^\vee)^\circ$ on $\Phi_\enh (G)$ preserve
cuspidality. Combine that with Theorem\autoref {thm:1.2}.c and the construction of the injection in
part (b). 
\end{proof}

Now $G$ can again be any group as in Section~\ref{sec:Moe}. The set of supercuspidal Bernstein 
components of $\Irr (G)$ is just $\Irr_\cusp (G) / X_\nr (G)$. Recall the notion of a Bernstein 
component of enhanced $L$-parameters from \cite[\S 8]{AMS1}. By definition, the set of 
cuspidal Bernstein components of $\Phi_\enh (G)$ is $\Phi_\cusp (G) / \rZ(G^\vee)^\circ$.
If we apply Theorems\autoref {thm:1.2} and\autoref {thm:1.3} to these sets, we obtain:

\begin{cor}\label{cor:1.4}
Theorems\autoref {thm:1.2} and\autoref {thm:1.3}.b induce an injection
\begin{itemize}
\item from the set of supercuspidal Bernstein components of $\Irr (G)$,
\item to the set of cuspidal Bernstein components of $\Phi_\enh (G)$.
\end{itemize}
This injection is $\mr{Out}(\mc G)$-equivariant and becomes canonical if we pass to $\mr{Out}(\mc G)$-orbits
on both sides.
\end{cor}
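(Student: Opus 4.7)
The plan is to take the injections furnished by Theorems \ref{thm:1.2} and \ref{thm:1.3}.b, restrict them to the supercuspidal locus using parts (c), and then descend to the quotients defining Bernstein components. The argument naturally splits according to whether $G$ is a general spin group.

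First I would treat the case that $G$ is symplectic, (special) orthogonal, or unitary. Here the maximal $F$-split central torus is trivial, so any irreducible supercuspidal $G$-representation has unitary central character and thus lies in the discrete series. Theorem \ref{thm:1.2}.a--b then provides an injection $\Irr_\cusp (G) \hookrightarrow \Phi_e (G)$, and Theorem \ref{thm:1.2}.c guarantees that the image consists of cuspidal enhanced L-parameters. For these groups $\rZ(G^\vee)^\circ$ is trivial (directly for $\Sp$ and $\SO$; via a Galois-fixed-point argument in the unitary case), so by the unramified character correspondence $X_\nr(G) \cong (\rZ(G^\vee)^{\circ,\mb I_F})_{\mb W_F}$ of \cite{Hai}, $X_\nr(G)$ is trivial as well. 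The quotients defining Bernstein components are thus singletons, and the claim follows.

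Next I would handle the general spin case, where $X_\nr(G) \cong \rZ(G^\vee)^\circ \cong \C^\times$ is nontrivial. Theorem \ref{thm:1.3}.b gives an injection from irreducible essentially square-integrable representations into the discrete parameters. Restricting to supercuspidals (which form an $X_\nr(G)$-stable subset of the essentially square-integrable irreducibles) and invoking Theorem \ref{thm:1.3}.c, one obtains an $X_\nr(G) \cong \rZ(G^\vee)^\circ$-equivariant injection $\Irr_\cusp (G) \hookrightarrow \Phi_\cusp (G)$. Descending to orbits then yields the desired injection between cuspidal Bernstein components; injectivity is preserved because an equivariant injection between two sets with a group action automatically descends injectively to the orbit spaces.

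Finally, the $\Out(\mc G)$-equivariance and the canonicity on $\Out(\mc G)$-orbits are inherited directly from the corresponding properties of Theorems \ref{thm:1.2}.b and \ref{thm:1.3}.b: the only noncanonical ingredient is the choice \eqref{eq:1.9} made in the proof of Theorem \ref{thm:1.2}.b, and that choice disappears upon passing to $\Out(\mc G)$-orbits. The main subtlety to verify is that the $X_\nr(G)$-action on $\Irr_\cusp (G)$ by tensoring with unramified characters matches, under the injection, the $\rZ(G^\vee)^\circ$-action on $\Phi_\cusp (G)$; this is already built into Theorem \ref{thm:1.3}.a,c via the identification $X_\nr(G) \cong \rZ(G^\vee)^\circ_\cpt$ and the compatibility with Jordan blocks established there, so no additional work is needed.
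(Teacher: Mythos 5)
Your proposal is correct and follows essentially the same approach the paper takes, which is simply to apply Theorems \ref{thm:1.2} and \ref{thm:1.3} to the orbit sets $\Irr_\cusp(G)/X_\nr(G)$ and $\Phi_\cusp(G)/\rZ(G^\vee)^\circ$. You make the underlying case distinction explicit — for symplectic and (special) orthogonal groups $X_\nr(G)$ and $\rZ(G^\vee)^\circ$ are trivial so supercuspidals are already discrete series and no descent is needed, whereas for general spin groups the $X_\nr(G) \cong \rZ(G^\vee)^\circ$-equivariance of Theorem \ref{thm:1.3}.c guarantees that the injection descends injectively to orbit spaces; the paper leaves this implicit. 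One tiny remark: at this point of the paper $G$ ranges only over symplectic, special orthogonal and general spin groups (unitary groups are treated separately in Section \ref{sec:unitary}), so your mention of unitary groups in the first case is slightly out of scope, though the reasoning you give for them is sound and is indeed what the paper does later.
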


\section{Comparison of Hecke algebras for Bernstein components} 
\label{sec:Hecke}

In this section $G$ is a general spin group. All our results are also valid for symplectic
groups and for (special) orthogonal groups, with slightly simpler proofs, see \cite{Hei2,Hei3,Hei4}
(on the $p$-adic side) and \cite{Mou} and \cite[\S 5.3]{AMS3} (on the Galois side). 
In \cite[\S 10]{SolEnd} an extended affine Hecke algebra
\[
\mc H (\mf s) = \End_G ( \Pi_{\mf s})
\]
was attached to $\mf s$, where $\Pi_{\mf s}$ is a particular progenerator of $\Rep (G)^{\mf s}$. 
We have $\Pi_{\mf s}= I_P^G\Pi_{\mf{s}_L}$, where $I_P^G$ is the (normalized) parabolic 
induction functor for $P$  a parabolic subgroup of $G$ with Levi factor $L$. Further 
$\Pi_{\mf{s}_L} := \mr{ind}_{L^1}^L (\sigma_1)$, with $L^1$ the subgroup of $L$ generated 
by all compact subgroups and $\sigma_1$ an irreducible constitutent of $\Res^L_{L^1} \sigma$. 
It is shown in \cite[\S VI.10.1]{Ren} that $\Pi_{\mf s}$
is canonical, in the sense that
\begin{equation}\label{eq:3.50}
\text{up to isomorphism, } \Pi_{\mf s} \text{ depends only on } \mf s.
\end{equation}
We note that \cite[\S 10]{SolEnd}
is applicable because the restriction of $\sigma$ to $L^1$ is multiplicity-free, which
follows from the fact that $\mc L$ is a direct product of reductive groups with centre
of dimension $\leq 1$. In this setup there is a natural equivalence of categories
\begin{equation}\label{eq:3.54}
\begin{array}{ccc}
\Rep (G)^{\mf s} & \isom & \Mod (\End_G (\Pi_{\mf s})^{\op} ) \\
\pi & \mapsto & \Hom_G (\Pi_{\mf s}, \pi) \\
V \otimes_{\End_G (\Pi_{\mf s})} \Pi_{\mf s} & \text{\reflectbox{$\mapsto$}} & V 
\end{array},
\end{equation}
see \cite[Th\'eor\`eme VI.10.2]{Ren}. We will explain the structure of 
$\mc H (\mf s) := \End_G (\Pi_\mf s)$ in terms of the data and the presentation at the start 
of Paragraph~\ref{par:HAL}. From there we will see that it is self-opposite, and we will 
compare it with $\mc H (\mf s^\vee,z)$.

Before we come to that, we need to match Bernstein components for $\Irr (G)$ and for $\Phi_\enh (G)$. 
Suppose that the bilinear form on $V$ is given by a symmetric matrix $\hat J$, such that
the isotropic part is made from blocks $\matje{0}{1}{1}{0}$ placed in rows and columns
$j, \dim V + 1 - j$. Let $\mu_G : G \to F^\times$ be the spinor norm, so that Spin$(V) = 
\ker \mu_G$. The Levi subgroup $L = \mc L (F)$ is embedded in $G = \GSpin (V)$ via
\begin{equation}\label{eq:3.59}
\begin{aligned}
& G_{n_-} / F^\times \times \GL_{n_1}(F) \times \cdots \times \GL_{n_k}(F) \to G / F^\times
\cong \SO (V) \\
& (g_-,g_1,\ldots,g_k) \mapsto \big( g_1,\ldots, g_k, g_-, 
\hat J g_k^{-T} \hat J^{-1}, \ldots , \hat J g_k^{-T} \hat J^{-1} \big) 
\end{aligned}
\end{equation}
It is difficult to write down the actual embedding $L \to G$ in such terms, to study that the root
datum from \cite{AsSh} is more useful. The group $\rN_{\mr{GPin} (V)}(L)$ contains an element that 
exchanges $g_j$ and $\hat J g_j^{-T} \hat J^{-1}$, and the same time multiplies
$g_-$ with $\det (g_j)$. As automorphism of $L$ , it is given by 
\begin{equation}\label{eq:1.23}
(g_-,g_1,\ldots,g_k) \mapsto
(\det (g_j) g_-,g_1,\ldots,g_{j-1},\hat J g_j^{-T} \hat J^{-1}, g_{j+1},\ldots,g_k ).
\end{equation}
We record that the effect of \eqref{eq:1.23} on irreducible representations is
\[ 
\sigma_- \boxtimes \sigma_1 \boxtimes \cdots \boxtimes \sigma_k \mapsto
\sigma_- \boxtimes \sigma_1 \boxtimes \cdots \boxtimes \sigma_{j-1} \boxtimes (\sigma_j^\vee
\otimes \nu_{\sigma_-} \circ \det) \boxtimes \sigma_{j+1} \boxtimes \cdots \boxtimes \sigma_k ,
\]
where $\nu_{\sigma_-}$ is character by which the central subgroup $F^\times \subset G_{n_-}$
acts on $\sigma_-$.

Further $\rN_G (L)$ contains elements that act on $L$ by permuting some type GL factors of the
same size. The group $\rN_{\mr{GPin}(V)} (L) / L$ is generated by elements of these two kinds, 
and is isomorphic to a direct product of Weyl groups of type $B_e$. For $\rN_G (L)/L$ the only
difference is that the elements from \eqref{eq:1.23} are subject to a determinant condition if 
dim$(V)$ is even. Notice that these descriptions match those after \eqref{eq:1.20}. Thus there
are canonical isomorphisms
\begin{equation}\label{eq:1.24}
\rN_G (L)/L \cong \rN_{G^\vee}(L^\vee)/L^\vee \quad \text{and} \quad
\rN_{\mr{GPin}(V)}(L)/L \cong \rN_{G^{\vee +}}(L^\vee) / L^\vee .
\end{equation}

\begin{thm}\label{thm:1.5}
\enuma{
\item There exists a injection 
\begin{itemize}
\item from the set of supercuspidal Bernstein components of $\Irr (L)$,
\item to the set of cuspidal Bernstein components of $\Phi_\enh (L)$.
\end{itemize}
This bijection is equivariant for the natural actions of \eqref{eq:1.24} and becomes canonical
if we pass to $\mr{Out}(\mc G_{n_-})$-orbits.
\item Let $L$ run through a set of representatives for the conjugacy classes of Levi subgroups
of $G$. The corresponding instances of part (a) provide an injection 
\begin{itemize}
\item from the set of Bernstein components of $\Irr (G)$,
\item to the set of Bernstein components of $\Phi_\enh (G)$.
\end{itemize}
This injection becomes canonical if we pass to $\mr{Out}(\mc G)$-orbits.
}
\end{thm}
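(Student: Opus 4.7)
The plan is to build the injection in part~(a) by combining Corollary~\ref{cor:1.4} for the $G_{n_-}$-factor with the LLC for general linear groups on the remaining factors, check equivariance for the action of \eqref{eq:1.24}, and then in part~(b) pass to $\rN_G(L)/L$-orbits on both sides.

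First I would write a Levi as $L = G_{n_-} \times \prod_{j=1}^k \GL_{n_j}(F)$, and observe that a supercuspidal Bernstein component of $\Irr(L)$ decomposes as a product of supercuspidal Bernstein components of $\Irr(G_{n_-})$ and of each $\Irr(\GL_{n_j}(F))$. The same is true on the Galois side: a cuspidal Bernstein component of $\Phi_e(L)$ is a cuspidal Bernstein component of $\Phi_e(G_{n_-})$ together with one of each $\Phi_e(\GL_{n_j}(F))$, the enhancements on the $\GL$-factors being forced to be trivial because the relevant component groups are trivial. The desired map is then defined by applying Corollary~\ref{cor:1.4} on the $G_{n_-}$-factor and the (fully canonical) LLC on each $\GL$-factor. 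Injectivity follows factorwise, and the only ambiguity is the $\Out(\mc G_{n_-})$-orbit inherited from Corollary~\ref{cor:1.4}.

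Next I would verify equivariance under the two kinds of generators of $\rN_G(L)/L \cong \rN_{G^\vee}(L^\vee)/L^\vee$ described in the paper. Permutations of equal-sized $\GL$-blocks act compatibly on the two sides by construction. The involutions in \eqref{eq:1.23} send $\sigma_j$ to $\sigma_j^\vee \otimes \nu_{\sigma_-} \circ \det$, while on the Galois side \eqref{eq:1.25} sends $\phi_j$ to $\phi_j^\vee \otimes \mu_G^\vee \circ \phi$. Because the LLC for $\GL_{n_j}(F)$ is compatible with contragredient and with character twists, it suffices to show that the matching from Corollary~\ref{cor:1.4} sends $\nu_{\sigma_-}$ to $\mu_G^\vee \circ \phi_-$. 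I would extract this from the Jordan-block dictionary underlying Theorem~\ref{thm:1.1}: the identity \eqref{eq:1.44} on the representation side translates, under the LLC applied to each $\rho \in \mr{Jord}(\sigma_-)$, into the identity \eqref{eq:1.46} on the parameter side, which pins $\mu_G^\vee \circ \phi_-$ down to agree with $\nu_{\sigma_-}$.

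For part~(b) I would invoke the Bernstein decomposition to identify Bernstein components of $\Irr(G)$ with $\rN_G(L)/L$-orbits of supercuspidal Bernstein components of $\Irr(L)$, and the analogous statement from \cite[\S 8]{AMS1} on the L-parameter side with $\rN_{G^\vee}(L^\vee)/L^\vee$-orbits. The equivariance from part~(a), combined with the canonical identification \eqref{eq:1.24}, then lets the injection descend to these orbits. The $\Out(\mc G_{n_-})$-indeterminacy from part~(a) gets absorbed into the $\Out(\mc G)$-indeterminacy at the level of $G$, since $G_{n_-}$ and $G$ have the same parity of dimension of the underlying form and therefore the same type of outer automorphism group. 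The main obstacle I expect is the central-character/similitude-character compatibility in the previous paragraph: one has to unwind the construction of Corollary~\ref{cor:1.4} carefully to see that $\nu_{\sigma_-}$ is genuinely sent to $\mu_G^\vee \circ \phi_-$ and not merely to something equal on a subgroup. A secondary subtlety is, in the even-dimensional cases, to verify that the choices in \eqref{eq:1.9} made at the supercuspidal level can be propagated coherently through the $\rN_G(L)/L$-action without introducing extra non-canonicity beyond the advertised $\Out(\mc G)$-orbit.
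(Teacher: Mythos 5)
Your proposal follows essentially the same route as the paper's proof: apply Corollary~\ref{cor:1.4} to the $G_{n_-}$-factor and the (canonical) LLC to each $\GL$-factor, verify equivariance against the explicit formulas \eqref{eq:1.13}, \eqref{eq:1.23}, \eqref{eq:1.25}, and then pass to $\rN_G(L)/L$-orbits via the Bernstein decompositions on both sides. The two subtleties you flag --- the $\nu_{\sigma_-}\leftrightarrow\mu_G^\vee\circ\phi_-$ compatibility, which is indeed encoded in the dictionary \eqref{eq:1.44}--\eqref{eq:1.46}, and the coherence of the choices \eqref{eq:1.9} under the $\rN_G(L)/L$-action --- are exactly the points where care is needed, and both are already absorbed into the construction and $\Out(\mc G_{n_-})$-equivariance established for Corollary~\ref{cor:1.4}.
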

\begin{proof}
(a) The injection and the canonicity follow from Corollary\autoref {cor:1.4}, while the equivariance
can be seen from our explicit formulas for the actions of \eqref{eq:1.24}, namely \eqref{eq:1.13},
\eqref{eq:1.25} and \eqref{eq:1.23}.\\
(b) By definition Bernstein components of $\Irr (G)$ are parametrized by supercuspidal Bernstein
components for Levi subgroups of $G$. Further $\mf s_L \subset \Irr_\cusp (L)$ and 
$\mf s_{L'} \subset \Irr_\cusp (L')$ give the same Bernstein component for $\Irr (G)$ if and 
only if $\mf s_L$ and $\mf s_{L'}$ are $G$-conjugate. Analogous statements hold for Bernstein
components of $\Phi_\enh (G)$ \cite[\S 8]{AMS1}, which yields the desired bijection. By the 
equivariance in part (a), this bijection does not depend on the choice of the representative
Levi subgroups.
\end{proof}

With Theorem\autoref {thm:1.2} we consider $\sigma = \pi (\phi_\sigma,\epsilon_\sigma) \in \Irr_\cusp (L)$. 
Then $\mf s_L = X_\nr (L) \sigma$ is the image of $\mf s_L^\vee$ under Theorem\autoref {thm:1.5}.b.
The injectivity and $X_\nr (L)$-equivariance in Theorem\autoref {thm:1.3} say that this extends to 
an injection from $\mf s_L$ to $\mf s_L^\vee$. Then the equivariance in Theorem\autoref {thm:1.5}.a 
guarentees that the groups $W_{\mf s}$ and $W_{\mf s^\vee}$ are canonically isomorphic.

We may assume that $\sigma$ has been normalized like $\phi$ after \eqref{eq:1.26}. Then
the group $W_{\mf s^\vee}$ can also be described as the stabilizer of 
\begin{equation}\label{eq:1.27}
\sigma  \; = \; \sigma_- \: \boxtimes \: 
\mathlarger{\mathlarger{\boxtimes}}_\rho \: \rho^{\boxtimes e_\rho} \; = \; 
\sigma_- \: \boxtimes \: 
\mathlarger{\mathlarger{\boxtimes}}_j \: {\rho_j}^{\boxtimes e_j} .
\end{equation}
in $\rN_G (L) / L$. The stabilizer $W_{\mf s}^+$ of $\sigma$ in $\rN_{\mr{GPin}(V)}(L)/L$
decomposes as a direct product of subgroups $W_{\mf s,\rho}$. From \eqref{eq:1.23} we see that
\begin{itemize}
\item if $\rho \not\cong \rho^\vee \otimes \nu_{\sigma_-} \circ \det$, then 
$W_{\mf s,\rho} \cong S_{e_\rho} \cong W (A_{e_\rho - 1})$,
\item if $\rho \cong \rho^\vee \otimes \nu_{\sigma_-} \circ \det$, then 
$W_{\mf s,\rho} \cong W(B_{e_\rho}) = W(C_{e_\rho})$, which can sometimes be interpreted
better as $W(D_{e_\rho}) \rtimes \mr{Aut}(D_n)$.
\end{itemize}
Using Theorem\autoref {thm:1.5}, we can express the complex torus underlying $\mc H (\mf s)$ as 
\begin{equation}\label{eq:3.18}
T_{\mf s} = \mf s_L \cong \mf s_L^\vee = T_{\mf s^\vee},
\end{equation}
Further, the action of $W_{\mf s}^+$ on $T_{\mf s}$ can be identified with the action of
$W_{\mf s^\vee}^+$ on $T_{\mf s^\vee}$. The map $\chi \mapsto \sigma \otimes \chi$ provides a
homeomorphism
\[
T_{\mf s} \cong X_\nr (L) / X_\nr (L,\sigma) ,
\]
where $X_\nr (L,\sigma) \subset X_\nr (L)$ is the stabilizer of $\sigma \in \Irr_\cusp (L)$.
Hence
\[
X^* (T_{\mf s}) \cong L_\sigma / L^1 
\text{ where } L_\sigma = \bigcap\nolimits_{\chi \in X_\nr (L,\sigma)} \ker \chi .
\]
More explicitly, $L/L^1 \cong \Z \times \prod_j \Z^{e_j}$ and 
\[
L_\sigma / L^1 \text{ is the subgroup } \Z \times \prod\nolimits_j (t_{\rho_j} \Z )^{e_j} ,
\]
where the first factor $\Z$ comes from $\rZ(\mc G)^\circ \cong \GL_1$. We recall that $t_\rho$ 
denotes the torsion number of $\rho$, that is, the number of unramified characters of 
$\GL_{d_\rho}(F)$ that stabilize $\rho \in \Irr (\GL_{d_\rho}(F))$.
We write the root datum for $\mc H (\mf s)$ as
\[
\mc R_{\mf s} = \big( \Sigma_{\mf s}, X^* (T_{\mf s}), \Sigma_{\mf s}^\vee, X_* (T_{\mf s}) \big).
\]
As explained in \cite[\S 3]{SolEnd}, the root system $\Sigma_{\mf s}^\vee$ comes from the roots 
$\alpha \in \Sigma_\red (G, \rZ(L))$ for which the so-called Harish-Chandra $\mu$-function $\mu^\alpha$
has a zero on $\mf s_L$. Then $\Sigma_{\mf s}$ consists of multiples of some elements of
$\Sigma_\red (G, \rZ(L))^\vee \cong \Sigma_\red (G^\vee,\rZ(L^\vee))$, just like $R_{\mf s^\vee}$ in
\eqref{eq:1.30}.

The group $W_{\mf s}^+$ acts naturally on $\mc R_{\mf s}$ and contains $W(\Sigma_{\mf s})$.
Our choice of a Borel subgroup $B^\vee$ of $G^\vee$ yields a system of positive roots 
$\Sigma_{\mf s}^+$ in $\Sigma_{\mf s}$. If $\Gamma_{\mf s}^{(+)}$ denotes the stabilizer of
$\Sigma_{\mf s}^+$ in $W_{\mf s}^{(+)}$, then 
\begin{equation}\label{eq:1.28}
W_{\mf s}^+ = \Gamma_{\mf s}^+ \ltimes W(\Sigma_{\mf s}) \quad \text{and} \quad
W_{\mf s} = \Gamma_{\mf s} \ltimes W(\Sigma_{\mf s}) .
\end{equation}
To match this decomposition with \eqref{eq:1.29}, we need to compare the underlying root systems.
In \cite[\S 3]{SolEnd} an element 
\begin{equation}\label{eq:1.32}
h_\alpha^\vee \in (L_\sigma \cap L_\alpha^1) / L^1 \subset L_\sigma / L^1 = X^* (T_{\mf s}) 
\end{equation}
was associated to each $\alpha \in  \Sigma_\red (G, \rZ(L))$. Here $L_\alpha$ is the Levi subgroup
of $G$ which contains $L$ and the root subgroups $\rU_{\alpha'}$ (for $\alpha' \in R (G,S)$ with
$\alpha' |_{\rZ(L)} \in \Q \alpha$) and whose semisimple rank is one higher than that of $L$.
In fact $(L_\sigma \cap L_\alpha^1) / L^1 \cong \Z$, $h_\alpha^\vee$ generates this group and
is pinned down by the requirement $\nu_F (\alpha (h_\alpha^\vee)) > 0$. Then
\begin{equation}\label{eq:1.31}
\Sigma_{\mf s} = \{ h_\alpha^\vee : \mu^\alpha \text{ has a zero on } \mf s_L \} .
\end{equation}
Recall that $R_{\mf s^\vee}$ is a disjoint union of irreducible root systems 
\[
R_{\mf s^\vee,\tau} = R (G^\vee_{\phi,\tau}T,T) = R (G^\vee_{\phi (\mb I_F),\tau} T,T)
\]
which are given explicitly in Table~\ref{tab:2}. Similarly, by \cite[Proposition 1.13]{Hei3}, 
$\Sigma_{\mf s}$ is a disjoint union of irreducible root systems $R_{\mf s,\rho}$, each one
coming from the factors $\GL_{m_j}(F)$ of $L$ with $\sigma_j = \rho$. By 
\cite[Proposition 1.15]{Hei3} (generalized to our setting) the groups
$W_{\mf s}^+$ and $\Gamma_{\mf s}^+$ decompose canonically as direct products of subgroups 
$W_{\mf s,\rho}^+$ and $\Gamma_{\mf s,\rho}^+$.

We fix one $\rho$ and we let $\tau \in \Irr (\mb W_F)$ be its image under the local Langlands correspondence for 
$\GL_{m_j}(F)$. By design $e_\tau = e_\rho > 0$. Recall from \eqref{eq:1.33} and \eqref{eq:1.21}
that $W_{\mf s^\vee}^+$ and $\Gamma_{\mf s^\vee}^+$ decompose canonically as direct products 
of subgroups $W_{\mf s^\vee,\tau}^+$ and $\Gamma_{\mf s,\tau}^+$. By Theorem\autoref {thm:1.5}.a
\begin{equation}\label{eq:1.34}
W_{\mf s,\rho}^+ \cong W_{\mf s^\vee,\tau}^+ 
\qquad \text{for all } \tau \in \Irr (\mb W_F) \text{ with } e_\rho > 0. 
\end{equation}
Since Jord$(\sigma_-)$ and Jord$(\phi_-)$ correspond via the local Langlands correspondence, $\ell_\tau > 0$ if and only if 
$\rho$ appears in Jord$(\sigma_-)$. We write 
\[
a_\rho = \max \{ a : (\rho,a) \in \mr{Jord}(\sigma_-) \} ,
\]
which equals $a_\tau$. Let $\rho'$ correspond to $\tau'$ via the local Langlands correspondence for $\GL_{d_\rho}(F)$,
so $\rho'$ is an unramified twist of $\rho$ which is not isomorphic to $\rho$, but still
$\rho' \cong \rho^{'\vee} \otimes \nu_{\sigma_-} \circ \det$.

\begin{prop}\label{prop:1.6}
There is a canonical bijection $R_{\mf s,\rho} \to R_{\tau,\red}$ which respects positivity of
roots. In particular $W(R_{\mf s,\rho}) \cong W (R_\tau)$ and 
$\Gamma_{\mf s,\rho}^+ \cong \Gamma_{\mf s^\vee,\tau}^+$.
\end{prop}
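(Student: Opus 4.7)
The plan is to compute $R_{\mf s,\rho}$ explicitly using Heiermann's description via zeros of the $\mu$-function, and then verify case by case that the resulting root system matches the root system $R_{\tau,\red}$ of Table \ref{tab:1}, with compatible positive systems.

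First, by \cite[Proposition 1.13]{Hei3} and \eqref{eq:1.31}, the root system $R_{\mf s,\rho}$ is supported on the rank-$e_\rho$ sublattice of $X^\ast(T_{\mf s})$ coming from the $e_\rho$ factors $\GL_{d_\rho}(F)$ of $L$ at which $\sigma$ has the component $\rho$. Its reduced roots are built out of the co-roots $h_\alpha^\vee$ from \eqref{eq:1.32} attached to those $\alpha \in \Sigma_\red(G,\rZ(L))$ for which $\mu^\alpha$ vanishes on $\mf s_L$. The analysis of these zeros is standard in this setting: for roots of $\GL_{d_\rho}(F) \times \GL_{d_\rho}(F)$ type the relevant reducibility of $\rho \times \rho'$ forces the type-$A$ contribution, whereas for roots linking a $\GL_{d_\rho}$-factor to $G_{n_-}$ the reducibility points of $\rho \times \sigma_-$ (governed by $a_\rho$) and the self-duality property $\rho \cong \rho^\vee \otimes \nu_{\sigma_-}\circ\det$ determine whether one obtains a $B$, $C$, $BC$ or $D$ type. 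This is essentially \cite[\S 1.15]{Hei3}, generalized to $\GSpin$ via the results of \cite{SolEnd}.

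Second, I would match the data on the two sides. The LLC for $\GL_{d_\rho}(F)$ sends $\rho$ to $\tau$ and preserves unramified twists, so $e_\rho = e_\tau$. The property $\rho \cong \rho^\vee \otimes \nu_{\sigma_-}\circ\det$ becomes, under the LLC for general linear groups together with \eqref{eq:1.45} and \eqref{eq:1.46}, the property $\tau \cong \tau^\vee \otimes \mu_G^\vee \circ \phi$, i.e.\ $\tau \in \Irr(\mb W_F)_\phi^{\pm}$; non-self-duality on one side corresponds to $\tau \in \Irr(\mb W_F)_\phi^0$ on the other. Moreover the sign (orthogonal vs.\ symplectic nature of $\rho$ with respect to $\nu_{\sigma_-}$) matches $\mr{sgn}(\tau)$ by the preservation of self-dual types under the LLC, hence places $\tau$ in $\Irr(\mb W_F)_\phi^+$ or $\Irr(\mb W_F)_\phi^-$ consistently with $\mr{sgn}({G^\vee}_\der)$. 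Theorem \ref{thm:1.5}.a (combined with the injection $\sigma_- \mapsto (\phi_-,\epsilon)$ of Theorem \ref{thm:1.2}) identifies Jord$(\sigma_-)$ with Jord$(\phi_-)$, so $\ell_\tau > 0$ iff $\rho \in \mr{Jord}(\sigma_-)$, and the reducibility point $a_\rho$ coincides with $a_\tau$ of \eqref{eq:1.50}. Running through the five cases of Table \ref{tab:1} and comparing with the Heiermann output gives a type-preserving bijection $R_{\mf s,\rho}\to R_{\tau,\red}$; the rank is forced to be $e_\rho=e_\tau$, and the Weyl-group isomorphism $W(R_{\mf s,\rho})\cong W(R_\tau)$ follows.

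Third, I would handle positivity and the $\Gamma$-factors. A choice of parabolic $P \supset L$ on the $p$-adic side and of Borel $B^\vee \supset T$ on the Galois side can be made compatibly with \eqref{eq:1.24}: both pick out the same system of positive coroots in $\Sigma_\red(G,\rZ(L))^\vee\cong\Sigma_\red(G^\vee,\rZ(L^\vee))$, which by the explicit description of $h_\alpha^\vee$ translates into compatible positive systems in $R_{\mf s,\rho}$ and $R_{\tau,\red}$. Given the isomorphism $W_{\mf s,\rho}^+ \cong W_{\mf s^\vee,\tau}^+$ from \eqref{eq:1.34} and the decomposition \eqref{eq:1.28}, the stabilizers of these positive systems match, which yields $\Gamma_{\mf s,\rho}^+ \cong \Gamma_{\mf s^\vee,\tau}^+$.

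The main obstacle will be the case analysis in the second step: one has to be careful when $\tau \in \Irr(\mb W_F)_\phi^{+-}$, where an unramified twist $\tau'$ of $\tau$ also carries the opposite sign, since the torsion number $t_\tau$ and the possibility $\ell_\tau=0=\ell_{\tau'}$ with $a_\tau, a_{\tau'}$ of different parities alter the length data in Table \ref{tab:2}. On the $p$-adic side the corresponding phenomenon is that $\rho$ and an unramified twist $\rho'$ of $\rho$ may both satisfy the self-duality relation while giving different reducibility points $a_\rho, a_{\rho'}$; the normalization of $\sigma$ carried out in analogy with \eqref{eq:1.26} (i.e.\ after \eqref{eq:1.27}) puts $\rho$ in the position parallel to the normalization of $\phi$ made after \eqref{eq:1.26}, so the reducibility data match and the $B_{e_\rho}$-type (as opposed to $C_{e_\rho}$) structure is obtained consistently on both sides. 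Once these alignments are in place, the bijection is visibly canonical, and the induced isomorphism $W(R_{\mf s,\rho})\cong W(R_\tau)$, $\Gamma_{\mf s,\rho}^+\cong\Gamma_{\mf s^\vee,\tau}^+$ is forced.
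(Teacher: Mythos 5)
Your overall plan is essentially the paper's proof: compute $R_{\mf s,\rho}$ from the zeros of Heiermann's $\mu$-function, transfer the Jordan-block data across the LLC for $\GL_{d_\rho}(F)$ using Theorem \ref{thm:1.5}.a, do a case analysis against Table \ref{tab:1}, and deduce the Weyl-group and $\Gamma$-factor isomorphisms from positivity together with \eqref{eq:1.34}. So the approach is the right one.

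Two points are worth flagging. First, you describe the resulting bijection $R_{\mf s,\rho}\to R_{\tau,\red}$ as ``type-preserving,'' but in the case $\tau\in\Irr(\mb W_F)^-_\phi$, $\ell_\tau=0$ one has $R_{\mf s,\rho}\cong B_{e_\rho}$ while $R_\tau\cong C_{e_\tau}$; the canonical map sends the short roots of $B_{e_\rho}$ to the long roots of $C_{e_\tau}$ by multiplication by $2$, which is the only non-identity part of the bijection. Second, the actual content of the proof is the case analysis you defer to citations. In particular, the case $\tau\in\Irr(\mb W_F)^+_\phi$, $\ell_\tau=0$ (where $R_{\mf s,\rho}$ should be $D_{e_\rho}$) is not a straight read-off of \cite{Hei3}: one must use the normalization of $\sigma$ (so that $\ell_{\tau'}=0$ as well) to show via the explicit form of $\mu^\alpha$ from \cite[(3.7)]{SolEnd} that the short $B$-roots and long $C$-roots are absent, and then use a $W_{\mf s}^+$-conjugacy argument ($\beta=e_i+e_j$ is $s_\alpha$-conjugate to a root of $A_{e_\rho-1}$, so $\mu^\beta$ inherits a zero) to establish that the roots of $D_{e_\rho}\setminus A_{e_\rho-1}$ really do occur. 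Your sketch does not exhibit this step, which is where the proof actually requires a new argument rather than a citation; the ``obstacle'' you single out ($\Irr(\mb W_F)^{+-}_\phi$ and the torsion numbers) concerns the scaled root datum $\mc R_{\mf s^\vee}$ of Table \ref{tab:2}, not $R_{\tau,\red}$, and so is not actually at issue in this proposition.
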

\begin{proof}
The proof of \cite[Proposition 1.13]{Hei3} shows that for $\GL_{d_\rho}(F)^{e_\rho} \subset L$ 
the roots $\alpha : t \mapsto t_i t_j^{-1}$ with $1 \leq i,j \leq e_\rho, i \neq j$ can be 
treated entirely like roots for some general linear group. Hence the associated function
$\mu^\alpha$ has a zero on $\mf s_L$ and $h_\alpha^\vee \in \Sigma_{\mf s}$. Thus $R_{\mf s,\rho}$
always contains a root subsystem of type $A_{e_\rho - 1}$. In terms of $\mc R_{\mf s}$:
the corresponding part of $X^* (T_{\mf s})$ can be identified with $(t_\rho \Z)^{e_\rho}$
and $A_{e_\rho - 1}$ is embedded there as the elements $h_\alpha^\vee = t_\rho \alpha^\vee$
with $\alpha^\vee \in \Z^{e_\rho}$ of the usual form 
\[
(0,\ldots,0,1,0,\ldots,0,-1,0,\ldots,0) .
\]
In view of the description of $W_{\mf s^\vee}^+$ following \eqref{eq:1.27}, $R_{\mf s,\rho}$ is
a $S_{e_\rho}$-stable reduced root subsystem of $BC_{e_\rho}$. In other words, it has type
$A_{e_\rho - 1}$, $B_{e_\rho}$, $C_{e_\rho}$ or $D_{e_\rho}$. We check all the cases in Table~\ref{tab:1}. 
\begin{itemize}
\item $\tau \in \Irr (\mb W_F)^0_\phi, \ell_\tau = 0$. Then $\rho \not\cong \rho^\vee \otimes
\nu_{\sigma_-} \circ \det$ and from \eqref{eq:1.23} we see that 
$R_{\mf s,\rho} \cong A_{e_\rho - 1} \cong R_\tau$.
\item $\tau \in \Irr (\mb W_F)^\pm_\phi , \ell_\tau > 0$. Here $\rho \cong \rho^\vee 
\otimes \nu_{\sigma_-} \circ \det$ and $a_\tau = a_\rho > 0$. Since  $(\rho,a_\rho) \in 
\mr{Jord}(\sigma_-)$, $\rho \otimes |\cdot|^{(a_\rho + 1)/2} \times \sigma_-$ is reducible 
\cite[\S 3.2]{Moe3}. Hence the automorphism \eqref{eq:1.23} comes from a root $\alpha$ for 
which $\mu^\alpha$ has a zero on $T_{\mf s}$. In the picture \eqref{eq:1.32} that becomes 
$h_\alpha^\vee = t_\rho \alpha^\vee$ with $\alpha^\vee$ a standard basis vector of 
$\Z^{e_\rho}$. In particular $R_{\mf s,\rho}$ has type $B_{e_\rho}$, just like $R_{\tau,\red}$.
\item $\tau \in \Irr (\mb W_F)^-_\phi, \ell_\tau = 0$. Again $\rho \cong \rho^\vee 
\otimes \nu_{\sigma_-} \circ \det$, but now $\rho$ does not occur in Jord$(\sigma_-)$. Still
\eqref{eq:1.23} fixes $\rho$, and by \cite[p. 1610]{Hei2} $\rho \otimes |\cdot|^{1/2} \rtimes 
\sigma_-$ is reducible. This is like the previous case, only with $a_\tau = a_\rho = 0$.
Notice that $\ell_{\tau'} = a_{\tau'} = a_{\rho'} = 0$ as well. Again we find 
$R_{\mf s,\rho} \cong B_{e_\rho}$, while $R_\tau \cong C_{e_\rho}$.
\item $\tau \in \Irr (\mb W_F)^+_\phi, \ell_\tau = 0$. Now \eqref{eq:1.23} fixes 
$\rho \cong \rho^\vee \otimes \nu_{\sigma_-} \circ \det$ although $\rho$ does not occur in
Jord$(\sigma_-)$. By \cite[p. 1610]{Hei2}, $\rho \times \sigma_-$ is reducible. By our 
assumptions on $\sigma$, $\ell_{\tau'} = 0$, so $\rho' \times \sigma_-$ is also reducible.
Then the shape of $\mu^\alpha$ \cite[(3.7)]{SolEnd} entails that $\mu^\alpha$ is constant
on $T_{\mf s}$, for $\alpha$ associated to \eqref{eq:1.23}. Hence $R_{\mf s,\rho}$ does not
contain short roots from $B_{e_\rho}$ or long roots from $C_{e_\rho}$. 

Consider a root in $D_{e_\rho} \setminus A_{e_\rho - 1}$, so of the form 
\[
\beta = (0,\ldots,0,1,0,\ldots,0,1,0,\ldots,0) .
\]
Via a suitable reflection $s_\alpha$ with $\alpha$ as before, $\beta$ is associate to a root 
$\beta' \in A_{e_\rho - 1}$. Since $s_\alpha \in W_{\mf s}^+$, $\mu^\beta = \mu^{\beta'} \circ
s_\alpha$. As $\mu^{\beta'}$ has a zero on $T_{\mf s}$, so does $\mu^\beta$. Therefore 
$R_{\mf s,\tau}$ contains 
\[
h_\beta^\vee = t_\rho \beta^\vee = t_\rho (0,\ldots,0,1,0,\ldots,0,1,0,\ldots,0) ,
\]
and $R_{\mf s,\rho} \cong D_{e_\rho} \cong R_\tau$.
\end{itemize}
In all cases there is indeed a natural bijection $R_{\mf s,\rho} \to R_{\tau,\red}$: the 
identity on all roots except the short roots in the second case, those are multiplied by 2.
The bijection preserves positivity of roots, so it induces an isomorphism from the stabilizer
$\Gamma_{\mf s,\rho}^+ \subset W_{\mf s,\rho}^+$ of $R_{\mf s,\rho}^+$ to the stabilizer 
$\Gamma_{\mf s^\vee,\tau}^+ \subset W_{\mf s^\vee,\tau}^+$ of $R_\tau^+$.
\end{proof}

Now we analyse the $q$-parameters for $\mc H (\mf s)$.
In view of the shape of $\mu^\alpha$ \cite[(3.7) and (3.10)]{SolEnd}, the condition 
\eqref{eq:1.31} on $\alpha \in \Sigma_\red (G, \rZ(L))$ is equivalent with $q_\alpha > 1$, where 
$q_\alpha$ comes from $\mu^\alpha$ and will also be a $q$-parameter for $\mc H (\mf s)$. The 
parameter functions $\lambda, \lambda^* : \Sigma_{\mf s} \to \R_{\geq 0}$ and the parameters 
\[
q_\alpha = q_F^{(\lambda (\alpha) + \lambda^* (\alpha))/2} ,\quad 
q_\alpha^* = q_F^{(\lambda (\alpha) - \lambda^* (\alpha))/2} 
\]
were computed in \cite{Hei2,Hei3}. Although these papers were 
written for Sp$(V)$ and SO$(V)$, the same arguments apply in our setting, that was checked in 
\cite{SolEnd,SolParam}. The $q$-parameters on $R_{\mf s,\rho}$ are expressed in terms of
$t_\rho$ and $a_\rho$. More precisely, by \cite[Proposition 3.4]{Hei2} the $q$-parameters are:
\begin{itemize}
\item if $R_{\mf s,\rho} \cong B_{e_\rho}$ and $\alpha$ is a short root, then 
$q_\alpha = q_F^{t_\rho (a_\rho + 1)/2}$ and $q_\alpha^* = q_F^{t_\rho (a_{\rho'} + 1)/2}$,
\item otherwise $q_\alpha = q_F^{t_\rho}$ and $q_\alpha^* = 1$.
\end{itemize}
With $q_F$ as $q$-base that gives
\begin{itemize}
\item $\lambda (\alpha) = t_\rho (a_\rho + a_{\rho'} + 2)/ 2, \lambda^* (\alpha) =
t_\rho (a_\rho - a_{\rho'})/2$ if $\alpha$ is a short root in $B_{e_\rho}$,
\item $\lambda (\alpha) = \lambda^* (\alpha) = t_\rho$ otherwise.
\end{itemize}
In the case $\tau, \tau' \in \Irr (\mb W_F)^-_\phi, \ell_\tau + \ell_{\tau'} = 0$ we find 
$q_\beta = q_\beta^* = q_F^{t_\rho /2}$ for the short roots $h_\beta^\vee$ in $B_{e_\rho}$. 
As explained in \cite[proof of Theorem 4.9]{SolParam}, we may replace $h_\beta^\vee$ by a long 
root $(h_\beta^\vee)^2 = h_{\beta/2}^\vee$ of $C_{e_\rho}$, and simultaneously put
\begin{equation}\label{eq:3.19}
q_{\beta/2} = q_F^{t_\rho} ,\qquad q_{\beta/2}^* = 1 ,\qquad 
\lambda (\beta/2) = \lambda^* (\beta/2) = t_\rho .
\end{equation}
With that improvement, the bijection $R_{\mf s,\rho} \to R_{\tau,\red}$ in Proposition
\autoref{prop:1.6} becomes simply the restriction of the canonical bijection $X^* (T_{\mf s})
\to X_* (T_{\mf s^\vee})$ to reduced roots. That yields a canonical isomorphism of root data
\[
\mc R_{\mf s} \cong \mc R_{\mf s^\vee,\red} ,
\]
where the subscript red means that (for the involved non-reduced root systems $BC_e$) we take 
only the indivisible roots and the non-multipliable coroots. Comparing with page \pageref{eq:1.20}, 
we see that the parameter functions $\lambda,\lambda^*$ for $\mc H (\mf s)$ are the same as those 
for $\mc H (\mf s^\vee,z)$ with $z = q_F^{1/2}$. Thus we find a canonical isomorphism of affine
Hecke algebras
\begin{equation}\label{eq:1.35}
\mc H (\mc R_{\mf s}, \lambda, \lambda^*,q_F^{1/2}) \cong 
\mc H (\mc R_{\mf s^\vee}, \lambda, \lambda^*,q_F^{1/2}) .
\end{equation}
Here we have $q_F^{1/2}$ instead of $q_F$ because in the setup of \cite{AMS3} the indeterminate
$z^2$ was a replacement of the usual $q$ in affine Hecke algebras. 

We recall from \eqref{eq:1.21} that 
\[
\Gamma_{\mf s^\vee}^+ = 
\prod\nolimits_{\tau \in \Irr (\mb W_F)^+_\phi, \ell_\tau = 0} \langle r_\tau \rangle ,
\]
where $r_\tau$ is the nontrivial automorphism of $R_{\mf s^\vee,\tau} \cong D_{e_\tau}$.
With Proposition\autoref {prop:1.6} we deduce that 
\[
\Gamma_{\mf s}^+ = 
\prod\nolimits_{\tau \in \Irr (\mb W_F)^+_\phi, \ell_\tau = 0} \langle r_\rho \rangle ,
\]
where $\rho$ corresponds to $\tau$ via the local Langlands correspondence for $\GL_{\dim \tau}(F)$ and $r_\rho$ is the
nontrivial automorphism of $R_{\mf s,\rho} \cong D_{e_\rho}$. For each such $\tau$ we define
$J_{r_\tau}$ as in \cite[\S 4.6]{Hei3}, it is unique up a factor $\pm 1$. Then the arguments
from \cite{Hei3} remain valid in our setting (see \cite[\S 10]{SolEnd}) and they show that
\begin{equation}\label{eq:1.37}
\mc H (\mf s) \cong \mc H (\mc R_{\mf s}, \lambda, \lambda^*,q_F^{1/2}) \rtimes \Gamma_{\mf s},
\end{equation}
where $\Gamma_{\mf s}$ acts on $\mc H (\mc R_{\mf s}, \lambda, \lambda^*,q_F^{1/2})$ via
automorphisms of $\mc R_{\mf s}$. We note that the algebra on the right is canonically 
isomorphic to its own opposite:
\begin{equation}\label{eq:1.36}
\begin{array}{lcl}
\mc H (\mc R_{\mf s}, \lambda, \lambda^*,q_F^{1/2}) \rtimes \Gamma_{\mf s} & \isom &
\big( \mc H (\mc R_{\mf s}, \lambda, \lambda^*,q_F^{1/2}) \rtimes \Gamma_{\mf s} \big)^{\op} \\
f T'_w & \mapsto & T'_{w^{-1}} f \hspace{2cm} f \in \mc O(T_{\mf s}), w \in W_{\mf s}.
\end{array}
\end{equation}
Here $T'_w$ with $w \in W(R_{\mf s}) \rtimes \Gamma_{\mf s}$ denotes a product of a standard 
generator of $\mc H (\mc R_{\mf s}, \lambda, \lambda^*,q_F^{1/2})$ and an element of 
$\Gamma_{\mf s}$.

\begin{thm}\label{thm:1.7}
\enuma{
\item There exists an algebra isomorphism
\[
\mc H (\mf s) \cong \mc H (\mf s^\vee, q_F^{1/2})
\]
which extends the isomorphism $\mc O (T_{\mf s}) \cong \mc O (T_{\mf s^\vee})$ 
given by Theorem\autoref {thm:1.3}. This isomorphism is canonically determined up to:
\begin{enumerate}[(i)]
\item the action of $\mr{Out}(\mc G)$,
\item conjugation by elements of $\mc O (T_{\mf s})^\times$,
\item adjusting the image of $\Gamma_{\mf s}$ in $\mc H (\mf s)$ by a character of 
$\Gamma_{\mf s}$,
\item Let $\beta$ be a short simple root in a root system $B_{e_\rho}$, and suppose that
$R_{\mf s,\rho}$ has type $D_{e_\rho}$ or that $R_{\mf s,\rho}$ has type $B_{e_\rho}$
and $q_\beta^* = 1$. Then we may replace $s_\beta$ by $h_\beta^\vee s_\beta \in X^* (T_{\mf s})
\rtimes W(R_{\mf s})$ and $T'_{s_\beta}$ by $T'_{h_\beta^\vee s_\beta}$ in 
$\mc H (\mc R_{\mf s}, \lambda, \lambda^*,q_F^{1/2}) \rtimes \Gamma_{\mf s^\vee}^+$.
\end{enumerate}
\item Composition with \eqref{eq:1.36} yields an algebra isomorphism
\[
\mc H (\mf s)^{\op} \cong \mc H (\mf s^\vee, q_F^{1/2}) ,
\]
which is exactly as canonical as part (a).
} 
\end{thm}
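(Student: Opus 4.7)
The plan is to assemble the desired isomorphism by combining three pieces already in hand in this paragraph: the canonical identification \eqref{eq:1.35} of the ``affine parts,'' the semidirect product presentations \eqref{eq:1.22} and \eqref{eq:1.37}, and the self-opposite isomorphism \eqref{eq:1.36}. Proposition \ref{prop:1.6} together with the $q$-parameter computation just carried out yields a canonical isomorphism of based root data with parameters $(\mc R_{\mf s}, \lambda, \lambda^*) \cong (\mc R_{\mf s^\vee, \red}, \lambda, \lambda^*)$, after applying the rescaling \eqref{eq:3.19} in the exceptional case $\tau \in \Irr(\mb W_F)_\phi^-$ with $\ell_\tau + \ell_{\tau'} = 0$. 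On the level of the ``affine'' parts this gives \eqref{eq:1.35}, and in particular the required identification of $\mc O(T_{\mf s})$ with $\mc O(T_{\mf s^\vee})$ via \eqref{eq:3.18}.

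Next I match the diagram-automorphism groups. Proposition \ref{prop:1.6} gives $\Gamma_{\mf s,\rho}^+ \cong \Gamma_{\mf s^\vee, \tau}^+$ for each matched pair $\rho \leftrightarrow \tau$; taking products over all $\rho$ yields $\Gamma_{\mf s}^+ \cong \Gamma_{\mf s^\vee}^+$. The subgroups $\Gamma_{\mf s} \subset \Gamma_{\mf s}^+$ and $\Gamma_{\mf s^\vee} \subset \Gamma_{\mf s^\vee}^+$ are in both cases cut out by the same determinant condition, coming respectively from the distinction $G$ versus $G^+$ on the $p$-adic side and from $G^\vee$ versus $G^{\vee+}$ on the Galois side, which correspond under \eqref{eq:1.24}; hence $\Gamma_{\mf s} \cong \Gamma_{\mf s^\vee}$ canonically.

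To extend \eqref{eq:1.35} across the semidirect products in \eqref{eq:1.22} and \eqref{eq:1.37}, I choose for each generator $r_\rho$ of $\Gamma_{\mf s}^+$ an invertible intertwiner $J_{r_\rho}$ as in \cite[\S 4.6]{Hei3}, unique up to a scalar $\pm 1$. The resulting $2$-cocycle of $\Gamma_{\mf s}^+$ may be computed factor by factor, and on the Galois side the analogous cocycle is trivial since each $\langle r_\tau \rangle \cong \Z/2\Z$ admits only trivial $2$-cocycles, as noted above Table \ref{tab:2}. Hence the lifts match up to scalars, and I obtain an isomorphism $\mc H(\mf s) \cong \mc H(\mf s^\vee, q_F^{1/2})$; composing with the self-opposite map \eqref{eq:1.36} produces the desired $\mc H(\mf s)^{\op} \cong \mc H(\mf s^\vee, q_F^{1/2})$.

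Finally I analyse canonicity. Ambiguity (1) is inherited from Theorem \ref{thm:1.5}: the match $\mf s \leftrightarrow \mf s^\vee$ is only canonical up to $\Out(\mc G)$, and every subsequent step is $\Out(\mc G)$-equivariant. Ambiguity (2) corresponds to replacing each $T'_w$ by $f^{-1} T'_w f$ with $f \in \mc O(T_{\mf s})^\times$, a change that is invisible on $\mc O(T_{\mf s})$. Ambiguity (3) is precisely the $\pm 1$ scalar freedom in each $J_{r_\rho}$ discussed above. Ambiguity (4) arises in the listed cases because the short simple reflection $s_\beta$ satisfies $q_\beta^* = 1$, so that its Hecke quadratic relation is preserved under translation by $h_\beta^\vee \in X^*(T_{\mf s})$, allowing $s_\beta$ to be replaced by $h_\beta^\vee s_\beta$ without altering the structure constants. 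The main obstacle, and the reason the isomorphism is only canonical up to these four kinds of ambiguity, is the lack of an \emph{a priori} preferred normalization of the $J_{r_\rho}$; pinning these down requires the explicit comparison with intertwining operators taken up later in the paper (ultimately resting on the normalizations of \cite{Art}), which is why the full refinement of canonicity must be deferred.
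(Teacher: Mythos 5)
Your construction of the isomorphism follows the paper's route: you assemble it from \eqref{eq:1.35}, \eqref{eq:1.22}, \eqref{eq:1.37}, and the self-opposite map \eqref{eq:1.36}, match $\Gamma_{\mf s}$ with $\Gamma_{\mf s^\vee}$ via Proposition \ref{prop:1.6} and \eqref{eq:1.24}, and lift the $\Gamma$-action using the Heiermann-style intertwiners $J_{r_\rho}$ together with the triviality of the $2$-cocycle. All of this is essentially identical to the paper's first two paragraphs and is fine.

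The gap is in the canonicity analysis. The theorem asserts that the isomorphism is determined \emph{up to} (1)--(4), i.e.\ that these four operations \emph{exhaust} the ambiguity once $\mc O(T_{\mf s}) \cong \mc O(T_{\mf s^\vee})$ is fixed. You only identify each of (1)--(4) as a \emph{source} of ambiguity; you give no argument that there are no others. This exhaustiveness is the substantive content, and the paper spends the bulk of the proof on it: any two isomorphisms differ by an automorphism $\psi$ of $\mc H(\mc R_{\mf s},\lambda,\lambda^*,q_F^{1/2}) \rtimes \Gamma_{\mf s}$ fixing $\mc O(T_{\mf s})$; one extends $\psi$ to the localization $\C(T_{\mf s})^{W_{\mf s}} \otimes_{\mc O(T_{\mf s})^{W_{\mf s}}}(\,\cdot\,) \cong \C(T_{\mf s}) \rtimes W_{\mf s}$ (via Lusztig), shows that $\psi_e(s_\alpha) = \theta_{n_\alpha h_\alpha^\vee}\, s_\alpha$ with $n_\alpha \in \Z$ by exploiting the quadratic relations, packages the $n_\alpha$ into a single $y \in \Hom_\Z(\Z R_{\mf s^\vee},\Z)$ so that $\psi$ is conjugation by $\theta_y$, and then classifies the admissible $y$: those in $X^*(T_{\mf s})$ give (2), and the half-integral $y = (1,\dots,1)/2$ gives an automorphism exactly when the parameter condition $q_\beta^* = 1$ holds, yielding (4). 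A separate analysis of automorphisms that fix the affine Hecke algebra shows they must send each $\gamma \in \Gamma_{\mf s}$ to $\pm\gamma$, yielding (3). Without this argument you have shown existence and listed some ambiguities, but not the uniqueness-up-to-(1)--(4) claim that the theorem actually makes.
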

\textbf{Remarks.} The condition $q_\beta^* = 1$ in (iv) is equivalent with $\lambda (\beta) = 
\lambda^* (\beta)$, and also with $\ell_{\tau'} = 0, a_{\tau'} = -1$. 
Canonical choices for (iii) and (iv) will come from Proposition\autoref {prop:3.2} and Paragraph~\ref{par:intertwining}. That will render the
isomorphisms in Theorem\autoref {thm:1.7} canonical up to $\mr{Out}(\mc G)$ and inner automorphisms.
\begin{proof} 
(a) From \eqref{eq:1.22}, Theorem\autoref {thm:1.3} and Proposition\autoref {prop:1.6} we get an algebra
isomorphism
\begin{equation}\label{eq:1.38}
\mc H (\mf s^\vee, q_F^{1/2}) \cong 
\mc H (\mc R_{\mf s}, \lambda, \lambda^*,q_F^{1/2}) \rtimes \Gamma_{\mf s} .
\end{equation}
It is canonical up to the action of $\mr{Out}(\mc G)$ on supercuspidal representations, see Theorem
\autoref{thm:1.2}. We fix a bijection $\mf s_L \to \mf s_L^\vee$ as in Theorem\autoref {thm:1.3}, then we do 
not have to worry about $\mr{Out}(\mc G)$ any more.
We compose \eqref{eq:1.38} with \eqref{eq:1.37} to obtain the isomorphism in the statement. 

Any two such isomorphisms differ by an automorphism $\psi$ of 
$\mc H (\mc R_{\mf s}, \lambda, \lambda^*,q_F^{1/2}) \rtimes \Gamma_{\mf s}$. We need to investigate
the possibilities for $\psi$. Since the isomorphism 
\[
\mc O (T_{\mf s^\vee}) \cong \mc O (T_{\mf s}) \subset \mc H (\mf s)
\]
has been fixed, $\psi$ is the identity on $\mc O (T_{\mf s})$. Any such $\psi$ extends naturally
to an automorphism $\psi_e$ of
\[
\C (T_{\mf s})^{W_{\mf s}} \underset{\mc O (T_{\mf s})^{W_{\mf s}}}{\otimes} 
\mc H (\mc R_{\mf s}, \lambda, \lambda^*,q_F^{1/2}) \rtimes \Gamma_{\mf s},
\]
an algebra which by \cite[\S 5]{Lus-Gr} is isomorphic to $\C (T_{\mf s}) \rtimes W_{\mf s}$.
As $\psi_e$ is the identity on $\C (T_{\mf s})$ and $W_{\mf s}$ acts faithfully on $T_{\mf s}$,
$\psi_e$ must send any $w \in W_{\mf s}$ to $\theta_w w$ for some $\theta_w \in \C (T_{\mf s})^\times$.

For a simple reflection $s_\alpha \in W(R_{\mf s})$ there are unique $f_1,f_2 \in \C (T_{\mf s})$
such that $T'_{s_\alpha} = f_1 s_\alpha + f_2$, see \cite[5.1.(a)]{Lus-Gr}. Then
\[
\psi (T'_{s_\alpha}) = f_1 \psi_e (s_\alpha) + f_2 = f_1 \theta_{s_\alpha} s_\alpha + f_2 ,
\]
so by the invertibility of $\psi$ we must have 
\[
\theta_{s_\alpha} \in \mc O (T_{\mf s})^\times = \C^\times \times X^* (T_{\mf s}).
\]
Write $\theta_{s_\alpha} = z \theta_x$ with $z \in \C^\times$ and $x \in X^* (T_{\mf s})$.
(We write $\theta_x$ to emphasize that we regard $x$ as an element of $\mc O (T_{\mf s})$.) Then
\[
1 = s_\alpha^2 = \psi_e (s_\alpha )^2 = (z \theta_x s_\alpha )^2 = z^2 \theta_x \theta_{s_\alpha (x)}
s_\alpha^2 = z^2 \theta_{x + s_\alpha (x)} .
\]
Hence $z = \pm 1$ and $s_\alpha (x) = -x$, which implies $x \in \Z h_\alpha^\vee$. (Here we do not use
\eqref{eq:3.19}, in the sense that we do not replace $B_{e_\tau}$ even when that is possible.) For 
every $x \in \Z h_\alpha^\vee$, $f_1 \theta_x s_\alpha + f_2$ satisfies the same quadratic equation as
$T'_{s_\alpha}$, that follows from a computation in $\C (T_{\mf s})$ which uses that 
$\theta_x s_\alpha$ is a reflection in the same direction as $s_\alpha$. On the other hand
$-f_1 \theta_x s_\alpha + f_2$ does not satisfy that quadratic relation, so $z = 1$ and
\begin{equation}\label{eq:1.39}
\psi_e (s_\alpha) = \theta_{n_\alpha h_\alpha^\vee} s_\alpha \qquad \text{for some } n_\alpha \in \Z.
\end{equation}
Let $\alpha^\sharp \in R_{\mf s^\vee}$ be the coroot associated to $h_\alpha^\vee \in R_{\mf s}$.
In the algebra $\C [\Hom (\Z R_{\mf s^\vee}, \Z) \rtimes W(R_{\mf s^\vee})]$, the element
\eqref{eq:1.39} can be rewritten as 
\[
\psi_e (s_\alpha) = \theta_y s_\alpha \theta_{-y} \quad \text{when } 
\langle y, \alpha^\sharp \rangle = n_\alpha .
\]
Guided by this formula we define $y \in \Hom_\Z (\Z R_{\mf s^\vee}, \Z)$ by
$\langle y, \alpha^\sharp \rangle = n_\alpha$ for all simple coroots $\alpha^\sharp$. 
Embed $\Hom_\Z (\Z R_{\mf s^\vee}, \Z)$ in $\Q R_{\mf s}$ and form the lattice
\[
X_e := X^* (T_{\mf s}) + \Hom_\Z (\Z R_{\mf s^\vee}, \Z) \subset X^* (T_{\mf s}) \otimes_\Z \Q.
\]
Then $\psi_e$ extends to the automorphism of $\C [X_e \rtimes W(R_{\mf s})]$, given by 
conjugation with $\theta_y$. Hence $\psi$ is also conjugation with $\theta_y$, at least on
$\mc H (\mc R_{\mf s}, \lambda, \lambda^*,q_F^{1/2})$. 
For $y \in X^* (T_{\mf s})$ that is simply an inner automorphism, which accounts for (ii).

There are only few other possible $y$. For each $\tau$ with $e_\tau > 0$, we have a direct summand 
\[
(\Z^{e_\tau}, R_{\mf s,\tau}, \Z^{e_\tau}, R_{\mf s^\vee,\tau}) \quad \text{of} \quad \mc R_{\mf s},
\]
where $R_{\mf s,\tau}$ has type $A_{e_\tau - 1}, B_{e_\tau}$ or $D_{e_\tau}$.
For type $A_{e_\tau - 1}$, $\Z^{e_\tau}$ surjects onto\\ 
$\Hom_\Z (\Z R_{\mf s^\vee,\tau}, \Z)$. Otherwise $\Hom_\Z (\Z R_{\mf s^\vee,\tau}, \Z)$ is 
spanned by $\Z^{e_\tau}$ and\\ $y = (1,1,\ldots,1)/2$. Conjugation by $\theta_y$
on $\C[\Z^{e_\tau} \rtimes W(B_{e_\tau})]$ sends $s_\beta$ to $h_\beta^\vee s_\beta$ and fixes
the other simple reflections. When $R_{\mf s,\tau} \cong D_{e_\rho}$, this gives an automorphism
of $\mc H ( D_{e_\rho}, q_F^{1/2} ) \rtimes \langle s_\beta \rangle$ and of $\mc H (\mf s)^{\op}$.

However, when $R_{\mf s,\tau}$ has type $B_{e_\rho}$ conjugation by $\theta_y$ only extends to an
automorphism of $\mc H (\mc R_{\mf s}, \lambda, \lambda^*,q_F^{1/2})$ if $q_\beta^* = 1$, 
because the $q$-parameters $q_\beta q_\beta^*$ of $s_\beta$ and $q_\beta (q_\beta^* )^{-1}$ of 
$h_\beta^\vee s_\beta$
need to be equal for such an automorphism. That gives the choices for $\psi$ described in (iv).
Notice that this excludes the cases $C_{e_\tau}$ that could arise via \eqref{eq:3.19}.

It remains to investigate automorphisms $\psi$ of  
$\mc H (\mc R_{\mf s}, \lambda, \lambda^*,q_F^{1/2}) \rtimes \Gamma_{\mf s}$ that restrict to
the identity on $\mc H (\mc R_{\mf s}, \lambda, \lambda^*,q_F^{1/2})$. As above we deduce that
for each $\gamma \in \Gamma_{\mf s}$ there exist $z \in \{\pm 1\}$ and $x \in X^* (T_{\mf s})$
such that $\psi (\gamma) = z \theta_x \gamma$. Just like conjugation by $\gamma$, conjugation
by $\psi (\gamma)$ is a product of diagram automorphisms of $D_{e_\tau}$ on
\[
\psi \big( \mc H (\mc R_{\mf s}, \lambda, \lambda^*,q_F^{1/2}) \big) = 
\mc H (\mc R_{\mf s}, \lambda, \lambda^*,q_F^{1/2}) .
\]
Hence $z \theta_x$ must lie in the centre of $\mc H (\mc R_{\mf s}, \lambda, \lambda^*,q_F^{1/2})$,
which means that $\langle x ,\alpha^\sharp \rangle = 0$ for every coroot $\alpha^\sharp$.
Looking at the rank of $R_{\mf s,\tau}$, we see that $x$ lives only in the $\Z^{e_\tau}$ for
which $R_{\mf s,\tau} \cong A_{e_\tau -1}$. The part of $x$ in the associated direct summand of
$X^* (T_{\mf s})$ is a multiple of $(1,1,\ldots,1)$. In particular $\theta_x$ commutes with $\gamma$.
As $\gamma$ has finite order in the finite group $\Gamma_{\mf s}$:
\[
1 = \gamma^{\mr{ord} \gamma} = \psi (\gamma)^{\mr{ord} \gamma} = 
(z \theta_x \gamma)^{\mr{ord} \gamma} = z^{\mr{ord} \gamma} \theta_x^{\mr{ord} \gamma}
\gamma^{\mr{ord} \gamma} = z^{\mr{ord} \gamma} \theta_{\mr{ord} (\gamma) x} .
\]
This implies that ord$(\gamma) x = 0$ and $x = 0, \psi (\gamma) = \pm \gamma$. We deduce that
there exists a character $\epsilon : \Gamma_{\mf s} \to \{\pm 1\}$ such that 
$\psi (\gamma) = \epsilon (\gamma) \gamma$.\\
(b) This follows from part (a) and \eqref{eq:1.37}.
\end{proof}

\subsection{Versions for \texorpdfstring{$G^+$}{G^+}} \
\label{par:G+}

There also exists a version of Theorem\autoref {thm:1.7} for $G^+$. Let 
$\mc L^+ = \rZ_{\mc G^+}(\rZ(\mc L)^\circ)$ be the $F$-Levi subgroup of $\mc G^+$ with identity 
component $\mc L$. It has the same shape:
\begin{equation}\label{eq:3.58}
L^+ = G^+_{n_-} \times \GL_{n_1}(F) \times \cdots \times \GL_{n_k}(F) .
\end{equation}
Usually $[L^+ : L] = [G^+ : G]$, but there are exceptions to that. Recall from \eqref{eq:1.60} 
that $G_{n_-} \subset GL (V_-)$ for some linear subspace $V_- \subset V$. If (and only if) 
$V_- = \{0\}$, we have $G_{n_-}^+ = G_{n_-} = \{e\}$ and $L^+ = L$. This can happen for the split
groups $O(V)$ and $\GSpin (V)$ with $\dim V$ even, even though $G^+ \neq G$ for those groups.

The whole theory behind $\mc H (\mf s^\vee, z)$ \cite{AMS1,AMS2,AMS3} was written for possibly
disconnected complex reductive groups, so it applies to $G^+$. The set of cuspidal Bernstein
components in $\Phi_\enh (L^+)$ is
\[
\Phi_\cusp (L^+) / (\rZ(L^+)^{\mb I_F,\circ})_{\mb W_F} = \Phi_\cusp (L^+) / \rZ(L)^\circ .
\]
An element in there is the same as an element $(\phi,\epsilon) \in \Phi_\cusp (L) / \rZ(L)^\circ$
together with an extension of $\epsilon \in \Irr (\mc S_\phi)$ to $\epsilon^+ \in \Irr (\mc S_\phi^+)$.
Let $\mf s^{+\vee}$ denote the Bernstein component determined by $(\phi,\epsilon^+)$, and similarly
without the $+$. We note that there is a canonical bijection
\[
\Phi_\enh (L)^{\mf s^\vee} \to \Phi_\enh (L^+ )^{\mf s^{+\vee}} : 
(z \phi, \epsilon) \mapsto (z \phi, \epsilon^+) .
\]
The same arguments as in Paragraph~\ref{par:HAL} shows that 
\begin{equation}\label{eq:3.1}
\mc H (\mf s^{+\vee},z) = 
\mc H (\mc R_{\mf s^\vee}, \lambda, \lambda^*, z) \rtimes \Gamma_{\mf s^\vee}^+ .
\end{equation}
The arguments in Paragraph~\ref{par:ess} and for Theorem\autoref {thm:1.5} lead to a canonical injection
from the set of Bernstein components of $\Irr (G^+)$ to the set of Bernstein components in 
$\Phi_\enh (G^+)$, say $\mf s^+ \mapsto \mf s^{+\vee}$. It relates to Theorem\autoref {thm:1.5} by 
$\Res_G^{G^+}$, as in the proof of Theorem\autoref {thm:1.2}. On the level of representations and 
enhanced $L$-parameters of $L^+$, by Theorems\autoref {thm:1.2} and\autoref {thm:1.3} each instance 
$\mf s^+ \mapsto \mf s^{+\vee}$ comes a bijection
\begin{equation}\label{eq:3.2}
\Irr (L^+ )^{\mf s_L^+} \cong T_{\mf s} \longrightarrow 
T_{\mf s^\vee} \cong \Phi_\enh (L^+ )^{\mf s_L^{+\vee}} .
\end{equation}
As justified by \eqref{eq:3.2}, we will sometimes write $T_{\mf s^+}$ for $T_{\mf s}$, or
$T_{\mf s^{+\vee}}$ for $T_{\mf s^\vee}$.

The theory used to construct and analyse $\mc H (\mf s)$ is not known for arbitrary disconnected
reductive groups. For $O(V)$ and $\GPin (V)$ (the only disconnected instances of $G^+$) we can work 
it out by hand though. First we need a good progenerator $\Pi_{\mf s_L^+}$ for 
$\Rep (L^+)^{\mf s_L^+}$ with $\mf s_L = [L,\sigma]_L$. We start from $\Pi_{\mf s_L } = 
\mr{ind}_{L^1}^L (\sigma_1 )$, where $L^1$ is the subgroup of $L$ generated by all 
compact subgroups and $\sigma_1$ is an irreducible constituent of $\Res^L_{L^1} \sigma$.
We distinguish two cases.

Suppose first that $\mr{Out} (\mc G_{n_-})$ does not stabilize $\mf s_L$, or that $L^+ = L$. Then
$\mr{ind}_L^{L^+} (\sigma' )$ is irreducible for all $\sigma' \in \Irr (L)^{\mf s_L}$, and
\begin{equation}\label{eq:3.51}
\mr{ind}_L^{L^+} (\Pi_{\mf s_L}) = \mr{ind}_{L^1}^{L^+} (\sigma_1 ) =: \Pi_{\mf s_L^+}
\end{equation}
is a progenerator of Rep$(L^+)^{\mf s_L^+}$ for the same reasons as for $\Pi_{\mf s_L}$.
If $L^+ = L$, then clearly $\End_{L^+}(\Pi_{\mf s_L^+}) = \End_L (\Pi_{\mf s_L})$. 
Otherwise, since $L$ is normal in $L^+$,
\[
\mr{Res}^{L^+}_L \mr{ind}_L^{L^+} (\Pi_{\mf s_L}) = \Pi_{\mf s_L} \oplus l \cdot \Pi_{\mf s_L} =
\Pi_{\mf s_L} \oplus \Pi_{\mf s'_L}
\]
where $l \in L^+ \setminus L$ and $\mf s'_L = l \cdot \mf s_L$. Further, by Frobenius reciprocity
\begin{equation}\label{eq:3.3}
\End_{L^+} \big( \mr{ind}_L^{L^+} \Pi_{\mf s_L} \big) \cong \Hom_L \big( \Pi_{\mf s_L},
\mr{ind}_L^{L^+} \Pi_{\mf s_L} \big) \cong 
\Hom_L \big( \Pi_{\mf s_L}, \Pi_{\mf s_L} \oplus \Pi_{\mf s'_L} \big) .
\end{equation}
By the Bernstein decomposition of $\Rep (L)$ this equals $\End_L (\Pi_{\mf s_L})$, which by 
\eqref{eq:3.18} is naturally isomorphic with $\mc O (T_{\mf s})$.

Suppose now that $\mr{Out} (\mc G_{n_-})$ stabilizes $\mf s_L$ and $L^+ \neq L$. 
Since $X_\nr (G_{n_-}) = \{1\}$, $\mr{Out} (\mc G_{n_-})$ stabilizes every $\sigma' \in 
\Irr (L)^{\mf s_L}$. Clifford theory tells us that $\sigma$ extends in two ways to a 
representation of $L^+$, say $\sigma^+$ and $\sigma^-$. For an
unramified character $\chi \in X_\nr (L^+) \cong X_\nr (L)$ we put
\[
(\sigma \otimes \chi )^+ = \sigma^+ \otimes \chi \quad \text{and} \quad
(\sigma \otimes \chi )^- = \sigma^- \otimes \chi .
\]
This yields two Bernstein components $\Irr (L^+ )^{\mf s_L^+} = X_\nr (L^+) \sigma^+$ and
$\Irr (L^+ )^{\mf s_L^-} = X_\nr (L^+) \sigma^-$, both naturally in bijection with 
$\Irr (L)^{\mf s_L}$. We note that $\mf s_L^+$ and $\mf s_L^-$ are in different 
$\rN_{G^+}(L^+)$-orbits, because they are inequivalent on $G_{n_-}^+$ and $\rN_{G^+}(L^+) / G_{n_-}^+$ 
only adjusts $\Irr_\cusp (L^+)$ on the type GL factors of $L^+$. The Bernstein decomposition 
for $L^+$ enables us to write
\begin{equation}\label{eq:3.52}
\ind_L^{L^+} (\Pi_{\mf s_L}) = \Pi_{\mf s_L^+} \oplus \Pi_{\mf s_L^-} \quad
\text{with } \Pi_{\mf s_L^\pm} \in \Rep (L^+ )^{\mf s_L^\pm}.
\end{equation}
Then $\Pi_{\mf s_L^+}$ is a progenerator of $\Rep (L^+)^{\mf s_L^+}$ and its restriction to 
$L$ is just $\mr{ind}_{L^1}^L (\sigma) = \Pi_{\mf s_L}$. All the elements of 
$\mc O (T_{\mf s})$ determine $L^+$-endomorphisms of $\mr{ind}_{L^{+1}}^{L^+} (\sigma^+)$, so
\[
\mc O (T_{\mf s}) = \End_L ( \Pi_{\mf s_L}) = \End_{L^+} (\Pi_{\mf s_L^+}) .
\]
In both above cases we constructed a progenerator $\Pi_{\mf s_L^+}$ of 
$\Rep (L^+)^{\mf s_L^+}$, with $L^+$-endomorphism algebra $\mc O (T_{\mf s})$. We define
\[
\Pi_{\mf s^+} = I_{P^+}^{G^+} (\Pi_{\mf s_L^+}) ,
\]
where $P^+$ is the semidirect product of $L^+$ and the unipotent radical of $P$. 

\begin{prop}\label{prop:3.1}
The representation $\Pi_{\mf s^+}$ is a canonical progenerator of $\Rep (G^+ )^{\mf s^+}$. 

Induction from $G$ to $G^+$ gives an injective algebra homomorphism $\End_G (\Pi_{\mf s}) \to 
\End_{G^+}(\Pi^{\mf s^+})$, which is bijective when $\mr{Out}(\mc G_{n_-}) \mf s_L = \mf s_L$.
\end{prop}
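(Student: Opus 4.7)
The plan has two parts: verify the progenerator property, then construct and analyze the algebra map. For the progenerator part I would use standard adjunctions. Projectivity of $\Pi_{\mf s^+} = I_{P^+}^{G^+}(\Pi_{\mf s_L^+})$ follows because normalized parabolic induction preserves projectives (its right adjoint, the Jacquet functor $r_{P^+}^{G^+}$, is exact) and $\Pi_{\mf s_L^+}$ is projective by construction. Membership in $\Rep(G^+)^{\mf s^+}$ is tautological. For the generator property, given $\pi^+\in\Irr(G^+)^{\mf s^+}$, transitivity of cuspidal support produces a nonzero summand of $r_{P^+}^{G^+}\pi^+$ in $\Rep(L^+)^{\mf s_L^+}$; since $\Pi_{\mf s_L^+}$ is already a progenerator of that block, $\Hom_{L^+}(\Pi_{\mf s_L^+}, r_{P^+}^{G^+}\pi^+) \neq 0$, so Frobenius reciprocity gives $\Hom_{G^+}(\Pi_{\mf s^+}, \pi^+) \neq 0$, and projectivity upgrades this to a surjection.

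The key ingredient for the algebra map is the Mackey-type identity $\mr{ind}_G^{G^+}\circ I_P^G \cong I_{P^+}^{G^+}\circ\mr{ind}_L^{L^+}$, valid because the nontrivial coset of $G$ in $G^+$ is represented by an element $l\in L^+\setminus L$ (the $\Out$-generator coming from $G^+_{n-}/G_{n-}$). Applied to $\Pi_{\mf s_L}$ and combined with the explicit two-case description of $\Pi_{\mf s_L^+}$ preceding the proposition, this yields
\[
\mr{ind}_G^{G^+}\Pi_{\mf s} \;\cong\;
\begin{cases}
\Pi_{\mf s^+} & \text{if } \Out(\mc G_{n-})\mf s_L \neq \mf s_L ,\\
\Pi_{\mf s^+}\oplus \Pi_{\mf s^-} & \text{if } \Out(\mc G_{n-})\mf s_L = \mf s_L .
\end{cases}
\]
Since $\mf s^+ \neq \mf s^-$ when both appear, the endomorphism ring of the induced object splits as a direct sum over these two Bernstein blocks. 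Functoriality of $\mr{ind}_G^{G^+}$ followed by projection onto the $\Pi_{\mf s^+}$-summand then defines the required homomorphism $\End_G(\Pi_{\mf s}) \to \End_{G^+}(\Pi_{\mf s^+})$.

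Injectivity is immediate from postcomposition with restriction $\End_{G^+}(\Pi_{\mf s^+}) \to \End_G(\mr{Res}^{G^+}_G\Pi_{\mf s^+})$: the $\Pi_{\mf s}$-isotypic summand of $\mr{Res}^{G^+}_G\Pi_{\mf s^+}$ equals $\Pi_{\mf s}$ when $\Out(\mc G_{n-})\mf s_L = \mf s_L$ and is a direct summand of $\Pi_{\mf s}\oplus\Pi_{\mf s'}$ otherwise, and in either case the total composite returns the original $\phi\in\End_G(\Pi_{\mf s})$ by direct inspection of the Mackey decomposition. Bijectivity under $\Out(\mc G_{n-})\mf s_L = \mf s_L$ is the main obstacle. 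Here $\mr{Res}^{G^+}_G\Pi_{\mf s^+} = \Pi_{\mf s}$, so restriction realises $\End_{G^+}(\Pi_{\mf s^+})$ as the subalgebra of $\End_G(\Pi_{\mf s})$ consisting of endomorphisms that commute with the operator by which the chosen extension $\sigma^+$ of $\sigma$ implements $l$. To exhaust $\End_G(\Pi_{\mf s})$, I would use the Heiermann-type presentation as an extended affine Hecke algebra with commutative part $\mc O(T_{\mf s})$, Hecke generators $T'_{s_\alpha}$ indexed by $h_\alpha^\vee\in\Sigma_{\mf s}$, and finite group $\Gamma_{\mf s}$; because the hypothesis $\Out(\mc G_{n-})\mf s_L = \mf s_L$ says exactly that $l$ fixes $\mf s_L$, conjugation by $l$ preserves each of these generating pieces, and a case-by-case inspection (using the explicit intertwining operators underlying the $T'_{s_\alpha}$, together with the canonical identification of $\sigma$ with its $l$-twist furnished by $\sigma^+$) shows that this conjugation action is in fact trivial. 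Surjectivity, hence bijectivity, of the induction map then follows.
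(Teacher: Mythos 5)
Your treatment of the progenerator property, the construction of the algebra map via the Mackey identity $\mr{ind}_G^{G^+}\circ I_P^G \cong I_{P^+}^{G^+}\circ\mr{ind}_L^{L^+}$, and the injectivity argument all track the paper's proof (with cosmetic reorganisation) and are sound.

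The bijectivity step, however, has a genuine gap and also misses a much shorter route. You correctly identify, via $\Res^{G^+}_G\Pi_{\mf s^+}=\Pi_{\mf s}$, that $\End_{G^+}(\Pi_{\mf s^+})$ sits inside $\End_G(\Pi_{\mf s})$ as the commutant of the action of the coset representative $l$ implemented by the chosen extension $\sigma^+$. But your plan to prove surjectivity then rests on the assertion that, in the Heiermann-type presentation of $\End_G(\Pi_{\mf s})$, conjugation by $l$ is trivial on all generators ($\mc O(T_{\mf s})$, the $T'_{s_\alpha}$, $\Gamma_{\mf s}$), dispatched with ``a case-by-case inspection shows that this conjugation action is in fact trivial.'' That assertion is precisely the content one would need to prove, and nothing in the proposal supplies it; the normalisation of $\sigma^+$ and the intertwining operators must be compared in detail, and it is not a priori clear the conjugation acts trivially rather than by an inner automorphism or a diagram twist. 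As stated, the argument is circular: triviality of the $l$-action is equivalent to the surjectivity you want.

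The paper avoids the Hecke algebra structure entirely. Using $\mr{ind}_G^{G^+}\Pi_{\mf s}=\Pi_{\mf s^+}\oplus\Pi_{\mf s^-}$ with $\mf s^+\neq\mf s^-$ and $\Res^{G^+}_G\Pi_{\mf s^+}=\Pi_{\mf s}$, it forms the chain
\[
\End_G(\Pi_{\mf s}) \;\longrightarrow\; \End_{G^+}(\Pi_{\mf s^+}) \;\hookrightarrow\; \End_G(\Pi_{\mf s^+})=\End_G(\Pi_{\mf s}),
\]
observes the composite is the identity and the second map is an injection, and concludes immediately that both maps are isomorphisms. This is the step your proposal should have found in place of the Hecke-algebra inspection: it turns a hard structural verification into an elementary algebraic observation.
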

\begin{proof}
The $L^+$-representation $\Pi_{\mf s_L^+}$ is canonical because in both cases 
\eqref{eq:3.51} and \eqref{eq:3.52} it arises naturally from the $L$-representation
$\Pi_{\mf s_L}$, which we already knew is canonical \eqref{eq:3.50}.

Suppose that $\mr{Out}(\mc G_{n_-}) \mf s_L \neq \mf s_L$, or that $L^+ = L$. Then 
\begin{equation}\label{eq:3.4}
\Pi_{\mf s^+} = I_{P^+}^{G^+} \big( \mr{ind}_L^{L^+} (\Pi_{\mf s_L}) \big) = 
\mr{ind}_G^{G^+} \big( I_P^G (\Pi_{\mf s_L}) \big) = \mr{ind}_G^{G_+} (\Pi_{\mf s}) .
\end{equation}
Now $\mr{ind}_G^{G^+}$ yields an algebra homomorphism
\begin{equation}\label{eq:3.55}
\End_G (\Pi_{\mf s}) \to \End_{G^+} (\mr{ind}_G^{G^+} \Pi_{\mf s} ) = \End_{G^+} (\Pi_{\mf s^+}) ,
\end{equation}
which is injective because $\Pi_{\mf s} \subset \Res^{G^+}_G \Pi_{\mf s^+}$. As $G$ is open 
in $G^+$, $\mr{ind}_G^{G^+}$ preserves projectivity. Moreover $G$ has finite index in $G^+$, 
so \eqref{eq:3.4} shows that $\Pi_{\mf s^+}$ is finitely generated and projective. For any 
nonzero $\tau \in \Rep (G)^{\mf s^+}$, the part of $\Res^{G^+}_G \tau$ in $\Rep (G)^{\mf s}$ 
generates $\tau$ so is nonzero. Hence 
\[
\Hom_{G^+} (\Pi_{\mf s^+},\tau) = \Hom_G (\Pi_{\mf s}, \tau) \neq 0 ,
\] 
which shows that $\Pi_{\mf s^+}$ generates $\Rep (G^+)^{\mf s^+}$.

Next we suppose that $\mr{Out}(\mc G_{n_-}) \mf s_L = \mf s_L$ and $L^+ \neq L$. Then
\begin{align}\label{eq:3.5}
& \Res^{G^+}_G \Pi_{\mf s^+} = I_P^G (\Pi_{\mf s_L^+} \big|_L ) = 
I_P^G (\Pi_{\mf s_L}) = \Pi_{\mf s},\\
\label{eq:3.6} & \mr{ind}_G^{G^+} (\Pi_{\mf s}) = 
I_{P^+}^{G^+} \big( \mr{ind}_L^{L^+} (\Pi_{\mf s_L}) \big) = 
I_{P^+}^{G^+} \big( \Pi_{\mf s_L^+} \oplus \mr{ind}_{L^{+1}}^{L^+} (\sigma^-) \big) \\
\nonumber & \hspace{17mm} = I_{P^+}^{G^+} ( \Pi_{\mf s_L^+})  \oplus I_{P^+}^{G^+} \big( 
 \mr{ind}_{L^{+1}}^{L^+} (\sigma^-) \big) = \Pi_{\mf s^+} \oplus I_{P^+}^{G^+} (\Pi_{\mf s_L^-}) .
\end{align}
Since $\mf s_L^+$ and $\mf s_L^-$ are in different $\rN_{G^+}(L^+)$-orbits, 
$\mf s^+ \neq \mf s^- = [L^+ ,\sigma^- ]_{G^+}$. By the Bernstein decomposition $\Rep (G^+)^{\mf s^+}$
and $\Rep (G^+ )_{\mf s^-}$ are orthogonal subcategories of $\Rep (G^+)$, so 
\begin{equation}\label{eq:3.7}
\End_{G^+} (\mr{ind}_G^{G^+} (\Pi_{\mf s}) = \End_{G^+} \big(  \Pi_{\mf s^+} \oplus \Pi_{\mf s^-} \big)
= \End_{G^+} (\Pi_{\mf s^+}) \oplus \End_{G^+} (\Pi_{\mf s^-}).
\end{equation}
From \eqref{eq:3.7}, $\mr{ind}_G^{G^+}$ and \eqref{eq:3.5} we obtain algebra homomorphisms
\begin{equation}\label{eq:3.8}
\End_G (\Pi_{\mf s}) \to \End_{G^+}(\Pi_{\mf s^+}) \to \End_G (\Pi_{\mf s^+}) = \End_G (\Pi_{\mf s}) .
\end{equation}
The composition of these homomorphisms is the identity and $\End_{G^+}(\Pi_{\mf s^+})$ is naturally
a subalgebra of $\End_G (\Pi_{\mf s^+})$, from which we conclude that \eqref{eq:3.8} consists of
isomorphisms.

By the same argument as in the first part, $\mr{ind}_G^{G^+} (\Pi_{\mf s})$ is finitely generated and
projective. In view of \eqref{eq:3.6}, so is its direct summand $\Pi_{\mf s^+}$. Let $\tau \in 
\Rep (G^+)_{\mf s^+}$ be nonzero. By \eqref{eq:3.6}
\[
\Hom_{G^+}(\Pi_{\mf s^+},\tau) = \Hom_{G^+}(\Pi_{\mf s^+} \oplus \Pi_{\mf s^-},\tau) =
\Hom_{G^+} \big( \mr{ind}_G^{G^+} \Pi_{\mf s}, \tau) = \Hom_G (\Pi_{\mf s},\tau) .
\]
As we already saw above, the right hand side is nonzero. Therefore $\Pi_{\mf s^+}$ is indeed
a progenerator of $\Rep (G^+)_{\mf s^+}$.
\end{proof}

We define $\mc H (\mf s^+) = \End_{G^+} (\Pi_{\mf s^+})$, then Proposition\autoref {prop:3.1} shows that
there is an equivalences of categories
\begin{equation}\label{eq:3.9}
\begin{array}{ccc}
\Mod (\mc H (\mf s^+ )^{\op} ) & \isom & \Rep (G)^{\mf s^+} \\
V & \mapsto & V \otimes_{\mc H (\mf s^+)} \Pi_{\mf s^+} 
\end{array} .
\end{equation}

\begin{prop}\label{prop:3.2}
There exists an algebra isomorphism
\[
\mc H (\mf s^+ ) \cong \mc H (\mf s^{+\vee}, q_F^{1/2}) =
\mc H (\mc R_{\mf s^\vee}, \lambda, \lambda^*, q_F^{1/2}) \rtimes \Gamma^+_{\mf s^\vee} \cong
\mc H (\mf s^{+\vee}, q_F^{1/2})^{\op} .
\]
It extends the isomorphism $\mc O (T_{\mf s}) \cong \mc O (T_{\mf s^\vee})$ induced by \eqref{eq:3.2}
and is canonical up to the operations (ii),(iii),(iv) in Theorem\autoref {thm:1.7}.
\end{prop}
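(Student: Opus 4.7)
The plan is to derive this from Theorem~\ref{thm:1.7} by analyzing how $\mc H(\mf s^+)$ differs from $\mc H(\mf s)$ and how $\Gamma_{\mf s^\vee}^+$ differs from $\Gamma_{\mf s^\vee}$. From~\eqref{eq:3.1} we have $\mc H(\mf s^{+\vee}, q_F^{1/2}) = \mc H(\mc R_{\mf s^\vee}, \lambda, \lambda^*, q_F^{1/2}) \rtimes \Gamma_{\mf s^\vee}^+$, so the underlying graded Hecke algebra and complex torus are identical to those appearing in Theorem~\ref{thm:1.7}; only the finite group acting via diagram automorphisms is possibly enlarged. According to the discussion after~\eqref{eq:1.21}, the index $[\Gamma_{\mf s^\vee}^+ : \Gamma_{\mf s^\vee}] \in \{1,2\}$, with the nontrivial case occurring exactly when $\mc G$ is a form of $\GSpin_{2n}$, the Levi factor has the shape $\prod_j \GL_{n_j}$ or equivalently no $\tau \in \Irr(\mb W_F)^+_\phi$ has $\ell_\tau > 0$, and a nontrivial $r_\tau$ is forced out of $\SO_{2n}(\C)$.

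The first step is to match this dichotomy on the two sides of Theorem~\ref{thm:1.5}.a: I claim that $\Gamma_{\mf s^\vee}^+ = \Gamma_{\mf s^\vee}$ if and only if the group $L^+/L$ (written $\Out(\mc G_{n-})$ in Proposition~\ref{prop:3.1}) stabilizes $\mf s_L \subset \Irr_\cusp(L)$. Using the LLC for general linear groups and the bijection~\eqref{eq:3.2}, the Galois-side criterion translates directly into the condition that the outer element of $L^+$ acts on~\eqref{eq:1.27} via~\eqref{eq:1.23} in a way fixing $\sigma$ up to unramified twist. In the case $\Gamma_{\mf s^\vee}^+ = \Gamma_{\mf s^\vee}$, Proposition~\ref{prop:3.1} gives $\mc H(\mf s^+) \cong \mc H(\mf s)$, so the result follows at once from Theorem~\ref{thm:1.7}. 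The canonicality up to operations~(2),(3),(4) of Theorem~\ref{thm:1.7} is preserved; item~(1), the action of $\Out(\mc G)$, is no longer relevant because that ambiguity has been absorbed by passing from $G$ to $G^+$.

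The substantive case is when $[\Gamma_{\mf s^\vee}^+ : \Gamma_{\mf s^\vee}] = 2$. Here Proposition~\ref{prop:3.1} exhibits $\mc H(\mf s)$ as a proper subalgebra of $\mc H(\mf s^+)$, and the formula $\Pi_{\mf s^+} = \mr{ind}_G^{G^+}(\Pi_{\mf s})$ together with a Frobenius-reciprocity calculation analogous to~\eqref{eq:3.3} shows that $\mc H(\mf s^+)$ is generated over $\mc H(\mf s)$ by a single intertwining operator $J$ coming from the outer element of $G^+/G$, which permutes the two $G$-Bernstein components whose union forms $\mf s^+$. Concurrently, $\Gamma_{\mf s^\vee}^+$ is generated over $\Gamma_{\mf s^\vee}$ by a single outer diagram automorphism $r_\tau$ of some $R_{\mf s^\vee,\tau} \cong D_{e_\tau}$. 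The strategy is to extend the isomorphism of Theorem~\ref{thm:1.7} by sending $J$ to $r_\tau$; this is well defined on generators once one verifies compatibility on $\mc O(T_{\mf s}) = \mc O(T_{\mf s^\vee})$ (immediate from equivariance of~\eqref{eq:3.2}) and on the standard generators $T'_w$ (a routine check using that both $J$ and $r_\tau$ induce the same permutation of the relevant root subgroups, via the matching of Proposition~\ref{prop:1.6}).

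The main obstacle is the normalization of $J$. The intertwiner is only canonical up to a scalar in $\mc O(T_{\mf s})^\times$, and matching it with $r_\tau$ requires a choice corresponding precisely to the freedom in items~(2), (3) and~(4) of Theorem~\ref{thm:1.7}; in particular, item~(4) (the replacement $s_\beta \leftrightarrow h_\beta^\vee s_\beta$ on short simple roots of a $D_{e_\tau}$-subsystem) is exactly the ambiguity needed to conjugate $J$ into the prescribed form. Once this normalization is fixed, the analysis of automorphisms of $\mc H(\mc R_{\mf s^\vee}, \lambda, \lambda^*, q_F^{1/2}) \rtimes \Gamma_{\mf s^\vee}^+$ that restrict to the identity on $\mc O(T_{\mf s})$ proceeds verbatim as in the second half of the proof of Theorem~\ref{thm:1.7}, yielding the stated canonicality.
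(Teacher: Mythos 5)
Your approach departs substantially from the paper's. The published proof simply observes that once the progenerator $\Pi_{\mf s^+}$ from Proposition \ref{prop:3.1} is available, the machinery of \cite{SolEnd} applies to $G^+$ (this is the whole point of the preparations in Paragraph \ref{par:G+}), so all the arguments leading up to Theorem \ref{thm:1.7} go through verbatim with $W_{\mf s}$ replaced by $W_{\mf s}^+$ and $\Gamma_{\mf s}$ by $\Gamma_{\mf s}^+$, directly producing $\mc H(\mf s^+)^{\op} \cong \mc H(\mc R_{\mf s^\vee},\lambda,\lambda^*,q_F^{1/2}) \rtimes \Gamma_{\mf s^\vee}^+$ without extending the $G$-isomorphism by extra generators.

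Your route is conceivable but contains a genuine gap. The opening biconditional, ``$\Gamma_{\mf s^\vee}^+ = \Gamma_{\mf s^\vee}$ if and only if $\Out(\mc G_{n_-})$ stabilizes $\mf s_L$'', is false. From the discussion after \eqref{eq:1.21}, $\Gamma_{\mf s^\vee}^+ \neq \Gamma_{\mf s^\vee}$ requires the existence of a $\tau \in \Irr(\mb W_F)^+_\phi$ with $\ell_\tau = 0$, $e_\tau>0$ \emph{and} $\dim\tau$ odd; otherwise the $\rS$-condition in \eqref{eq:1.15} imposes no constraint on the $r_\tau$. In particular one can have $\Out(\mc G_{n_-})\mf s_L \neq \mf s_L$ while $\Gamma_{\mf s^\vee}^+ = \Gamma_{\mf s^\vee}$, namely when every $\GL_m(F)$ factor of $L$ has $m$ even (this is the situation of Lemma \ref{lem:3.3}.b). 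In that subcase Proposition \ref{prop:3.1} only gives an injection $\mc H(\mf s)\hookrightarrow\mc H(\mf s^+)$, not an isomorphism, so your ``Case A'' does not conclude; and you cannot invoke Lemma \ref{lem:3.3}.b to close the gap because its proof runs through \eqref{eq:3.10}, which depends on Proposition \ref{prop:3.2} itself. In ``Case B'' the claim that $\mc H(\mf s^+)$ is generated over $\mc H(\mf s)$ by a single intertwiner $J$, and that $J\mapsto r_\tau$ extends to an algebra isomorphism after normalizing via items (2)--(4), is only asserted; one still needs to verify the commutation/braid relations between $J$ and the standard generators $T'_{s_\alpha}$, which is exactly what the re-run of the \cite{SolEnd} machinery handles systematically. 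The paper's approach sidesteps both problems by never case-splitting.
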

\begin{proof}
With the progenerators $\Pi_{\mf s^+}$ at hand, the paper \cite{SolEnd} also applies to $G^+$.
Therefore all the arguments in Section~\ref{sec:Hecke} remain valid. The only difference with the
proof of Theorem\autoref {thm:1.7} is that we do not have to replace $W_{\mf s}^+$ by $W_{\mf s}$ any more. 
\end{proof}

From the above proof we see that in the description of Proposition\autoref {prop:3.2} the map 
$\mc H (\mf s) \to \mc H (\mf s^+)$ from Proposition\autoref {prop:3.1} becomes just the inclusion
\begin{equation}\label{eq:3.10}
\mc H (\mc R_{\mf s^\vee}, \lambda, \lambda^*, q_F^{1/2}) \rtimes \Gamma_{\mf s^\vee} \longrightarrow
\mc H (\mc R_{\mf s^\vee}, \lambda, \lambda^*, q_F^{1/2}) \rtimes \Gamma^+_{\mf s^\vee} .
\end{equation}
\textbf{Remark.} In Proposition\autoref {prop:4.4} we will fix choices for (iii) and (iv) from Proposition 
\autoref{prop:3.2}, depending only on a Whittaker datum for the quasi-split inner form of $G$. That will 
make the isomorphisms in Proposition\autoref {prop:3.2} canonical up to inner automorphisms. 
Via \eqref{eq:3.10}, that also determines choices for (iii) and (iv) in Theorem\autoref {thm:1.7}.

\begin{lem}\label{lem:3.3}
\enuma{
\item Suppose that $\mr{Out}(\mc G_{n_-}) \mf s_L = \mf s_L$. Then the restriction map \\
$\Rep (G^+)^{\mf s^+} \to \Rep (G)^{\mf s}$ is an equivalence of categories.
\item Suppose that $\mr{Out}(\mc G_{n_-}) \mf s_L \neq \mf s_L$ and that all the direct
factors $\GL_m (F)$ of $L$ have $m$ even. Then $\mr{ind}_G^{G^+} \colon\Rep (G)^{\mf s} 
\to \Rep (G^+)^{\mf s^+}$ is an equivalence of categories. 
\item In the remaining cases $\Rep (G)^{\mf s}$ and $\Rep (G^+)^{\mf s^+}$ are not naturally
equivalent. 
}
\end{lem}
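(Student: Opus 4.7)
The three cases are handled by comparing the progenerators $\Pi_{\mf s^+}$ and $\Pi_{\mf s}$ together with their endomorphism algebras, starting from Proposition~\ref{prop:3.1}, and then exploiting Frobenius reciprocity and Bernstein orthogonality.

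Case (a). The hypothesis $\Out(\mc G_{n-}) \mf s_L = \mf s_L$ places us in the second scenario of the construction preceding Proposition~\ref{prop:3.1}, which gives $\Pi_{\mf s^+}|_G = \Pi_{\mf s}$ and (as a consequence of the bijectivity statement there) a natural algebra isomorphism $\End_{G^+}(\Pi_{\mf s^+}) \cong \End_G(\Pi_{\mf s})$, realized by restricting $G^+$-linear endomorphisms to $G$-linear ones. Under the Morita equivalences $\Rep(G^+)^{\mf s^+} \cong \Mod(\mc H(\mf s^+)^{\op})$ from \eqref{eq:3.9} and $\Rep(G)^{\mf s} \cong \Mod(\mc H(\mf s)^{\op})$, the restriction functor $\mr{Res}^{G^+}_G$ corresponds to scalar restriction along this Hecke algebra isomorphism, hence is an equivalence of categories.

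Case (b). Proposition~\ref{prop:3.1}'s first scenario gives $\Pi_{\mf s^+} = \mr{ind}_G^{G^+} \Pi_{\mf s}$, so Frobenius reciprocity produces
\begin{equation*}
\End_{G^+}(\Pi_{\mf s^+}) \;\cong\; \Hom_G\bigl(\Pi_{\mf s},\, \Pi_{\mf s} \oplus \Pi_{\mf s}^{l}\bigr)
\end{equation*}
for any $l \in G^+ \setminus G$. Under the extra hypothesis that every $n_j$ is even, the only generators of $\rN_{G^+}(L)/L$ carrying a determinant $-1$ contribution are the outer element on $\mc G_{n-}$ (since each block-swap has determinant $(-1)^{n_j}=+1$), so every $l \in G^+\setminus G$ can be arranged to act on $L$ through the outer automorphism of $\mc G_{n-}$ modulo $G$. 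Combined with $\Out(\mc G_{n-})\mf s_L \neq \mf s_L$ and the fact that $\rN_G(L)/L$ acts on the $G_{n-}$-component of $\sigma$ only by central translations (cf.~\eqref{eq:1.23}), this forces $l\cdot\mf s \neq \mf s$ on $G$. Bernstein orthogonality then kills the cross term, giving $\End_G(\Pi_{\mf s}) \cong \End_{G^+}(\Pi_{\mf s^+})$ via induction, and hence $\mr{ind}_G^{G^+}$ is the required equivalence by the Morita equivalences.

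Case (c). Here the presence of an odd $n_j$ produces a new coset representative $l \in G^+\setminus G$, namely the block-swap on the $j$-th factor, whose action on $L$ fixes the $G_{n-}$-component and instead sends $\sigma_j$ to $\sigma_j^{\vee}\otimes\nu_{\sigma_-}\circ\det$. Following through the same Frobenius computation as in case (b), either $\sigma_j$ is self-dual up to unramified twist, in which case $l\cdot\mf s = \mf s$ on $G$, the Hom-term $\Hom_G(\Pi_{\mf s},\Pi_{\mf s}^l)$ is nonzero, and the inclusion $\End_G(\Pi_{\mf s}) \hookrightarrow \End_{G^+}(\Pi_{\mf s^+})$ from Proposition~\ref{prop:3.1} is strict so induction cannot be an equivalence; or $l\cdot\mf s \neq \mf s$ on $G$, in which case the restriction of objects of $\Rep(G^+)^{\mf s^+}$ to $G$ spills into the distinct Bernstein block $\Out\cdot\mf s$ and so the restriction functor does not even land in $\Rep(G)^{\mf s}$. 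In either subcase neither $\mr{Res}^{G^+}_G$ nor $\mr{ind}_G^{G^+}$ yields an equivalence, giving the non-naturality claim.

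The main obstacle is case (c): one must carefully track the interplay between the block-swap on an odd $n_j$-factor, its determinant contribution, and the outer action on $G_{n-}$, so as to identify which subcase occurs and to verify the failure of the candidate functors in each. Once the relevant generator of $\rN_{G^+}(L)/\rN_G(L)$ is identified, the dichotomy boils down to the standard Frobenius--Mackey calculation, but the bookkeeping required to line it up with the description of $\Gamma_{\mf s^\vee}$ versus $\Gamma_{\mf s^\vee}^+$ from Theorem~\ref{thm:1.7} and Proposition~\ref{prop:3.2} is somewhat delicate.
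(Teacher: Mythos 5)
Parts (a) and (b) of your proposal are correct. For (a) you essentially reproduce the paper's proof. For (b) you take a genuinely different route: the paper observes that "all $n_j$ even" forces $\rN_{G^+}(L^+)/L^+ \cong \rN_G(L)/L$, hence $\Gamma_{\mf s}^+ = \Gamma_{\mf s}$, so that the inclusion $\mc H(\mf s) \to \mc H(\mf s^+)$ of \eqref{eq:3.10} is an isomorphism; your Frobenius--Mackey computation $\End_{G^+}(\Pi_{\mf s^+}) \cong \End_G(\Pi_{\mf s}) \oplus \Hom_G(\Pi_{\mf s},\Pi_{\mf s}^l)$, with the cross-term killed by Bernstein orthogonality, reaches the same conclusion from the representation-theoretic side. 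Both are valid; the paper's version makes the resulting algebra isomorphism completely explicit, which it then needs for the rest of the section.

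In part (c), however, your dichotomy introduces a genuine gap. In the subcase where $\sigma_j$ on the odd $\GL$-factor is conjugate-self-dual up to unramified twist, your argument is sound and recovers the paper's: $\Hom_G(\Pi_{\mf s},\Pi_{\mf s}^l)\neq 0$, the inclusion $\End_G(\Pi_{\mf s}) \hookrightarrow \End_{G^+}(\Pi_{\mf s^+})$ is strict, and the two module categories cannot be equivalent. But in the other subcase your conclusion only addresses restriction. You must also rule out induction, and the Frobenius--Mackey computation actually works against you there: if $l\cdot\mf s \neq \mf s$, then Bernstein orthogonality gives $\Hom_G(\Pi_{\mf s},\Pi_{\mf s}^l)=0$, hence $\End_{G^+}(\Pi_{\mf s^+}) \cong \End_G(\Pi_{\mf s})$, and $\mr{ind}_G^{G^+}$ \emph{would be} an equivalence, contradicting the statement you are trying to prove. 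You cannot simply dismiss this subcase; you must either show it cannot occur under the hypotheses of (c) or argue directly why neither functor is an equivalence there. The paper's argument for (c) proves $W_{\mf s} \neq W_{\mf s}^+$ by asserting that $s_\alpha l^-$ stabilizes $\mf s_L^+$, which requires exactly that $s_\alpha$ stabilizes the $\GL_{n_j}$-component of $\sigma$ (i.e.\ that $\sigma_j$ is conjugate-self-dual); it does not consider your second subcase. So rather than splitting and waving away the opposite alternative, the argument should pin down precisely when $s_\alpha L^+$ lies in $W_{\mf s}^+$ but $s_\alpha L \notin W_{\mf s}$, and verify that the hypotheses of (c) place you in that situation.
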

\begin{proof}
(a) Via \eqref{eq:3.5} and \eqref{eq:3.10}, the restriction is induced by the algebra homomorphism
$\mc H (\mf s) \to \mc H (\mf s^+)$. In Proposition\autoref {prop:3.1} we saw that it is an isomorphism.\\
(b) The second condition implies that $\rN_{G^+}(L^+) / L^+ \cong N_G (L) / L$. Hence
$\Gamma_{\mf s}^+ = \Gamma_{\mf s}$, which together with \eqref{eq:3.10} means that the map
$\mc H (\mf s) \to \mc H (\mf s^+)$ from Proposition\autoref {prop:3.1} is an algebra isomorphism.
That yields equivalences of categories
\begin{equation}\label{eq:3.11}
\begin{array}{ccccccc}
\Rep (G)^{\mf s} & \leftrightarrow & \Mod (\mc H (\mf s)^{\op}) & \leftrightarrow &
\Mod (\mc H (\mf s^+ )^{\op}) & \leftrightarrow & \Rep (G^+ )^{\mf s^+} \\
V \otimes_{\mc H (\mf s)} \Pi_{\mf s} & \text{\reflectbox{$\mapsto$}} & V & \leftrightarrow & V^+ &
\mapsto & V^+ \otimes_{\mc H (\mf s^+)} \Pi_{\mf s^+} 
\end{array}\!.\!
\end{equation}
By the first condition, \eqref{eq:3.4} holds. Hence $V \otimes_{\mc H (\mf s)} \Pi_{\mf s}$ is mapped
by \eqref{eq:3.11} to
\[
V^+ \otimes_{\mc H (\mf s^+)} \Pi_{\mf s^+} = V \otimes_{\mc H (\mf s)} \mr{ind}_G^{G^+} (\Pi_{\mf s})
= \mr{ind}_G^{G^+} \big( V \otimes_{\mc H (\mf s)} \Pi_{\mf s} \big) . 
\]
(c) The assumption says  that $L$ has a direct factor $\GL_m (F)$ with $m$ odd, and that 
$\mr{Out}(\mc G_{n_-}) \mf s_L = \{ \mf s_L, \mf s'_L \}$ with $\mf s'_L = l^- \cdot \mf s_L \neq \mf s_L$ 
for any $l^- \in G_{n_-}^+ \setminus G_{n_-}$. Consider an element $s_\alpha \in N_{G^+}(L^+)$ which acts
in this factor $\GL_m (F)$ by $g \mapsto \hat{J} g^{-T} \hat{J}^{-1}$ and on $L$ as in \eqref{eq:1.23}.
Then $\det (s_\alpha) = -1$ because $m$ is odd, so $s_\alpha L \notin W_{\mf s}$. On the other hand
$s_\alpha l^-$ stabilizes $\mf s_L^+$, so $s_\alpha l^- L^+ = s_\alpha L^+ \in W_{\mf s}^+$. Thus
$W_{\mf s} \neq W_{\mf s}^+$, which by \eqref{eq:3.10} means that the inclusion $\mc H (\mf s) \to
\mc H (\mf s^+)$ is not an isomorphism.
\end{proof}

\subsection{Langlands parameters via Hecke algebras} \
\label{par:LLC}

Let $\mf s = [L,\sigma]_G$ be an inertial equivalence class for $G$.
Recall the canonical progenerator $\Pi_{\mf s}$ and the equivalence of categories
\begin{equation}\label{eq:3.12}
\Hom_G (\Pi_{\mf s},?) :
\Rep (G)^{\mf s} \isom \Mod (\mc H (\mf s)^{\op}) = \Mod (\End_G (\Pi_{\mf s})^{\op} )
\end{equation}
from \eqref{eq:3.54}. Let us fix an isomorphism as in Theorem\autoref {thm:1.7}.a. 
That and \eqref{eq:1.36} induce equivalences of categories
\begin{equation}\label{eq:3.13}
\Mod (\mc H (\mf s)^{\op}) \cong \Mod \big( \mc H (\mf s^\vee, q_F^{1/2})^{\op} \big) 
\cong \Mod \big( \mc H (\mf s^\vee, q_F^{1/2}) \big).
\end{equation}
From \eqref{eq:3.12} and \eqref{eq:3.13} (or \eqref{eq:3.9} and Proposition
\autoref{prop:3.2} for $G^+$) we obtain equivalences of categories 
\begin{equation}\label{eq:3.16}
\begin{array}{lll}
\Rep (G)^{\mf s} & \cong & \Mod \big( \mc H (\mf s^\vee, q_F^{1/2}) \big) , \\
\Rep (G^+)^{\mf s^+} & \cong & \Mod \big( \mc H (\mf s^{+\vee}, q_F^{1/2}) \big) .
\end{array}
\end{equation}
It was shown in \cite[Theorem 3.18]{AMS3} that there is a canonical bijection
\begin{equation}\label{eq:3.14}
\Irr \big( \mc H (\mf s^\vee, q_F^{1/2}) \big) \longleftrightarrow \Phi_\enh (G)^{\mf s^\vee} ,
\end{equation}
and similarly for $G^+$. This proceeds via reduction to the graded Hecke algebras mentioned 
around \eqref{eq:1.50}, which are then studied in terms of varieties of Langlands parameters, 
perverse sheaves and equivariant homology \cite{AMS2}. Disconnected complex reductive groups
and the associated Hecke algebras are an integral part of \cite{AMS2,AMS3}, and therefore 
\eqref{eq:3.14} works in the same way for $G^+$ as for $G$. Following \cite{AMS3}, we denote 
the image of $(\phi,\epsilon) \in \Phi_\enh (G)^{\mf s^\vee}$ under \eqref{eq:3.14} by 
\[
\bar{M} (\phi, \epsilon, q_F^{1/2}) \in \Irr \big( \mc H (\mf s^\vee, q_F^{1/2}) \big).
\]
\textbf{Remark.} In \eqref{eq:3.14} $\epsilon$ is a representation of $\mc S_\phi$, a group 
in which all elements have order at most two. Therefore $\epsilon$ may identified with its 
contragredient $\epsilon^\vee$. By construction \cite{AMS2,AMS3} 
$\bar{M} (\phi, \epsilon, q_F^{1/2})$ is the unique irreducible quotient of a standard 
$\mc H (\mf s^\vee, q_F^{1/2})$-module
\begin{equation}\label{eq:3.56}
\bar E ( \phi,\epsilon,q_F^{1/2}) = 
\Hom_{\mc S_\phi} \big( \epsilon, \bar E (\phi,q_F^{1/2}) \big).
\end{equation}
Reinterpreting $\epsilon$ as $\epsilon^\vee$, that becomes the irreducible quotient of
\[
\Hom_{\mc S_\phi} \big( \epsilon^\vee, \bar E (\phi,q_F^{1/2}) \big) =
\big( \epsilon \otimes \bar E (\phi,q_F^{1/2}) \big)^{\mc S_\phi} .
\]
This might be the most natural setup for comparison with endoscopic methods, that is 
indicated for instance by \cite[\S 2]{MoRe} and \cite[Definition 2.7.6]{Kal2}.

Let $\bar{M} (\phi, \epsilon, q_F^{1/2})^{\op}$ be $\bar{M} (\phi, \epsilon^\vee, q_F^{1/2})$ 
considered as irreducible right $\mc H (\mf s^\vee, q_F^{1/2})$-module via \eqref{eq:1.36}. 
Via Theorem\autoref {thm:1.7}.a it corresponds to an irreducible right $\mc H (\mf s)$-module, 
to which we can apply \eqref{eq:3.12}.

\begin{thm}\label{thm:3.4}
The maps \eqref{eq:3.12}, \eqref{eq:3.13} and \eqref{eq:3.14} induce a bijection
\[
\begin{array}{ccc}
\Irr (G)^{\mf s} & \longleftrightarrow & \Phi_\enh (G)^{\mf s^\vee} \\
\pi (\phi,\epsilon) & \text{\reflectbox{$\mapsto$}} & (\phi,\epsilon)
\end{array} .
\] 
It satisfies the following properties:
\enuma{
\item The cuspidal support maps form a commutative diagram
\[
\begin{array}{ccc}
\Irr (G)^{\mf s} & \longleftrightarrow & \Phi_\enh (G)^{\mf s^\vee} \\
\downarrow \Sc & & \downarrow \Sc \\
\Irr ( L)^{\mf s_L} / W_{\mf s} & \longleftrightarrow & \Phi_\cusp (L)^{\mf s^\vee} / W^{\mf s^\vee} 
\end{array} .
\]
In particular $(\phi,\epsilon)$ is cuspidal if and only if $\pi (\phi,\epsilon)$ is supercuspidal.
\item $\pi (\phi,\epsilon)$ is essentially square-integrable if and only if $\phi$ is discrete.
\item $\pi (\phi,\epsilon)$ is tempered if and only if $\phi$ is bounded.
\item For any $\chi \in X_\nr (G)$, corresponding to $\hat \chi \in 
( \rZ(G^\vee)^{\mb I_F,\circ})_{\mb W_F}$, there is a canonical isomorphism 
$\pi (\hat \chi \phi, \epsilon) = \chi \otimes \pi (\phi,\epsilon)$.
\item The $\rZ (G)_s$-character of $\pi (\phi,\epsilon)$ equals the character of 
$\rZ (G)_s$ determined by the image of $\phi$ in $\Phi (\rZ(G)_s )$.
\item Suppose that Theorem\autoref {thm:1.7} can be made canonical, up to $\mr{Out}(\mc G)$ and
inner automorphisms of the involved algebras. Then the above bijection is canonical
up to $\mr{Out}(\mc G)$.
}
All the above statements also hold with $G^+$ instead of $G$. Then part (f) relies on a canonical
version of Proposition\autoref {prop:3.2}, and we can omit $\mr{Out}(\mc G)$.
\end{thm}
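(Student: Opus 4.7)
The plan is to simply \emph{define} the bijection by composing the three stated equivalences: the Bernstein-block equivalence \eqref{eq:3.12}, the equivalence \eqref{eq:3.13} induced by the Hecke algebra isomorphism of Theorem \ref{thm:1.7}, and the parametrization \eqref{eq:3.14} of \cite[Theorem 3.18]{AMS3}. Once the composition is fixed, the five properties (a)--(e) are verified by tracking the relevant piece of structure through each arrow, the key input being that the isomorphism of Theorem \ref{thm:1.7} is the identity on the maximal commutative subalgebra $\mc O(T_{\mf s}) = \mc O(T_{\mf s^\vee})$.

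For (a), the cuspidal support of $\pi \in \Irr(G)^{\mf s}$ is read from the central character of the $\mc H (\mf s)$-module $\Hom_G(\Pi_{\mf s},\pi)$: it is a $W_{\mf s}$-orbit in $T_{\mf s}$ which, under $T_{\mf s} \cong \mf s_L$ from \eqref{eq:3.18}, equals $\Sc(\pi)$ (this is the Bernstein--Heiermann description of the centre of $\mc H(\mf s)$). On the Galois side, \cite[Theorem 3.18]{AMS3} asserts exactly the analogous statement: the cuspidal support of $(\phi,\epsilon) \in \Phi_e(G)^{\mf s^\vee}$ is the $W_{\mf s^\vee}$-orbit on $T_{\mf s^\vee}$ giving the central character of $\bar M(\phi,\epsilon,q_F^{1/2})$. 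Since Theorem \ref{thm:1.7} identifies the two maximal commutative subalgebras and $W_{\mf s} \cong W_{\mf s^\vee}$ canonically (Theorem \ref{thm:1.5}.a), the diagram commutes. The second assertion in (a), namely the equivalence of cuspidality, is the case $L = G$: on the $p$-adic side $\Sc(\pi) = \pi$ forces $\pi$ supercuspidal, while on the Galois side $\Sc(\phi,\epsilon) = (\phi,\epsilon)$ is the definition of a cuspidal pair \cite[\S6]{AMS1}.

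For (b) and (c), I would combine two inputs. On the $p$-adic side, Heiermann's normalization of $\mc H(\mf s)$ is chosen precisely so that essentially square-integrable (resp. tempered) representations in $\Rep (G)^{\mf s}$ correspond to discrete-series (resp. tempered) $\mc H(\mf s)$-modules \cite{Hei2,Hei3,Hei4}, and this extends to $\GSpin(V)$ by \cite{SolEnd,SolParam}. On the Galois side, \cite[\S 3]{AMS3} proves that at $z = q_F^{1/2} > 1$ the bijection \eqref{eq:3.14} matches discrete (resp. bounded) enhanced parameters with discrete-series (resp. tempered) modules of $\mc H(\mf s^\vee,q_F^{1/2})$. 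Both characterizations are intrinsic to the affine Hecke algebra — they are Casselman-type criteria on the positive real structure of the underlying torus — so they transfer across the algebra isomorphism of Theorem \ref{thm:1.7}, provided the real coordinates on $T_{\mf s}$ and $T_{\mf s^\vee}$ agree. This is automatic because the isomorphism is the identity on $\mc O(T_{\mf s})$ and both sides take the same lattice $X^*(T_{\mf s})$. Properties (d) and (e) are then almost formal: the $X_\nr(G)$-action on $\Irr(G)^{\mf s}$ is translation on $T_{\mf s}$, which under $T_{\mf s} \cong T_{\mf s^\vee}$ (Theorem \ref{thm:1.3}) becomes the $\rZ(G^\vee)^\circ$-action on $\Phi_e(G)^{\mf s^\vee}$; restricting this central character to $\rZ(G)_s \cong F^\times$ yields (e) via the LLC for $\rZ(G)_s$.

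The $G^+$ version runs identically, with Proposition \ref{prop:3.2} replacing Theorem \ref{thm:1.7} and \eqref{eq:3.9} replacing \eqref{eq:3.12}; the bijection \eqref{eq:3.2} supplies the identification of tori and Proposition \ref{prop:3.2} again pins down the isomorphism on the commutative subalgebra. The main obstacle I anticipate is verifying property (c): one must check that Heiermann's and AMS's conventions for the real form of the torus, and hence for temperedness of Hecke algebra modules, coincide. Since Theorem \ref{thm:1.7} is the identity on $\mc O(T_{\mf s})$ and both sides use the same cocharacter lattice, this reduces to comparing normalizations of root data and $q$-parameters, which is exactly what was verified in Paragraph \ref{par:HAL} (culminating in \eqref{eq:1.35}). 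This is the spot where the canonicity clause (2) of Theorem \ref{thm:1.7} is essential: without fixing the isomorphism on $\mc O(T_{\mf s})$, the Casselman criteria on the two sides would not be comparable.
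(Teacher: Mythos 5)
Your proposal matches the paper's proof in both strategy and substance: compose the three equivalences and track central characters (for (a)), temperedness/discreteness criteria (for (b)--(c)), and equivariance/character data (for (d)--(e)) through the Hecke algebra isomorphism, with \cite[Theorem 3.18]{AMS3} and the Heiermann/Solleveld results as the key inputs. The only point you state slightly too loosely is the positivity issue in (b)--(c) — the paper invokes Proposition~\ref{prop:1.6} explicitly to ensure the root-system bijection preserves positive roots (not merely the lattice and the torus), and the reduction in (e) to the cuspidal case via the partial cuspidal support $\pi_-$ and \eqref{eq:1.46} — but you correctly flag both as the places that need verification, and the verification is exactly what you expect.
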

\textbf{Remark.} Parts (b)--(e) were already predicted in \cite[\S 10]{Bor}. In fact Borel formulated more 
general versions of (d) and (e), which in principle can also be checked in our setup. We refrain from 
taking that up here, because it will boil down to properties of endoscopy which fall outside the scope
of this paper. The canonicity requirements in part (f) will be established in Proposition\autoref {prop:4.4}.
\begin{proof}
(a) The central character of $\bar{M} (\phi, \epsilon, q_F^{1/2})$ is described in 
\cite[Theorem 3.18.a]{AMS3}. It lies in $T_{\mf s^\vee} / W_{\mf s^\vee}$ and by construction equals
$W_{\mf s^\vee} \Sc (\phi,\epsilon)$. Similarly the central character of $\Hom_G (\Pi_{\mf s}, 
\pi (\phi,\epsilon)) \in \Irr (\mc H (\mf s)^{\op})$ lies in $T_{\mf s} / W_{\mf s}$ and by
\cite[Condition 4.1 and Lemma 6.1]{SolComp} it equals $W_{\mf s} \Sc (\pi (\phi,\epsilon))$.\\
(b) By \cite[Theorem 4.9.a]{SolComp} the map \eqref{eq:3.12} respects temperedness. The equivalence
\eqref{eq:3.13} does so as well, because by Proposition\autoref {prop:1.6} the isomorphism in Theorem
\autoref{thm:1.7}.b preserves the notion of positive roots (which determines the conditions for temperedness,
see e.g. \cite[p. 215]{SolComp}). By \cite[Theorem 3.18.c]{AMS3}, under the map \eqref{eq:3.14} 
temperedness of irreducible representations corresponds to boundedness of (enhanced) $L$-parameters.\\
(c) This is similar to part (b), now we use \cite[Theorem 4.9 and Proposition 4.10]{SolComp}, 
Proposition\autoref {prop:1.6} and \cite[Theorem 3.18.d]{AMS3}.\\
(d) This follows from \cite[Lemma 4.3.c]{SolComp} and \cite[Theorem 3.18.e]{AMS3}.\\
(e) First we reduce to the cuspidal case. Clearly $\pi (\phi,\epsilon)$ and $\Sc (\pi (\phi,\epsilon))$
have the same $\rZ(G)$-character. 
Recall that ${Z (G)_s}^\vee = G^\vee / {G^\vee}_\der \cong \C^\times$. The quotient map
$G^\vee \to {Z (G)_s}^\vee$ is the similitude character $\mu_G^\vee$, so the image of $\phi$
in $\Phi (Z (G)_s)$ is $\mu_G^\vee \circ \phi$. The cuspidal support map for enhanced
$L$-parameters only changes things in ${G^\vee}_\der$ (and modifies the enhancements), so
$\mu_G^\vee \circ \phi = \mu_G^\vee \circ \phi_c$ where $\Sc (\phi,\epsilon) = (\phi_c,\epsilon_c)$.
In view of part (a), $(\phi_c,\epsilon_c)$ is the enhanced $L$-parameter of 
$\Sc (\pi (\phi,\epsilon)) =: \pi_c$. 

The $GL$-factors of $L^\vee$ lie in ${G^\vee}_\der$, so they are contained in the kernel of 
$\mu_G^\vee$. Hence $\mu_G^\vee \circ \phi$ depends only on the component of $\phi_c$ in 
$G_{n_-}^\vee$, let us call the latter $\phi_-$. On the other hand, $Z (G)_s$ is contained in 
the factor $G_{n_-}$ of $L$, so the $Z (G)_s$-character of $\pi_c$ depends only on the component 
of $\pi_c$ in $G_{n_-}$, say $\pi_- \in \Irr_\cusp (G_{n_-})$. 

It remains to compare the $\rZ(G)_s$-character $\nu_{\pi_-}$ of $\pi_-$ with $\mu_G^\vee \circ
\phi_-$. Those agree by \eqref{eq:1.46}.\\
(f) This holds because it can be checked at every step in the construction. The progenerator
$\Pi_{\mf s}$ and the bijections \eqref{eq:3.12}, \eqref{eq:3.14} are always canonical, 
and now by assumption \eqref{eq:3.13} is canonical up to $\mr{Out}(\mc G)$.

The proof for $G^+$ is basically the same. To get the bijection we use \eqref{eq:3.9} and Proposition
\autoref{prop:3.2} instead of \eqref{eq:3.12} and \eqref{eq:3.13}. Although in \cite{SolComp} the group 
$\mc G$ is connected, the parts that we use work just as well for $\mc G^+$. For parts (d) and (e) 
it is helpful to note that 
\[
X_\nr (G) \cong X_\nr (G^+) ,\quad \rZ(G^{+\vee}) = \rZ(G^\vee) \quad \text{and} \quad 
\rZ(\mc G) = \rZ(\mc G^+) . \qedhere
\]
\end{proof}

Recall from Theorem\autoref {thm:1.5} that the map $\mf s \mapsto \mf s^\vee$ between sets of Bernstein 
components for $G$ and $G^\vee$ is injective. From that, \eqref{eq:3.16} and Theorem\autoref {thm:3.4} 
we conclude:

\begin{cor}\label{cor:3.5}
Let $\Omega^\vee (G)$ be the image of the map from Theorem\autoref {thm:1.5} in the set of Bernstein
components for $\Phi_\enh (G)$, and define $\Omega^\vee (G^+)$ analogously. Theorem\autoref {thm:1.5} and 
\eqref{eq:3.16} provide equivalences of categories
\[
\begin{array}{ccccc}
\!\! \Rep (G) & \to & \prod\limits_{\mf s^\vee \in \Omega^\vee (G)} \Mod \big( \mc H (\mf s^\vee, 
q_F^{1/2}) \big) & = & \Mod \Big( \bigoplus\limits_{\mf s^\vee \in \Omega^\vee (G)} 
\mc H (\mf s^\vee, q_F^{1/2}) \Big) , \\
\!\!\Rep (G^+) & \to & \prod\limits_{\mf s^{+\vee} \in \Omega^\vee (G^+)} 
\Mod \big( \mc H (\mf s^{+\vee}, q_F^{1/2}) \big) & = & \Mod \Big( 
\bigoplus\limits_{\mf s^{+\vee} \in \Omega^\vee (G^+)} \mc H (\mf s^{+\vee}, q_F^{1/2}) \Big) .
\end{array}
\]
For irreducible representations the equivalences of categories and \eqref{eq:3.14} provide injections,
which are unions of instances of Theorem\autoref {thm:3.4}:
\[
\begin{array}{lll}
\Irr (G) & \to & \bigsqcup_{\mf s^\vee \in \Omega^\vee (G)} \Phi_\enh (G)^{\mf s^\vee} , \\
\Irr (G^+) & \to & \bigsqcup_{\mf s^{+\vee} \in \Omega^\vee (G^+)} \Phi_\enh (G^+)^{\mf s^{+\vee}} .
\end{array}
\]
\end{cor}

The image of the parametrization maps in Corollary\autoref {cor:3.5} is a union of Bernstein 
components of enhanced $L$-parameters. Surjectivity on the cuspidal level in Theorem 
\autoref{thm:1.1}.d would imply surjectivity in Corollary\autoref {cor:3.5}, then $\Omega^\vee (G)$ 
would be the set of all Bernstein components in $\Phi_\enh (G)$. 
That is known when $F$ is a $p$-adic field, from \cite{Art} and \cite{MoRe}. When $F$ is a local 
function field, that surjectivity has been shown for symplectic and for split special orthogonal 
groups, assuming $p>2$ \cite{GaVa}. By Proposition\autoref {prop:1.9}, the Hypothesis\autoref {as:1.8}
suffices to obtain such surjectivity.\\

Suppose now that $M \subset G$ is a Levi subgroup which contains $L$. It is a direct product of a 
group of the same type as $G$ and of factors $\GL_m (F)$, so all the previous results apply just
as well to $L$. Then $\mc H (\mf s_M) = \End_M (\Pi_{\mf s_M})$ embeds in $\mc H (\mf s)$ via 
normalized parabolic induction and $\mc H (\mf s_M^\vee, q_F^{1/2})$ embeds naturally in 
$\mc H (\mf s^\vee ,q_F^{1/2})$. As isomorphism 
\begin{equation}\label{eq:3.17}
\mc H (\mf s_M)^{\op} \cong \mc H (\mf s_M^\vee,q_F^{1/2})
\end{equation}
we can simply take the restriction of $\mc H (\mf s)^{\op} \cong \mc H (\mf s^\vee,q_F^{1/2})$
from Theorem\autoref {thm:1.7}.b. The same works for $M^+ \subset G^+$, using Paragraph~\ref{par:G+}.
In this setting we can compare the equivalences of categories \eqref{eq:3.16} and their analogues
for $M,M^+$, using normalized parabolic induction.

Let $\bar{E}(\phi,\epsilon,q_F^{1/2})$ be the standard $\mc H (\mf s^\vee,q_F^{1/2})$-module
associated to $(\phi,\epsilon)$ in \cite[\S 2.2 and Theorem 3.18]{AMS3}. By definition
$\bar{M}(\phi,\epsilon,q_F^{1/2})$ is the unique irreducible quotient (``Langlands quotient")
of $\bar{E}(\phi,\epsilon,q_F^{1/2})$.
We let $\pi_{st}(\phi,\epsilon)$ be the image of $\bar{E}(\phi,\epsilon,q_F^{1/2})$ under 
\eqref{eq:3.16}, and we use analogous notations for $G^+, M, M^+$, with superscripts $M$ or +. 
Let us point out that for bounded $\phi$ (and in fact for almost all $\phi$):
\[
\bar{E}(\phi,\epsilon,q_F^{1/2}) = \bar{M}(\phi,\epsilon,q_F^{1/2}) \quad \text{and} \quad
\pi_{st}(\phi,\epsilon) = \pi (\phi,\epsilon) .
\]

\begin{thm}\label{thm:3.6}
Let $(\phi,\epsilon^M) \in \Phi_\enh (M)^{\mf s_M}$ be bounded, or a twist of a bounded parameter
by an element of $\rZ(M^\vee)$ which is positive with respect to $M^\vee B^\vee$ in the sense of
\cite[Appendix A]{AMS2}. Then
\[
I_{MU}^G \big( \pi_{st} (\phi,\epsilon^M) \big) \cong \bigoplus\nolimits_\epsilon \Hom_{\mc S_\phi^M} 
(\epsilon^M, \epsilon) \otimes \pi_{st} (\phi,\epsilon) ,
\]
where the sum runs over all $\epsilon \in \Irr (\mc S_\phi)$ with $\Sc (\phi,\epsilon) =
\Sc (\phi, \epsilon^M)$. The same holds for $M^+ \subset G^+$.
\end{thm}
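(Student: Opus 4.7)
\bigskip
\noindent\textbf{Proof plan for Theorem \ref{thm:3.6}.} The plan is to transfer the entire statement to the Hecke algebra side, where it becomes an assertion about how induction of standard modules decomposes through the inclusion of component groups $\mc S_\phi^M \subset \mc S_\phi$. That decomposition is already established in the affine Hecke algebra framework of \cite{AMS2,AMS3}, so the work consists in matching the two sides correctly.

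First I would check that under the equivalences of categories \eqref{eq:3.16} for $M$ and for $G$, normalized parabolic induction $I_{MU}^G$ corresponds to Hecke algebra induction $\mr{Ind}_{\mc H(\mf s_M^\vee, q_F^{1/2})}^{\mc H(\mf s^\vee, q_F^{1/2})}$. On the $p$-adic side, transitivity of parabolic induction yields $\Pi_{\mf s} \cong I_{MU}^G \Pi_{\mf s_M}$ (with $\Pi_{\mf s_M} = I_{P \cap M}^M \Pi_{\mf s_L}$), and Frobenius reciprocity gives a natural isomorphism
\[
\Hom_G(\Pi_{\mf s}, I_{MU}^G \pi_{st}(\phi,\epsilon^M)) \cong \mc H(\mf s)^{\op} \otimes_{\mc H(\mf s_M)^{\op}} \Hom_M(\Pi_{\mf s_M}, \pi_{st}(\phi,\epsilon^M)).
\]
By construction \eqref{eq:3.17}, the embedding $\mc H(\mf s_M)^{\op} \hookrightarrow \mc H(\mf s)^{\op}$ coming from $I_{MU}^G$ is taken to the natural inclusion $\mc H(\mf s_M^\vee, q_F^{1/2}) \hookrightarrow \mc H(\mf s^\vee, q_F^{1/2})$; one checks this by unravelling Proposition~\ref{prop:1.6}, which shows that $\mc R_{\mf s_M}$ (resp.\ $\mc R_{\mf s_M^\vee}$) is a parabolic sub-root-datum of $\mc R_{\mf s}$ (resp.\ $\mc R_{\mf s^\vee}$), with matching $q$-parameters and matching finite groups $\Gamma_{\mf s_M} \subset \Gamma_{\mf s}$. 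The same compatibility holds on the $G^+$ side via Proposition~\ref{prop:3.2}.

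Second, under the equivalence \eqref{eq:3.16} the standard module $\pi_{st}(\phi,\epsilon^M)$ corresponds to $\bar{E}(\phi,\epsilon^M,q_F^{1/2})$ by definition, and the previous step identifies $I_{MU}^G \pi_{st}(\phi,\epsilon^M)$ with $\mr{Ind}_{\mc H(\mf s_M^\vee,q_F^{1/2})}^{\mc H(\mf s^\vee,q_F^{1/2})} \bar{E}(\phi,\epsilon^M,q_F^{1/2})$. It then suffices to invoke the decomposition formula for induced standard modules on the Galois side, namely
\[
\mr{Ind}_{\mc H(\mf s_M^\vee,q_F^{1/2})}^{\mc H(\mf s^\vee,q_F^{1/2})} \bar{E}(\phi,\epsilon^M,q_F^{1/2}) \cong \bigoplus\nolimits_\epsilon \Hom_{\mc S_\phi^M}(\epsilon^M,\epsilon) \otimes \bar{E}(\phi,\epsilon,q_F^{1/2}),
\]
which follows from the construction of $\bar{E}(\phi,\epsilon,q_F^{1/2})$ in \cite[\S 2.2]{AMS3} (or \cite[Appendix A]{AMS2}) by Clifford theory applied to the inclusion $\mc S_\phi^M \subset \mc S_\phi$. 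The positivity condition on $\phi$ is exactly what is needed there to ensure that $\bar{E}$ is realized as an honest induced module from a tempered module over the Levi subalgebra and that the sum over $\epsilon$ is controlled by the cuspidal support condition $\Sc(\phi,\epsilon) = \Sc(\phi,\epsilon^M)$.

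The main obstacle is the first step: the canonicity of Theorem~\ref{thm:1.7} is only up to the operations (1)--(4) listed there, so one must verify that the restriction of the $G$-level isomorphism to $\mc H(\mf s_M)^{\op}$ is genuinely compatible with parabolic induction in a way that is insensitive to these ambiguities. Concretely, one checks that each of the operations (1)--(4) respects the parabolic sub-root-datum structure, so that the induced embedding of Hecke subalgebras always matches the natural inclusion on the Galois side. The $G^+$-case is identical, using Proposition~\ref{prop:3.2} in place of Theorem~\ref{thm:1.7} and \eqref{eq:3.9} in place of \eqref{eq:3.12}.
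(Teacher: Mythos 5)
Your overall strategy matches the paper's: transfer both sides to Hecke algebra modules, invoke the known decomposition of induced standard modules on the Galois side, and use the compatibility of the Morita equivalences with normalized parabolic induction. The paper cites \cite[Lemma 3.19.a]{AMS3} for the Hecke algebra decomposition (with the hypothesis checked via \cite[Proposition A.3]{AMS2}) and \cite[Condition 4.1 and Lemma 6.1]{SolComp} for the compatibility of \eqref{eq:3.12} with parabolic induction, so the skeleton of your argument is the same.

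There is one genuine misstep. You assert that \emph{Frobenius reciprocity} yields
\[
\Hom_G\bigl(\Pi_{\mf s}, I_{MU}^G \pi_{st}(\phi,\epsilon^M)\bigr) \cong \mc H(\mf s)^{\op} \otimes_{\mc H(\mf s_M)^{\op}} \Hom_M\bigl(\Pi_{\mf s_M}, \pi_{st}(\phi,\epsilon^M)\bigr),
\]
but this is not an instance of Frobenius reciprocity. Second adjointness relates $\Hom_G(\Pi_{\mf s}, I_{MU}^G (\cdot))$ to a $\Hom_M$ involving a Jacquet restriction of $\Pi_{\mf s}$, not to a base change along $\mc H(\mf s_M)^{\op} \hookrightarrow \mc H(\mf s)^{\op}$. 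The identification of $\Hom_G(\Pi_{\mf s}, -) \circ I_{MU}^G$ with Hecke algebra induction is exactly the nontrivial content of \cite[Condition 4.1 and Lemma 6.1]{SolComp}, which is why the paper invokes it rather than re-deriving it. Your intuition that Proposition~\ref{prop:1.6} underlies the parabolic sub-root-datum compatibility is also off target: that proposition compares $R_{\mf s,\rho}$ with $R_{\tau,\red}$, and the embedding \eqref{eq:3.17} is handled directly by restricting the isomorphism of Theorem~\ref{thm:1.7}, with the ambiguities (1)--(4) indeed preserving the Levi subalgebra, as you correctly anticipate but do not actually verify. Finally, your citation of ``Clifford theory'' applied to $\mc S_\phi^M \subset \mc S_\phi$ is the right heuristic, but the precise statement is \cite[Lemma 3.19.a]{AMS3}, whose positivity hypothesis is what the hypothesis on $\phi$ is designed to guarantee via \cite[Proposition A.3]{AMS2}.
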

\begin{proof}
By \cite[Lemma 3.19.a]{AMS3} this holds for $\bar{E}(\phi,\epsilon^M,q_F^{1/2})$ and
$\mr{ind}_{\mc H (\mf s^\vee_M,q_F^{1/2})}^{\mc H (\mf s^\vee,q_F^{1/2})}$. We note that the condition
in \cite[Lemma 3.19.a]{AMS3} is fulfilled by \cite[Proposition A.3]{AMS2} and the assumed properties 
of $\phi$. Via \eqref{eq:3.13} and \eqref{eq:3.17} we obtain the corresponding statement for modules
of $\mc H (\mf s)^{\op}$ and $\mc H (\mf s_M)^{\op}$. By \cite[Condition 4.1 and Lemma 6.1]{SolComp}
the equivalences \eqref{eq:3.12} commute with normalized parabolic induction, which enables us to 
transfer the statement to representations of $G$ and $M$. The same proof works for $M^+ \subset G^+$.
\end{proof}

\section{Comparison of Langlands parameters}
\label{sec:Langlands}

In this section we will compare the enhanced $L$-parameters for $G$ obtained via the endoscopic
methods of Arthur and M\oe glin with the enhanced $L$-parameters associated to irreducible
$\mc G(F)$-representations in Theorem\autoref {thm:3.4}. Although endoscopy only seems to be available 
when $F$ is a $p$-adic field, in Paragraph~\ref{par:close} we showed how the resulting parametrization
can be transferred to classical groups over local function fields. That requires Hypothesis 
\autoref{as:1.8} (which we hope to lift in the future). Then M\oe glin's constructions to find 
enhanced $L$-parameters make sense for any classical group over a non-archimedean local field. Since 
that applies to $G^+$ rather than $G$, we will focus on $G^+$-representations in this paragraph.

We will compare them with our method via Hecke algebras, increasing the classes of 
representations under consideration step by step. For supercuspidal representations the enhanced 
$L$-parameters in Theorem\autoref {thm:3.4} are by definition equal to those constructed in Theorems 
\autoref{thm:1.1} and\autoref {thm:1.2}. The relation between the discrete series and the supercuspidal 
representations of classical groups is due to M\oe glin and Tadi\'c \cite{Moe0,MoTa}, also proven 
with different methods by Kim and Mati\'c \cite{KiMa}.

\subsection{Cuspidal supports of essentially square-integrable representations} \
\label{par:Sc}

There are cuspidal support maps both for irreducible $G$-representations and for $\Phi_\enh (G)$. Recall
from Theorem\autoref {thm:3.4}.a that these maps commute with the assignment of enhanced $L$-parameters
via Hecke algebras. We want to check that the same holds for M{\oe}glin's parameters of discrete series
representations. (Since the cuspidal support maps commute with tensoring by unramified characters,
that implies the same statement for essentially square-integrable representations.) The initial steps
to determine the cuspidal support of $(\phi,\epsilon) \in \Phi_\enh (G)$ are:
\begin{itemize}
\item Replace $(\phi,\epsilon)$ by $\phi |_{\mb W_F}$ and $(\phi |_{\SL_2 (\C)}, \epsilon)$, where
$\phi (\SL_2 (\C))$ lies in $H := \rZ_{{G^\vee}_\der}(\phi (\mb W_F))$ and 
\[
\mc S_\phi = \pi_0 (\rZ_{{G^\vee}_\der} (\phi)) = \pi_0 \big( \rZ_H (\phi |_{\SL_2 (\C)}) \big) .
\]
\item From $(\phi |_{\SL_2 (\C)}, \epsilon)$ we extract the triple
\[
s_\phi = \phi \matje{q_F^{1/2}}{0}{0}{q_F^{-1/2}} 
,\; u_\phi = \phi \begin{pmatrix} 1 & 1 \\ 0 & 1 \end{pmatrix} ,\; 
\epsilon \in \Irr \big( \pi_0 (\rZ_H (s_\phi,u_\phi)) \big).
\]
Such triples can be regarded as $H$-valued enhanced $L$-parameters which are trivial on $\mb W_F$, and
that provides a notion of cuspidal support for such triples. Up to $H$-conjugacy the triple 
$(s_\phi,u_\phi,\epsilon)$ contains precisely the same information as $(\phi |_{\SL_2 (\C)}, \epsilon)$. 
\item The cuspidal support of $(s_\phi, u_\phi, \epsilon)$, in the group $H$, is another triple
$(t,v,\epsilon_c)$ with $t \in H$ conjugate to $s_\phi, v \in H$ unipotent, $t v t^{-1} = v^{q_F}$ and
$\epsilon_c \in \Irr \big( \pi_0 (\rZ_H (t,v)) \big)$. 
\item The cuspidal support of $(\phi,\epsilon)$ is an enhanced $L$-parameter $(\phi_c,\epsilon_c)$
reconstructed from $(\phi |_{\mb W_F},t,v,\epsilon_c)$, so with $\phi_c \matje{1}{1}{0}{1} = v$ and
$\phi_c \big( w, \matje{q_F^{1/2}}{0}{0}{q_F^{-1/2}} \big) = \phi (w) t$ for any arithmetic Frobenius
element $w \in \mb W_F$.
\end{itemize}
We work this out further for discrete enhanced $L$-parameters of $G^+$. (That is a little easier than
for $G$, and yields basically the same information.) From \eqref{eq:1.6} we know that 
$H = \rZ_{{G^{+\vee}}_\der}(\phi (\mb W_F))$ is a direct product of orthogonal and symplectic groups
over $\C$. To complete the above characterization of $\Sc (\phi,\epsilon)$, it suffices to describe
the cuspidal support for triples $(s,u,\epsilon)$ in $\rO_n (\C)$ or $\Sp_{2n}(\C)$.
For that we use the detailed analysis from \cite{Lus-Int} and \cite[\S 5]{Mou}. Fortunately, it turns 
out that there are only very few possibilities for the cuspidal supports \cite[\S 10]{Lus-Int}. \\

\noindent \textbf{Symplectic case} \\
Take a Levi subgroup $L_d = \Sp_{d (d+1)}(\C) \times \GL_1 (\C)^{n - d(d+1)/2}$ of $\Sp_{2n}(\C)$
and let $u_d \in \Sp_{d(d+1)}$ be a unipotent element with Jordan blocks of sizes 
$\{2,4,\ldots,2d\}$. Take any semisimple element $s \in L_d$ with $s u_d s^{-1} = u^{q_F}$. Then
$\pi_0 \big( \rZ_{\Sp_{2n} (\C)} (s,u_d) \big) \cong \F_2^d$ with basis $\{z_2, z_4,\ldots z_{2d}\}$,
and $\epsilon_d (z_{2j}) = (-1)^j$ gives a cuspidal triple $(s,u_d,\epsilon_d)$.

Given a triple $(s,u,\epsilon)$, the only options for $\Sc (s,u,\epsilon)$ are $(s,u_d,\epsilon_d)$
with $d \in \Z_{\geq 0}$. We write
\[
d' = \left\{ \begin{array}{ll}
d+1 & \text{if } d \text{ is even},\\
-d & \text{if } d \text{ is odd}.
\end{array} \right.
\]
In \cite[\S 12]{Lus-Int} the cuspidal support of $(u,\epsilon)$ is computed via this number $d'$,
which is called the defect of $(u,\epsilon)$. Assume for simplicity that all Jordan blocks of $u$
have different size $i_1,i_2,\ldots,i_r$ which are even (this is the case if $(s,u)$ comes
from a discrete $L$-parameter). Write $\pi_0 \big( \rZ_{\Sp_{2n}(\C)} (s,u) \big) = \F_2^r$ with basis
$\{ z_{i_1}, z_{i_2}, \ldots, z_{i_r} \}$. If $r$ is even, we define a new $\tilde \epsilon$ by 
adding $i_0 = 0$ with $\epsilon' (z_0) = 1$, apart from that $\epsilon' = \epsilon$. Then the 
advanced combinatorics in \cite[\S 11]{Lus-Int} entails that 
$d' = \sum_j (-1)^{j+r} \epsilon' (z_{i_j}) \in 1 + 2\Z$. Hence
\begin{equation}\label{eq:4.1}
d = \left\{\begin{array}{ll}
-1 + \sum_j (-1)^{j+r} \epsilon' (z_{i_j}) & \text{if } d' > 0 ,\\
-\sum_j (-1)^{j+r} \epsilon' (z_{i_j}) & \text{if } d' < 0 .
\end{array} \right.
\end{equation}
\textbf{Orthogonal case}\\
Take a Levi subgroup $L_d =\rO_{d^2}(\C) \times \GL_1 (\C)^{(n-d^2)/2}$ of $\rO_n (\C)$ (so with $d \equiv
n$ mod 2) and let $u_d \in\rO_{d^2}(\C)$ be a unipotent element with Jordan blocks of sizes 
$(1,3,\ldots,2d-1)$. Let $s \in L_d$ be semisimple such that $s u_d s^{-1} = u_d^{q_F}$. Then 
$\pi_0 (\rZ_{\rO_{d^2}(\C)} (s,u_d)) \cong \F_2^d$ with basis $\{z_1,z_3,\ldots, z_{2d-1}\}$ and 
$\epsilon_d (z_{2j-1}) = (-1)^j$ and $-\epsilon_d$ give two cuspidal triples $(s,u_d,\pm \epsilon_d)$.

Given a triple $(s,u,\epsilon)$ for $\rO_n (\C)$, the options for $\Sc (s,u,\epsilon)$ are
$(s,u_d,\pm \epsilon_d)$ with $d \in \Z_{\geq 0}$ of the same parity as $n$. In this case $d$ is the
defect of $(u,\epsilon)$ \cite[\S 13]{Lus-Int}. Suppose that all Jordan blocks of $u$ have different
sizes $1_1,i_2,\ldots,i_r$, which are all odd (as for discrete $L$-parameters). Then
\cite[\S 13]{Lus-Int} entails that 
\begin{equation}\label{eq:4.2}
d = \big| \sum\nolimits_j (-1)^j \epsilon (z_{i_j}) \big| .
\end{equation}
By that and \cite{AMS1}, $\Sc (s,u,\epsilon) = (s,u_d,\pm \epsilon_d)$ where the sign is determined by
$\pm \epsilon_d (z) = \epsilon (z)$ for some $z \in\rO_{d^2} (\C) \setminus \SO_{d^2} (\C)$. We embed
$\rO_{d^2}(\C)$ in $\rO_n (\C)$ so that the subgroup $\rO_1 (\C) \subset \rZ_{\rO_{d^2}(\C)}(u_d)$, which comes
from the Jordan block of size $1$, is contained in a subgroup $\rO_{i_m}(\C) \subset \rZ_{\rO_n (\C)}(u)$
which comes from a Jordan block of size $i_m$. Then we take $z = z_1$, and we find (using that
$i_m$ is odd)
\begin{equation}\label{eq:4.3}
\pm \epsilon_d (z_1) = \epsilon (z_1) = \epsilon (z_1)^{i_m} = \epsilon (z_{i_m}) .
\end{equation}
This determines the sign, and thus fixes $\Sc (s,u,\epsilon)$.

\begin{prop}\label{prop:4.1}
M\oe glin's parametrization of the discrete series of $G^+$ is compatible with the cuspidal support
maps, in the following sense. For a discrete series representation $\pi \in \Irr (G^+)$ with
$\Sc (\pi) \in \Irr (L^+)$, $\Sc (\phi_\pi, \epsilon_\pi)$ is $\rN_{G^{+\vee}} (L^{+\vee})$-conjugate
to $(\phi_{\Sc (\pi)}, \epsilon_{\Sc (\pi)})$.
\end{prop}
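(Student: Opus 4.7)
The plan is to verify the equality one $\tau$-isotypic type at a time in the decomposition of $\phi_\pi|_{\mb W_F}$. On the Galois side, the centralizer $H = \rZ_{{G^{+\vee}}_\der}(\phi_\pi(\mb W_F))$ is, by \eqref{eq:1.6}, a direct product indexed by $\tau \in \Irr(\mb W_F)^\pm_{\phi_\pi}$ of orthogonal and symplectic groups, and the cuspidal support construction for triples $(s,u,\epsilon)$ respects this product decomposition. On the representation side, the Moeglin--Tadi\'c algorithm reconstructs $\Sc(\pi) = \sigma_- \boxtimes \rho_1 \boxtimes \cdots \boxtimes \rho_k$ one $\rho$-isotypic component at a time: each $\rho_j$ appearing in the Levi corresponds to a segment removed from the $\rho$-chain in $\mr{Jord}(\pi)$, and $\mr{Jord}_\rho(\sigma_-)$ is what survives. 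Via the LLC for $\GL_{d_\rho}$, the correspondence $\rho \leftrightarrow \tau$ matches these two decompositions, so it suffices to check the proposition one $\tau$ at a time.

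Fix such a $\tau$ with corresponding $\rho$, and let $i_1 > i_2 > \cdots > i_r$ be the sizes in $\mr{Jord}_\tau(\phi_\pi) = \mr{Jord}_\rho(\pi)$ (all of a fixed parity dictated by $\mr{sgn}(\tau)$). I would run two parallel computations. On the Galois side, apply \eqref{eq:4.1} in the symplectic case and \eqref{eq:4.2} in the orthogonal case to compute the defect $d_\tau$ of the cuspidal support as an alternating sum of the values $\epsilon_\pi(z_{\rho,i_j})$, and read off from \eqref{eq:4.3} the accompanying sign $\pm \epsilon_{d_\tau}$ in the orthogonal case. On the representation side, Moeglin's recipe (together with the ``no holes'' and ``alternated'' conditions \eqref{eq:1.7}) exhibits $\mr{Jord}_\rho(\sigma_-)$ as the minimal no-holes chain $\{2,4,\ldots,2d_\tau\}$ or $\{1,3,\ldots,2d_\tau - 1\}$, where $d_\tau$ is again recovered through alternating sums of the $\epsilon_\pi(z_{\rho,i_j})$ (the alternation is what governs which segments can be removed in the Moeglin--Tadi\'c algorithm). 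Inspection shows that the two formulas for $d_\tau$ coincide.

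To finish, match the L-parameter of $\sigma_-$ to the semisimple part $t$ of $\Sc(\phi_\pi, \epsilon_\pi)$ via the LLC for general linear groups and the construction \eqref{eq:1.8}; this gives the desired equality of underlying L-parameters up to $\rN_{G^{+\vee}}(L^{+\vee})$-conjugacy. For the enhancement side, the identification of generators $z_{\rho,a} \leftrightarrow z_{\tau,a}$ underlying Theorem \ref{thm:1.2} translates the cuspidal-support character $\epsilon_c$ into a character of $\mc S_{\sigma_-}$, and Moeglin's recipe says this is precisely $\epsilon_{\sigma_-}$, because both are determined by the alternated condition relative to the chain $\mr{Jord}_\rho(\sigma_-)$.

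The main obstacle will be the orthogonal sign coming from \eqref{eq:4.3}. There one must check that the embedding $\rO_{d_\tau^2}(\C) \hookrightarrow \rO_{n_\tau}(\C)$ used in its derivation is compatible with the distinguished generator $z_{\rho,i_m}$ that Moeglin selects for the size-1 Jordan block in $\sigma_-$. This is a matching of two conventions (Lusztig's defect book-keeping from \cite{Lus-Int} and Moeglin's Jordan-block book-keeping from \cite{Moe0,MoTa}), and I expect it to follow by tracing how the alternated condition \eqref{eq:1.7} transforms when a segment is removed from the top of the $\rho$-chain. All remaining steps are straightforward combinatorics on the two sides.
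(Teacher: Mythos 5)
Your proposal follows essentially the same route as the paper's proof: reduce to a per-$\tau$ analysis using the product decomposition of the centralizer from \eqref{eq:1.6}, compute the defect of the cuspidal support on the Galois side via \eqref{eq:4.1}--\eqref{eq:4.2}, compare against Moeglin's recursive description of $\mr{Jord}_\rho(\Sc(\pi))$, and handle the orthogonal sign via \eqref{eq:4.3}. The one place where you are a bit vague is the bridge between the two sides: the paper first explicitly reduces to the case where $\epsilon_\pi$ is alternated by removing adjacent pairs $(\rho,a),(\rho,a')$ with $\epsilon_\pi(z_{\rho,a})=\epsilon_\pi(z_{\rho,a'})$ (observing that this leaves both $\Sc(\pi)$ and the alternating sums unchanged), and only then reads off the explicit chains $\{2,4,\ldots,2d\}$ or $\{1,3,\ldots,2d-1\}$ in three cases; your ``inspection shows the two formulas coincide'' is precisely this case check, which is short but not vacuous.
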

\begin{proof}
In \cite{Moe0} the cuspidal support of $\pi$ is studied in relation with $\phi_\pi$ and $\epsilon_\pi$.
The M\oe glin parameter of $\Sc (\pi)$ is obtained via a recursive procedure, whose important steps
are mentioned on \cite[p. 147]{Moe0}.

Suppose first that $a,a' \in \mr{Jord}_\rho (\pi)$ are adjacent (that is, no $b$ inbetween $a$ and $a'$ 
belongs to $\mr{Jord}_\rho (\pi)$) and that $\epsilon_\pi (\rho,a) = \epsilon_\pi (\rho,a')$. Then
$\{(\rho,a),(\rho,a')\}$ can be removed from Jord$(\pi)$, and the new $(\mr{Jord}',\epsilon')$
corresponds to a discrete series representation with the same cuspidal support as $\pi$ (apart from
$(a+a') d_\rho$ extra factors $\GL_1 (\C)$ in the Levi subgroup from $\Sc (\pi)$). This enables us 
to reduce to the cases where $\epsilon_\pi$ is alternated in the sense that $\epsilon_\pi (\rho,a) = 
-\epsilon_\pi (\rho,a')$ whenever $a,a' \in \mr{Jord}_\rho (\pi)$ are adjacent. 

Suppose now that $\epsilon_\pi$ is alternated. 
\begin{enumerate}[(i)]
\item If $\mr{Jord}_\rho (\pi)$ consists of even numbers $a$ and $\epsilon_\pi (\rho,a) = -1$ for
the minimal such $a$, then $\mr{Jord}_\rho (\Sc (\pi)) = \{2,4,\ldots,2d\}$ with 
$d = |\mr{Jord}_\rho (\pi)|$ and $\epsilon_{\Sc (\pi)} (\rho,2a) = (-1)^a$.
\item If $\mr{Jord}_\rho (\pi)$ consists of even numbers $a$ and $\epsilon_\pi (\rho,a) = 1$ for
the minimal such $a$, $\mr{Jord}_\rho (\Sc (\pi)) = \{2,4,\ldots,2d\}$ with 
$d = |\mr{Jord}_\rho (\pi)| - 1$ and $\epsilon_{\Sc (\pi)} (\rho,2a) = (-1)^a$.
\item If $\mr{Jord}_\rho (\pi)$ consists of odd numbers, then $\mr{Jord}_\rho (\Sc (\pi)) =
\{1,3,\ldots,2d-1\}$ where $d = |\mr{Jord}_\rho (\pi)|$ and $\epsilon_{\Sc (\pi)}(\rho,1) =
\epsilon (\rho,a)$ for the minimal $a \in \mr{Jord}_\rho (\pi)$. This last property is implicit
on \cite[p. 147]{Moe0}, which mentions that here $\epsilon$ does not change if we pass from 
$\pi$ to $\Sc (\pi)$. 
\end{enumerate}
If we now compute the above numbers $d$ in terms of the original $\epsilon_\pi$, we recover
precisely \eqref{eq:4.1} and \eqref{eq:4.2}. In case (iii) we can embed $\rO_{d^2}(\C)$ in
$\rO_n (\C)$ such that the part $P_{2a-1} : \SL_2 (\C) \to\rO_{2a-1}(\C)$ of $\mr{Jord}_\rho (\Sc (\pi))$
lands in the subgroup $\rO_{2 i_a - 1}(\C) \subset\rO_n (\C)$ that contains the image of the
part $P_{2 i_a - 1}$ of $\mr{Jord}_\rho (\pi)$. Then the aforementioned property 
$\epsilon_{\Sc (\pi)}(\rho,1) = \epsilon (\rho,a)$ becomes \eqref{eq:4.3}. 
\end{proof}

\subsection{Jordan blocks of discrete series representations} \
\label{par:Jordan}

First we check that the Jordan blocks of a discrete series representation $\pi$ of $G^+ = G_n^+$
can be read off directly from the enhanced $L$-parameter assigned to it by Theorem\autoref {thm:3.4}.

\begin{lem}\label{lem:4.6}
Let $(\phi,\epsilon) \in \Phi_\enh (G^+)$ be bounded and discrete, such that $\pi (\phi,\epsilon)$
is defined in Theorem\autoref {thm:3.4}. Then $\Jord(\pi (\phi,\epsilon))$
corresponds to $\Jord(\phi)$ under the local Langlands correspondence for general linear groups.
\end{lem}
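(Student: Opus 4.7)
The plan is to reduce the claim to the supercuspidal case combined with a Hecke-algebra translation of the reducibility condition defining $\mr{Jord}(\pi)$. Write $\Sc(\pi(\phi,\epsilon)) = \sigma_- \boxtimes \rho_1^{\boxtimes e_1} \boxtimes \cdots \boxtimes \rho_k^{\boxtimes e_k}$ on the cuspidal Levi $L^+$. By Theorem \ref{thm:3.4}.a, $\Sc(\phi,\epsilon) = (\phi_c,\epsilon_c)$, where $\phi_c$ is built from $\phi_-$ and the $\phi_j$'s via \eqref{eq:1.14}; here $\phi_- \leftrightarrow \sigma_-$ via Theorem \ref{thm:1.1} (which is baked into the construction of our bijection on supercuspidals) and $\phi_j \leftrightarrow \rho_j$ via the LLC for $\GL$. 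Thus $\mr{Jord}(\sigma_-) \leftrightarrow \mr{Jord}(\phi_-)$ under the LLC for $\GL$ is immediate from Theorem \ref{thm:1.1}.a, reducing the problem to matching $\mr{Jord}(\pi) \setminus \mr{Jord}(\sigma_-)$ with $\mr{Jord}(\phi) \setminus \mr{Jord}(\phi_-)$.

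Next, I would analyze the ``new'' Jord pairs using the Hecke-algebra comparison. A new pair $(\rho,a) \in \mr{Jord}(\pi)$ forces $\rho \cong \rho^\vee \otimes \nu_\pi$, and by the recursive analysis underlying Proposition \ref{prop:4.1} such $\rho$ must be an unramified twist of some $\rho_j$ appearing in $\Sc(\pi)$; after the normalization preceding \eqref{eq:1.27} we may take $\rho = \rho_j$. Via the equivalence of categories \eqref{eq:3.16} and Theorem \ref{thm:3.6}, reducibility of $\delta(\rho_j,a) \times \pi$ translates to reducibility of a parabolically induced module over the affine Hecke algebra associated to the Levi $M^+$ of $G^+_{n + d_{\rho_j}a}$ that contains $L^+ \times \GL_{d_{\rho_j}a}(F)$. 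The $q$-parameters on the relevant root are given by Table \ref{tab:2} (equivalently, by \cite[Prop.~3.4]{Hei2} on the $p$-adic side), and a direct comparison shows that both sides produce the same initial reducibility point, expressed in terms of $a_{\phi_j}, a_{\phi'_j}$ as in \eqref{eq:1.50}.

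Finally, the discreteness of $\pi$ (equivalently, of $\phi$, by Theorem \ref{thm:3.4}.b) pins down the entire Jord-string attached to each $\rho_j$: on the representation side it is the string $(\rho_j, a_{\phi_j}+2), (\rho_j, a_{\phi_j}+4),\ldots$ of length $e_j$ together with the analogous contribution for $\rho'_j$ when $a_{\phi'_j} \geq 0$, by the Moeglin--Tadi\'c construction; on the parameter side the decomposition \eqref{eq:1.5} of $\phi$ together with the no-multiplicity property of discrete Jord forces the identical combinatorial string, since the $\mb W_F$-multiplicity of $\phi_j$ in $\phi - \phi_-$ equals $2e_j$. The main obstacle is the ``mixed'' case where both $a_{\phi_j}$ and $a_{\phi'_j}$ are nonnegative, since then the Jord strings on the representation side interleave contributions from $\rho_j$ and $\rho'_j = \rho_j^\vee \otimes \nu_\pi$; matching this with the $\SL_2$-decomposition of $\phi$ requires carefully invoking the detailed discrete-series parametrization for affine Hecke algebras with unequal parameters from \cite[\S 5.3]{AMS3} applied to the parameters $\lambda(\beta), \lambda^*(\beta)$ in Table \ref{tab:2}.
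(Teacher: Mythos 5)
Your proposed reduction has a genuine set-theoretic flaw. You write that the problem reduces to matching $\mr{Jord}(\pi) \setminus \mr{Jord}(\sigma_-)$ with $\mr{Jord}(\phi) \setminus \mr{Jord}(\phi_-)$, but neither difference is well posed: $\mr{Jord}(\sigma_-)$ is in general not a subset of $\mr{Jord}(\pi)$. In M\oe glin's framework (and as recalled in Proposition \ref{prop:4.1}), for each $\rho$ the cuspidal Jordan set is a \emph{consecutive} ladder $\{2,4,\ldots,2d\}$ or $\{1,3,\ldots,2d-1\}$, while $\mr{Jord}_\rho(\pi)$ can skip values; passing from $\pi$ to $\Sc(\pi)$ involves removing some pairs and \emph{replacing} them by a fresh ladder, not deleting a subset. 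For instance, with $\mr{Jord}_\rho(\pi) = \{1,5\}$ and $\epsilon_\pi$ alternated on these two, one finds $\mr{Jord}_\rho(\Sc(\pi)) = \{1,3\}$, which is not contained in $\{1,5\}$. So the two ``difference sets'' you wish to match do not exist, and the rest of the plan is built on a reduction that is not available. Beyond that, the closing paragraph -- matching interleaving strings via discreteness, the no-multiplicity property, and the unequal-parameter discrete-series combinatorics of \cite[\S 5.3]{AMS3} -- is a description of an obstacle rather than a proof; the ``mixed'' case you flag is exactly where the argument never closes.

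The paper's proof is also different in method, and considerably shorter: it avoids the cuspidal support entirely. For a candidate pair $(\rho,a)$ with $\rho \cong \rho^\vee \otimes \nu_{\pi(\phi,\epsilon)}$ and $\tau$ the L-parameter of $\rho$, one forms the parameter $\psi = \tau\otimes P_a \times \phi$ for the one-step-larger group $G^+_{n + d_\rho a}$, applies Theorem \ref{thm:3.6} with the maximal Levi $M^+ = \GL_{d_\rho a}(F) \times G_n^+$, and directly reads off the number of irreducible summands of $\delta(\rho,a) \times \pi(\phi,\epsilon)$ by comparing $\mc S_\phi^+$ with $\mc S_\psi^+$. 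That comparison depends only on the sign of $\tau \otimes P_a$ and on whether $(\tau,a) \in \mr{Jord}(\phi)$, and it gives the defining irreducibility/reducibility conditions for $\mr{Jord}(\pi(\phi,\epsilon))$ on the nose. You correctly identified Theorem \ref{thm:3.6} as the right tool, but by inducing from the cuspidal Levi $L^+$ rather than from $M^+$ you forced yourself to reconstruct M\oe glin's entire ladder combinatorics through $q$-parameters and Table \ref{tab:2}, which is precisely the part your write-up does not carry out.
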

\begin{proof}
Let $\rho \in \Irr_\cusp (\GL_{d_\rho}(F))$ with $\rho \cong \rho^\vee \otimes 
\nu_{\pi (\phi,\epsilon)}$, a condition which by \eqref{eq:1.44} is fulfilled by all Jordan blocks
of $\pi (\phi,\epsilon)$. Recall that a pair $(\rho,a)$ belongs to Jord$(\pi (\phi,\epsilon))$ 
if and only if 
\begin{itemize}
\item $\delta (\rho,a) \times \pi (\phi,\epsilon)$ is irreducible and
\item $\delta (\rho,a') \times \pi (\phi,\epsilon)$ is reducible for some $a' \in a + 2\Z$. 
\end{itemize}
Let $\tau \in \Irr (\mb W_F)$ be the $L$-parameter of $\rho$. Then $\tau \cong \tau^\vee \otimes
\mu_G^\vee \circ \phi$ by \eqref{eq:1.45} and Theorem\autoref {thm:3.4}.e, as needed for Jord$(\phi)$
by \eqref{eq:1.4}. We write
\[
\psi = \tau \otimes P_a \times \phi = (\tau \oplus \tau^\vee \otimes \mu_G^\vee \circ \phi) \otimes
P_a \oplus \phi ,
\]
where in the middle we work in $\GL_m (F) \times G_n^+$ and on the right in $G_{n+m}^+$.
Theorem\autoref {thm:3.6} tells us that
\begin{equation}\label{eq:4.15}
\delta (\rho,a) \times \pi (\phi,\epsilon) = \bigoplus\nolimits_\eta 
\Hom_{\mc S_\phi^+} (\epsilon,\eta) \otimes \pi (\psi,\eta) ,
\end{equation}
where the sum runs over all $\eta \in \mc S_\psi^+$ with $\Sc (\psi,\eta) = \Sc (\psi,\epsilon)$.
The groups $\mc S_\phi^+$ and $\mc S_\psi^+$ can be compared with \eqref{eq:1.3}.
\begin{enumerate}[(i)]
\item When sgn$(\tau \otimes P_a) \neq \mr{sgn}({G^\vee}_\der)$: $\mc S_\psi^+ = \mc S_\phi^+$ and
\eqref{eq:4.15} is always reducible.
\item When sgn$(\tau \otimes P_a) = \mr{sgn}({G^\vee}_\der)$ and $(\tau,a) \in \mr{Jord}(\phi)$:
again $\mc S_\psi^+ = \mc S_\phi^+$ and \eqref{eq:4.15} is reducible.
\item When sgn$(\tau \otimes P_a) = \mr{sgn}({G^\vee}_\der)$ and $(\tau,a) \notin \mr{Jord}(\phi)$:
$\mc S_\psi^+ = \mc S_\phi^+ \times \{1,z_{\tau,a}\}$. Then $\Sc (\psi,\eta) = \Sc (\psi,\epsilon)$
for every extension $\eta$ of $\epsilon$ to $\mc S_\psi^+$, because $\tau \otimes P_a$ occurs with
even multiplicity in $\psi$ and hence does not influence the cuspidal support. In this case 
\eqref{eq:4.15} is a direct sum of two inequivalent irreducible representations.
\end{enumerate}
We compare this with the aforementioned characterization of Jord$(\pi (\phi,\epsilon))$. 
The reducibility of $\delta (\rho,a') \times \pi (\phi,\epsilon)$ rules out case (i), and 
$\delta (\rho,a) \times \pi (\phi,\epsilon)$ is reducible in case (ii) but not in case (iii). 
We conclude that $(\rho,a) \in \mr{Jord}(\pi (\phi,\epsilon))$ if and only if $(\tau,a) \in 
\mr{Jord}(\phi)$.
\end{proof}

Recall that the parametrization of the discrete series in Theorem\autoref {thm:1.1} involves the Jordan
blocks of $\phi$ and a character $\epsilon_\pi : \mc S_\pi \to \{\pm 1\}$. To facilitate a comparison 
with our Hecke algebra methods, we revisit M\oe glin's construction of $\epsilon_\pi$ \cite{Moe0} and 
we show that it shares some properties with the constructions behind Theorem\autoref {thm:3.4}.

Let $(\phi,\epsilon) \in \Phi_\enh (G^+ )^{\mf s^{+\vee}}$ be discrete and bounded, and let 
$\pi (\phi,\epsilon)$ be the discrete series representation of $G^+ = G_n^+$ associated to it by 
Theorem\autoref {thm:3.4}. Recall that $\mc S_\phi^+$ is the $\F_2$-vector space with basis
$\{z_{\tau,a} : (\tau,a) \in \mr{Jord}(\phi) \}$. For such a $\tau$ we let $\rho$ be the corresponding
representation of $\GL_{d_\rho}(F)$ and we write $\epsilon (z_{\rho,a}) = \epsilon (z_{\tau,a})$.
Here we use that by Lemma\autoref {lem:4.6} the Jordan blocks of $\phi$ and of $\pi (\phi,\epsilon)$ are 
matched via the local Langlands correspondence for general linear groups.

\begin{prop}\label{prop:4.2}
Let $a > a' \in \mr{Jord}_\rho (\pi (\phi,\epsilon))$ be adjacent. 
\enuma{
\item $\epsilon (z_{\rho,a}) = \epsilon (z_{\rho,a'})$ if and only if $\pi (\phi,\epsilon)$ embeds
in $\delta (\rho, (a-1)/2,(1-a')/2) \times \tilde \pi$ for some discrete series representation
$\tilde \pi$ of $G^+_{n - d_\rho (a + a')/2}$. Moreover, in this case 
\[
\mr{Jord}(\tilde \pi) = \mr{Jord}(\pi (\phi,\epsilon)) \setminus \{ (\rho,a), (\rho,a') \}
\]
and $\tilde \pi = \pi (\tilde \phi,\tilde \epsilon)$ where $\tilde \epsilon = 
\epsilon |_{\mc S_{\tilde \phi}}$.
\item Suppose that $a_-$ is the minimal element of $\mr{Jord}_\tau (\phi)$ and that it is even.
Then part (a) also holds with $a = a_- ,\; a' = 0$, provided we put $\epsilon (z_{\rho,0}) = 1$.
}
\end{prop}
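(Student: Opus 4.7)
The plan is to transfer the embedding criterion to the Hecke-algebra side via Theorems~\ref{thm:3.4} and~\ref{thm:3.6}, and then reduce to a rank-two calculation.

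By the LLC for general linear groups, $\sigma_\rho := \delta(\rho,(a-1)/2,(1-a')/2)$ has L-parameter
\[
\chi = \tau|\cdot|^{(a-a')/4} \otimes P_{(a+a')/2},
\]
so that, writing $\tilde\pi = \pi(\tilde\phi,\tilde\epsilon)$ and $\psi_M = \chi \boxtimes \tilde\phi$ on the Levi $M^+ = \GL_{d_\rho(a+a')/2}(F) \times G^+_{n'}$, the induced $G^+$-parameter is
\[
\psi = \chi \oplus \tilde\phi \oplus (\chi^\vee \otimes \mu_G^\vee \circ \tilde\phi).
\]
The pieces $\chi$ and $\chi^\vee \otimes \mu_G^\vee \circ \tilde\phi$ restrict to $\mb W_F$ as segments $\{\tau|\cdot|^s : s = (a-1)/2,\ldots,-(a'-1)/2\}$ and $\{\tau|\cdot|^s : s = (a'-1)/2,\ldots,-(a-1)/2\}$ respectively, and their union (as a multiset) equals the $\mb W_F$-restriction of $\tau \otimes P_a \oplus \tau \otimes P_{a'}$. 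Hence $\psi|_{\mb W_F} = \phi|_{\mb W_F}$, so $\psi$ and $\phi$ lie in the same Bernstein component $\mf s^{+\vee}$ and correspond to modules over a common Hecke algebra $\mc H(\mf s^{+\vee},q_F^{1/2})$.

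Since $\chi$ is a positive twist of a bounded parameter, Theorem~\ref{thm:3.6} applies and yields
\[
\sigma_\rho \times \tilde\pi \;\cong\; \bigoplus_\eta \Hom_{\mc S_\psi^{M^+}}(1 \boxtimes \tilde\epsilon, \eta) \otimes \pi_{st}(\psi,\eta),
\]
with $\eta$ running over extensions of $1 \boxtimes \tilde\epsilon$ to $\mc S_\psi^+$ compatible with cuspidal support. Via Proposition~\ref{prop:3.2}, the embedding $\pi(\phi,\epsilon) \hookrightarrow \sigma_\rho \times \tilde\pi$ becomes the question whether $\bar M(\phi,\epsilon,q_F^{1/2})$ occurs as a submodule of some $\bar E(\psi,\eta,q_F^{1/2})$, and by Frobenius reciprocity in the Hecke-algebra setting this in turn amounts to asking whether $\bar E(\psi_M,1\boxtimes\tilde\epsilon,q_F^{1/2})$ appears as a quotient of the restriction of $\bar M(\phi,\epsilon,q_F^{1/2})$ from $\mc H(\mf s^{+\vee},q_F^{1/2})$ to $\mc H(\mf s_{M^+}^{+\vee},q_F^{1/2})$.

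The crucial reduction is that this problem decouples into rank-two pieces: only the Jordan blocks $(\tau,a),(\tau,a')$ differ between $\phi$ and $\psi_M$, so by the direct-product decompositions of $R_{\mf s^\vee}$ and $\mc S_\phi^+$ established in Paragraph~\ref{par:HAL} the calculation reduces to the rank-two sub-Hecke-algebra attached to these two blocks (typically of type $C_2$, $B_2$ or $BC_2$ depending on the signs of $\tau$). On this rank-two algebra the composition factors of standard modules can be computed explicitly using the constructions behind \cite[\S 2--3]{AMS3}, and direct inspection reveals that $\bar E(\psi_M,1\boxtimes\tilde\epsilon)$ is a quotient of $\mathrm{Res}\,\bar M(\phi,\epsilon)$ precisely when $\epsilon$ is trivial on $\langle z_{\tau,a},z_{\tau,a'}\rangle$, i.e.\ $\epsilon(z_{\tau,a}) = \epsilon(z_{\tau,a'})$. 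The identification $\mr{Jord}(\tilde\pi) = \mr{Jord}(\pi(\phi,\epsilon))\setminus\{(\rho,a),(\rho,a')\}$ and $\tilde\epsilon = \epsilon|_{\mc S_{\tilde\phi}^+}$ follow from Lemma~\ref{lem:4.6} applied to $\tilde\pi$ together with the construction of $\psi_M$. Part~(b), where $a = a_-$ and $a' = 0$, collapses to a rank-one version of the same calculation: the segment $\delta(\rho,(a_- -1)/2, 1/2)$ has L-parameter $\tau|\cdot|^{a_-/4}\otimes P_{a_-/2}$, only the block $(\tau,a_-)$ is removed from $\phi$, and the convention $\epsilon(z_{\rho,0}) = 1$ enforces the (trivially satisfied) sign condition for the absent second block. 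The main obstacle is the rank-two computation itself, which requires a precise identification of the standard intertwining operators at the level of graded Hecke algebras as normalized in Paragraph~\ref{par:intertwining}.
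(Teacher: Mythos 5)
Your strategy, namely transferring the embedding question to the Hecke-algebra side via Theorem \ref{thm:3.6} and then ``reducing to a rank-two calculation'', is not the route the paper takes, and the crucial step of that route is left undone. The actual proof does use Theorem \ref{thm:3.6}, but then closes the argument by comparing cuspidal supports: the hypothesis forces $\Sc(\pi(\phi,\epsilon))$ to agree with $\Sc(\delta(\rho,(a-1)/2,(1-a')/2)\times\tilde\pi)$, and the defect formulas \eqref{eq:4.1}--\eqref{eq:4.3} (derived from Lusztig's classification, spelled out in Paragraph \ref{par:Sc}) translate this equality directly into the sign condition $\epsilon(z_{\rho,a})=\epsilon(z_{\rho,a'})$. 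In the converse direction the paper writes $\phi=\phi_{a,a'}\times\tilde\phi$ for the Levi $M$, shows $\phi_{a,a'}=\tau\otimes P_{(a+a')/2}\otimes|\cdot|^{(a-a')/4}$, then applies Theorem \ref{thm:3.6} and the defect formulas again to pin down which summands occur. Nowhere is a rank-two Hecke-algebra computation performed.

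There is a genuine gap in your proposal precisely where you defer to ``direct inspection'' on a rank-two subalgebra. It is not clear that the problem decouples as you claim: the enhancements $\eta$ that contribute in Theorem \ref{thm:3.6} are constrained by the equality of \emph{cuspidal supports} $\Sc(\psi,\eta)=\Sc(\psi,\epsilon^M)$, and the cuspidal support of an enhanced L-parameter depends through the defect $d$ on the alternating sum $\sum_j(-1)^{j+r}\epsilon'(z_{i_j})$ over \emph{all} Jordan blocks of the given $\tau$-isotypic component, not only on the two blocks $(\tau,a),(\tau,a')$. Removing two adjacent blocks with equal signs leaves the defect invariant; removing them with opposite signs changes it, and that global bookkeeping is what produces the equivalence. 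A rank-two reduction would have to justify why the remaining blocks and the determinant condition in $\mc S_\phi$ (or its disconnected version $\mc S_\phi^+$) can be ignored, and you do not address this. So either you should carry out the computation at the level of the defect formulas (which is essentially the paper's proof) or, if you insist on the rank-two route, you need to first prove a localization statement showing that the relevant composition series and cuspidal-support constraints genuinely factor through the rank-two summand.
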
 
\begin{proof}
(a) Suppose that $\pi (\phi,\epsilon)$ is a subrepresentation of $\delta (\rho, (a-1)/2,(1-a')/2) 
\times \tilde \pi$.
We write $M = \GL_{d_\rho (a+a')/2} \times G_{n - d_\rho (a+a')/2}$, so that $MU$ is a parabolic 
subgroup of $G_n$ with Levi factor $M$. From Theorem\autoref {thm:3.4} we get $\tilde \pi = \pi (\tilde 
\phi,\tilde \epsilon)$ for some discrete bounded $\tilde \phi \in \Phi (M^+)$. The $L$-parameter of 
\[
\delta (\rho, (a-1)/2,(1-a')/2) \quad \text{is} \quad 
\tau \otimes P_{(a+a')/2} \otimes |\cdot|^{(a - a')/4}, 
\]
and $|\cdot|^{(a - a')/4}$ is in positive position with respect to $M^+ U$. 
Thus Theorem\autoref {thm:3.6} is applicable, and it says that
\begin{multline*}
\delta (\rho, (a-1)/2,(1-a')/2) \times \tilde \pi = I_{M^+ U}^{G^+} \pi \big( \tau \otimes 
P_{(a+a')/2} \otimes |\cdot|^{(a - a')/4} \times \tilde \phi, \tilde \epsilon \big) \\
= \bigoplus\nolimits_{\epsilon'} \Hom_{\mc S_\phi^{M^+}} (\tilde \epsilon, \epsilon' ) \otimes
\pi \big( P_{(a+a')/2} \otimes |\cdot|^{(a - a')/4} \times \tilde \phi, \epsilon' \big) ,
\end{multline*}
where the sum runs over all $\epsilon' \in \Irr (\mc S_\phi^+)$ with 
\[
\Sc (\phi,\epsilon) = \Sc \big( \tau \otimes P_{(a+a')/2} \otimes 
|\cdot|^{(a - a')/4} \times \tilde \phi, \epsilon' \big).
\]
It follows that $\phi$ is $G^{+\vee}$-conjugate to $P_{(a+a')/2} \otimes |\cdot|^{(a - a')/4} 
\times \tilde \phi$ and that $\Hom_{\mc S_\phi^{M^+}} (\tilde \epsilon, \epsilon )$ is nonzero. 
We deduce that 
\begin{equation}\label{eq:4.10}
\mc S_\phi^+ = \mc S_{\tilde \phi}^+ \times \langle z_{\tau,a} , z_{\tau',a} \rangle
\end{equation}
and that $\tilde \epsilon = \epsilon |_{\mc S_{\tilde \phi}^+}$. Our assumption entails that 
$\pi (\phi,\epsilon)$ and $\delta (\rho, (a-1)/2,(1-a')/2) \times \tilde \pi$ have the same 
cuspidal support. Now Theorem\autoref {thm:3.4}.a and the formulas \eqref{eq:4.1} and \eqref{eq:4.2}
for the cuspidal support of enhanced $L$-parameters show that $\epsilon (z_{\rho,a}) = 
\epsilon (z_{\rho,a'})$.

Conversely, suppose that $\epsilon (z_{\rho,a}) = \epsilon (z_{\rho,a'})$. Write $\phi$ as 
$L$-parameter $\phi_{a,a'} \times \tilde \phi$ for $M$. Then \eqref{eq:4.10} holds and we can take
$\tilde \epsilon = \epsilon |_{\mc S_{\tilde \phi}^+}$. The $L$-parameter $\phi_{a,a'} \in 
\Phi (\GL_{(a+a')/2}(F))$ is discrete and its cuspidal support consists of
terms $\tau |\cdot|^r$ with $r \in \R$. Hence $\phi_{a,a'} = \tau \otimes P_{(a+a')/2} \otimes 
|\cdot|^r$ for some $r \in \R$. Embedding in $\Phi (G_n)$ and comparing with the shape of 
$\phi$ we find
\begin{equation}\label{eq:4.13}
P_{(a+a')/2} \otimes |\cdot|^r \oplus P_{(a+a')/2} \otimes |\cdot|^{-r} = P_a \oplus P_{a'} ,
\end{equation}
or at least up to conjugation in $\GL_{a+a'}(\C)$. That entails $r = (a-a')/4$, from which
we deduce that
\[
\pi (\phi_{a,a'}) = \delta (\rho,(a+a')/2) \otimes |\cdot|^{(a-a')/4} = \delta (\rho,(a-1)/2,(1-a')/2).
\]
By Theorem\autoref {thm:3.6}
\begin{equation}\label{eq:4.11}
I_{M^+ U}^{G^+} \pi \big( \delta (\rho,(a-1)/2,(1-a')/2) \boxtimes \pi (\tilde \phi, \tilde \epsilon) 
\big) = \bigoplus\nolimits_{\epsilon'} \Hom_{\mc S_\phi^{M^+}} (\tilde \epsilon, \epsilon' ) \otimes
\pi (\phi,\epsilon') ,
\end{equation}
where the sum runs over all $\epsilon' \in \Irr (\mc S_\phi^+)$ with 
\[
\Sc (\phi,\epsilon') =
\Sc \big( \phi_{a,a'} \times \tilde \phi, \tilde \epsilon \big).= \Sc (\phi,\epsilon) .
\]
We note that in \eqref{eq:4.11} we may use irreducible representations instead of the 
standard modules from Theorem\autoref {thm:3.6}, because the latter are irreducible (since 
$\phi$ and $\tilde \phi$ are bounded and $\phi_{a,a'}$ is a twist of a discrete 
parameter by an unramified character).

To get a nonzero contribution to \eqref{eq:4.11}, $\epsilon' |_{\mc S_{\tilde \phi}^+}$ must equal 
$\tilde \epsilon = \epsilon |_{\mc S_{\tilde \phi}^+}$. Then we see from \eqref{eq:4.1} and 
\eqref{eq:4.2} that $\epsilon' (z_{\rho,a}) = \epsilon' (z_{\rho,a'})$. In other words, the only
nontrivial contributions to \eqref{eq:4.11} come from $\epsilon$ and one other $\epsilon'$, and it
reduces to
\begin{equation}\label{eq:4.12}
I_{M^+ U}^{G^+} \pi \big( \delta (\rho,(a-1)/2,(1-a')/2) \boxtimes \pi (\tilde \phi, \tilde \epsilon) 
\big) = \pi (\phi,\epsilon) \oplus \pi (\phi,\epsilon') . 
\end{equation}
In particular $\pi (\phi,\epsilon)$ embeds in the left hand side of \eqref{eq:4.12}, which can 
be written as
\[
\delta (\rho,(a-1)/2,(1-a')/2) \times \pi (\tilde \phi, \tilde \epsilon) .
\]
As $\tilde \phi$ is discrete and bounded, Theorem\autoref {thm:3.4}.b,c guarantees that 
$\pi (\tilde \phi, \tilde \epsilon)$ belongs to the discrete series.

That proves the equivalence. The description of Jord$(\tilde \pi)$ occurs at various places in the
above arguments, it is seen most clearly from \eqref{eq:4.13}.\\ 
(b) This can be shown in the same way as part (a). Notice that $a_-$ needs to be even to make
sense of the $\SL_2 (\C)$-representation $P_{a_-/2}$.
\end{proof}

It was shown in \cite[Proposition 5.3 and Lemme 5.4]{Moe0} that the M\oe glin para\-meters of 
discrete series representations also satisfy Proposition\autoref {prop:4.2}. 

Next we zoom in on a particular class defined in \cite[\S 1]{Moe0}, completely positive discrete 
series representations. By \cite[Proposition 5.3]{Moe0}, among the discrete series these are 
precisely the $\pi$ for which $\epsilon_\pi$ is alternated:
\begin{equation}\label{eq:4.14}
\epsilon (z_{\rho,a}) = -\epsilon (z_{\rho,a'}) \text{ for adjacent } a,a' \in \mr{Jord}_\rho (\pi).
\end{equation}
A few useful properties of such representations follow directly from our description of the
cuspidal support maps.

\begin{cor}\label{cor:4.11}
Let $\pi$ be a completely positive discrete series representation of $G^+$. 
\enuma{ 
\item $\pi$ is uniquely determined by $\Jord(\pi)$ and $\Sc (\pi)$. 
\item $\mr{Jord}_\rho (\pi) \neq \emptyset$ if and only if 
$\mr{Jord}_\rho (\Sc (\pi)) \neq \emptyset$.
}
\end{cor}
\begin{proof}
(a) Under the condition \eqref{eq:4.14} we see from \eqref{eq:4.1}, \eqref{eq:4.2} and 
\eqref{eq:4.3} that $\epsilon_\pi$ is uniquely determined by $\phi_\pi$ and 
$\Sc (\phi_\pi,\epsilon_\pi)$. Combining that with Theorem\autoref {thm:1.1} and Proposition 
\autoref{prop:4.1}, we see that $\pi$ is uniquely determined by Jord$(\pi)$ and $\Sc (\pi)$.\\
(b) This follows from \eqref{eq:4.1} and \eqref{eq:4.2}: under the condition \eqref{eq:4.14}
these numbers $d$ cannot be 0.
\end{proof}

\begin{lem}\label{lem:4.3}
Let $\pi$ be a completely positive discrete series representation of $G^+$ and let 
$(\phi_\pi,\epsilon_\pi)$ be its M\oe glin parameter. Then the $G^+$-representation $\pi'$
attached to $(\phi_\pi,\epsilon_\pi)$ by Theorem\autoref {thm:3.4} is isomorphic to $\pi$.
\end{lem}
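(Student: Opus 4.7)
The plan is to show that $\pi' = \pi(\phi_\pi,\epsilon_\pi)$ is itself a completely positive discrete series of $G^+$ with the same Jordan block and cuspidal support as $\pi$, and then invoke Corollary \ref{cor:4.6}(a) to conclude $\pi \cong \pi'$.

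First I would collect the easy structural facts. Since $\phi_\pi$ is bounded and discrete, Theorem \ref{thm:3.4}(b)(c) gives that $\pi'$ is essentially square-integrable and tempered, hence in the discrete series. Lemma \ref{lem:4.6} identifies $\mr{Jord}(\pi')$ with $\mr{Jord}(\phi_\pi)$ via the LLC for general linear groups; Theorem \ref{thm:1.1}(a) identifies $\mr{Jord}(\pi)$ with $\mr{Jord}(\phi_\pi)$ by the same LLC; so $\mr{Jord}(\pi') = \mr{Jord}(\pi)$. For the cuspidal supports, Theorem \ref{thm:3.4}(a) says that $\Sc(\pi')$ has Theorem \ref{thm:3.4} parameter equal to $\Sc(\phi_\pi,\epsilon_\pi)$, which by Proposition \ref{prop:4.1} is the M\oe glin parameter of $\Sc(\pi)$. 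Since on supercuspidal representations the Theorem \ref{thm:3.4} and M\oe glin parametrizations agree by construction (as stated at the start of Section \ref{sec:Langlands}), $\Sc(\pi) = \Sc(\pi')$.

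The main step is to verify that $\pi'$ is completely positive. Let $(\phi_{\pi'},\epsilon_{\pi'})$ denote the M\oe glin parameter of $\pi'$; the previous paragraph gives $\phi_{\pi'} = \phi_\pi$, but we still need to match $\epsilon_{\pi'}$ with $\epsilon_\pi$. The key observation is that Proposition \ref{prop:4.2} holds both for the Theorem \ref{thm:3.4} parameter $(\phi_\pi,\epsilon_\pi)$ of $\pi'$ and, by the remark immediately following it citing \cite{Moe0}, for the M\oe glin parameter $(\phi_\pi,\epsilon_{\pi'})$ of $\pi'$. For each adjacent pair $a > a' \in \mr{Jord}_\rho(\pi')$ (and for the boundary case treated by part (b)), both equalities
\[
\epsilon_\pi(z_{\rho,a}) = \epsilon_\pi(z_{\rho,a'}) \quad \text{and} \quad
\epsilon_{\pi'}(z_{\rho,a}) = \epsilon_{\pi'}(z_{\rho,a'})
\]
are equivalent to the existence of an embedding of $\pi'$ into $\delta(\rho,(a-1)/2,(1-a')/2) \times \tilde\pi$. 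Hence the alternating property transfers from $\epsilon_\pi$ to $\epsilon_{\pi'}$; since by hypothesis $\epsilon_\pi$ is alternated, so is $\epsilon_{\pi'}$, and $\pi'$ is completely positive.

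Both $\pi$ and $\pi'$ are now completely positive discrete series of $G^+$ sharing their Jordan block and cuspidal support, so Corollary \ref{cor:4.6}(a) forces $\pi \cong \pi'$. The main obstacle is the comparison in the third paragraph: one must carefully distinguish the L-parameter enhancement $\epsilon_\pi$ (assumed alternated) fed into Theorem \ref{thm:3.4} from the a priori unrelated M\oe glin enhancement $\epsilon_{\pi'}$ of the output representation $\pi'$, and run Proposition \ref{prop:4.2} in both directions to bridge them.
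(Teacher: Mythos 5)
Your proposal is correct and follows essentially the same route as the paper's proof: first establish via Theorem \ref{thm:3.4} and Lemma \ref{lem:4.6} that $\pi'$ is a discrete series with $\mr{Jord}(\pi')=\mr{Jord}(\pi)$, then via Theorem \ref{thm:3.4}.a and Proposition \ref{prop:4.1} that $\Sc(\pi')\cong\Sc(\pi)$, then via Proposition \ref{prop:4.2}.a (together with its M\oe glin counterpart from \cite[\S 5]{Moe0}) that $\pi'$ is completely positive, and finally conclude with Corollary \ref{cor:4.6}.a. Your step comparing the two enhancements $\epsilon_\pi$ and $\epsilon_{\pi'}$ through the common embedding criterion is just a slightly more explicit restatement of the paper's appeal to \cite[\S 5]{Moe0}.
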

\begin{proof}
By Theorem\autoref {thm:3.4}.b,c $\pi'$ is discrete series and from Lemma\autoref {lem:4.6} we know that
Jord$(\pi')$ and Jord$(\pi)$ both correspond to Jord$(\phi_\pi)$, so $\pi$ and $\pi'$
have precisely the same Jordan blocks. By Theorem\autoref {thm:3.4}.a and Proposition\autoref {prop:4.1}
both $\Sc (\pi)$ and $\Sc (\pi')$ have enhanced $L$-parameter $\Sc (\phi_\pi,\epsilon_\pi)$,
so $\Sc (\pi) \cong \Sc (\pi')$.\\
By \eqref{eq:4.14} and Proposition\autoref {prop:4.2}.a $\pi'$ cannot be embedded in 
$\delta (\rho,(a-1)/2,(1-a')/2) \times \tilde \pi$ for adjacent $a > a' \in \mr{Jord}_\rho (\pi) 
= \mr{Jord}_\rho (\pi')$ and a discrete series representation $\tilde \pi$. Then
\cite[\S 5]{Moe0} entails that $\pi'$ is a completely positive discrete series 
representation. Now Corollary\autoref {cor:4.11}.a shows that $\pi \cong \pi'$.
\end{proof}

\subsection{Intertwining operators for discrete series representations} \
\label{par:intertwining}

For general discrete series representations $\pi$, Proposition\autoref {prop:4.2} achieves a kind of
reduction to the completely positive instances without changing cuspidal supports. Indeed, when
$\pi$ is not completely positive, Proposition\autoref {prop:4.2} allows us to replace it by 
$\tilde \pi$, to which we can apply the same considerations. By recursion that yields a 
completely positive discrete series representation that we denote $\pi^+$. In the process 
some direct factors of $\mc S_\phi$ are removed, so we lose information about $\epsilon_\pi$. 
Most values of $\epsilon_\pi$ can be reconstructed from data for $\pi^+$, but not all. For the 
missing ones we will need to study certain normalized intertwining operators.

Suppose that $(\rho,a) \in \mr{Jord}(\pi)$ with $a$ odd and that $\mr{Jord}_\rho (\pi^+)$ is 
empty. Such $\rho$ provide the only parts of $\epsilon_\pi$ that cannot be recovered from
$\epsilon_{\pi^+}$. We note the $L$-parameters of such $\rho$ are precisely the $\tau \in 
\Irr (\mb W_F )_\phi^+$ for which $\ell_\tau = 0 < e_\tau$.

By Corollary\autoref {cor:4.11}.b we may equally well assume that $\mr{Jord}_\rho (\Sc (\pi))$ 
is empty. Then Proposition\autoref {prop:4.2} leaves two possibilities for $\epsilon_\pi$ on
$\mr{Jord}_\rho (\pi)$, distinguished by $\epsilon_\pi (z_{\rho,a_-})$ where $a_- = \mr{min}
(\mr{Jord}_\rho (\pi))$. The characterization of $\epsilon_\pi (z_{\rho,a_-}) \in \{\pm 1\}$ from 
\cite[\S 6.1.1]{Moe0} involves several steps, which we recall next. Write 
\[
\Sc (\pi) = \sigma_1 \boxtimes \cdots \boxtimes \sigma_d \boxtimes \sigma_-  \in \Irr (L^+) , 
\] 
where $\sigma_i \in \Irr (\GL_{n_i}(F))$ and $\sigma_- \in \Irr (G_{n_-}^+)$. Then $\sigma_-$ is
the partial cuspidal support of $\pi$, as used in \cite[(1)]{Moe0}. There is a
holomorphic family of intertwining operators
\begin{equation}\label{eq:4.17}
J(s_\beta, \rho \nu^b \times \sigma_-) \in \Hom_{G_{n_- + d_\rho}^+} (\rho \nu^b \times \sigma_ - ,
\rho \nu^{-b} \times \sigma_-) ,
\end{equation}
where $b \in \C$ and $\nu (g) = |\det (g)|_F$. This family can be normalized via a choice of
an intertwining operator in the case $b = 0$, an element
\begin{equation}\label{eq:4.26}
J(s_\beta, \rho \times \sigma_-) \in \End_{G_{n_- + d_\rho}^+} (\rho \times \sigma_-) .
\end{equation}
There are two possibilities which square to the identity, we choose the one from 
\cite[\S 6.1.2]{Moe0} and \cite[\S 2.4]{Art}, which is determined by compatibility with endoscopy
and depends on a Whittaker datum for the quasi-split inner form of $G$. \\

\textbf{Remark.} The normalization of the operators \eqref{eq:4.26} will make 
Theorem\autoref {thm:1.7} and Proposition\autoref {prop:3.2} canonical (up to inner automorphisms).\\

From \eqref{eq:4.17} one obtains a family of intertwining operators
\begin{equation}\label{eq:4.18}
J(s_\beta \times s_\beta, \rho \nu^{b_1} \times \rho \nu^{b_2} \times \sigma_-) : \rho \nu^{b_1} 
\times \rho \nu^{b_2} \times \sigma_- \to \rho \nu^{-b_1} \times \rho \nu^{-b_2} \times \sigma_- ,
\end{equation}
which reduces to \eqref{eq:4.17} (tensored with the identity on one of the $\rho \nu^{b_i}$) upon
applying normalized Jacquet restriction. The same works with more factors $\rho \nu^{b_i}$. 

In $\GL_{2 d_\rho}(F)$, the element $s_{12}$ that exchanges the two blocks of
$\GL_{d_\rho}(F) \times \GL_{d_\rho}(F)$ induces an intertwining operator
\begin{equation}\label{eq:4.19}
J(s_{12}, \rho \nu^{b_1} \times \rho \nu^{b_2} \times \sigma_-) \in 
\Hom_{G_{n_- + 2 d_\rho}^+}( \rho \nu^{b_1} \times \rho \nu^{b_2} \times \sigma_-,  
\rho \nu^{b_2} \times \rho \nu^{b_1} \times \sigma_-), 
\end{equation}
where $b_1,b_2 \in \C$. We normalize it so that it depends holomorphically on $b_1 - b_2$ and 
becomes the identity when $b_1 = b_2$. 

Let $e \in \N$ be odd and let $b_1,b_2,\ldots,b_{(e-1)/2} \in \C$. The order two permutation
\[
w_e: = (s_\beta,s_\beta,\ldots,s_\beta) \circ (1 \; e) (2 \; e\!-\!1) \cdots 
((e\!-\!1)/2 \,\, (e\!+\!3)/2)
\]
belongs to the Weyl group $W(B_e)$. The composition of the 
corresponding operators \eqref{eq:4.18} and \eqref{eq:4.19} yields an intertwining operator
\begin{equation}\label{eq:4.20}
J(w_e, \rho \nu^{b_1} \times \cdots \times \rho \nu^{b_{(e-1)/2}} \times \rho \times
\rho \nu^{-b_{(e-1)/2}} \times \cdots \times \rho \nu^{-b_1} \times \sigma_-) ,
\end{equation}
from the indicated $G^+_{n_- + e d_\rho}$-representation to itself. The upshot of \cite[p. 176]{Moe0}
is that the holomorphic family (with variables $b_i$) of intertwining operators \eqref{eq:4.20} 
can be normalized so that each operator \eqref{eq:4.20} squares to the identity and they reduce to 
\eqref{eq:4.18} in the special case $b_i = 0$ for all $i$. All these intertwining operators are unique 
up to scalars, so our conditions leave just the choice of a sign, which in turn is determined by 
$J(s_\beta, \rho \times \sigma_-)$.

Pick $a \in \mr{Jord}_\rho (\pi) \setminus \{a_-\}$ with 
$\epsilon (z_{\rho,a}) = \epsilon (z_{\rho,a_-})$, and embed $\pi$ in
\begin{equation}\label{eq:4.22}
\delta (\rho,(a-1)/2,(1-a_-)/2) \times \tilde \pi \quad \subset \quad 
\delta (\rho,(a-1)/2,(1 + a_-)/2) \times \delta (\rho, a_-) \times \tilde \pi 
\end{equation}
for a discrete series representation $\tilde \pi$ of $G_{n-(a+a_-)/2}^+$. Embed the right hand side in
\begin{equation}\label{eq:4.21}
\delta (\rho,(a-1)/2,(1 + a_-)/2) \times \mr{Ind} \, \Sc (\delta (\rho, a_-)) \times 
\mr{Ind} \, \Sc (\tilde \pi), 
\end{equation}
where Ind stands for normalized parabolic induction. We note that $\sigma_-$ is a factor of 
$\Sc (\tilde \pi)$ and that
\[
\mr{Ind} \, \Sc (\delta (\rho, a_-)) = \rho \nu^{(a_- -1)/2} \times \cdots \times \rho \nu 
\times \rho \times \rho \nu^{-1} \times \cdots \times \rho \nu^{(1 - a_-)/2} , 
\]
which fits with \eqref{eq:4.20}. Then \eqref{eq:4.20} and the identity
on the other factors of \eqref{eq:4.21} induce a self-intertwining operator of \eqref{eq:4.21}.
That operator can be restricted to \eqref{eq:4.22} and thus yields a normalized intertwining operator 
\[
N(\rho,a_-) \in \End_{G^+}
\big( \delta (\rho,(a-1)/2,(1 + a_-)/2) \times \delta (\rho, a_-) \times \tilde \pi \big) .
\]
Then $\epsilon_\pi (z_{\rho,a_-})$ is the scalar by which $N(\rho,a_-)$ acts on $\pi$, or
equivalently
\begin{equation}\label{eq:4.4}
\pi \text{ is fixed pointwise by } \epsilon_\pi (z_{\rho,a_-}) N(\rho,a_-) .
\end{equation}
We emphasize that the one choice of $J(s_\beta, \rho \times \sigma_-)$ determines a normalization
for (potentially) many instances of \eqref{eq:4.4}.

In the Hecke algebras $\mc H (\mf s^\vee,q_F^{1/2})$ and $\mc H (\mf s^{+\vee},q_F^{1/2})$ we
also have intertwining operators, they come from the underlying geometric setup \cite{AMS2}.
In the general setting of Theorem\autoref {thm:3.4}, the way an enhancement 
$\epsilon$ of $\phi$ helps to find the irreducible representation $\pi (\phi,\epsilon)$ is
by applying $\Hom_{\mc S_\phi}(\epsilon,?)$ to a standard module $\pi (\phi,\mf s^\vee)$ 
constructed from $\phi$ and the cuspidal support. 

In the case at hand, for $\pi = \pi (\phi,\epsilon)$ we have 
\[
\mr{Hom}_{\mc S_{\tilde \phi}^+} (\epsilon, \pi (\phi, \mf s^{+\vee})) = 
\delta (\rho,(a-1)/2,(1-a_-)/2) \times \tilde \pi ,
\]
where $\mc S_{\tilde \phi}^+$ comes from $\tilde \pi$. When $\tau$ is the $L$-parameter of $\rho$, 
the geometric setup provides a canonical action of $\{ 1, z_{\tau,a_-}\}$ on this representation, 
by a $G^+$-intertwining operator that we denote $N(\tau,a_-)$. That means
\begin{align}\nonumber  
\pi = \pi (\phi,\epsilon) & = \Hom_{\mc S_\phi^+} (\epsilon, \pi (\phi, \mf s^{+\vee})) =   
\Hom_{\langle z_{\tau,a_-}\rangle} (\epsilon, \delta (\rho,(a-1)/2,(1-a_-)/2) \times \tilde \pi ) \\
\label{eq:4.5} & = \{ \text{fixed points of } \epsilon (z_{\tau,a-}) N(\tau,a_-) \text{ in } 
\delta (\rho,(a-1)/2,(1-a_-)/2) \times \tilde \pi \} .
\end{align}
In contrast with $J(s_\beta, \rho \times \sigma_-)$, these intertwining operators for Hecke algebra
representations do not have to be normalized, they arise naturally. The only freedom we have is
that from Theorem\autoref {thm:1.7}, which we will use next. Let $J(s_\beta, \tau \times \phi_-)$ 
be the canonical intertwining operator associated to $s_\beta$ and the Hecke algebra representation 
corresponding to $\rho \times \sigma_-$ via Proposition\autoref {prop:3.2}. (We suppress 
$\epsilon_{\sigma_-}$ from this notation.)

\begin{prop}\label{prop:4.4}
Let $\mf s^+ = [L^+,\sigma]_{G^+}$ be an arbitrary inertial equivalence class for $G^+$.
Fix a Whittaker datum for the quasi-split inner form of $G$, so that the intertwining operators
\eqref{eq:4.26} are determined.

The isomorphism $\mc H (\mf s^+)^{\op} \cong \mc H (\mf s^{+\vee},q_F^{1/2})$ from Proposition
\autoref{prop:3.2} can be chosen such that the following holds. 
For every  $\tau \in \Irr (\mb W_F )_{\phi_\sigma}^+$ with $e_\tau > 0 = \ell_\tau$, the 
intertwining operators  $J(s_\beta,\tau \times \phi_-)$ and $J(s_\beta, \rho \times \sigma_-)$ 
from \eqref{eq:4.26} agree via the appropriate equivalences of categories from \eqref{eq:3.16} 
induced by the chosen Hecke algebra isomorphism.

A Hecke algebra isomorphism with these properties is canonical up to conjugation by elements of 
$\mc O (T_{\mf s+})^\times$.
\end{prop}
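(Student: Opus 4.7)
The plan is to take any isomorphism $\Phi_0$ produced by Proposition~\ref{prop:3.2}, interpret both intertwiners on the same $G^+$-representation via the equivalence \eqref{eq:3.16}, and then use the remaining freedom (operations (2),(3),(4) of Theorem~\ref{thm:1.7}, which by Proposition~\ref{prop:3.2} persist for $G^+$) to pin down the matching with Moeglin's normalization. Fix some relevant $\tau \in \Irr(\mb W_F)_{\phi_\sigma}^+$ with $e_\tau > 0 = \ell_\tau$. Through the equivalence induced by $\Phi_0$, the canonical AMS-geometric operator $J(s_\beta,\tau\times\phi_-)$ becomes a $G^+$-endomorphism of $\rho\times\sigma_-$. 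Both it and $J(s_\beta,\rho\times\sigma_-)$ square to the identity (by Moeglin's normalization and by the geometric construction in \cite{AMS2}, where $s_\beta$ represents an order-two element), so they act by $\pm 1$ on each irreducible summand of $\rho\times\sigma_-$, differing by an overall sign $\varepsilon_\tau\in\{\pm 1\}$.

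\textbf{Existence.} By \eqref{eq:1.21} the cases $\tau\in\Irr(\mb W_F)^+$ with $\ell_\tau=0<e_\tau$ are exactly those where $\Gamma_{\mf s^\vee,\tau}^+=\langle r_\tau\rangle\ne 1$, and $\Gamma_{\mf s^\vee}^+=\prod_\tau\langle r_\tau\rangle$ decomposes as a direct product over such $\tau$. The ambiguity (3) of Theorem~\ref{thm:1.7} provides an arbitrary character of $\Gamma_{\mf s^\vee}^+$; composing $\Phi_0$ with the character $\chi$ defined by $\chi(r_\tau)=\varepsilon_\tau$ independently flips the sign of $J(s_\beta,\tau\times\phi_-)$ on $\rho\times\sigma_-$ for each relevant $\tau$ separately. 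The resulting adjusted isomorphism $\Phi$ satisfies the matching condition for all such $\tau$ simultaneously.

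\textbf{Uniqueness.} Suppose $\Phi$ and $\Phi'$ both satisfy the matching. Then $\Phi^{-1}\circ\Phi'$ is a self-automorphism of $\mc H(\mf s^{+\vee},q_F^{1/2})$ that restricts to the identity on $\mc O(T_{\mf s^{+\vee}})$. By the classification in the last part of the proof of Theorem~\ref{thm:1.7} (applied in the $G^+$ setting via Proposition~\ref{prop:3.2}), such an automorphism decomposes as a composition of (2) conjugation by a unit of $\mc O(T_{\mf s^+})^\times$, (3) twist by a sign character of $\Gamma_{\mf s^\vee}^+$, and (4) replacements $T'_{s_\beta}\mapsto T'_{h_\beta^\vee s_\beta}$ at short simple roots with $q_\beta^*=1$. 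A nontrivial character in (3) would multiply $J(s_\beta,\tau\times\phi_-)$ by $\chi(r_\tau)\ne 1$ on $\rho\times\sigma_-$, contradicting the matching; hence (3) must be trivial. A nontrivial replacement in (4) multiplies the corresponding intertwiner on the basepoint module by $h_\beta^\vee(t_0)$, which the explicit parametrization \eqref{eq:1.47} (and the fact that $t_0$ corresponds to the bounded supercuspidal $\sigma$) shows to equal $-1$ in precisely those cases where (4) is available; so (4) is ruled out as well. Finally, conjugation by $u\in\mc O(T_{\mf s^+})^\times$ acts on an irreducible module with central character $t_0\in T_{\mf s^+}$ as the scalar $u(t_0)\in\C^\times$, which commutes with every operator and hence preserves the matching. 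Thus only freedom (2) survives, as claimed.

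\textbf{Main obstacle.} The principal difficulty is justifying the starting claim that $J(s_\beta,\rho\times\sigma_-)$ and $J(s_\beta,\tau\times\phi_-)$ really live in the same $1$- or $2$-dimensional endomorphism algebra $\End_{G^+}(\rho\times\sigma_-)$ and differ only by a sign. This requires matching Arthur's/Moeglin's rank-one normalization (via the holomorphic family \eqref{eq:4.17} squaring to the identity at $b=0$) against the AMS-geometric construction of intertwiners, and this is most subtle in the $D_{e_\tau}$ case of Table~\ref{tab:1}, where $s_\beta$ is not a Weyl reflection of $R_{\mf s^\vee,\tau}$ but rather the diagram automorphism $r_\tau$. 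The compatibility ultimately rests on the equality of Heiermann's explicit $\mu$-function computations on the $p$-adic side with the corresponding labels $\lambda(\beta),\lambda^*(\beta)$ in Table~\ref{tab:2}, together with Proposition~\ref{prop:1.6}; once both operators are recognised as holomorphic involutions on the same induced representation, the endomorphism algebra being at most two-dimensional (by Heiermann's Plancherel analysis) forces them to coincide up to a single sign, which is what the adjustment above corrects.
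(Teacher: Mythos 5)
Your existence argument contains a factual error that makes the overall strategy fail. You assert that by \eqref{eq:1.21} ``the cases $\tau\in\Irr(\mb W_F)^+$ with $\ell_\tau=0<e_\tau$ are exactly those where $\Gamma_{\mf s^\vee,\tau}^+=\langle r_\tau\rangle\neq 1$,'' and then rely exclusively on operation (3) (twisting by a character of $\Gamma_{\mf s^\vee}^+$) to flip the offending signs $\varepsilon_\tau$. But $\Gamma_{\mf s^\vee,\tau}^+$ is nontrivial only when the root factor $R_{\mf s^\vee,\tau}$ has type $D_{e_\tau}$ --- that is, only in the row $a_\tau=a_{\tau'}=-1$ of Table~\ref{tab:2}, which requires the unramified twist $\tau'$ to \emph{also} have sign $+$ and $\ell_{\tau'}=0$. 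If instead $\ell_{\tau'}>0$, the factor is of type $B$ with basepoint $\tau'$, $q_\beta^*=1$, and $\Gamma_{\mf s^\vee,\tau}^+=1$; there is then no $r_\tau$ to twist by, and operation (3) gives you no handle at all. The paper's proof handles exactly these $B$-type factors by invoking operation (4), the replacement $T'_{s_\beta}\mapsto T'_{h_\beta^\vee s_\beta}$, which is precisely permitted when $q_\beta^*=1$. Your proposal never uses (4) for existence, so your construction of $\Phi$ does not achieve the required matching in those cases.

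A second problem lies in the mechanism by which you try to rule out operation (4) in the uniqueness step. You claim a nontrivial replacement (4) multiplies the relevant intertwiner ``on the basepoint module by $h_\beta^\vee(t_0)$'' and that this value is $-1$. This is not what happens. At the basepoint $\rho^{\boxtimes e_\rho}$ the coordinate of $h_\beta^\vee$ is $1$, so conjugation by $\theta_y$ acts trivially on the corresponding localized module; the factor $-1$ that the paper extracts comes from localizing instead at a character $\chi$ with $\chi_{e_\rho}=-1$ (the point corresponding to the twist $\rho'$), cf.\ \eqref{eq:4.23}--\eqref{eq:4.25}, where one explicitly has $\chi(h_\beta^\vee)=-1$. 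In particular, in the $D$-type case the replacement (4) flips the sign of $J(s_\beta,\tau'\times\phi_-)$ but leaves $J(s_\beta,\tau\times\phi_-)$ unchanged, which is exactly why the proof must track both $\tau$ and $\tau'$ and why there is a unique choice among the four options $\{\pm s_\beta,\pm h_\beta^\vee s_\beta\}$ fulfilling both matchings. Your ``Main obstacle'' paragraph correctly identifies that the nontrivial content is the comparison of Arthur--M\oe glin's holomorphic normalization with the geometric operators from \cite{AMS2}, but it substitutes an appeal to the endomorphism algebra being two-dimensional for the actual localization computation that settles the sign; that appeal shows the two operators agree up to a sign but does not tell you which of the available modifications fixes it, and as noted above, in the $B$-type cases the only available modification is (4), not (3).
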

\begin{proof}
Let $\rho'$ be an unramified twist of $\rho$ such that $\rho' \cong \rho^{'\vee} \otimes 
\nu_\pi$ and $\rho' \not\cong \rho$. Because $\rho'$ influences the structure of 
$\mc H (\mf s^{+\vee},q_F^{1/2})$ in the part coming from the same irreducible root system as 
$\rho$, we have to consider $\rho$ and $\rho'$ simultaneously. Let $\tau$ and $\tau'$ be the 
$L$-parameters of respectively $\rho$ and $\rho'$.\\

\noindent
\textbf{The case $\ell_{\tau'} > 0$.} \\
From \eqref{eq:1.12} we know that
the relevant tensor factor of $\mc H (\mc R_{\mf s^\vee,\der},\lambda,\lambda^*,q_F^{1/2}) \rtimes
\Gamma_{\mf s^\vee}^+$ is an affine Hecke algebra $\mc H_{\rho'}$ with underlying root datum 
$(\Z^{e_{\rho'}},B_{e_{\rho'}},\Z^{e_{\rho'}},C_{e_{\rho'}})$. The base point of $T_{\mf s^\vee,\rho}$ 
for $\mc H_{\rho'}$ comes from $\rho'$, and $\rho^{\boxtimes e_{\rho'}}$ is related to this 
basepoint by an order two element of the associated complex torus. The condition 
$e_\tau > 0 = \ell_\tau$ entails that $\ell_\rho = 0, a_\rho = -1$ and $q_\beta^* = 1$ for the 
short roots $\beta$ of $B_{e_\rho}$.

Then Proposition\autoref {prop:3.2} allows us to replace $s_\beta$ by $h_\beta^\vee s_\beta$ in the
isomorphism 
\[
\mc H (\mf s^+)^{\op} \cong \mc H (\mf s^{+\vee},q_F^{1/2}). 
\]
The representation $\rho \times \sigma_-$ does not appear directly in this framework, but it 
does so via a short detour. Pick $\chi \in \Irr (\Z^{e_{\rho'}})$ such that the values 
$\chi_i := \chi (e_i) \in \C^\times$ are in generic position, except that $\chi_{e_{\rho'}} = -1$. 
We identify $\chi_i$ with an unramified character of $\GL_{d_\rho}(F)$, unique up to 
$X_\nr (\GL_{d_\rho}(F),\rho)$. The $\mc H_{\rho'}$-representation 
$\mr{ind}_{\C [\Z^{e_{\rho'}}]}^{\mc H_{\rho'}} \C_\chi$ corresponds to 
\begin{equation}\label{eq:4.24}
\rho' \otimes \chi_1 \times \cdots \times \rho' \otimes 
\chi_{e_{\rho'}-1} \times \rho \times \sigma_- .
\end{equation}
The decomposition of this representation in irreducibles is governed by the component group 
$\mc S_\chi$ of the $L$-parameter, which in this case is just $\langle s_\beta \rangle$, acting
on the last coordinate.

Things become more transparent if we complete the centre of $\mc H_{\rho'}$ at its maximal ideal
associated to $W(B_{e_\rho}) \chi$, as in \cite[\S 7]{Lus-Gr}. That also gives a completion of 
$\mc H_{\rho'}$. Recall \cite[10.3.(a)]{Lus-Gr} that such a completion 
functor does not change the category of modules which admit the central character 
$W(B_{e_\rho}) \chi$. By \cite[Theorem 8.6]{Lus-Gr}, this completion of $\mc H_{\rho'}$ 
is Morita equivalent with the completion (also determined by $\chi$) of the simpler
extended affine Hecke algebra $\C [\Z^{e_{\rho'}}] \rtimes \mc S_\chi$. Moreover, by 
\cite[Theorem 2.1.2.c]{SolAHA} this gives rise to a natural equivalence between the category 
of $\C [\Z^{e_{\rho'}}] \rtimes \mc S_\chi$-modules with central character $\chi$ and the
category of $\mc H_{\rho'}$-modules with central character $W(B_{e_\rho})\chi$. 
Via that equivalence, our induced representation becomes
\begin{equation}\label{eq:4.23}
\mr{ind}_{\C [\Z^{e_{\rho'}}]}^{\C [\Z^{e_{\rho'}}] \rtimes \langle s_\beta \rangle} \C_\chi .
\end{equation}
Since $\chi (h_\beta^\vee) = -1$, the automorphism which exchanges $s_\beta$ and $h_\beta^\vee
s_\beta$ affects the action of $s_\beta$ on \eqref{eq:4.23} by multiplication with -1.
As a consequence the canonical intertwining operator from $s_\beta$ on \eqref{eq:4.23}, or 
equivalently on $\mr{ind}_{\C [\Z^{e_{\rho'}}]}^{\mc H_{\rho'}} \C_\chi$ or \eqref{eq:4.24}, 
is adjusted by a factor by $-1$ by the replacement $s_\beta \mapsto h_\beta^\vee s_\beta$.

The intertwining operator on \eqref{eq:4.23} associated with $s_\beta$ is induced by the
intertwining operator from $s_\beta$ on the representation
\[
\mr{ind}_{\C [\Z]}^{\C [\Z] \rtimes \langle s_\beta \rangle} \C_{-1}
\]
of the smaller algebra $\C [\Z] \rtimes \langle s_\beta \rangle$. By \cite[Theorem 8.6]{Lus-Gr} 
and \cite[Theorem 2.1.2]{SolAHA} the completion of that algebra (with respect to the central 
character $\langle s_\beta \rangle (-1) = -1$) is naturally isomorphic 
to the completion (also with respect to the central character -1) of $\mc H 
(\mf s^{'\vee},q_F^{1/2})$, where $(\tau \times \phi_-, \epsilon_{\sigma_-}) \in 
\Phi (G^+_{n_- + d_\rho} )^{\mf s^{'\vee}}$. In this way the intertwining operator for $s_\beta$
on \eqref{eq:4.23} is related to the intertwining operator $J(s_\beta,\tau \times \phi_-)$, and
multiplying the former by $-1$ entails that the latter is also multiplied by $-1$.

As $J(s_\beta, \rho \times \sigma_-)$ is a priori unique up to a factor $-1$, it follows that we can
match it with $J(s_\beta,\tau \times \phi_-)$ under the appropriate Hecke algebra isomorphism by
making the (unique) correct choice for the image of $T_{s_\beta} \in \mc H (\mf s^{+\vee},
q_F^{1/2})$ in $\mc H (\mf s^+)^{\op}$.\\

\noindent
\textbf{The case $\ell_{\tau'} = 0$.} \\
Here we need to take both $J(s_\beta, \tau \times \sigma_-)$ and $J(s_\beta, \tau' \times \sigma_-)$
into account. The relevant tensor factor of
$\mc H (\mc R_{\mf s^\vee,\der},\lambda,\lambda^*,q_F^{1/2}) \rtimes \Gamma_{\mf s^\vee}^+$ 
is of the form 
\[
\mc H_\rho = \mc H (\mc R_{D_\rho},q_F^{1/2}) \rtimes \mr{Out}(D_{e_\rho}) ,
\]
where $\mc R_{D_m} = (\Z^m, D_m, \Z^m ,D_m)$. As basepoint of the underlying torus we take
$\rho^{\boxtimes e_\rho}$.
We can modify the isomorphism $\mc H (\mf s^+)^{\op} \cong \mc H (\mf s^{+\vee},q_F^{1/2})$ in
four ways on this tensor factor. Namely, write $\mr{Out}(D_{e_\rho}) = \langle s_\beta \rangle$
with $\beta$ a short root in $B_{e_\rho} \supset D_{e_\rho}$. As the image of $s_\beta \in 
\mc H (\mf s^{+\vee},q_F^{1/2})$ in $\mc H (\mf s^+)^{\op}$ we may take $-s_\beta, h_\beta^\vee 
s_\beta, -h_\beta^\vee s_\beta$ or $s_\beta$.

Like in the previous case, we can study the representations of $\mc H_\rho$ induced from
characters $\chi$ of $\Z^{e_\rho}$. We pick the first $e_\rho -1$ coordinates of $\chi$ 
generically in $\C^\times$, and take $\chi_{e_\rho} = \pm 1$. That corresponds to
\[
\rho \otimes \chi_1 \times \cdots \times \rho \otimes 
\chi_{e_{\rho'}-1} \times \rho \otimes \pm 1 \times \sigma_- ,
\]
where $\rho \otimes 1 = \rho$ and $\rho \otimes -1 = \rho'$. We can complete $\mc H_\rho$
with respect to the maximal ideal of $Z(\mc H_\rho)$ associated to $W(B_{e_\rho}) \chi$. 
By \cite[Theorem 8.6]{Lus-Gr} that the completion is Morita equivalent to the completion 
(with respect to the central character $\langle s_\beta \rangle \chi = \chi$) of the simpler 
extended affine Hecke algebra $\C [\Z^{e_{\rho'}}] \rtimes \langle s_\beta \rangle$. Moreover 
\cite[Theorem 2.1.2.c]{SolAHA} provides a natural equivalence between the modules with central 
character $W(B_{e_\rho}) \chi$ or $\chi$ of these two algebras. Thus we transfer the issue
to modules for $\C [\Z^{e_{\rho'}}] \rtimes \langle s_\beta \rangle$.

When $\chi_{e_\rho} = 1$, the intertwining operator for $s_\beta$ on
\begin{equation}\label{eq:4.25}
\mr{ind}_{\C [\Z^{e_\rho}]}^{\C [\Z^{e_\rho}] \rtimes \langle s_\beta \rangle} \C_\chi 
\end{equation}
is induced from the intertwining operator on 
$\mr{ind}_{\C [\Z]}^{\C [\Z] \rtimes \langle s_\beta \rangle} \C_1$. The completion (with
respect to the central character $\chi$) of the algebra 
$\C [\Z] \rtimes \langle s_\beta \rangle$ is naturally isomorphic to the completion (with
respect to the central character 1, which corresponds to the basepoint $\tau$) of the affine 
Hecke algebra for the Bernstein component containing $(\tau \times \phi_-, \sigma_-)$. 
Thus the intertwining operator on
\eqref{eq:4.25} is essentially $J(s_\beta,\tau \times \phi_-)$. Via an instance of
\eqref{eq:3.16} for a Bernstein component of $\Irr (G^+_{n_- + d_\rho})$, the latter corresponds
to $\pm J (s_\beta, \rho \times \sigma_-)$. Possibly adjusting the isomorphism
$\mc H (\mf s^+)^{\op} \cong \mc H (\mf s^{+\vee},q_F^{1/2})$ so that $s_\beta$ goes to 
$-s_\beta$, we can match $J(s_\beta,\tau \times \phi_-)$ and $J (s_\beta, \rho \times \sigma_-)$.

When $\chi_{e_\rho} = -1$, the situation is similar, but now \eqref{eq:4.25} is induced from\\
$\mr{ind}_{\C [\Z]}^{\C [\Z] \rtimes \langle s_\beta \rangle} \C_{-1}$, which comes from
$(\tau' \times \phi_-,\epsilon_{\sigma_-})$. Here the intertwining operator from $s_\beta$
on \eqref{eq:4.25} is essentially $J(s_\beta, \tau' \times \phi_-)$. Via the same instance
of \eqref{eq:3.16} as above, this operator corresponds to $\pm J(s_\beta, \rho' \times \sigma_-)$.
We can still adjust the isomorphism $\mc H (\mf s^+)^{\op} \cong \mc H (\mf s^{+\vee},q_F^{1/2})$
by composition with the automorphism that sends $s_\beta$ to $h_\beta^\vee s_\beta$.
That multiplies $J(s_\beta,\tau' \times \phi_-)$ with -1, while perserving
$J(s_\beta,\tau \times \phi_-)$. Thus, by a suitable choice we can arrange that 
$J(s_\beta, \tau' \times \phi_-)$ corresponds to $J(s_\beta, \rho' \times \sigma_-)$,
without disturbing the previous normalization.
In total we have a unique choice (out of four) for the image of $s_\beta$ under the algebra 
isomorphism, such that both relevant pairs of intertwining operators match up. \\

With the above choices, for $\ell_{\tau'} \geq 0$ and for all relevant $\rho$, we managed to 
fulfill the conditions imposed in the statement. To this end we exploited the freedom provided 
by the operations (iii) and (iv) in Theorem\autoref {thm:1.7} and Proposition\autoref {prop:3.2}, and 
we fixed choices for all instances of (iii) and (iv). These are canonical because the 
intertwining operators \eqref{eq:4.26} are determined by the Whittaker datum for the quasi-split 
inner form of $G$, which is part of our input. In view of Proposition\autoref {prop:3.2}, this 
renders our Hecke algebra isomorphism canonical up to conjugation by elements of 
$\mc O (T_{\mf s^+})^\times$.
\end{proof}

Applying Proposition\autoref {prop:4.4}, we can match many more intertwining operators between
$G^+$-representations with intertwining operators between 
$\mc H (\mf s^{+\vee},q_F^{1/2})$-modules.

\begin{lem}\label{lem:4.7}
Choose an isomorphism $\mc H (\mf s^+)^{\op} \cong \mc H (\mf s^{+\vee},q_F^{1/2})$ as 
in Proposition\autoref {prop:4.4}. For every discrete series 
representation $\pi \in \Irr (G^+)^{\mf s^+}$ and every $(\rho,a_-) \in \mr{Jord}(\pi)$ with 
$a_-$ minimal and odd and $\mr{Jord}_\rho (\Sc (\pi))$ empty, the intertwining operators 
$N(\rho,a_-)$ and $N(\tau,a_-)$ from \eqref{eq:4.4} and \eqref{eq:4.5} coincide on the
representation\\ $\delta (\rho,(a-1)/2,(1-a_-)/2) \times \tilde \pi$ from \eqref{eq:4.22}.
\end{lem}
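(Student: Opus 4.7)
The plan is to decompose both $N(\rho,a_-)$ and $N(\tau,a_-)$ into products of elementary intertwiners on a common parabolically induced representation, and then match those elementary pieces using Proposition \ref{prop:4.4} together with the LLC for general linear groups.

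First I would unwind M\oe glin's inductive recipe for $N(\rho,a_-)$. Writing $e=a_-$, the operator $J(w_e,\,\cdot\,)$ of \eqref{eq:4.20} is a composition of the central reflection $J(s_\beta,\rho\times\sigma_-)$ with a string of GL-exchanges $J(s_{12},\rho\nu^{b_i}\times\rho\nu^{b_j}\times\sigma_-)$, assembled into a holomorphic family in $(b_1,\ldots,b_{(e-1)/2})$ that squares to the identity and reduces to \eqref{eq:4.18} at $b_i=0$. Specialising the $b_i$ to $(a_--1)/2,(a_--3)/2,\ldots,1$ and restricting to the Langlands subrepresentation $\delta(\rho,(a-1)/2,(1-a_-)/2)\times\tilde\pi$ inside \eqref{eq:4.21} yields $N(\rho,a_-)$. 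All of these steps are canonically determined once the single sign of $J(s_\beta,\rho\times\sigma_-)$ is fixed.

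Next I would carry out the parallel decomposition on the Hecke algebra side. The element $z_{\tau,a_-}\in\mc S_\phi^+$ lifts canonically to an element of $W_{\mf s^\vee,\tau}^+\cong W(B_{e_\tau})$ which, via the combinatorics of the geometric setup of \cite{AMS2}, is precisely the counterpart of $w_e$. Under the Bernstein--Lusztig presentation of $\mc H(\mc R_{\mf s^\vee,\tau},\lambda,\lambda^*,q_F^{1/2})$, this lift factors as a word in the short-root generator $T_{s_\beta}$ and the type-$A$ generators; its action on the induced module that corresponds, via \eqref{eq:3.16}, to $\delta(\rho,(a-1)/2,(1-a_-)/2)\times\tilde\pi$ produces $N(\tau,a_-)$. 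Compatibility of \eqref{eq:3.16} with parabolic induction (Theorem \ref{thm:3.6}) and with normalized Jacquet restriction translates the holomorphic family and the specialisation step exactly as on the $p$-adic side.

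It then remains to match the two decompositions factor-by-factor. The short-root factor $J(s_\beta,\rho\times\sigma_-)$ is identified with $J(s_\beta,\tau\times\phi_-)$ by Proposition \ref{prop:4.4}, which is precisely the point of the normalisation chosen there. The GL-type factors $J(s_{12},\rho\nu^{b_1}\times\rho\nu^{b_2}\times\sigma_-)$ correspond, via the LLC for $\GL_{d_\rho}$ and the GL-case of the Hecke algebra isomorphism (covered by Bushnell--Kutzko together with \cite[\S 5.1]{AMS1}), to the standard generators of the type-$A$ part of $\mc H(\mf s^{+\vee},q_F^{1/2})$, both sides being normalised to become the identity when the two spectral parameters coincide. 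Composing, $N(\rho,a_-)$ and $N(\tau,a_-)$ agree on the larger induced representation of \eqref{eq:4.21}, hence on the subrepresentation $\delta(\rho,(a-1)/2,(1-a_-)/2)\times\tilde\pi$.

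The main obstacle I expect is the bookkeeping around normalisations: verifying that M\oe glin's delicate holomorphic normalisation of $J(w_e,\,\cdot\,)$ corresponds precisely to the family of standard intertwiners on the Hecke algebra side, whose normalisation is pinned down by rationality in $\mc O(T_{\mf s^{+\vee}})$ and by reducing to the identity on the wall $b_1=b_2$. Once the elementary match of Proposition \ref{prop:4.4} is in hand, this should follow formally from the fact that both sides are determined by the same axioms, but carrying it out cleanly---especially tracking signs through the sequence of Jacquet restrictions used to pass from $J(w_e,\,\cdot\,)$ to $N(\rho,a_-)$---is where the actual work lies.
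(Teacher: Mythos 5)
Your proposal attacks the lemma by a genuinely different route than the paper. Both begin by unwinding M\oe glin's recipe for $N(\rho,a_-)$ in terms of the holomorphic family $J(w_e,\cdot)$ of \eqref{eq:4.20}, and both use Proposition~\ref{prop:4.4} to match $J(s_\beta,\rho\times\sigma_-)$ with $J(s_\beta,\tau\times\phi_-)$. But from there the strategies diverge. The paper never compares the type-$A$ (GL-exchange) factors at all: it specialises both \eqref{eq:4.20} and its Hecke-algebra counterpart \eqref{eq:4.27} to $b_1=\cdots=b_{(e-1)/2}=0$, observes that at that point the permutation part of $w_e$ acts trivially on each side --- on the Galois side because it lies in the connected component of the relevant centraliser, and on the $p$-adic side because \eqref{eq:4.19} is normalised to be the identity on the wall --- so both operators collapse to the $e$-fold tensor power of $J(s_\beta,\cdot)$, matched by Proposition~\ref{prop:4.4}; it then propagates the agreement to all $b_i$ by the rigidity of an order-two operator in a continuous family. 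No factor-by-factor matching of the permutation pieces is ever needed.

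Your plan instead factors $w_e$ into simple reflections on each side and tries to match all the elementary pieces, including the type-$A$ exchanges $J(s_{12},\rho\nu^{b_1}\times\rho\nu^{b_2}\times\sigma_-)$. That is the additional burden your route carries, and the justification offered --- ``the LLC for $\GL_{d_\rho}$ and the GL-case of the Hecke algebra isomorphism (Bushnell--Kutzko, \cite[\S 5.1]{AMS1})'' --- does not close it: those references concern Hecke algebras and L-parameters for GL in isolation, whereas the operators $J(s_{12},\cdot)$ here are intertwiners of $G^+$-representations, and what needs matching are normalised intertwiners (R-matrices), not the Bernstein generators $T_{s_\alpha}$ themselves (the latter satisfy a quadratic relation and do not reduce to the identity on a wall). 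What your argument actually needs is that under the isomorphism of Proposition~\ref{prop:3.2}, fixed on $\mc O(T_{\mf s})$, the normalised type-$A$ intertwiners on the two sides agree --- plausible by the same uniqueness-plus-normalisation principle you invoke for $s_\beta$, but an extra step that you should either prove directly or avoid via the paper's specialise-at-$b_i=0$-then-deform argument, which makes the type-$A$ comparison vacuous. The overall structure of your proposal is sound; the gap is localised to the matching of the GL-type factors, which you flag as the main obstacle but do not actually dispatch.
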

\begin{proof}
After \eqref{eq:4.26} we described how $J(s_\beta,\rho \times \sigma_-)$ determines
$N(\rho,a_-)$. By Proposition\autoref {prop:4.4}, $J(s_\beta,\rho \times \sigma_-)$ corresponds to
$J(s_\beta, \tau \times \phi_-)$, so it remains to check that the latter determines
$N(\tau,a_-)$ in the same way.

The constructions around \eqref{eq:4.22} and \eqref{eq:4.21} work analogously for modules
of Hecke algebras, which reduces our task to comparing 
\begin{equation}\label{eq:4.29}
J(w_e, \rho \nu^{b_1} \times \cdots \times \rho \nu^{b_{(e-1)/2}} \times \rho \times
\rho \nu^{-b_{(e-1)/2}} \times \cdots \times\rho \nu^{-b_1} \times \sigma_-) 
\end{equation}
from \eqref{eq:4.20} with its version for the appropriate Hecke algebra $\mc H (\mf s^{'\vee},
q_F^{1/2})$. Recall that $\nu^b \in X_\nr (\GL_{d_\rho}(F))$ corresponds to the central element 
$q_F^b \in\GL_{d_\rho}(\C)$, and then $\rho \nu^b$ corresponds to $q_F^b \tau$. In the geometric 
setup from \cite{AMS2}, given $w_e$ there is a canonical intertwining operator 
\begin{equation}\label{eq:4.27}
J \Big( w_e, \mr{ind}_{\mc O (T_{\mf s^{'\vee}}) }^{\mc H (\mf s^{'\vee},q_F^{1/2})} 
\pi \big( q_F^{b_1} \tau \times \cdots \times q_F^{b_{(e-1)/2}} \tau \times
\tau \times \cdots \times q_F^{-b_1} \tau \times \phi_-,\epsilon_{\sigma_-} \big) \Big) ,
\end{equation}
from the indicated module to itself. (Here the symbols $\times$ refer to an $L$-parameter with 
values in a direct product of groups, not to parabolic induction.) This operator has order 2,
and it comes as a member of an algebraic family parametrized by $b_i \in \R$.
When all $b_i$ are equal to 0, the permutation $(1 \; e) (2 \; e\!-\!1) \cdots 
((e\!-\!1)/2 \,\, (e\!+\!3)/2)$ lies in the connected component of the centralizer group of the 
$L$-parameter in \eqref{eq:4.27}, and the canonical intertwining operator associated to that 
permutation is just the identity. Hence for $b_i = 0$ the operator \eqref{eq:4.27} reduces to 
\begin{equation}\label{eq:4.28}
J \Big( s_\beta \times \cdots \times s_\beta, 
\mr{ind}_{\mc O (T_{\mf s^{'\vee}})}^{\mc H (\mf s^{'\vee},q_F^{1/2})} 
\pi \big( \tau \times \cdots \times \tau \times \phi_-,\epsilon_{\sigma_-} \big) \Big) .
\end{equation}
This operator is induced by $J(s_\beta, \tau \times \phi_- )$ on each of the $e = e_\rho$ 
coordinates, in the following sense. Upon completion of $\mc H (\mf s^{'\vee},q_F^{1/2})$ with
respect to the central character associated to $\tau \times \cdots \times \tau \times \phi_-$
(like in the proof of Proposition\autoref {prop:4.4}) 
we obtain an $e$-fold tensor product of modules 
\[
\mr{ind}_{\C [\Z]}^{\C [\Z] \rtimes \langle s_\beta \rangle} \pi (\tau \times \phi_-) .
\]
Then \eqref{eq:4.28} can be identified with the $e$-fold tensor product of the operators
$J(s_\beta,\tau \times \phi_-)$ on these modules. This is the same procedure as in 
\eqref{eq:4.18}, so Proposition\autoref {prop:4.4} guarantees that \eqref{eq:4.28} agrees with 
\eqref{eq:4.18} for $b_i = 0$ and the correct number of factors. Since all instances of 
\eqref{eq:4.27} square to the identity and they are part of a continuous family, all these
instances are fixed when we know \eqref{eq:4.28}. That is completely analogous to the
situation in \eqref{eq:4.20}. Therefore \eqref{eq:4.29} and \eqref{eq:4.27} agree via
a Hecke algebra isomorphism as in Proposition\autoref {prop:4.4}. 
\end{proof}

After all these preparations, we are ready to compare the two parametrizations of arbitrary
discrete series representations of $G^+$.

\begin{prop}\label{prop:4.5}
Choose a Hecke algebra isomorphism $\mc H (\mf s^+)^{\op} \cong \mc H (\mf s^{+\vee},q_F^{1/2})$ 
as in Proposition\autoref {prop:4.4}. 
Let $(\phi,\epsilon) \in \Phi_\enh (G^+)^{\mf s^{+\vee}}$ be discrete and let $\pi \in 
\Irr (G^+)$ be associated with $(\phi,\epsilon)$ by Theorems\autoref {thm:1.1} and\autoref {thm:1.3}.
Then $\pi (\phi,\epsilon)$ is isomorphic to $\pi$.
\end{prop}
\begin{proof}
Write $\phi$ as $z_\phi \phi_b$ with $z_\phi \in \rZ(G^\vee)$ and $\phi_b \in \Phi (G)$ bounded
and discrete. Let $\chi_\phi \in X_\nr (G^+)$ correspond to $z_\phi$. By Theorem\autoref {thm:1.3}
$\pi = \chi_\phi \otimes \pi_b$ where $\pi_b$ corresponds to $(\phi_b,\epsilon)$. On the other
hand Theorem\autoref {thm:3.4}.d says that
\[
\pi (\phi,\epsilon) = \pi (z_\phi \phi_b,\epsilon) = \chi_\phi \otimes \pi (\phi_b,\epsilon) .
\]
Therefore it suffices to prove the proposition under the additional assumption that
$\phi$ is bounded.

Applying Proposition\autoref {prop:4.2} repeatedly, we find that $\pi (\phi,\epsilon)$ embeds in  
\begin{equation}\label{eq:4.7}
\prod\nolimits_{\rho,a,a'} \delta (\rho,(a-1)/2,(1-a')/2) \times \pi (\tilde \phi, \tilde \epsilon) ,
\end{equation}
where $\mr{Jord}(\tilde \phi) \subset \mr{Jord}(\phi)$ and $\tilde \epsilon = 
\epsilon |_{\mc S_{\tilde \phi}^+}$ is alternated in the sense of \eqref{eq:4.14}. Here the product
runs over some triples with $\epsilon (z_{\rho,a}) = \epsilon (z_{\rho,a'})$, not necessarily
all such triples. Similarly, by \cite[\S 5]{Moe0} $\pi$ embeds in
\begin{equation}\label{eq:4.8}
\prod\nolimits_{\rho,a,a'} \delta (\rho,(a-1)/2,(1-a')/2) \times \tilde \pi
\end{equation}
with $\mr{Jord}(\tilde \pi) \subset \mr{Jord}(\pi)$ and $\epsilon_{\tilde \pi} = 
\epsilon |_{\mc S_{\tilde \pi}^+}$ alternated. By Lemma\autoref {lem:4.3} both $\pi (\tilde \phi,
\tilde \epsilon)$ and $\tilde \pi$ are completely positive discrete series representations. 
Further $\tilde \pi$ and $\pi (\tilde \phi,\tilde \epsilon)$ have the same Jordan blocks, because 
both are obtained from $\mr{Jord}(\pi) = \mr{Jord}(\pi (\phi,\epsilon))$ by removing the pairs 
$(\rho,a),(\rho,a')$ that appear in the product. By Theorem\autoref {thm:3.4}.a and Proposition 
\autoref{prop:4.1}, $\pi (\phi,\epsilon)$ and $\pi$ have the same supercuspidal support. From 
\eqref{eq:4.7} and \eqref{eq:4.8} we see that $\pi (\tilde \phi,\tilde \epsilon)$ and $\tilde \pi$ 
also have the same supercuspidal support. With Corollary\autoref {cor:4.11} we deduce that 
$\tilde \pi \cong \pi (\tilde \phi, \tilde \epsilon)$. 

Thus both $\pi (\phi,\epsilon)$ and $\pi$ are subrepresentations of \eqref{eq:4.7}, which is 
isomorphic to \eqref{eq:4.8}. By Theorem\autoref {thm:3.6}, \eqref{eq:4.7} is a direct sum of precisely 
$[\mc S_\phi : \mc S_{\tilde \phi}]^{1/2}$ subrepresentations, which are mutually inequivalent. 
Every factor $\delta (\rho, (a-1)/2,(1-a')/2)$ doubles the number of constituents, because 
\begin{equation}\label{eq:4.9}
\delta (\rho, (a-1)/2,(1-a')/2) \times \pi (\tilde \phi, \tilde \epsilon)
\end{equation}
has length two. We can distinguish three classes of $\rho$'s:

\textbf{Case (i).}
When $\mr{Jord}_\rho (\tilde \pi )$ is nonempty, Proposition\autoref {prop:4.2}.a determines which
summands must be picked to get $\pi$. (This works also for M\oe glin's parametrization, by 
\cite[\S 5]{Moe0}.) Namely, start with $(\rho,b) \in \mr{Jord}(\tilde \pi)$ and an adjacent 
$(\rho,a) \in \mr{Jord}(\pi) \setminus \mr{Jord}(\tilde \pi)$. Then Proposition\autoref {prop:4.2}.a
imposes a condition (recall that $\epsilon$ was given). Next, take $(\rho,a') \in \mr{Jord}(\pi)
\setminus \mr{Jord}(\tilde \pi)$ adjacent to $(\rho,a)$. Of the two choices for a subrepresentation 
of \eqref{eq:4.9}, one fulfills the previous condition and one does not (that is another consequence 
of Proposition\autoref {prop:4.2}.a). Proceeding in this way, now with 
$\tilde \phi \setminus \{ (\rho,a),(\rho,a') \}$ in
the role of $\tilde \phi$, we discover step by step how to pick the right constituent of 
\begin{equation}\label{eq:4.16}
\delta (\rho, (a''-1)/2,(1-a''')/2) \times \pi (\tilde \phi, \tilde \epsilon)
\end{equation}
for other $a'',a''' \in \mr{Jord}(\pi) \setminus \mr{Jord}(\tilde \pi)$ as well.

\noindent
\textbf{Case (ii).}
Suppose that $\mr{Jord}_\rho (\tilde \pi)$ is empty and that $\mr{Jord}_\rho (\pi)$ consists of
even numbers. In this case we may take $a' = 0$, set $\epsilon (\rho,0) = 1$ and use Proposition 
\autoref{prop:4.2}.b. As in the previous case, $\epsilon$ determines which constituents of \eqref{eq:4.9}
and \eqref{eq:4.10} must chosen to enable an embedding of $\pi$.

\noindent
\textbf{Case (iii).}
Suppose  that $\mr{Jord}_\rho (\tilde \pi)$ is empty and that $\mr{Jord}_\rho (\pi)$ consists 
of odd numbers. By Proposition\autoref {prop:4.4} and Lemma\autoref {lem:4.7}, our two parametrizations 
involve the same constituent of $\delta (\rho, (b-1)/2,(1-a_-)/2) \times \pi (\tilde \phi, 
\tilde \epsilon)$, where $b$ is the smallest $a \in \mr{Jord}_\rho (\pi) \setminus \{a_-\}$ such 
that $\epsilon (z_{\rho,a}) = \epsilon (z_{\rho,a_-})$. Once we know that, the method from the 
previous cases tells us which constituent of \eqref{eq:4.16} we have to take, for any adjacent
$a'',a''' \in \mr{Jord}_\rho (\pi)$.

Hence $\pi$ and $\pi (\phi,\epsilon)$ are obtained from \eqref{eq:4.7} by taking the same 
constituents of \eqref{eq:4.16} in all cases, so $\pi \cong \pi (\phi,\epsilon)$.
\end{proof}

\subsection{Tempered representations} \

Consider a bounded $L$-parameter $\phi \in \Phi (G)$. Recall from \eqref{eq:1.2} and \eqref{eq:1.3} 
that we can decompose $(\phi,\C^{2n})$ as 
\begin{equation}\label{eq:4.30}
\bigoplus\nolimits_{\psi \in I^\pm} N_\psi \otimes V_\psi \oplus 
\bigoplus\nolimits_{\psi \in I^0} N_\psi \otimes (V_\psi \oplus V_\psi^\vee) ,
\end{equation}
where $N_\psi$ is a multiplicity space and $V_\psi^\vee$ is endowed with the representation
$\psi^\vee \otimes \mu_G^\vee \circ \phi$. There exists a Levi subgroup $L$ of $G$, unique up to
conjugation, such that $\phi$ factors through $\Phi (L)$ and defines a discrete $L$-parameter for $L$.
Every factor $\GL_m (F)$ of $L$ appears in $G$ as
\[
\{ (A,B) \in \GL_m (F) \times \GL_m (F) : B = J A^{-T} J^{-1} \} .
\]
The same goes for $L^\vee$ and $G^\vee$. 
Hence every $\psi \in I^\pm$ which appears with multiplicity $\mu$ in $\phi |_{\GL_m (\C)}$,
accounts for multiplicity $2 \mu$ in \eqref{eq:4.30}. In view of \eqref{eq:1.5}, the part of 
$\phi$ with image in the factor $G_{n_-}^\vee$ of $L^\vee$ is precisely 
$\prod_{\psi \in I^+ : \dim N_\psi \mr{ odd}} \psi$, while the part of $\phi$ in the type
GL factors of $L^\vee$ is
\[
\bigoplus\nolimits_{\psi \in I^\pm} \lfloor \dim (N_\psi) / 2 \rfloor \psi \; \oplus \;
\bigoplus\nolimits_{\psi \in I^0} \dim (N_\psi) \psi .
\]
For $\psi \in I^0$ this involves a choice of $\psi$ or $\psi^\vee \otimes \mu_G^\vee \circ \phi$,
but that hardly matters because both will appear equally often when we pass to $G^\vee$. For
the component groups of $\phi$ it is a bit easier to work with $G^+$ and $L^+$, so we consider
$\phi$ as element of $\Phi (G^+)$ and as $\phi_L \in \Phi (L^+)$. By these we mean just 
$\Phi (G)$ and $\Phi (L)$, only with component groups of $\phi$ or $\phi_L$ computed in
$G^{\vee +}$ or $L^{\vee +}$. In the description of $\mc S_\phi$ 
following \eqref{eq:1.3}, passing to $G^+$ replaces $\mc S_\phi$ by $\mc S_\phi^+$, which
means that we forget the determinant condition ``S'' on $Z_{{G^\vee}_\der}(\phi)$. Thus
\begin{align*}
& \rZ_{{L^{+ \vee}}_\der } (\phi_L) \cong \prod_{\psi \in I^+ : \dim N_\psi \, \mr{odd}} 
\rO_1 (\C) \times \prod_{\psi \in I^\pm} 
\GL_{\lfloor \dim (N_\psi)/2 \rfloor}(\C) \times \prod_{\psi \in I^0} \GL (N_\psi) , \\
& \mc S_{\phi_L}^+ \cong \prod_{\psi \in I^+ : \dim N_\psi \, \mr{odd}} \langle z_\psi \rangle , \\
& \mc S_\phi^+ = \prod_{\psi \in I^+ : N_\psi \neq 0} \langle z_\psi \rangle \; = \;
\mc S_{\phi_L}^+ \times \prod_{\psi \in I^+ : \dim (N_\psi) \in 2 \Z_{>0}} \langle z_\psi \rangle 
\; =: \; \mc S_{\phi_L}^+ \times \mc S^+_{\phi / \phi_L} .
\end{align*}
Let us fix $\epsilon_L \in \Irr (\mc S^+_{\phi_L})$ such that $(\phi_L,\epsilon_L)$ belongs to
the image in of the parametrization map in Theorem\autoref {thm:3.4} for $L^+$. It gives a discrete 
series representation $\pi (\phi_L, \epsilon_L) \in \Irr (L^+)$, which by Proposition\autoref {prop:4.5} 
is the same for the endoscopic method as for the Hecke algebra method. By Theorem\autoref {thm:3.6},
$I_{L^+ U}^{G^+} \pi (\phi_L,\epsilon_L)$ has precisely $| \mc S^+_{\phi / \phi_L} |$ irreducible 
direct summands, which are mutually inequivalent and indexed by
\[
\big\{ \epsilon \in \Irr (\mc S_\phi^+) : \epsilon |_{\mc S_{\phi_L}^+} = \epsilon_L \big\} 
\cong \Irr (\mc S^+_{\phi / \phi_L}) .
\]
The same conclusion was obtained in \cite[Theorem 13.1]{MoTa}. One part of the constructions 
behind Theorem\autoref {thm:3.4} in \cite{AMS2,AMS3} is
\begin{equation}\label{eq:4.33}
\pi (\phi,\epsilon) = \Hom_{\mc S^+_{\phi / \phi_L}} 
\big( \epsilon |_{\mc S^+_{\phi / \phi_L}},  I_{L^+ U}^{G^+} \pi (\phi_L,\epsilon_L) \big) .
\end{equation}
This goes back to \cite[(1.17)]{AMS3}, and from there it is transfered via various Hecke
algebras in \cite{AMS3} to an analogue for $\mc H (\mf s^\vee,q_F^{1/2})$, see \eqref{eq:3.56}.
Using the equivalences of categories \eqref{eq:3.13} and \eqref{eq:3.12}, one arrives at 
\eqref{eq:4.33}. Here the action of $\mc S^+_{\phi / \phi_L}$ comes from intertwining operators 
\[
N (z_\psi, \phi_L, \epsilon_L) \in \mr{End}_{G^+} \big( I_{L^+ U}^{G^+} \pi (\phi_L,\epsilon_L) \big),
\]
one for each generator $z_\psi$ of $\mc S^+_{\phi / \phi_L}$.

On the other hand, an irreducible tempered $G^+$-representation $\pi (\phi )_\epsilon$ is 
constructed with endoscopy in \cite[\S 3.6]{MoRe}, and it is checked that
$I_{L^+ U}^{G^+} \pi (\phi_L, \epsilon_L)$ (called $\sigma$ in \cite{MoRe}) decomposes as
\[
\bigoplus\nolimits_{\epsilon \in \Irr (\mc S_\phi^+) \,:\, \epsilon |_{\mc S_{\phi_L}^+} = 
\epsilon_L } \pi (\phi )_\epsilon .
\]
This decomposition can be achieved with suitable intertwining operators that make 
$\mc S^+_{\phi / \phi_L}$ act on $I_{L^+ U}^{G^+} \pi (\phi_L, \epsilon_L)$ and are normalized
in a way that is compatible with the endoscopic methods in \cite{MoRe}. The appropriate 
normalization stems from \cite[\S 2.3]{Art} and involves $L$-functions and $\epsilon$-factors.
Unfortunately, it becomes untractable in the setting of Hecke algebras. Nevertheless, we can
say more concretely that, for every $\psi \in I^+$ with $\dim N_\psi \in 2 \Z_{>0}$ and
$\pi (\psi) = \delta (\rho,a)$, there is a normalized intertwining operator
\[
N \big( z_{\rho,a}, \pi (\phi_L,\epsilon_L) \big) \in 
\mr{End}_{G^+} \big( I_{L^+ U}^{G^+} \pi (\phi_L,\epsilon_L) \big) 
\]
which squares to the identity. From \cite[\S 2]{MoRe} we see that 
\begin{multline}\label{eq:4.34}
\pi (\phi)_\epsilon = \big( \epsilon |_{\mc S^+_{\phi / \phi_L}} \otimes I_{L^+ U}^{G^+} 
\pi (\phi_L,\epsilon_L) \big)^{\mc S^+_{\phi / \phi_L}} = \\
\{ \text{fixed points of the operators } \epsilon (z_\psi)
N(z_\psi, \pi (\phi_L,\epsilon_L)) \text{ with } \dim N_\psi \in 2 \Z_{>0} \} .
\end{multline}

\begin{lem}\label{lem:4.8}
Pick an inertial equivalence class $\mf s^+$ for $G^+$ and choose an isomorphism
$\mc H (\mf s^+ )^{\op} \cong \mc H (\mf s^{+\vee},q_F^{1/2})$ as in Proposition\autoref {prop:4.4}.

For every bounded $(\phi, \epsilon) \in \Phi_\enh (G^+ )^{\mf s^{+\vee}}$ and every
$\psi \in I^+ \cap \mr{Jord}(\phi)$ with $\pi (\psi) = (\rho,a)$, the intertwining operators 
\[
N \big( z_{\rho,a},\pi (\phi_L,\epsilon_L) \big) \quad \text{and} \quad
N ( z_\psi, \phi_L,\epsilon_L) 
\]
agree via the Hecke algebra isomorphism.
\end{lem}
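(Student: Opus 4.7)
The plan is to reduce Lemma \ref{lem:4.8} to the rank-one situation already handled by Proposition \ref{prop:4.4}, by exhibiting both sides as assembled from their rank-one building blocks via the same combinatorial recipe. First I would reduce to the case where $L^+$ has a single GL-factor $\GL_{a d_\rho}(F)$ on which the chosen $\psi$ acts. Both $N(z_{\rho,a}, \pi(\phi_L,\epsilon_L))$ and $N(z_\psi, \phi_L,\epsilon_L)$ respect the product decomposition of $\mc S_{\phi/\phi_L}^+$ because each generator $z_\psi$ acts through the block corresponding to its own $\psi$ and fixes the other blocks: on the Arthur side this is built into the inductive definition of \cite{Art}, and on the Hecke-algebra side it follows from the factorization $\mc H(\mf s^{+\vee},q_F^{1/2}) \cong \bigotimes_\tau \mc H_\tau$ of the geometric Hecke algebra. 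After this reduction $\pi(\phi_L,\epsilon_L) = \delta(\rho,a) \boxtimes \pi_-$ for some tempered representation $\pi_-$ with bounded L-parameter $\phi_-$, and we only have to compare two self-intertwiners of $\delta(\rho,a) \times \pi_-$.

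Next I would unfold Arthur's normalized definition of $N(z_{\rho,a}, \pi(\phi_L,\epsilon_L))$, exactly as in the discussion preceding Proposition \ref{prop:4.4}: embed $\delta(\rho,a)$ in the full induction $\rho\nu^{(a-1)/2} \times \cdots \times \rho\nu^{(1-a)/2}$, form the normalized operator $J(w_a, \rho\nu^{(a-1)/2}\times\cdots\times\rho\nu^{(1-a)/2}\times\pi_-)$ of \eqref{eq:4.20}, and restrict it to the subspace $\delta(\rho,a) \times \pi_-$. By construction this operator is built out of $J(s_\beta,\rho\times\pi_-)$ together with GL-intertwiners $J(s_{ij},\ldots)$ that are normalized to be the identity on the diagonal $b_i = b_j$. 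Then I would show that exactly the same recipe produces $N(z_\psi,\phi_L,\epsilon_L)$ from $J(s_\beta,\tau\times\phi_-)$ on the Hecke-algebra side, which is essentially the content of equations \eqref{eq:4.27}--\eqref{eq:4.28} in the proof of Lemma \ref{lem:4.7}: after localizing $\mc H(\mf s^{+\vee},q_F^{1/2})$ at the central character of $\mr{ind}(\tau\times\cdots\times\tau\times\phi_-)$, the canonical geometric intertwiner attached to $z_\psi$ factors as an $a$-fold composition in which the $a-1$ type-A factors reduce to the identity on the diagonal, while the remaining factor is exactly $J(s_\beta,\tau\times\phi_-)$.

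Finally I would invoke Proposition \ref{prop:4.4}: the Hecke algebra isomorphism has been chosen precisely so that $J(s_\beta,\rho\times\pi_-)$ matches $J(s_\beta,\tau\times\phi_-)$. Combined with the trivial matching of the remaining type-A factors (both are identities on the diagonal), this yields the desired equality of the two composite operators.

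The main obstacle will be the rigorous identification of the two normalization procedures. Arthur's normalization of $J(w_a,\ldots)$ is defined indirectly via $L$- and $\epsilon$-factors together with the conditions that the operator square to the identity and depend holomorphically on the continuous parameters, whereas the geometric normalization on the Hecke-algebra side arises from an action of a component group on a local system in the sense of \cite{AMS2}. The saving grace is that both produce self-intertwiners whose square is the identity and whose specialization at $b_i = 0$ is governed by the single rank-one operator $J(s_\beta,-)$; this characterizes the operator uniquely up to a sign, and that sign has already been fixed by the choice in Proposition \ref{prop:4.4}. Once this rigidity is established, the matching of the composite operators is automatic, and Lemma \ref{lem:4.8} follows.
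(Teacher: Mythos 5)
Your overall strategy — reduce to a single GL-block, unfold both operators into compositions of rank-one operators at the diagonal specialization, and conclude that they are pinned down by the same rank-one operator $J(s_\beta,-)$ together with the requirements of holomorphy and involutivity — is the same as the paper's. But your final paragraph contains a genuine gap: you claim the remaining sign ambiguity ``has already been fixed by the choice in Proposition~\ref{prop:4.4}'', and this is simply false for a whole subclass of $\psi$. Proposition~\ref{prop:4.4} only normalizes the operators $J(s_\beta, \tau\times\phi_-)$ against $J(s_\beta,\rho\times\sigma_-)$ for $\tau\in\Irr(\mb W_F)^+_{\phi_\sigma}$ with $e_\tau>0=\ell_\tau$, i.e.\ for $\tau$ in the ``orthogonal'' sign class. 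When $\psi=\tau\otimes P_a$ with $\tau\in\Irr(\mb W_F)^-_\phi$ (the symplectic sign class), which does occur for $\psi\in I^+$ since then $a$ is even, the proposition makes no normalization whatsoever, and indeed the paper explicitly concedes ``we do not know whether $N(s_\beta,\rho\times\sigma_-)$ and $N(s_\beta,\tau\times\phi_-)$ match via the Hecke algebra isomorphism.''

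The paper rescues this case by a parity argument that your proposal is missing: the two rank-one operators differ at worst by a sign $\pm 1$, and the composite operator $N(w_e,\cdots)$ involves an $a$-fold tensor product of the rank-one operator (compare \eqref{eq:4.32} with \eqref{eq:4.28}), so the composites differ by $(\pm 1)^a$. Since $\psi\in I^+$ and $\tau\in\Irr(\mb W_F)^-_\phi$ forces $a$ to be even, the discrepancy vanishes. Without this observation, your argument does not close the case $\tau\in\Irr(\mb W_F)^-_\phi$, and the proof is incomplete. You should also separate out the case where $\dim N_\psi$ is odd, where $z_\psi$ lives in $\mc S^+_{\phi_L}$ rather than $\mc S^+_{\phi/\phi_L}$ and the comparison reduces directly to Lemma~\ref{lem:4.7} applied to the factor $G^+_{n_-}$, rather than to a GL-factor as your reduction step assumes.
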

\begin{proof}
We need to distinguish a few cases.

First we suppose that $\dim N_\psi$ is odd. Then $\psi$ appears in the factor $G_{n_-}^{+\vee}$ of 
$L^+$, and the two intertwining operators of $G^+$-representations under consideration are 
induced by the analogous intertwining operators of $G^+_{n_-}$-representations. The latter
two agree by Lemma\autoref {lem:4.7}.

Now we suppose $\dim N_\psi$ that is even and that $\psi = \tau \otimes P_a$ with 
$\tau \in \Irr (\mb W_F )_\phi^+$. Here $a$ is odd because $\psi \in I^+$. The same arguments
as for Lemma\autoref {lem:4.7} show that $N \big( z_{\rho,a},\pi (\phi_L,\epsilon_L) \big)$
and $N ( z_\psi, \phi_L,\epsilon_L)$ agree, because $N(s_\beta, \rho \times \sigma_-)$ and
$N(s_\beta, \tau \times \phi_-)$ agree by Proposition\autoref {prop:4.4}.

Finally we suppose that $\dim N_\psi$ is even and that $\psi = \tau \otimes P_a$ with 
$\tau \in \Irr (\mb W_F )_\phi^-$. Now $a$ is even because $\psi \in I^+$. In this case we
do not know whether $N(s_\beta, \rho \times \sigma_-)$ and $N(s_\beta, \tau \times \phi_-)$
match via the Hecke algebra isomorphism. But both are unique up to scalars and square to the
identity, so the agree up to a factor $\pm 1$. 

Write $e = a d_\rho$. Motivated by \eqref{eq:4.21}, we want to compare the operators
\begin{equation}\label{eq:4.31}
N \big( w_e, \mr{Ind} \, \Sc (\delta (\rho,a)) \times \sigma_- \big) \quad \text{and} \quad
N (w_e, \Sc (\tau \otimes P_a) \times \phi_-) ,
\end{equation}
where the right hand side is an abbreviation of \eqref{eq:4.27}. From the remarks 
after \eqref{eq:4.20} we know that the former is determined (via a continuous deformation)
by the intertwining operator 
\begin{equation}\label{eq:4.32}
N( (s_\beta \times \cdots \times s_\beta), \rho \times \cdots \times \rho \times \sigma_-) ,
\end{equation}
where $s_\beta$ and $\rho$ both appear $a$ times. For each such factor $\rho$, we get a 
contribution which is induced by $N(s_\beta, \rho \times \sigma_-)$.

Similarly, in the proof of Lemma\autoref {lem:4.7} we saw that $N (w_e, \Sc (\tau \otimes P_a) 
\times \phi_-)$ is determined in the same way by \eqref{eq:4.28} and 
$N(s_\beta, \tau \times \phi_-)$. Writing 
\[
N(s_\beta, \rho \times \sigma_-) = \pm N(s_\beta, \tau \times \phi_-) ,
\]
it follows that, via the appropriate Hecke algebra 
isomorphism, \eqref{eq:4.32} and \eqref{eq:4.28} agree up to a factor $(\pm 1)^a$. Since 
$a$ is even they really agree, and so do the two sides of \eqref{eq:4.31}. We note that 
\[
N \big( z_{\rho,a},\pi (\phi_L,\epsilon_L) \big) \quad \text{and} \quad
N ( z_\psi, \phi_L,\epsilon_L) 
\]
are induced by \eqref{eq:4.31}, on both sides in the same way as in \eqref{eq:4.21}, so with the
identity on factors not involved in \eqref{eq:4.31}. We combine that with the above analysis of
\eqref{eq:4.31} to establish the lemma in this case.
\end{proof}

From Proposition\autoref {prop:4.5}, Lemma\autoref {lem:4.8}, \eqref{eq:4.33} and \eqref{eq:4.34} we conclude:

\begin{cor}\label{cor:4.9}
In the setting of Proposition\autoref {prop:4.4}, let $(\phi,\epsilon) \in \Phi_\enh (G^+ )^{\mf s^+}$ be
bounded. Then $\pi (\phi,\epsilon) \in \Irr (G^+)$ from Theorem\autoref {thm:3.4} is isomorphic with 
the tempered representation $\pi (\phi )_\epsilon$ from \cite{MoRe}.
\end{cor}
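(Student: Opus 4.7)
The plan is to assemble the corollary directly from three ingredients prepared in the preceding subsection: the agreement of the two parametrizations for discrete series on a Levi (Proposition \ref{prop:4.5}), the agreement of intertwining operators that generate the $\mc S^+_{\phi/\phi_L}$-action (Lemma \ref{lem:4.8}), and the parallel fixed-point descriptions \eqref{eq:4.33} and \eqref{eq:4.34}. Fix an isomorphism $\mc H (\mf s^+)^{\op} \cong \mc H (\mf s^{+\vee},q_F^{1/2})$ as in Proposition \ref{prop:4.4}.

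First I would factor $\phi$ through a Levi subgroup $L^+$ of $G^+$ so that $\phi_L \in \Phi (L^+)$ is a bounded discrete L-parameter, as done in the paragraph preceding \eqref{eq:4.30}. Using the decomposition $\mc S_\phi^+ = \mc S_{\phi_L}^+ \times \mc S^+_{\phi / \phi_L}$, set $\epsilon_L = \epsilon |_{\mc S^+_{\phi_L}}$; then $\epsilon$ is recovered from the pair $(\epsilon_L, \epsilon |_{\mc S^+_{\phi/\phi_L}})$. Because $\phi_L$ is discrete, Proposition \ref{prop:4.5}, applied to $L^+$ in place of $G^+$, identifies the Hecke-algebraic discrete series representation $\pi (\phi_L,\epsilon_L)$ with the endoscopic discrete series representation attached to $(\phi_L,\epsilon_L)$ by Theorems \ref{thm:1.1} and \ref{thm:1.3}. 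Consequently both constructions start from the very same parabolically induced $G^+$-representation $I_{L^+ U}^{G^+} \pi (\phi_L,\epsilon_L)$.

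Next I would compare the two group actions of $\mc S^+_{\phi / \phi_L}$ on this induced representation. On the Hecke algebra side, \eqref{eq:4.33} realises $\pi (\phi,\epsilon)$ as the $\epsilon |_{\mc S^+_{\phi/\phi_L}}$-isotypic component with respect to the canonical intertwining operators $N(z_\psi,\phi_L,\epsilon_L)$ coming from the geometric construction of \cite{AMS2,AMS3}. On the endoscopic side, \eqref{eq:4.34} realises $\pi (\phi)_\epsilon$ as the analogous isotypic component with respect to the operators $N(z_{\rho,a},\pi (\phi_L,\epsilon_L))$ of \cite{MoRe}, where $\pi (\psi) = \delta (\rho,a)$. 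Lemma \ref{lem:4.8} asserts precisely that, for the chosen Hecke algebra isomorphism and for every generator $z_\psi$ of $\mc S^+_{\phi / \phi_L}$ (i.e.\ every $\psi \in I^+ \cap \mr{Jord}(\phi)$ with $\dim N_\psi \in 2\Z_{>0}$), these two operators coincide. Therefore the two $\mc S^+_{\phi/\phi_L}$-actions agree and their $\epsilon$-isotypic components are equal, giving $\pi (\phi,\epsilon) \cong \pi (\phi)_\epsilon$.

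The genuine difficulty has already been absorbed upstream: the choice of normalization of the Arthur intertwining operators and the matching of $J(s_\beta, \rho \times \sigma_-)$ with $J(s_\beta, \tau \times \phi_-)$ were the real obstacle, and they were dealt with in Proposition \ref{prop:4.4} and Lemmas \ref{lem:4.7}--\ref{lem:4.8}. What remains here is only the bookkeeping that the indexing of $z_\psi$ by $(\rho,a)$ via the LLC for general linear groups is consistent on both sides, which is ensured by Lemma \ref{lem:4.6} together with the identification $\mc S_{\phi_L}^+$ used to build $\epsilon_L$.
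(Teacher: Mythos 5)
Your proposal is correct and follows exactly the route the paper takes: the paper's proof of Corollary \ref{cor:4.9} is the single sentence ``From Proposition \ref{prop:4.5}, Lemma \ref{lem:4.8}, \eqref{eq:4.33} and \eqref{eq:4.34} we conclude,'' and your write-up simply spells out how these four ingredients combine, namely by applying Proposition \ref{prop:4.5} to the Levi $L^+$ carrying the discrete $\phi_L$, and then matching the two fixed-point descriptions via the agreement of intertwining operators in Lemma \ref{lem:4.8}.
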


Together with Theorem\autoref {thm:3.4}.d, Corollary\autoref {cor:4.9} implies that 
\begin{equation}\label{eq:4.35}
\pi (\hat \chi \phi)_\epsilon \cong \chi \otimes \pi (\phi )_\epsilon 
\end{equation}
for all bounded $(\phi,\epsilon) \in \Phi_\enh (G^+)$ and all unitary $\chi \in X_\nr (G^+ )$.

\subsection{Irreducible smooth representations} \

With the Langlands classification \cite[Th\'eor\`eme VII.4.2]{Ren} one can 
construct and parametrize all irreducible smooth representations of a reductive $p$-adic group 
in terms of the irreducible tempered representations of its Levi subgroups. Although the
Langlands classification usually applies to connected reductive groups, it can be extended
to some disconnected reductive groups \cite[Theorem 4.2 and Remark 4.2]{BaJa}. For our $G^+$ 
with the Levi subgroups $L^+ = \rZ_{\mc G^+}(\rZ (\mc L)^\circ) (F)$, the formulation is the 
almost same as for connected reductive groups, at least when $[G^+ : G] = [L^+ : L]$ and we
limit ourselves to $G^+$-representations from standard modules that are parabolically induced
from $L^+$-representations. Indeed, from \cite[Introduction]{BaJa} one
sees that in those cases the Langlands data are triples $(P^+,\tau,\nu)$ where $P^+ = L^+ U_P$ 
is a parabolic subgroup of $G^+$, $\tau \in \Irr (L^+)$ is tempered and $\nu \in \Hom (L^+,\R_{>0})$ 
is in strictly positive position with respect to $P^+$. As usual $I_{P^+}^{G^+} (\tau \otimes \nu)$
has a unique irreducible quotient $\pi (P^+,\tau,\nu)$, and that sets up a bijection 
between the $G^+$-conjugacy classes of such Langlands data and the associated subset of $\Irr (G^+)$.

There also exists a Langlands classification for (enhanced) $L$-parameters \cite{SiZi}, which is
similar. For every $\phi \in \Phi (G)$ there exists a parabolic subgroup $P^\vee = L^\vee U_P^\vee$
of $G^\vee$, a bounded $\phi_b \in \Phi (L)$ and $\hat \nu \in H^1 (\mb W_F/\mb I_F,Z(L^\vee))$,
strictly positive with respect to $P^\vee$, such that $\phi = \phi_v \hat \nu$ in $\Phi (G)$. 
Moreover, by \cite[Theorem 4.6]{SiZi} this yields a bijection between $\Phi (G)$ and the
$G^\vee$-conjugacy classes of such triples $(P^\vee,\phi_b,\hat \nu)$. Of course we can pass to
a bijection between $G^{\vee+}$-conjugacy classes in $\Phi (G)$ and of triples $(P^\vee,\phi_b,\hat \nu)$.
Further, by \cite[Proposition 7.3]{SiZi} 
\[
\mc S_\phi \cong \mc S_{\phi_b \hat \nu} = \mc S_{\phi_b},
\]
where $\mc S_{\phi_b \hat \nu}$ and $\mc S_{\phi_b}$ are computed in $L^{\vee+}$. By the same argument,
this extends to an isomorphism 
\[
\mc S_\phi^+ \cong \mc S^+_{\phi_b \hat \nu} \quad \text{when } [G^+ : G] = [L^+ : L] .
\]
Then the Langlands classification from \cite{SiZi} sends $(\phi,\epsilon) \in \Phi_\enh (G)$ to
$(P^\vee, \phi_b, \hat \nu, \epsilon)$ with $(\phi_b,\epsilon) \in \Phi_\enh (L^+)$.

In the exceptional cases where $L^+ = L$ but $G^+ \neq G$, this does not work so nicely. Then 
$I_P^G (\tau \otimes \nu)$ is indecomposable, but $I_P^{G^+}(\tau \otimes \nu)$ is a direct sum of
one or two indecomposable $G^+$-representations. For Langlands parameters, the analogous issue is that
$\mc S_{\phi_b}^+ = \mc S_{\phi_b}$ has index one or two in $\mc S_\phi^+$. To handle those cases in 
similar style, we resort to the method from \cite{ABPS0}.

\begin{thm}\label{thm:4.10}
With the above versions of the Langlands classification, one can canonically extend the parametrization 
of irreducible tempered $G^+$-representations from \cite{MoTa,MoRe} to a parametrization of $\Irr (G^+)$.
\end{thm}
\begin{proof}
\textbf{Case (i): $[G^+:G] = [L^+:L]$.}\\
The above Langlands classifications show us what to do: to $(\phi_b \hat \nu,\epsilon)$ we associate
the unique irreducible quotient of $I_{P^+}^{G^+}( \pi_{L^+} (\phi_b)_\epsilon \otimes \nu)$.

From now on we suppose that $L^+ = L$ but $G^+ \neq G$. Consider Langlands data $(P^\vee, \phi_b,
\nu, \epsilon)$ for $\Phi (G)$. Let $Y$ be the variety of $\chi \in X_\nr (L)$ such that 
$\mc S_{\phi_b \hat \chi}^+ \cong \mc S_{\phi_b \hat \nu}^+$. It is a coset of a complex algebraic 
subtorus of $X_\nr (L)$, with finitely many cosets of subtori of smaller dimension removed, see
\cite[\S 3]{ABPS0}. It intersects the maximal compact subgroup of $X_\nr (L)$, so we can pick a 
unitary $\chi \in Y$.

\noindent
\textbf{Case (ii): $\mc S_{\phi_b \hat \nu}^+ = \mc S_{\phi_b \hat \nu}$ 
for $\phi_b \hat \nu$ as element of $\Phi (G)$.}\\
By Corollary\autoref {cor:4.9} we may use Theorem\autoref {thm:3.4} for $(\phi_b \hat \chi, \epsilon)$.
Comparing Theorem\autoref {thm:3.4} for $G$ and $G^+$, we see that the condition of this case implies
that $\ind_G^{G^+}$ preserves the irreducibility of $\pi_G (\phi_b \hat \chi )_\epsilon$. By
Theorem\autoref {thm:3.6} and \eqref{eq:4.35} for $L$:
\[
\pi (\phi_b \hat \nu )_\epsilon = \ind_G^{G^+} \pi_G (\phi_b \hat \chi )_\epsilon =
\ind_G^{G^+} I_P^G (\pi_L (\phi_b \hat \chi)_\epsilon) = 
I_P^{G^+} (\pi_L (\phi_b )_\epsilon \otimes \chi) = I_P^{G^+} \pi (\phi_b \hat \chi, \epsilon) .
\]
Now $\pi (\phi_b \hat \chi,\epsilon) = \pi (\phi_b,\epsilon) \otimes \chi$ is irreducible for any 
$\chi \in Y$ (not necessarily unitary) and Theorem\autoref {thm:3.4} for $L$ shows that
$I_P^{G^+} (\pi (\phi_b)_\epsilon) \otimes \chi) = \pi_{st}(\phi_b \hat \chi,\epsilon)$ has a unique
irreducible quotient. That means that we are back in the setting of the Langlands classification.
As in case (i), it imposes that to $(\phi_b \hat \nu, \epsilon)$ one must associated the irreducible
quotient of $I_P^{G^+} (\pi (\phi_b)_\epsilon) \otimes \nu)$. 

\noindent
\textbf{Case (iii): $\mc S_{\phi_b \hat \nu}^+ \neq \mc S_{\phi_b \hat \nu}$ for 
$\phi_b \hat \nu$ as element of $\Phi (G)$.}\\
Let $\epsilon_G$ be the restriction of $\epsilon$ to $\mc S_{\phi_b \hat \nu}$ and write
$\ind_{\mc S_{\phi_b \hat \nu}}^{\mc S_{\phi_b \hat \nu}^+} \epsilon_G = \epsilon \oplus \epsilon'$.
Theorem\autoref {thm:3.4} for $G^+$ and $G$, in combination with the condition of the case, show that
$\Res_G^{G^+} \pi (\phi_b \hat \chi)_\epsilon$ is irreducible and equal to 
$\pi_G (\phi_b \hat \chi)_{\epsilon_G}$. By Theorem\autoref {thm:3.6}
\[
I_P^{G^+} (\pi_L (\phi_b)_{\epsilon_G} \otimes \chi) = \ind_G^{G^+} I_P^G \pi_L (\phi_b \hat \chi
)_{\epsilon_G} = \ind_G^{G^+} \pi_G (\phi_b \hat \chi)_{\epsilon_G} =
\pi (\phi_b \hat \chi )_\epsilon \oplus \pi (\phi_b \hat \chi )_{\epsilon'} .
\]
For arbitrary $\chi \in Y$, Theorems\autoref {thm:3.4} and\autoref {thm:3.6} entail that 
\[
I_P^{G^+} (\pi_L (\phi_b)_{\epsilon_G} \otimes \chi) =
\pi_{st} (\phi_b \hat \chi )_\epsilon \oplus \pi_{st} (\phi_b \hat \chi )_{\epsilon'} ,
\]
where both standard $G^+$-representations $\pi_{st} (\phi_b \hat \chi )_{\epsilon^{(')}}$
have a unique irreducible quotient. The Langlands classification decrees that to $(\phi_b \hat \nu,
\epsilon )$ we must associate one of the irreducible quotients of $I_P^{G^+} 
(\pi_L (\phi_b)_{\epsilon_G} \otimes \chi)$. But only $\pi_{st} (\phi_b \hat \chi )_\epsilon$ fits 
in a conti\-nuous family of $G^+$-representations $\chi \mapsto \pi_{st} (\phi_b \hat \chi )_\epsilon$
which for unitary $\chi \in Y$ recovers $\pi (\phi_b \hat \chi )_\epsilon$, so we must send 
$(\phi_b \hat \nu,\epsilon)$ to the irreducible quotient of that standard representation. 
\end{proof}

We next result collects the conclusions from Section~\ref{sec:Langlands}.

\begin{thm}\label{thm:4.12}
Let $\mf s^+$ be an inertial equivalence class for $G^+$ and fix a Whittaker datum for the 
quasi-split inner form of $G$. There exists an algebra isomorphism
$\mc H (\mf s^+)^{\op} \cong \mc H (\mf s^{+\vee}, q_F^{1/2})$, canonical up to conjugation by
elements of $\mc O (T_{\mf s^+})^\times$, such that the following holds. 

For each $(\phi,\epsilon) \in \Phi_\enh (G^+ )^{\mf s^{+\vee}}$, the $G^+$-representation 
$\pi (\phi,\epsilon)$ constructed via Hecke algebras in Theorem\autoref {thm:3.4} is isomorphic to the 
$G^+$-representation associated to $(\phi,\epsilon)$ by \cite{MoRe} and the Langlands classification. 
\end{thm}
\begin{proof}
As before, the Hecke algebra isomorphism and its canonicity property comes from Propositions\autoref{prop:3.2} and\autoref {prop:4.4}.

We claim that the extensions of \cite{MoRe} by means of the Langlands classification, as in Theorem
\autoref{thm:4.10}, always sends $(\phi_b \hat \nu ,\epsilon) \in \Phi_\enh (G^+)$ to the unique 
irreducible quotient of $\pi_{st}(\phi_b \hat \nu, \epsilon)$. Indeed, the proof of Theorem 
\autoref{thm:4.10} states that explicitly in the cases (ii) and (iii). In case (i) it follows because
Theorem\autoref {thm:3.6} shows that
\[
I_{P^+}^{G^+}( \pi_{L^+} (\phi_b)_\epsilon \otimes \nu) =
I_{P^+}^{G^+}( \pi_{L^+} (\phi_b \hat \nu,\epsilon)) = \pi_{st}(\phi_b \hat \nu, \epsilon) .
\]
On the other hand, by construction (see the lines before Theorem\autoref {thm:3.6}), $\pi (\phi_b
\hat \nu ,\epsilon)$ is also the irreducible quotient of $\pi_{st}(\phi_b \hat \nu, \epsilon)$. 
\end{proof}

\section{Unitary groups}
\label{sec:unitary}

In this section we discuss how the setup and the statements in Sections 
\ref{sec:GSpin}--\ref{sec:Langlands} can be adjusted, so that the arguments and the results hold for
unitary groups. Most of this can be found in \cite{Moe1} and \cite[\S C]{Hei4}. 
We prefer to use the convenient description of $L$-parameters for unitary groups from \cite{GGP}.

Let $E/F$ be a separable quadratic extension. Let $V$ be a finite dimensional $E$-vector space endowed
with an Hermitian form. Recall that the unitary group $\rU(V)$ is a reductive algebraic $F$-group, an
outer form of $\GL_{\dim V}$. The classification of pure inner twists reads:
\begin{itemize}
\item When $\dim V = 2n$, there is one quasi-split group $\rU_{2n}(E/F)$ and one pure inner twist
$\rU'_{2n}(E/F)$, which is not quasi-split.
\item When $\dim V = 2n+1$, there is a quasi-split group $\rU_{2n+1}(E/F)$, associated to a Hermitian
form with discriminant 1. There is an isomorphic but different form $\rU'_{2n+1}(E/F)$, which is
associated to an Hermitian form whose discriminant is nontrivial in $F^\times / N_{E/F}(E^\times)$.
\end{itemize} 
The complex dual group of $\rU_m (E/F)$ and $\rU'_m (E/F)$ is $\GL_m (\C)$. The group \\
$\mb W_F / \mb W_E = \mr{Gal}(E/F)$ acts on $\GL_m (\C)$ by the outer automorphism
\[
A \mapsto J_m A^{-T} J_m^{-1} ,
\] 
where $-T$ denotes inverse transpose and $J_m$ is the anti-diagonal $m \times m$-matrix whose with 
on the anti-diagonal alternating 1 and -1. We use a compressed form of the Langlands dual group:
\[
{}^L \rU_m (E/F) = {}^L \rU'_m (E/F) =\GL_m (\C) \rtimes \mb W_F / \mb W_E . \vspace{2mm}
\]

\noindent
\textbf{Modifications in Paragraph \ref{par:properties}.}\\
According to \cite[Theorem 8.1]{GGP}, any $L$-parameter $\phi$ for $\rU(V)$ is determined (up to
$\rU(V)^\vee$-conjugacy) by its restriction to $\mb W_E \times SL_2 (\C)$, which we denote $\Phi_\enh$.
This $\Phi_\enh$ is a conjugate-dual representation, which means that $\Phi_\enh^\vee$ is isomorphic
to $s \cdot \Phi_\enh$ for any $s \in \mb W_F \setminus \mb W_E$. Moreover $\Phi_\enh$ is conjugate-orthogonal 
(sign +1) if $\dim V$ is odd and conjugate-symplectic (sign -1) if $\dim V$ is even. 
That provides a bijection from $\Phi (U(V))$ to the isomorphism classes of conjugate-dual
representations of $\mb W_E$ with sign $(-1)^{\dim V - 1}$. For consistency we define
sgn$(\rU(V)^\vee) = (-1)^{\dim V - 1}$.

Conversely, let a conjugate-dual $m$-dimensional representation $\Phi_\enh$ of $\mb W_E \times \SL_2 (\C)$ 
with sign $(-1)^{m-1}$ be given. Then one can determine 
\begin{equation}
\phi \colon \mb W_F \times \SL_2 (\C) \to\GL_m (\C) \rtimes \mb W_F / \mb W_E
\end{equation}
up to conjugacy by requiring that $\phi (\mb W_F \setminus \mb W_E)$ consists of elements 
$s$ (in the non-identity component) such that $s \cdot \Phi_\enh$ is equivalent with $\Phi_\enh^\vee$. 
We abbreviate this operation to $\Phi_\enh \mapsto \mr{ind}_{\mb W_E}^{\mb W_F} \Phi_\enh$.

It is natural to relate the centralizer group of $\phi$ (computed in $\rU(V)^\vee$) to a suitable 
centralizer group of $\Phi_\enh$. To this end we recall from \cite{GGP} that $\phi$ determines an 
explicit bilinear form $B_\phi$ on $\C^m$, with respect to which $\Phi_\enh$ is conjugate-dual. 
By \cite[Theorem 8.1.iii]{GGP}
\begin{align*}
& \rZ_{\rU(V)^\vee} (\phi) = \rZ_{\mr{Aut} (B_\phi)} (\Phi_\enh) ,\\
& \mr{Aut}(B_\phi) = \big\{ g \in\GL_m (\C) : B_\phi (gv,gv') = 
B_\phi (v,v') \; \forall v,v' \in \C^m \big\} .
\end{align*}
From \cite[\S 4]{GGP} one sees that $\rZ_{\mr{Aut} (B_\phi)} (\Phi_\enh)$ behaves exactly like 
$\rZ_{{G^\vee}^+_\der}(\phi)$ in the case of general spin groups. More explicitly, 
$\rZ_{\rU(V)^\vee}(\phi)$ and $Z_{\mr{Aut} (B_\phi)}(\phi)$ are given by \eqref{eq:1.3} and 
\eqref{eq:1.6}, we only have to omit the $\rS$ (for $\det = 1$) from those formulas.\\

\noindent
\textbf{Modifications in Paragraph~\ref{par:HAL}.}\\
The standard Levi subgroups of $G_n = U(V)$ are of the form
\[
L = G_{n_-} \times\GL_{n_1}(E) \times \cdots \times\GL_{n_k}(E)
\]
with $G_{n_-} = U (V')$ of the same type as $G_n$ and $\dim V - \dim V' = 2 (n_1 + \cdots + n_k)$.
Similarly
\[
{}^L L = {}^L G_{n_-} \times \mr{ind}_{\mb W_E}^{\mb W_F} \big(\GL_{n_1}(\C) \times \cdots
\times\GL_{n_k}(\C) \big) .
\]
By  Shapiro's lemma, $\Phi (L)$ is naturally in bijection with
\[
\Phi (G_{n_-}) \times \Phi (\GL_{n_1}(E) \times \cdots \times\GL_{n_k}(E)) ,
\]
which by \cite[Theorem 8.1]{GGP} can be regarded as a set of conjugacy classes of homomorphisms 
with domain $\mb W_E \times \SL_2 (\C)$. Accordingly, the centralizer of 
$\phi \in \Phi (L)$ can be computed as the centralizer of $\Phi_\enh$ in
\[
L_E^\vee := \mr{Aut}(B_\phi) \times\GL_{n_1}(\C) \times \cdots \times\GL_{n_k}(\C) .
\]
We write
\[
\mc S_\phi = \mc S_{\Phi_\enh} = \pi_0 \big( Z_{L^\vee}(\phi) \big) = 
\pi_0 \big( Z_{L_E^\vee} (\Phi_\enh) \big) .
\]
The cuspidal support \cite{AMS1} of $(\phi,\epsilon) \in \Phi (G)$ can be computed via 
\[
Z_{G^\vee} (\phi (\mb W_F)) = Z_{\mr {Aut}(B_\phi)}(\Phi_\enh (\mb W_E)) .
\] 
This implies that 
\[
\Sc (\phi,\epsilon) = \Sc \big( \mr{ind}_{\mb W_E}^{\mb W_F} \Phi_\enh, \epsilon \big) = 
\ind_{\mb W_E}^{\mb W_F} \big( \Sc (\phi,\epsilon) \big) ,
\]
where $\mr{ind}_{\mb W_E}^{\mb W_F}$ does not change the enhancements.

As a consequence, everything in Paragraph~\ref{par:HAL} can be carried out for unitary groups,
with $\Phi_\enh$ and $L_E^\vee$ instead of $\phi$ and $L^\vee$. However, the results are not always
precisely as before. We have to distinguish two cases, depending on the ramification of $\rU(V)$,
that is, the ramification of $E/F$. 

Suppose first that $E/F$ is ramified. We take a Frobenius element of $\mb W_E$ also as
Frobenius element of $\mb W_F$, and we pick a representative for $\mb W_F / \mb W_E$ in $\mb I_F$.
Then $\mr{Res}_{\mb I_E}^{\mb W_E}$ and $\mr{Res}_{\mb I_F}^{\mb W_F}$ are compatible with
$\phi \mapsto \Phi_\enh$ and $\mr{ind}_{\mb W_E}^{\mb W_F}$. Hence the calculations in Paragraph~\ref{par:HAL} produce the correct results for $\rU(V)$. We only have to remember to omit the centre
$\C^\times$ of $\GSpin (V)^\vee$ and the S for $\det = 1$, like we needed to do for symplectic groups.

Next we suppose that $E/F$ is unramified. Then $\mb I_E = \mb I_F$ and as Frobenius element of 
$\mb W_F$ we take the square of a Frobenius element of $\mb W_E$. In contrast to the ramified case,
the impact on Paragraph~\ref{par:HAL} is substantial. 

For $\tau \in \Irr (\mb W_E )_\phi^\pm$,
there is still a unique (up to isomorphism) unramified twist $\tau' = \tau \otimes \chi$ which
is conjugate-dual and not isomorphic to $\tau$. However, in contrast to before $\tau'$ and $\tau$
always have different signs \cite[Proposition 4.10.b]{SolParam}. We order $\tau,\tau'$ so that
$\ell_\tau \geq \ell_{\tau'}$ and if $\ell_\tau = \ell_{\tau'} = 0$ then $a_\tau \geq a_{\tau'}$. 

The next change occurs in \eqref{eq:1.48}, there 
\begin{align*}
T_{\mf s^\vee} & 
= T \big/ \big( \prod\nolimits_j \rZ (\GL_{n_j}(\C))_{\mr{ind}_{\mb W_E}^{\mb W_F} \phi_j} \big) \\ 
& = \prod\nolimits_\tau \big( \C^\times / \rZ (\GL_{n_j}(\C))_{\mr{ind}_{\mb W_E}^{\mb W_F} \phi_j} 
\big)^{e_\tau} = \prod\nolimits_\tau T_{\mf s^\vee ,\tau} ,
\end{align*}
with the latter two products running over $\Irr' (\mb W_E )_\phi^\pm \cup \Irr (\mb W_E )_\phi^0$.
We note that
\[
\big| \rZ (\GL_{n_j}(\C))_{\mr{ind}_{\mb W_E}^{\mb W_F} \phi_j} \big| =
2 \: | \rZ (\GL_{d_\tau} (\C))_\tau | = 2 t_\tau .
\]
In particular 
\[
X^* (T_{\mf s^\vee,\tau}) = 2 t_\tau \big( X^* (T) \cap \Q X^* (T_{\mf s^\vee,\tau}) \big) .
\]
Further \eqref{eq:1.18} becomes
\begin{align*}
J & = Z_{G^\vee} (\phi (\mb I_F)) = \prod\nolimits_\tau G^\vee_{\phi (\mb I_E),\tau} \\
& = \prod_{\tau \in \Irr' (\mb W_E )_\phi^\pm}\GL_{2 e_\tau + \ell_\tau + \ell_{\tau'}} (\C)^{t_\tau}
\times \prod_{\tau \in \Irr (\mb W_E )_\phi^0}\GL_{e_\tau} (\C)^{2 t_\tau} .
\end{align*}
As a consequence \eqref{eq:1.49} has to be modified in the cases $\tau \in \Irr (\mb W_E )_\phi^+$,
now it reads
\[
R (G^\vee_{\phi (\mb I_E),\tau} T,T ) = \left\{ \begin{array}{ll}
C_{e_\tau} & \ell_\tau + \ell_{\tau'} = 0 \\
BC_{e_\tau} & \ell_\tau + \ell_{\tau'} > 0 
\end{array} \right. .
\]
In view of the new shape of $J$, its maximal torus given in \eqref{eq:1.47} becomes
\[
T_J = \prod_\tau T_{J,\tau}^{t_\tau} = \prod_{\tau \in \Irr' (\mb W_E )_\phi^\pm} 
\big( (\C^\times )^{2 e_\tau + \ell_\tau + \ell_{\tau'}} \big)^{t_\tau}
\times \prod_{\tau \in \Irr (\mb W_E )_\phi^0} \big( (\C^\times)^{e_\tau} \big)^{2 t_\tau} . 
\]
The computation of $m_\alpha$ for $\alpha \in R(J,T)_\red$ after \eqref{eq:1.48} also changes
for unramified unitary groups. For $\tau \in \Irr (\mb W_E )_\phi^0$, the root system
$R (G^\vee_{\phi (\mb I_E),\tau} T,T )$ has $2 t_\tau$ irreducible components, all of type
$A_{e_\tau - 1}$ and permuted cyclically by $\Fr_F$. Hence $m_\alpha$ equals $2 t_\tau m'_\alpha$,
and the same argument as before shows that $m'_\alpha = 1$.

When $\tau \in \Irr' (\mb W_E )_\phi^\pm$, the root system $R (G^\vee_{\phi (\mb I_E),\tau} T,T )$ 
has $t_\tau$ irreducible components. They are of type $A_{2e_\tau + \ell_\tau + \ell_{\tau'}}$ and
$\Fr_F$ permutes them cyclically, so $m_\alpha = t_\tau m'_\alpha$. Here the computation of $m'_\alpha$
proceeds analogously to in Paragraph~\ref{par:HAL} for the cases $\tau \in \Irr (\mb W_F)_\phi^{+-}$. 
We conclude that $m_\alpha = 2t_\tau$ unless $\ell_\tau + \ell_{\tau'} = 0$ and $\alpha \in C_{e_\tau}$
is long, then $m_\alpha = t_\tau$.

From this we obtain the root systems $R_{\mf s^\vee,\tau}$ whose union is $R_{\mf s^\vee}$. 
For $\tau \in \Irr (\mb W_E )_\phi^0$ we obtain $A_{e_\tau - 1} \subset X^* (T_{\mf s^\vee,\tau})$
as before. For $\tau \in \Irr' (\mb W_E )_\phi^\pm$ with $\ell_\tau + \ell_{\tau'} > 0$ we get
$2 t_\tau B_{e_\tau} \subset 2 t_\tau X^* (T)$, which can be identified with $B_{e_\tau}$ in
$X^* (T_{\mf s^\vee,\tau})$. For $\tau  \in \Irr' (\mb W_E )_\phi^\pm$ with $\ell_\tau + \ell_{\tau'} = 0$ 
we obtain $2 t_\tau D_{e_\tau} \cup t_\tau (C_{e_\tau} \setminus D_{e_\tau})$ in $2 t_\tau X^* (T)$,
which identifies with $B_{e_\tau}$ in $X^* (T_{\mf s^\vee,\tau})$.

The root datum for the affine Hecke algebra decomposes nicely:
\[
\mc R_{\mf s^\vee} = \bigoplus\nolimits_\tau \mc R_{\mf s^\vee,\tau} = 
\bigoplus\nolimits_\tau \big( X^* (T_{\mf s^\vee,\tau}),R_{\mf s^\vee,\tau}, X_* (T_{\mf s^\vee,\tau}),
R^\vee_{\mf s^\vee} \big) .
\]
The calculation of the parameter functions $\lambda,\lambda^*$ (following the method in 
\cite[\S 3.3]{AMS3}) leads to the following modified version of Table~\ref{tab:2}:
\begin{table}[h]
\caption{Data from $\mc R_{\mf s^\vee}$ for each $\tau$}\label{tab:3}
$\begin{array}{cccccccc}
a_\tau & a_{\tau'} & X^* (T_{\mf s^\vee,\tau}) & R_{\mf s^\vee,\tau} & \lambda(\alpha) & 
\lambda(\beta) & \lambda^* (\beta) \\
\hline
0 & -1 & \Z^{e_\tau} & B_{e_\tau} & 2 t_\tau & t_\tau  & t_\tau  \\
\geq 1 & \geq -1 & \Z^{e_\tau} & B_{e_\tau} & 2 t_\tau & t_\tau (a_\tau + a_{\tau'} + 2) & 
t_\tau ( a_\tau - a_{\tau'}) \\
\multicolumn{2}{c}{\Irr (\mb W_E )^0_\phi} & \Z^{e_\tau} & A_{e_\tau - 1} & 2 t_\tau & -- & -- 
\end{array}$
\end{table}\\
Here the first line is an instance of the second line, we mention it separately because it comes from
the exceptional case $\ell_\tau + \ell_{\tau'} = 0$ discussed above. We note that in all lines of
Table~\ref{tab:3} $W (R_{\mf s^\vee, \tau})$ is the full group $W_{\mf s^\vee}$, so 
$\Gamma_{\mf s^\vee}^{(+)}$ is trivial and can be omitted from the table. \\

\noindent
\textbf{Modifications in Section~\ref{sec:Moe}.}\\
Most of the necessary adjustments, as well as a proof of Theorem\autoref {thm:1.1}.c,d for unitary groups,
can be found in \cite{Moe1}. Let us spell out the significant changes.

The Jordan blocks of a discrete series representation $\pi$ of $G = U(V)$ are based on unitary 
supercuspidal representations $\rho$ of $\GL_m (E)$. Instead of \eqref{eq:1.44}, they have to be 
conjugate-dual: $\rho \cong \bar{\rho}^\vee$, where the bar indicates composing a representation 
with the natural action of Gal$(E/F)$ on $\rU (V)$. 

Although there exist outer automorphisms of unitary groups, we should not involve them like for
$\SO (V)$ and $\GSpin (V)$, because here $G^+ = G$. Rather, we should just replace $\mr{Out}(\mc G)$ 
by the trivial group everywhere. Then all results in Section~\ref{sec:Moe} hold for unitary groups
(except Theorem\autoref {thm:1.3} which is specific for general spin groups). \\

\noindent
\textbf{Modifications in Section~\ref{sec:Hecke}.}\\
No further adjustments are needed, everything works in the above setup.
The groups $\Gamma_{\mf s}$, $\Gamma_{\mf s}^+$, $\Gamma_{\mf s^\vee}$, $\Gamma_{\mf s^\vee}^+$ are
trivial, so all considerations about those are superfluous for unitary groups. Also, as $G^+ = G$
the material in Paragraph~\ref{par:G+} becomes trivial.\\

\noindent
\textbf{Modifications in Section~\ref{sec:Langlands}.}\\
There is only one small change, when $\rU(V)$ is unramified. In the proof of Proposition\autoref {prop:4.4} 
the case $\ell_{\tau'} = 0$ can be treated just as $\ell_{\tau'} > 0$, because by Table~\ref{tab:3}
the relevant Hecke algebra has a root datum of type $B_{e_\tau}$ with parameters such that
$\lambda (\beta) = \lambda^* (\beta) > 0$.\\

\smallskip
\textbf{Acknowledgements.}
We thank the referees for their helpful reports.


\end{document}